\theoremstyle{plain}
\newtheorem{thm}{Theorem}[section]
\newtheorem{lem}[thm]{Lemma}
\newtheorem{prop}[thm]{Proposition}
\newtheorem{cor}[thm]{Corollary}
\theoremstyle{remark}
\newtheorem{ex}[thm]{Example}
\newtheorem{remark}[thm]{Remark}
\newtheorem{defi}[thm]{Definition}
\DeclareMathOperator*{\argmin}{arg\,min}
\DeclareMathOperator*{\oequivalent}{o}
\DeclareMathOperator*{\Oequivalent}{\mathcal{O}}
\newcommand\bx{{\bf x}}
\newcommand\by{{\bf y}}
\newcommand\bX{{\bf X}}
\newcommand{\appendixlink}{Appendix \ref{appn}}
\newcommand{\supplementary}{the \hyperref[appn]{Appendix}}
\newcommand{\supplementSecTwo}{Appendix \ref{app:lemmas}}
\newcommand{\supplementSecFive}{Appendix \ref{sec:consistency}}
\title{On the convergence of PINNs}
\author{
  Nathan Doum\`eche$^{1}$ \and
  G\'erard Biau$^{1,2}$ \and
  Claire Boyer$^{1,2}$
}
\date{
  $^{1}$Sorbonne Universit\'e, CNRS, LPSM, F-75005 Paris, France\\
  $^{2}$Institut universitaire de France (IUF)\\[4pt]
  \texttt{nathan.doumeche@sorbonne-universite.fr},
  \texttt{gerard.biau@sorbonne-universite.fr},
  \texttt{claire.boyer@sorbonne-universite.fr}
}
\begin{document}
\maketitle

\begin{abstract}
Physics-informed neural networks (PINNs) are a promising approach that combines the power of neural networks with the interpretability of physical modeling. PINNs have shown good practical performance in solving partial differential equations (PDEs) and in hybrid modeling scenarios, where physical models enhance data-driven approaches. However, it is essential to establish their theoretical properties in order to fully understand their capabilities and limitations. In this study, we highlight that classical training of PINNs can suffer from systematic overfitting.  This problem can be addressed by adding a ridge regularization to the empirical risk, which ensures that the resulting estimator is risk-consistent for both linear and nonlinear PDE systems. However, the strong convergence of PINNs to a solution satisfying the physical constraints requires a more involved analysis using tools from functional analysis and calculus of variations. In particular,
for linear PDE systems, an implementable Sobolev-type  regularization 
allows to reconstruct a solution that not only achieves statistical accuracy but also maintains consistency with the underlying physics.
\end{abstract}


\section{Introduction}

\paragraph{Physics-informed machine learning}
Advances in machine learning and deep learning have led to significant breakthroughs in almost all areas of science and technology. However, despite remarkable achievements, modern machine learning models are difficult to interpret and do not necessarily obey the fundamental governing laws of physical systems \citep{linardatos2021explainability}. Moreover, they often fail to extrapolate scenarios beyond those on which they were trained \citep{xu2021extrapolation}. 
On the contrary, numerical or pure physical methods struggle to capture nonlinear relationships in complex and high-dimensional systems, while lacking flexibility and being prone to computational problems.  
This state of affairs has led to a growing consensus that data-driven machine learning methods need to be coupled with prior scientific knowledge based on physics. 
This emerging field, often called {physics-informed machine learning} \citep{raissi2019PINN}, seeks to combine the predictive power of machine learning techniques with the interpretability and robustness of physical modeling. 
The literature in this field is still disorganized, with a somewhat unstable nomenclature. In particular, the terms physics-informed, physics-based, physics-guided, and theory-guided are used interchangeably. 
For a comprehensive account, we refer to the reviews by \citet{rai2020review}, \citet{karniadakis2021piml}, \citet{cuomo2022scientific}, and \citet{Hao2022review}, which survey some of the prevailing trends in embedding physical knowledge in machine learning, present some of the current challenges, and discuss various applications.

\smallskip
\paragraph{Vocabulary and use cases} Depending on the nature of the interaction between machine learning and physics, physics-informed machine learning is usually achieved by preprocessing the features \citep{rai2020review}, by designing innovative network architectures that incorporate the physics of the problem \citep{karniadakis2021piml}, or by forcing physics infusion into the loss function \citep{cuomo2022scientific}. 
It is this latter approach, which is most often referred to as physics regularization \citep{rai2020review}, to which our article is devoted.  Note that other names are possible, including physics consistency penalty \citep{wang2020superResolution}, knowledge-based loss term \citep{vonRueden2021informed}, and physics-guided neural networks \citep{cunha2022review}. In the following, we will focus more specifically on neural networks incorporating a physical regularization, called PINNs (for physics-informed neural networks, \citealt{raissi2019PINN}).
Such models have been successfully applied to $(i)$ model hybrid learning tasks, where the data-driven loss is regularized to satisfy a  physical prior, and $(ii)$ design efficient solvers of partial differential equations (PDEs).
A significant advantage of PINNs is that they are easy to implement compared to other PDE solvers, and that they rely on the backpropagation algorithm, resulting in reasonable computational cost.
Although $(i)$ and $(ii)$ are different facets of the same mathematical problem, they differ in their geometry and the nature of the data on which they are based, as we will see later. 

\smallskip
\paragraph{Related work and contributions} Despite a rapidly growing literature highlighting the capabilities of PINNs in various real-world applications, there are still few theoretical guarantees regarding {the} overfitting, consistency, and error analysis of the approach. Most existing theoretical work focuses either on intractable modifications of PINNs \citep{cuomo2022scientific} or on negative results, such as in \citet{krishnapriyan2021characterizing} and \citet{wang2022ntk}. 

Our goal in the present article is to provide a comprehensive theoretical analysis of the mathematical forces driving PINNs, in both the hybrid modeling and PDE solver settings, 
with the constant concern to provide approaches that can be implemented in practice. Our results complement those of \citet{shin2020convergence}, \citet{shin2020errorEstimates}, \citet{mishra2022generalization}, \citet{ryck2022kolmogorov},   \citet{wu2022convergence}, and \citet{qian2023error} for the PDE solver problem. \citet{shin2020convergence} and \citet{wu2022convergence} focus on modifications of PINNs using the Hölder norm of the neural network in the loss function, which is unfortunately intractable in practice. In the context of linear PDEs, \citet{shin2020errorEstimates} analyze the expected generalization error of PINNs using the Rademacher complexity of the image of the neural network class by a differential operator. However, this Rademacher complexity does not obviously vanish with increasing sample size. 
Similarly, \citet{mishra2022generalization} bound the generalization error by a quadrature rule depending on the Hölder norm of the neural network, which does not necessarily tend to zero as the number of training points tends to infinity. \citet{ryck2022kolmogorov} derive bounds on the expectation of the $L^2$ error, provided that the weights of the neural networks are bounded. 
In contrast to this series of works, we consider models and assumptions that can be practically verified or implemented. Moreover, our approach includes hybrid modeling, for which, as pointed out by \citet{karniadakis2021piml}, no theoretical guarantees have been given so far. Preliminary interesting results on the statistical consistency of a regression function penalized by a PDE are reported in \citet{arnone2022spatialRegression}. 
The original point of our approach lies in the use of a mix of statistical and functional analysis arguments \citep{evans2010partial} to characterize the PINN problem.

\smallskip
\paragraph{Overview} After correctly defining the PINN problem in Section \ref{TPF}, we show in Section \ref{POF} that an additional regularization term is needed in the loss, otherwise PINNs can overfit. This first important result is consistent with the approach of \citet{shin2020convergence}, which penalizes PINNs by Hölder norms to ensure their convergence, and with the experiments of \citet{nabian2019engineering}, which improve performance by adding an extra-regularization term. In Section \ref{sec:consistency}, we establish the consistency of ridge PINNs by proving in Theorem \ref{thm:generalization_error} that a slowly va\-ni\-shing ridge penalty is sufficient to prevent overfitting.
Finally, in Section \ref{sec:functional}, we show that an additional level of regularization is sufficient in order to guarantee the strong convergence of PINNs (Theorem \ref{thm:functionalCv}).
We also prove that an adapted tuning of the hyperparameters allows to reconstruct the solution in the PDE solver setting (Theorem \ref{prop:pdeSolverFunctional}), as well as to ensure both statistical and physics consistency in the hybrid modeling setting (Theorem \ref{cor:sPINNsConsistency}). All proofs are postponed to \supplementary.
The code of all the numerical experiments can be found at \citet{supplement2023code} or at \url{https://github.com/NathanDoumeche/Convergence_and_error_analysis_of_PINNs}.

\section{The PINN framework}
\label{TPF}

In its most general formulation, the PINN method can be described as an empirical risk minimization problem, penalized by a PDE system.

\smallskip
\paragraph{Notation} Throughout this article, the symbol $\mathbb E$ denotes expectation and $\|\cdot\|_2$ (resp.,  $\langle \cdot, \cdot\rangle$) denotes the Euclidean norm (resp., scalar product) in $\mathbb R^d$, where $d$ may vary depending on the context. 
Let $\Omega \subset \mathbb R^{d_1}$ be a bounded Lipschitz domain with boundary $\partial \Omega$ and closure $\bar \Omega$, and let $(\bX,Y) \in \Omega \times \mathbb{R}^{d_2}$ be a pair of random variables. Recall that Lipschitz domains are a general category of open sets that includes bounded convex domains (such as $]0,1[^{d_1})$ and usual manifolds with $C^1$ boundaries (see \appendixlink). 
This level of generality with respect to the domain $\Omega$ is necessary to encompass most of the physical problems, such as those presented in \citet{arzani2021uncovering}, which use non-trivial (but Lipschitz) geometries. 
For $K \in \mathbb{N}$, the space of functions from $\Omega$ to $\mathbb{R}^{d_2}$ that are $K$ times continuously differentiable is denoted by $C^K(\Omega, \mathbb{R}^{d_2})$. 

Let $C^\infty(\Omega, \mathbb{R}^{d_2}) = \cap_{K \geqslant 0}C^K(\Omega, \mathbb{R}^{d_2})$ be the space of infinitely differentiable functions. 
The space $C^K(\Omega, \mathbb{R}^{d_2})$ is endowed with the Hölder norm $\|\cdot\|_{C^K(\Omega)}$, 
defined for any $u$ by $\|u\|_{C^K(\Omega)} = \max_{|\alpha|\leqslant K} \|\partial^\alpha u\|_{\infty, \Omega}$. 
The space $C^\infty(\bar{\Omega}, \mathbb{R}^{d_2})$ of smooth functions is defined as the subspace of continuous functions $u:\bar{\Omega} \to \mathbb{R}^{d_2}$ satisfying $u|_\Omega \in C^\infty(\Omega, \mathbb{R}^{d_2})$ and, for all $K\in \mathbb{N}$, $\|u\|_{C^K(\Omega)} < \infty$. 
 A differential operator $\mathscr{F} : C^\infty(\Omega, \mathbb{R}^{d_2}) \times \Omega \to \mathbb{R}$ is said to be of order $K$ if it can be expressed as a function over the partial derivatives of order less than or equal to $K$. For example, the operator $\mathscr{F}(u, \bx) = \partial_1 u(\bx)  \partial^2_{1,2} u(\bx) + u(\bx)\sin(\bx) $ has order 2. 
 A summary of the mathematical notation used in this paper is to be found in \appendixlink.

\smallskip
\paragraph{Hybrid modeling} As in classical regression analysis, we are interested in estimating the unknown regression function $u^\star$ such that $Y = u^\star({\bf X}) +\varepsilon$, for some random noise $\varepsilon$ that satisfies $\mathbb E(\varepsilon|\bX)=0$. 
What makes the problem original is that the function $u^{\star}$ is assumed to satisfy (at least approximately) a collection of $M \geqslant 1$ PDE-type constraints of order at most $K$, denoted in a standard form by $\mathscr{F}_k(u^\star,\bx)\simeq 0$ for $1 \leqslant k \leqslant M$. 
It is therefore assumed that $u^\star$ can be derived $K$ times. 
Moreover, there exists some subset $E \subseteq \partial \Omega$ and an boundary/initial condition function $h:E\to \mathbb R^{d_2}$ such that, for all $\bx\in E$, $u^\star(\bx) \simeq h(\bx)$. 
We stress that $E$ can be strictly included in $\Omega$, as shown in 
Example \ref{ex:spatioTemp} for a spatio-temporal domain $\Omega$. 
The specific case $E = \partial \Omega$ corresponds to Dirichlet boundary conditions. 

These constraints model some a priori physical information about $u^{\star}$. However, this knowledge may be incomplete (e.g., the PDE system may be ill-posed and have no or multiple solutions) and/or imperfect (i.e., there is some modeling error, that is, $\mathscr{F}_k(u^{\star},\bx)\neq 0$ and $u^\star|_E \neq h$).
This again emphasizes that $u^\star$ is not necessarily a solution of the system of differential equations. 
\begin{ex}[Maxwell equations]
Let $\bx = (x, y, z, t) \in \mathbb{R}^3 \times \mathbb{R}_+$, and consider  Maxwell equations describing the evolution of an electro-magnetic field $u^\star = (E^\star, B^\star)$ in vacuum, defined by
\[
\left\{\begin{array}{rcl}
      \mathscr{F}_1(u^\star, \bx) &=& \mathrm{div} E^\star(\bx)\\
      \mathscr{F}_2(u^\star, \bx) &=& \mathrm{div} B^\star(\bx)\\
      (\mathscr{F}_{3},\ \mathscr{F}_{4},\ \mathscr{F}_{5}) (u^\star, \bx) &=& \partial_t E^\star(\bx)  - \mathrm{curl} B^\star(\bx)\\
      (\mathscr{F}_{6},\ \mathscr{F}_{7},\ \mathscr{F}_{8})(u^\star, \bx) &=& \partial_t B^\star(\bx) + \mathrm{curl} E^\star(\bx),\\
\end{array}\right.
\]
where $E^\star\in C^1(\mathbb{R}^4, \mathbb{R}^3)$ is the electric field,  $B^\star \in C^1(\mathbb{R}^4, \mathbb{R}^3)$ the magnetic field, and the $\mathrm{div}$ and $\mathrm{curl}$ operators are respectively defined for $F = (F_x, F_y, F_z) \in C^1(\mathbb{R}^4, \mathbb{R}^3)$ by
\[ 
\mathrm{div} F = \partial_{x} F_x + \partial_{y} F_y + \partial_{z} F_z \quad \mbox{and} \quad
\mathrm{curl} F =  (\partial_y F_z - \partial_z F_y,\ \partial_z F_x - \partial_x F_z,\ \partial_x F_y - \partial_y F_x).
\]
In this case, $d_1=4$, $d_2 = 6$, and $M = 8$. 
\end{ex}

\begin{ex}[Spatio-temporal condition function]
\label{ex:spatioTemp}
    Assume that the domain $\Omega \subseteq \mathbb{R}^{d_1}$ is of the form $\Omega=\Omega_1 \times ]0,T[$, 
where $\Omega_1 \subseteq \mathbb{R}^{d_1-1}$ is a bounded Lipschitz domain and $T\geqslant 0$ is a finite time horizon. The spatio-temporal PDE system admits (spatial) boundary conditions specified by a function $f:\partial \Omega_1 \to \mathbb R^{d_2}$, i.e.,
\[
\forall x \in \partial \Omega_1, \ \forall t \in [0, T], \quad u^\star(x, t) = f(x),
\]
and a (temporal) initial condition specified by a function $g: \Omega_1 \to \mathbb R^{d_2}$, that is
\[
\forall x \in \Omega_1, \quad u^\star(x, 0) = g(x).
\]
The set on which the boundary and initial conditions are defined is $E = (\Omega_1\times \{0\}) \cup (\partial \Omega_1 \times [0,T]) $, and the associated condition function $h : E \to \mathbb{R}^{d_2}$ is
\[
h(\bx) = \left\{\begin{array}{lll}
       f(x) &\text{if} &\bx= (x,t) \in \partial \Omega_1 \times [0,T]\\
       g(x) &\text{if} &\bx=(x,t) \in \Omega_1\times \{0\}.
\end{array}\right.
\]
Notice that $E\subsetneq \partial \Omega$. 
\end{ex}
\hfill \\
In order to estimate $u^\star$, we assume to have at hand three sets of data:
\begin{itemize}
\item[$(i)$] A collection of i.i.d.~random variables $(\bX_1,Y_1), \hdots, (\bX_n,Y_n)$ distributed as $(\bX,Y) \in \Omega \times \mathbb R^{d_2}$, the distribution of which is \textit{ unknown};
\item[$(ii)$] A collection of i.i.d.~random variables $\bX^{(e)}_1, \hdots, \bX^{(e)}_{n_e}$ distributed according to some \textit{known} distribution $\mu_E$ on $E$;
\item[$(iii)$] A sample of i.i.d.~random variables $\bX^{(r)}_1, \hdots, \bX^{(r)}_{n_r}$ \textit{ uniformly distributed} on $\Omega$.
\end{itemize}
The function $u^\star$ is then estimated by minimizing the empirical risk function
\begin{align}
R_{n, n_e, n_r}(u_{\theta}) &= \frac{\lambda_d}{n}\sum_{i=1}^{n} \|u_\theta(\bX_i)-Y_i\|_2^2 + \frac{\lambda_e}{n_e}\sum_{j=1}^{n_e} \|u_\theta(\bX^{(e)}_j)-h(\bX^{(e)}_j)\|_2^2 \nonumber\\
& \quad + \frac{1}{n_r}\sum_{k=1}^M \sum_{\ell=1}^{n_r}  \mathscr{F}_k(u_\theta, \bX^{(r)}_\ell)^2\label{lossgenerale}
\end{align}
over the class $\text{NN}_H(D):=\{u_\theta, \theta \in \Theta_{H,D}\}$ 
of feedforward neural networks with $H$ hidden layers of common width $D$ (see below for a precise definition), where $(\lambda_d, \lambda_e) \in \mathbb{R}_+^2\backslash (0,0)$ are hyperparameters that establish a tradeoff between the three terms. 
In practice, one often encounters the case where $\lambda_e=0$ (data + PDEs).
Another situation of interest is when $\lambda_d=0$ (PDEs + boundary/initial conditions), which corresponds to the special case of a PDE solver. 
Setting \eqref{lossgenerale} is more general as it includes all the combinations data + PDEs + boundary/initial conditions. 
Since a minimizer of the empirical risk function \eqref{lossgenerale} does not necessarily exist, we denote by $(\hat \theta(p, n_e, n_r, D))_{p\in \mathbb{N}} \in \Theta_{H,D}^\mathbb{N}$ any minimizing sequence, i.e.,
\[ 
\lim_{p \to \infty}R_{n, n_e, n_r}(u_{\hat \theta(p, n_e, n_r, D)}) = \inf_{\theta \in \Theta_{H,D}}\,R_{n, n_e, n_r}(u_\theta).
\]
In practice, such a sequence is usually obtained by implementing some optimization procedure, the exact description of which is not important for our purpose.

On the practical side, simulations using hybrid modeling have been successfully applied to model image denoising \citep{wang2020superResolution},
turbulence \citep{wang2020turbulence},
blood streams \citep{arzani2021uncovering},
wave propagation \citep{davini2021using},
and ocean streams \citep{wolff2021ocean}. Experiments with real data have been performed to assess 
the sea temperature \citep{bezenac2017processes},
subsurface transport \citep{he2020subsurface},
fused filament fabrication \citep{kapusuzoglu2020manufacturing},
seismic response \citep{zhang2020seismic},
glacier dynamic \citep{riel2021glacier}, 
lake temperature \citep{daw2022lake},
thermal modeling of buildings \citep{gokhale2022thermal},  
 blasts \citep{pannell2022blast}, and
 heat transfers \citep{ramezankhani2022multifidelity}.
 The generality and flexibility of the empirical risk function \eqref{lossgenerale} allows it to encompass most PINN-like problems. For example, the case $M \geqslant 2$ is considered in \citet{bezenac2017processes} and \citet{riel2021glacier}, while \citet{zhang2020seismic} and \citet{wang2020turbulence} assume that $d_1 = d_2 = 3$. 
Importantly, the situation where $\lambda_d > 0$ {\it and} $\lambda_e>0$ (data + boundary conditions + PDEs) is also interesting from a physical point of view. This is, for example, the approach advocated by \citet{arzani2021uncovering}, which uses both data and boundary conditions (see also \citealp{cuomo2022scientific}, and \citealp{Hao2022review}).

\smallskip
\paragraph{The PDE solver case} The particular case $\lambda_d=0$ deserves a special comment. 
In this setting, without physical measures $(\bX_i, Y_i)$, the function $u^{\star}$ is viewed as the unknown solution of the system of PDEs $\mathscr{F}_1, \hdots, \mathscr{F}_M$ with boundary/initial conditions $h$. 
The goal is to estimate the solution $u^{\star}$ of the PDE problem
\[
    \left\{\begin{array}{lrcl}
      \forall k, \, \forall \bx \in \Omega, & \mathscr{F}_k(u^{\star},\bx) &=& 0 \\
      \forall \bx \in E, & u^{\star}(\bx) &=& h(\bx),
\end{array}\right.
\]
with neural networks from $\mathrm{NN}_H(D)$.
In this case, the empirical risk function \eqref{lossgenerale} becomes 
\[
R_{n_e, n_r}(u_\theta) = \frac{\lambda_e}{n_e}\sum_{j=1}^{n_e} \|u_\theta(\bX^{(e)}_j)-h(\bX^{(e)}_j)\|_2^2  + \frac{1}{n_r}{\sum_{k=1}^M}\sum_{\ell=1}^{n_r} \mathscr{F}_k(u_\theta, \bX^{(r)}_\ell)^2,
\]
where the boundary and initial conditions $(\bX^{(e)}_1,h(\bX^{(e)}_1)), \hdots, (\bX^{(e)}_{n_e},h(\bX^{(e)}_{n_e}))$ are sampled on $E\times \mathbb{R}^{d_2}$ according to some known distribution $\mu_E$, and $(\bX^{(r)}_1, \hdots, \bX^{(r)}_{n_r})$ are uniformly distributed on $\Omega$.
Note that, for simplicity, we write $R_{n_e,n_r}(u_{\theta})$ instead of $R_{n, n_e,n_r}(u_{\theta})$ because no $\bX_i$ is involved in this context. Since no confusion is possible, the same convention is used for all subsequent risk functions throughout the paper. 
The first term of $R_{n_e,n_r}(u_{\theta})$ measures the gap between the network $u_\theta$ and the condition function $h$ on $E$, while the second term forces $u_{\theta}$ to obey the PDE in a discretized way. 
Since both the condition function $h$ and the distribution $\mu_E$ are known, it is reasonable to think of $n_e$ and $n_r$ as large (up to the computational resources).
In this scientific computing perspective, PINNs have been successfully applied to solve a wide variety of linear and nonlinear problems, including motion, advection, heat, Euler, high-frequency Helmholtz, Schr\"odinger, Blasius, Burgers, and Navier-Stokes equations, covering various fields ranging from classical (mechanics, fluid dynamics, thermodynamics, and electromagnetism) to quantum physics \citep[e.g.,][]{cuomo2022scientific, li2023aphysics}.

\smallskip
\paragraph{The class of neural networks}  
A fully-connected feedforward neural network with $H\in\mathbb{N}^\star$ hidden layers of sizes $(L_1, \hdots, L_H) :=(D,\hdots ,  D) \in (\mathbb{N}^\star)^H$ and activation $\tanh$, is a function from $\mathbb R^{d_1}$ to $\mathbb R^{d_2}$, defined by
\begin{equation*}
    u_{\theta} = \mathcal{A}_{H+1}\circ (\tanh \circ \mathcal{A}_H) \circ \cdots \circ (\tanh \circ \mathcal{A}_1),
\end{equation*}
where the hyperbolic tangent function $\tanh$ is applied element-wise.  Each $\mathcal{A}_k : \mathbb{R}^{L_{k-1}} \rightarrow \mathbb{R}^{L_{k}}$ is an affine function of the form $\mathcal{A}_k(\bx) = W_k \bx + b_k$, with $W_k$ a ($L_{k-1} \times L_k$)-matrix,  $b_k \in \mathbb{R}^{L_k}$ a vector, 
$L_0 = d_1$, and $L_{H+1} = d_2$.
The neural network $u_{\theta}$ is parameterized by $\theta = (W_1, b_1, \hdots, W_{H+1}, b_{H+1}) \in \Theta_{H,D}$, where $\Theta_{H,D}=\mathbb{R}^{\sum_{i=0}^H (L_i+1) \times L_{i+1}}$.
Throughout, we let $\text{NN}_H(D)=\{u_{\theta}, \, \, \theta \in \Theta_{H,D}\}$.
We emphasize that the $\tanh$ function is the most common activation in PINNs \citep[see, e.g.,][]{cuomo2022scientific}.
It is preferable to the classical $\text{ReLU }(x)=\max(x,0)$ activation. In fact, since ReLU neural networks are a subset of piecewise linear functions, their high derivatives vanish and therefore cannot be captured by the penalty term $\frac{1}{n_r}\sum_{k=1}^M \sum_{\ell=1}^{n_r} \mathscr{F}_k(u_\theta, \bX^{(r)}_\ell)^2$.

The parameter space $\text{NN}_H(D)$ must be chosen large enough to approximate both the solutions of the PDEs and their derivatives. This property is encapsulated in Proposition \ref{prop:densite}, which shows that for any number $H \geqslant 2$ of hidden layers, the set $\text{NN}_H:=\cup_{D}\text{NN}_H(D)$ is dense in the space $(C^\infty(\bar{\Omega}, \mathbb{R}^{d_2}), \|\cdot\|_{C^K(\Omega)})$. This generalizes
Theorem 5.1 in \citet{ryck2021approximation} which states that $\text{NN}_2$ is dense in $(C^\infty([0,1]^{d_1}, \mathbb{R}), \|\cdot\|_{C^K(]0,1[^{d_1})})$ for all $d_1 \geqslant 1$ and $K \in \mathbb{N}$. 
\begin{prop}[Density of neural networks in Hölder spaces]
\label{prop:densite}
Let $K \in \mathbb{N}$, $H\geqslant 2$, and $\Omega \subseteq \mathbb{R}^{d_1}$ be a bounded Lipschitz domain. Then $\mathrm{NN}_H:=\cup_{D}\mathrm{NN}_H(D)$ is dense in $(C^\infty(\bar{\Omega}, \mathbb{R}^{d_2}), \|\cdot\|_{C^K(\Omega)})$, i.e., for any function $u\in C^\infty(\bar{\Omega}, \mathbb{R}^{d_2})$, there exists a sequence $(u_{p})_{p\in \mathbb{N}}\in \mathrm{NN}_H^{\mathbb{N}}$ such that $\lim_{p \to \infty} \|u-u_p\|_{C^K(\Omega)} = 0$. 
\end{prop}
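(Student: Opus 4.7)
My plan is to reduce Proposition \ref{prop:densite} to the scalar, hypercube version established by Theorem 5.1 of \citet{ryck2021approximation}, and then bootstrap it to arbitrary Lipschitz domains, vector-valued targets, and arbitrary depth $H\ge 2$. Throughout, fix $K\in\mathbb N$ and let $u\in C^\infty(\bar\Omega,\mathbb R^{d_2})$ be the target.

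The first step is a \emph{geometric reduction}. Since $\Omega$ is bounded, after translating and rescaling by an affine map $\Phi$ (which can always be absorbed into $\mathcal A_1$ without leaving the class $\mathrm{NN}_H$), I may assume $\bar\Omega\subset\, ]0,1[^{d_1}$. Next, I apply Stein's extension theorem for Lipschitz domains: there is a bounded linear operator $\mathcal E:C^{K}(\bar\Omega,\mathbb R)\to C^{K}(\mathbb R^{d_1},\mathbb R)$. Applying $\mathcal E$ component-wise and multiplying by a fixed smooth cutoff $\chi\in C_c^\infty(\mathbb R^{d_1})$ with $\chi\equiv 1$ on $\bar\Omega$ and $\mathrm{supp}\,\chi\subset\, ]0,1[^{d_1}$ yields $\tilde u\in C^\infty(\mathbb R^{d_1},\mathbb R^{d_2})$ with $\tilde u|_{\bar\Omega}=u$ and $\tilde u|_{[0,1]^{d_1}}\in C^\infty([0,1]^{d_1},\mathbb R^{d_2})$ in the sense of the paper (smoothness on the open cube plus bounded Hölder norms of all orders).

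The second step is \emph{scalar-valued density with two hidden layers on the cube}. For each coordinate $\tilde u_j\in C^\infty([0,1]^{d_1},\mathbb R)$, Theorem 5.1 in \citet{ryck2021approximation} furnishes, for every $\varepsilon>0$, a network $v_{j,\varepsilon}\in\mathrm{NN}_2(D_{j,\varepsilon})$ with $\|\tilde u_j-v_{j,\varepsilon}\|_{C^K(]0,1[^{d_1})}<\varepsilon$. Concatenating the $d_2$ scalar networks in parallel (padding weight matrices with zeros to equalize widths) produces a single two-hidden-layer network $v_\varepsilon\in\mathrm{NN}_2(D_\varepsilon)$ from $\mathbb R^{d_1}$ to $\mathbb R^{d_2}$ with $\|\tilde u-v_\varepsilon\|_{C^K(]0,1[^{d_1})}\to 0$; restriction to $\bar\Omega\subset\, ]0,1[^{d_1}$ establishes the proposition for $H=2$.

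The third and most delicate step is the \emph{depth lift} from $H=2$ to arbitrary $H\ge 3$. The idea is that $\tanh$ can approximate the identity through the transformation $I_\eta(t):=\tanh(\eta t)/\eta$: we have $I_\eta'(t)=1-\tanh^2(\eta t)$ and $I_\eta^{(k)}(t)=\eta^{k-1}\tanh^{(k)}(\eta t)$ for $k\ge 2$, so $I_\eta\to\mathrm{id}$ uniformly on compacts while every higher derivative of $I_\eta$ tends to $0$ uniformly on compacts. Given any $v_\varepsilon\in\mathrm{NN}_H(D)$, I insert a bogus hidden layer just before the output: set $\mathcal A'(x)=\eta x$ (a $D\times D$ diagonal map), apply $\tanh$, and pre-compose the original output map $\mathcal A_{H+1}$ with a division by $\eta$. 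The resulting network lies in $\mathrm{NN}_{H+1}(D)$ and equals $\mathcal A_{H+1}\bigl(\eta^{-1}\tanh(\eta\,\Psi(\cdot))\bigr)$ where $\Psi$ is the output of the original hidden stack of $v_\varepsilon$. Using Faà di Bruno's formula together with the above estimates on $I_\eta^{(k)}$ and the fact that $\Psi$ has uniformly bounded derivatives on the relevant compact set, the $C^K(\Omega)$ distance between the new network and $v_\varepsilon$ tends to $0$ as $\eta\to 0$. Iterating this insertion $H-2$ times increases the depth arbitrarily while preserving $C^K$ proximity, and a triangle inequality with the $H=2$ approximation concludes the proof.

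The main obstacle is bookkeeping in the third step: controlling all derivatives up to order $K$ of the nested composition and ensuring that the chain-rule terms carrying factors $I_\eta^{(k)}$ with $k\ge 2$ vanish faster than the dominant linear term $I_\eta'$ stabilizes at $1$. The Stein extension of the second step and the vector/width-padding of the first step are essentially bookkeeping; the genuine analytic content is the Faà di Bruno estimate that makes the identity-emulation layer harmless in $C^K$ norm.
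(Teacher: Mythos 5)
Your proposal is correct, and its first two steps (affine rescaling of $\Omega$ into the unit cube absorbed into the first layer, Stein extension for Lipschitz domains, invocation of Theorem 5.1 of \citet{ryck2021approximation} for $H=2$, and stacking of scalar networks to handle $d_2>1$) coincide with the paper's proof; the difference lies entirely in how you pass from depth $2$ to depth $H\geqslant 3$. The paper's depth lift is exact: it prepends $H-2$ layers realizing the componentwise diffeomorphism $v(\bx)=(\tanh^{\circ(H-2)}(x_1),\hdots,\tanh^{\circ(H-2)}(x_{d_1}))$, approximates $u\circ v^{-1}$ on the Lipschitz domain $v(\Omega)$ by a two-hidden-layer network, and controls $\|u_{\theta}(v)-u\|_{C^K(\Omega)}$ through a change-of-coordinates bound (Lemma \ref{lem:boundPartialDer2} together with Corollary \ref{cor:bounding_tanh}); no additional limit is introduced, but one must check that $u\circ v^{-1}$ is again an admissible smooth target on the transformed domain. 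Your lift instead appends near-identity layers $I_\eta(t)=\eta^{-1}\tanh(\eta t)$ after the last activation and lets $\eta\to 0$, using that $I_\eta\to\mathrm{id}$, $I_\eta'\to 1$ and $I_\eta^{(k)}\to 0$ ($k\geqslant 2$) uniformly on $[-1,1]$, combined with Fa\`a di Bruno and the global boundedness of the derivatives of the hidden stack; this is sound, since the inserted layer acts on post-activation values confined to $[-1,1]^D$ and the Fa\`a di Bruno terms of block size $\geqslant 2$ vanish with $\eta$, while the single-block term converges to the original derivative. The trade-off is that your route adds an extra asymptotic parameter (and exploding output weights $W/\eta$, harmless for a pure density statement) but is more modular, leaving the target function and domain untouched, whereas the paper's route is limit-free at the price of transporting the problem to $v(\Omega)$. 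One cosmetic point: you do not need the Stein extension to be $C^\infty$; membership in $W^{K+1,\infty}$ of the extension on the cube (which is exactly what the universal Stein operator provides and what the paper uses) already suffices to invoke De Ryck's theorem.
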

In the remainder of the article, the number $H$ of hidden layers is considered to be fixed. 
\citet{krishnapriyan2021characterizing} use $\text{NN}_4(50)$, \citet{xu2021extrapolation} take $\text{NN}_5(100)$, whereas \citet{arzani2021uncovering} employ $\text{NN}_{10}(100)$. 
It is worth noting that in this series of papers the width $D$ is much larger than $H$, as in Proposition \ref{prop:densite}.

\section{PINNs can overfit}
\label{POF}
Our goal in this section is to show through two examples how learning with standard PINNs can lead to severe overfitting problems. This weakness has already been noted in \citet{costabal2020physics},  \citet{nabian2019engineering},  \citet{chandrajit2023recipes}, and \citet{esfahani2023adatadriven}, which propose to improve the performance of their models by resorting to an additional regularization strategy. The pathological cases that we highlight both rely on neural networks with exploding derivatives. 

The theoretical risk function is defined by 
\begin{equation}
\mathscr{R}_n(u) = \frac{\lambda_d}{n} \sum_{i=1}^n \|u(\bX_i) - Y_i\|_2^2 + \lambda_e \mathbb{E}\|u(\bX^{(e)})-h(\bX^{(e)})\|_2^2 + \frac{1}{|\Omega|}\sum_{k=1}^M \int_\Omega \mathscr{F}_k(u, \bx)^2 d\bx.\label{lossTheorical}
\end{equation}
Observe that in $\mathscr{R}_n(u)$ we take expectation with respect to $\mu_E$ (for the boundary/initial condition part) and integrate with respect to the uniform measure on $\Omega$ (for the PDE part), but keep the term $\sum_{i=1}^n \|u_{{\theta}}(\bX_i) - Y_i\|_2^2$ intact. 
This regime corresponds to the limit of the em\-pi\-ri\-cal risk function \eqref{lossgenerale}, holding $n$ fixed and letting  $n_e, n_r \to \infty$. The rationale is that while the random samples $(\bX_i,Y_i)$ may be limited in number (e.g., because their acquisition is more delicate and require physical measurements), this is not the case for $\bX^{(e)}_j$ or $\bX^{(r)}_j$, which can be freely sampled (up to computational resources).
Note however that in the PDE solver setting, the first term is not included. 

Given any minimizing sequence $(\hat{\theta}(p, n_e,n_r, D))_{p \in \mathbb{N}}$ of the empirical risk, satisfying
\[\lim_{p\to \infty} R_{n, n_e,n_r}(u_{\hat{\theta}(p, n_e,n_r, D)}) = \inf_{\theta \in \Theta_{H,D}}R_{n, n_e,n_r}(u_\theta),
\] 
a natural requirement, called risk-consistency, is that 
\[
\lim_{n_e,n_r \to \infty}\lim_{p \to \infty}
\mathscr{R}_n(u_{\hat{\theta}(p, n_e,n_r, D)}) = \inf_{u \in \text{NN}_H(D)} \mathscr{R}_n(u).
\]
We show below that standard PINNs can dramatically fail to be risk-consistent, through two counterexamples, one in the hybrid modeling context and one in the specific PDE solver setting.

\smallskip
\paragraph{The case of dynamics with friction} 
Consider the following ordinary differential constraint, defined on the domain $\Omega = ]0, T[$ (with closure ${\bar{\Omega}} = [0,T]$) by
\begin{align}
\label{eq:friction_constraint}
\forall u\in C^2({\bar \Omega},\mathbb{R}), \ \forall \bx \in \Omega, \quad \mathscr{F}(u, \bx) &= mu''(\bx) + \gamma u'(\bx).
\end{align} 
This models the dynamics of an object of mass $m > 0$, subjected to a fluid force of friction coefficient $\gamma >0$. 
The goal is to reconstruct the real trajectory $u^\star$ by taking advantage of the model $\mathscr{F}$ and the noisy observations $Y_i$ at the $\bX_i$. This is an example where the modeling is perfect, i.e., $\mathscr{F}(u^\star, \cdot) = 0$, but the challenge is that the physical model is incomplete because the boundary conditions are unknown.
Following the hybrid modeling framework, the trajectory $u^{\star}$ is estimated by minimizing over the space $\text{NN}_H(D)$ the empirical risk function
\[
R_{n, n_r}(u_\theta) = \frac{\lambda_d}{n}\sum_{i=1}^{n} |u_\theta(\bX_i)-Y_i|^2 + \frac{1}{n_r}\sum_{\ell=1}^{n_r} \mathscr{F}(u_\theta, \bX^{(r)}_\ell)^2.
\] 
\begin{prop}[Overfitting] 
\label{prop:friction}
    Consider the dynamics with friction model \eqref{eq:friction_constraint}, and assume that there are two observations such that $Y_i \neq Y_j$. Then, whenever $D \geqslant n-1$, for any integer $n_r$, for all $\bX^{(r)}_1, \hdots, \bX^{(r)}_{n_r}$, there exists a minimizing sequence $(u_{\hat{\theta}(p, n_r, D)})_{p\in \mathbb{N}} \in \mathrm{NN}_H(D)^{\mathbb{N}}$ such that 
    $\lim_{p \to \infty} R_{n,n_r}(u_{\hat{\theta}(p, n_r, D)}) = 0$ but $ \lim_{p \to \infty} \mathscr{R}_n(u_{\hat{\theta}(p, n_r, D)}) = \infty$. So, this PINN estimator is not consistent.  
\end{prop}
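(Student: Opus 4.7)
The strategy is to construct explicitly a minimizing sequence that interpolates the $Y_i$ at the $\bX_i$ and makes the PDE residual vanish at the finite set of residual points, yet whose PDE residual has diverging $L^2(\Omega)$ norm. The plan is to work with a single-hidden-layer $\tanh$ ansatz of width $n-1$, which sits inside $\mathrm{NN}_H(D)$ for any $D \geq n-1$ (pad unused output weights with zeros) and any number $H$ of hidden layers (thread the output of the active layer through the $H - 1$ remaining layers by exploiting the Taylor expansion $\tanh(\epsilon y) = \epsilon y + O(\epsilon^3)$, with output weights rescaled by $1/\epsilon$ so that the resulting deep network is a $C^2$-perturbation of the shallow template of arbitrarily small size).

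First, order the (assumed distinct) sample abscissas as $\bX_{(1)} < \cdots < \bX_{(n)}$, with paired ordinates $Y_{(1)}, \ldots, Y_{(n)}$, set $\alpha_k = (Y_{(k+1)} - Y_{(k)})/2$ and $\delta = (Y_{(1)} + Y_{(n)})/2$, and pick breakpoints $c_k \in (\bX_{(k)}, \bX_{(k+1)})$ avoiding the finite set $\{\bX^{(r)}_\ell\}_\ell$. The candidate is
\[
\tilde u_p(\bx) = \delta + \sum_{k=1}^{n-1} \alpha_k \tanh\bigl(p(\bx - c_k)\bigr).
\]
Since $Y_i \neq Y_j$ for some pair, at least one adjacent difference $Y_{(k+1)} - Y_{(k)}$ is nonzero, so at least one $\alpha_k \neq 0$.

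Next, three properties must be verified as $p \to \infty$. Data fit: $\tanh(p(\bX_{(k)} - c_j))$ converges to $\mathrm{sign}(\bX_{(k)} - c_j) \in \{\pm 1\}$ exponentially fast, and the coefficients are arranged so that $\tilde u_p(\bX_{(k)}) \to Y_{(k)}$, killing the data term of $R_{n, n_r}$. PDE residual at each $\bX^{(r)}_\ell$: $\min_k |\bX^{(r)}_\ell - c_k| > 0$ forces $\mathrm{sech}^2(p(\bX^{(r)}_\ell - c_k))$ to decay exponentially in $p$, so $\tilde u_p'(\bX^{(r)}_\ell), \tilde u_p''(\bX^{(r)}_\ell) \to 0$ and thus $\mathscr{F}(\tilde u_p, \bX^{(r)}_\ell) \to 0$. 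Blow-up of $\|\mathscr{F}(\tilde u_p, \cdot)\|_{L^2(\Omega)}^2$: near $c_k$, the dominant term of $m \tilde u_p'' + \gamma \tilde u_p'$ is $-2 m p^2 \alpha_k \mathrm{sech}^2(p(\bx - c_k))\tanh(p(\bx - c_k))$; the change of variable $y = p(\bx - c_k)$ yields a squared $L^2$-contribution of order $p^3$, and cross terms between distinct breakpoints are $o(p^3)$ since each spike is supported on an $O(1/p)$-neighborhood of its own $c_k$.

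Combining these points, $R_{n, n_r}(\tilde u_p) \to 0$, which makes $\tilde u_p$ a minimizing sequence (the risk being nonnegative, its infimum must be $0$), whereas $\mathscr{R}_n(\tilde u_p) \geq \frac{1}{T}\|\mathscr{F}(\tilde u_p, \cdot)\|_{L^2(\Omega)}^2 \to \infty$, yielding the claimed inconsistency. The main technical obstacle is the embedding step for general $H \geq 2$: one must balance the sharpness parameter $p$ against the linearization parameter $\epsilon$ of the additional layers so that the $C^2$-perturbation introduced by the near-identity routing remains $o(1)$ while the leading spike still contributes $\Theta(p^3)$ to the $L^2$ integral and the exponential decay at the residual points is preserved.
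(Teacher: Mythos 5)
Your construction is correct and follows the same overall strategy as the paper: steep $\tanh$ steps with breakpoints strictly between consecutive ordered data points and away from the collocation points, so that the network interpolates the $Y_{(i)}$, all derivatives vanish at the $\bX^{(r)}_\ell$ (empirical risk $\to 0$, hence a minimizing sequence since the risk is nonnegative), while the residual $m u'' + \gamma u'$ concentrates in $O(1/p)$ spikes whose $L^2(\Omega)$ norm diverges. The two places where you genuinely deviate are worth noting. First, depth: the paper realizes the step directly as a depth-$H$ network via the composition $\tanh_p^{\circ H}$, with a dedicated lemma showing $\tanh_p^{\circ H}\to\mathrm{sgn}$ in $C^K$ norm away from the jump; you instead keep a single active nonlinear layer and thread it through $H-1$ near-identity layers $y\mapsto\tanh(\epsilon y)/\epsilon$, which requires the diagonal choice $\epsilon=\epsilon(p)$ you flag — this works because, at fixed $p$, the rescaled composition converges to the identity with all derivatives uniformly on $[-1,1]$, so $\epsilon(p)$ can be taken small enough that the values at the $\bX_i$, the first two derivatives at the $\bX^{(r)}_\ell$, and the $L^2$ norm of the residual on the compact $\bar\Omega$ are all perturbed by $o(1)$ relative to the shallow template; the paper's route avoids this balancing at the cost of proving the $C^K$ convergence of iterated scaled $\tanh$. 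Second, divergence: you compute the dominant spike contribution of $m u''$ and obtain an explicit $\Theta(p^3)$ rate (a rigorous writeup should add the one-line inequality $(a+b)^2\geqslant a^2/2-b^2$ to rule out cancellation with the lower-order $\gamma u'$ term, plus the exponentially small cross terms you mention), whereas the paper lower-bounds the integral over shrinking windows around each jump by Cauchy--Schwarz, getting $\gamma^2(Y_{(i+1)}-Y_{(i)})^2/(2\varepsilon)$ and letting $\varepsilon\to 0$, which sidesteps any explicit computation of the residual. Both arguments implicitly assume the $\bX_i$ are distinct (otherwise no sequence can drive the data term to zero), an assumption the paper shares, so this is not a gap specific to your proof.
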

\begin{figure}
    \centering
    \includegraphics[width = 0.7\textwidth]{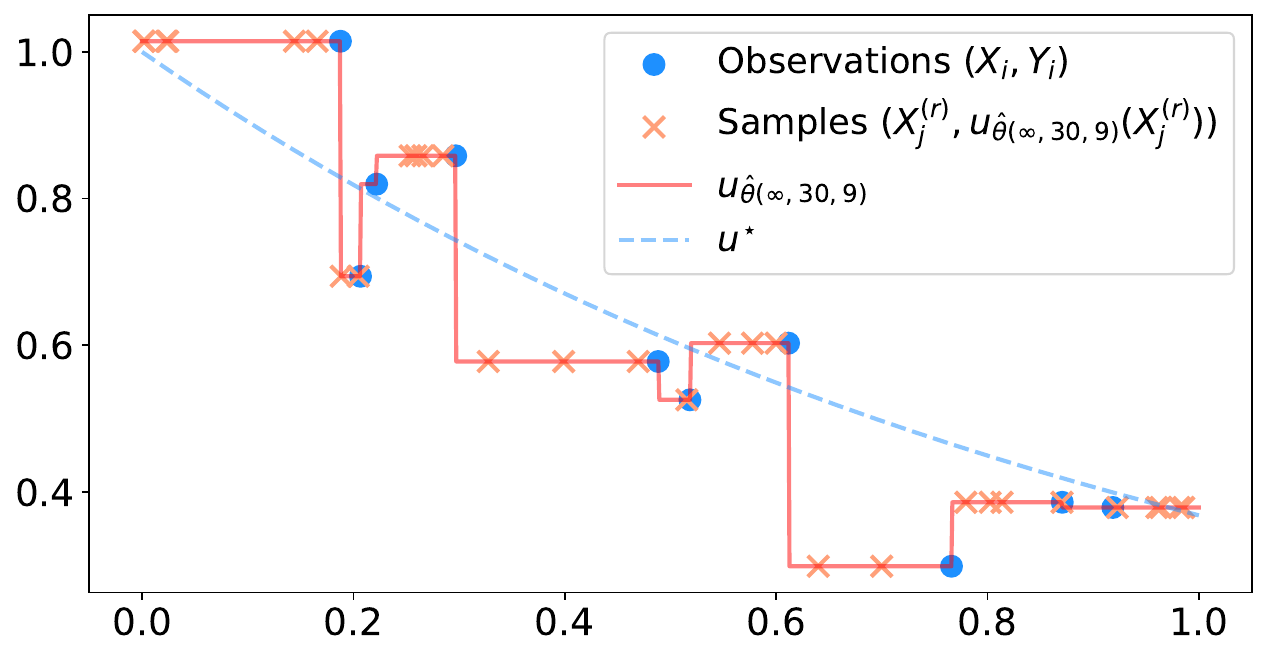}
    \caption{An inconsistent PINN estimator in hybrid modeling with $m = \gamma = 1$, $\varepsilon \sim \mathcal{N}(0, 10^{-2})$, and $n=10$. }
    \label{fig:failure_hybrid}
\end{figure}
Proposition \ref{prop:friction} illustrates how fitting a PINN by minimizing the empirical risk alone can lead to a catastrophic situation, where the empirical risk of the minimizing sequence is (close to) zero, while its theoretical risk is infinite. This phenomenon is explained by the existence of piecewise constant functions interpolating the observations $\bX_1,\hdots, \bX_n$, whose derivatives are null at the points $\bX^{(r)}_1, \hdots, \bX^{(r)}_{n_r}$, but diverge between these points (see Figure \ref{fig:failure_hybrid}). These functions correspond to neural networks $u_\theta$ such that $\|\theta\|_2 \to \infty$.

\smallskip
\paragraph{PDE solver: The heat propagation case} Consider the heat propagation differential operator defined on the domain $\Omega = ]-1, 1[ \times ]0,T[$ (with closure ${\bar \Omega} = [-1, 1] \times [0,T]$) by 
\begin{equation}
    \label{eq:wave_constraint}
    \forall  u \in C^2({\bar \Omega}, \mathbb{R}), \ \forall \bx \in \Omega, \quad \mathscr{F}(u,\bx) = \partial_{t}u(\bx) - \partial^2_{x,x}u(\bx),
\end{equation}
associated with the boundary conditions
\[\forall t \in [0,T], \quad u(-1, t) = u(1, t) = 0,\]
and the initial condition defined, for all $x \in [-1, 1]$, by
\[u(x,0) = \tanh^{\circ H}(x+0.5)-\tanh^{\circ H}(x-0.5) + \tanh^{\circ H}(0.5) - \tanh^{\circ H}(1.5). \]
The notation $\tanh^{\circ k}$ stands for the function recursively defined by $\tanh^{\circ 1} = \tanh$ and $\tanh^{\circ (k+1)} = \tanh \circ \tanh^{\circ k}$.  The unique solution $u^\star$ of the PDE is shown in Figure \ref{fig:failure_PDEsolving} (right). It models the time evolution of the temperature of a wire, whose extremities at $x=-1$ and $x=1$ are maintained at zero temperature. Note that the initial condition corresponds to a bell-shaped function, which belongs to $\text{NN}_H(2)$.
However, the setting can be extended to arbitrary initial conditions that take the form of a neural network function, given the boundary condition $u(\partial \Omega \times [0,T]) = \{0\}$. 

To solve the PDE \eqref{eq:wave_constraint}, we use $n_e$ i.i.d.~samples $\bX^{(e)}_1, \hdots, \bX^{(e)}_{n_e}$ on  $E = ([-1,1]\times\{0\}) \cup (\{-1, 1\}\times [0,T])$, distributed according to $\mu_E$, together with $n_r$ i.i.d.~samples $\bX^{(r)}_1, \hdots, \bX^{(r)}_{n_r}$, uniformly distributed on $\Omega$. Let $(\hat{\theta}(p, n_e, n_r, D))_{p \in \mathbb{N}}$ be a sequence of parameters  minimizing the empirical risk function
\[
R_{n_e, n_r}(u_\theta) = \frac{\lambda_e}{n_e}\sum_{j=1}^{n_e} |u_\theta(\bX^{(e)}_j)-h(\bX^{(e)}_j)|^2  + \frac{1}{n_r}\sum_{\ell=1}^{n_r} \mathscr{F}(u_\theta, \bX^{(r)}_\ell)^2,
\] 
over the space $\text{NN}_H(D)$. The theoretical counterpart of this empirical risk is
\[\mathscr{R}(u) =  \lambda_e \mathbb{E}|u(\bX^{(e)})-h(\bX^{(e)})|^2 + \frac{1}{|\Omega|}\int_\Omega \mathscr{F}(u, \bx)^2 d\bx.  \] 

\begin{prop}[PDE solver overfitting]
\label{prop:wave}
Consider the heat propagation model \eqref{eq:wave_constraint}. Then, 
 whenever $D \geqslant 4$, for any pair $(n_e, n_r)$, for all $\bX^{(e)}_1, \hdots, \bX^{(e)}_{n_e}$ and for all $\bX^{(r)}_1, \hdots, \bX^{(r)}_{n_r}$, there exists a minimizing sequence $(u_{\hat{\theta}(p, n_e, n_r, D)})_{p \in \mathbb{N}} \in \mathrm{NN}_H(D)^{\mathbb{N}}$ such that $\lim_{p\to \infty}R_{n_e, n_r}(u_{\hat{\theta}(p, n_e, n_r, D)}) = 0$ but $\lim_{p \to \infty}\mathscr{R}(u_{\hat{\theta}(p, n_e, n_r, D)}) = \infty$. So, this PINN estimator is not consistent. 
\end{prop}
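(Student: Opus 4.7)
The plan is to adapt the overfitting construction of Proposition~\ref{prop:friction} to the spatio-temporal setting. The key observation is that both terms of $R_{n_e,n_r}$ depend on $u_\theta$ only through its values and partial derivatives at the $n_e+n_r$ sample points, whereas the theoretical risk $\mathscr{R}$ integrates $\mathscr{F}(u,\cdot)^2$ over the whole of $\Omega$. A sharply peaked perturbation that is negligible (up to exponentially small error) at every sample point but has a polynomially growing $L^2$ PDE residual over $\Omega$ will therefore inflate $\mathscr{R}$ while leaving $R_{n_e,n_r}$ essentially untouched.

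The pathological ingredient I would use is the following ``slab-bump'' in $\mathrm{NN}_H(2)$. Choose an affine form $\ell:\bar\Omega\to\mathbb{R}$ such that $\{\ell=0\}$ crosses $\Omega$ with positive one-dimensional Hausdorff measure while remaining at distance at least some $\delta>0$ from every sample point (possible by finiteness). For a fixed $\alpha>0$ and $p\to\infty$, define
\[
w_p(\bx) \;=\; \tanh^{\circ H}\!\bigl(p\,\ell(\bx) + \alpha\bigr)\;-\;\tanh^{\circ H}\!\bigl(p\,\ell(\bx) - \alpha\bigr),
\]
which is realised in $\mathrm{NN}_H(2)$ by propagating the two iterates in parallel channels across the $H$ hidden layers. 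Outside the slab $\{|\ell(\bx)|\leqslant 2\alpha/p\}$, of Lebesgue measure $\sim 1/p$, both iterates are exponentially close to their common asymptotic limit $\pm\tanh^{\circ H}(\pm\infty)$, so $|w_p|$ and all of its partial derivatives decay at rate $e^{-cp}$; in particular $|w_p(\bX^{(e)}_j)|$ and $|\mathscr{F}(w_p,\bX^{(r)}_\ell)|$ decay exponentially in $p$. Inside the slab, the chain rule gives $|\partial^2_{x,x}w_p|\sim p^2$, and a direct integration yields $\int_\Omega \mathscr{F}(w_p,\bx)^2\, d\bx\geqslant c\,p^3\to\infty$.

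To obtain a true minimizing sequence I would set $u_p := u_{0,p} + w_p$, where $u_{0,p}\in\mathrm{NN}_H(D-2)$ is a baseline satisfying $R_{n_e,n_r}(u_{0,p})\to 0$, combined with $w_p\in\mathrm{NN}_H(2)$ into a single network in $\mathrm{NN}_H(D)$ by using block-diagonal weight matrices; this is exactly where the hypothesis $D\geqslant 4$ enters. Existence of such a baseline follows from Proposition~\ref{prop:densite} applied to the true heat-equation solution $u^\star$, which is $C^\infty$ on $\bar\Omega$ by standard parabolic regularity (the initial datum being smooth and compatible with the boundary condition $u(\pm 1,t)=0$). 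One then concludes $R_{n_e,n_r}(u_p)\leqslant R_{n_e,n_r}(u_{0,p})+O(e^{-cp})\to 0$ while $\mathscr{R}(u_p)\geqslant c\,p^3-O(1)\to\infty$, which is the desired overfitting.

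The main obstacle is architectural: Proposition~\ref{prop:densite} guarantees density only in $\bigcup_D\mathrm{NN}_H(D)$, not inside a fixed-width subclass, so one must show that an adequate approximation of $u^\star$ can in fact be realised within the $D-2\geqslant 2$ remaining channels. This is handled by exploiting that the initial condition itself is built from a width-$2$ $\tanh^{\circ H}$ combination, which can be propagated in time by a second width-$2$ sub-architecture; everything else reduces to routine asymptotics of $\tanh^{\circ H}$ at large arguments, in direct analogy with the proof of Proposition~\ref{prop:friction}.
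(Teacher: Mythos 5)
Your slab-bump mechanism is sound in itself (exponentially small values and residuals at the finitely many sample points, an $L^2$ residual of order $p^{3}$ over a slab of width $\sim 1/p$), but the proof is incomplete exactly where you locate "the main obstacle": the existence of the baseline $u_{0,p}\in\mathrm{NN}_H(D-2)$ with $R_{n_e,n_r}(u_{0,p})\to 0$ is never established, and since the statement must hold for $D=4$, this baseline has to live in width $2$. Proposition \ref{prop:densite} cannot deliver it: it only gives density of $\cup_{D}\mathrm{NN}_H(D)$, with no control on the width, and nothing you cite shows that $u^\star$ — which, unlike the initial datum, is not a finite $\tanh^{\circ H}$ combination but a heat-kernel evolution, and is not even obviously smooth up to the corners since higher-order compatibility conditions such as $\partial^2_{x,x}u^\star(\pm 1,0)=0$ are unchecked — can be approximated in $C^2$, or even have its empirical risk driven to zero, within a width-$2$ architecture. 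The fallback you sketch ("propagate the initial condition in time by a second width-2 sub-architecture") is precisely the nontrivial construction that must be exhibited; without it nothing is proved. Note also that the obvious candidate, a static width-$2$ copy of the initial condition, fails: it matches every boundary sample exactly but leaves the residual $-\partial^2_{x,x}u_\theta$ nonzero at the interior collocation points, so its empirical risk does not vanish.

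The paper resolves exactly this point with a single explicit width-$4$ sequence, $u_p(x,t)=\tanh^{\circ H}(x+0.5+pt)-\tanh^{\circ H}(x-0.5+pt)+\tanh^{\circ H}(0.5+pt)-\tanh^{\circ H}(1.5+pt)$, which equals the initial condition exactly at $t=0$ and converges, together with the relevant derivatives, to $0$ at every fixed point with $t>0$ (Lemma \ref{lem:compTanh}), so that $R_{n_e,n_r}(u_p)\to 0$ for any sample configuration; its theoretical risk diverges because the jump between the initial profile at $t=0$ and the zero limit concentrates the residual in a time layer of width $\delta$, yielding $\mathscr{R}(u_p)\gtrsim \delta^{-1}\int_{-1}^{1}u_p(x,0)^2\,dx$ for every $\delta>0$ by Cauchy--Schwarz — the analogue of your $p^{3}$ estimate. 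In other words, once a correct fixed-width "baseline" is written down it already is the counterexample, and the additional bump is unnecessary; as submitted, your argument defers the only genuinely delicate step rather than carrying it out.
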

\begin{figure}
    \includegraphics[width = \textwidth]{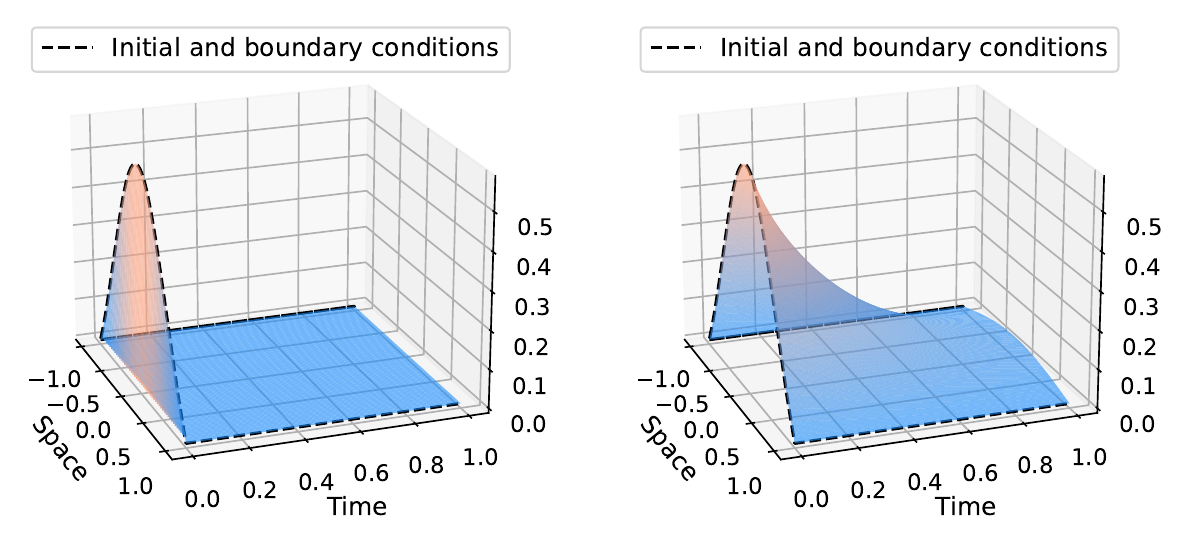}
    \caption{Inconsistent PINN (left) compared to the solution $u^\star$ of the PDE (right) for the heat propagation case.}
    \label{fig:failure_PDEsolving}
\end{figure}
Figure \ref{fig:failure_PDEsolving} (left) shows an example of an inconsistent PINN estimator.
Such an estimator corresponds to a function that equals zero on $\Omega$ (and thus satisfies the linear PDE), while satisfying the  initial condition on $\partial \Omega$. This function corresponds to a limit of neural networks $u_\theta$ such that $\|\theta\|_2\to \infty$.

The proof strategy of Propositions \ref{prop:friction} and \ref{prop:wave} does not depend on the geometry of the points $\bX^{(r)}$ and the points $\bX^{(e)}$, which could therefore be sampled along a grid, or by any quasi Monte Carlo method.
We emphasize that the two negative examples of Propositions \ref{prop:friction} and \ref{prop:wave} are no exceptions. In fact, their proofs can be easily generalized to differential operators $\mathscr{F}$ such that the following property holds: for all $\bx \in \Omega$, for all $u \in C^\infty(\Omega, \mathbb{R}^{d_2})$, if $\nabla u$ vanishes on an open set containing $\bx$, then $\mathscr{F}(u, \bx) = 0$.
This property is satisfied in the case of motion with friction, advection, heat, wave propagation, Schr\"odinger, Maxwell and Navier-Stokes equations, which are so as many cases that will suffer from overfitting.

\section{Consistency of regularized PINNs for linear and nonlinear PDE systems}
\label{sec:consistency}
Training PINNs can be tricky because it can lead to the type of pathological situations highlighted in Section \ref{POF}. To avoid such an overfitting behavior, a standard approach in machine learning is to resort to ridge regularization, where the empirical risk to be minimized is penalized by the $L^2$ norm of the parameters $\theta$. This technique has been shown to improve not only the optimization convergence 
during the training phase, but also the generalization ability of the resulting predictor \citep{krogh1991a, guo2017on}. Ridge regularization is available in most deep learning libraries (e.g., \texttt{pytorch} or \texttt{keras}),  where it is implemented using the so-called weight decay \citep{loshchilov2019decoupled}. 
Interestingly, the ridge re\-gu\-la\-ri\-za\-tion of a slight modification of PINNs, using adaptive activation functions, has been studied in \citet{jagtap2020adaptive}, which shows that gradient descent algorithms manage to generate an effective minimizing sequence of the penalized empirical risk. In this section, we formalize ridge PINNs and study their risk-consistency. 
\begin{defi}[Ridge PINNs]
    The ridge risk function is defined by
\begin{equation}
    R_{n, n_e,n_r}^{(\mathrm{ridge)}}(u_\theta) = R_{n, n_e,n_r}(u_\theta) + \lambda_{(\mathrm{ridge})} \|\theta\|_2^2, \label{eq:regPINN}
\end{equation}
where $\lambda_{(\mathrm{ridge})} >0$ is the ridge hyperparameter. We denote by $(\hat \theta_{(p, n_e, n_r, D)}^{(\mathrm{ridge})})_{p\in\mathbb{N}}$ a minimizing sequence of this risk, i.e., 
\[ 
\lim_{p \to \infty}R^{(\mathrm{ridge)}}_{n, n_e, n_r}(u_{\hat \theta_{(p, n_e, n_r, D)}^{(\mathrm{ridge})}}) = \inf_{\theta \in \Theta}\,R^{(\mathrm{ridge})}_{n, n_e, n_r}(u_\theta).
\]
\end{defi}

Our next Proposition \ref{prop:bounding} states that the $L^2$ norm of the parameters $\theta$ bounds the Hölder norm of the neural network $u_\theta$. This result is interesting in itself because it establishes a connection between the $L^2$ norm of a fully connected neural network and its regularity. (Note that, by equivalence of the norms, this result also holds if the ridge penalty is replaced by $\|\theta\|_p^p$.) In the present paper it plays a key role in the risk-consistency analysis.

\begin{prop}[Bounding the norm of a neural network by the norm of its parameter]
\label{prop:bounding}
    Consider the class $\mathrm{NN}_H(D)=\{u_\theta, \theta\in\Theta_{H,D}\}$. Let $K \in \mathbb{N}$.
    Then there exists a constant $C_{K,H} >0$, depending only on $K$ and $H$,
    such that, for all $ \theta \in \Theta_{H, D}$,
    \[
    \|u_\theta\|_{C^K(\mathbb{R}^{d_1})} \leqslant C_{K,H}(D+1)^{HK+1}(1+\|\theta\|_2)^{HK}\|\theta\|_2.
    \]
    Moreover, this bound is tight with respect to $\|\theta\|_2$, in the sense that, for all $H,D \geqslant 1$ and all $K\in \mathbb{N}$, there exists a sequence $(\theta_p)_{p \in \mathbb{N}} \in \mathrm{NN}_H(D)$ and a constant $\bar{C}_{K,H}>0$ such that $(i)$ $\lim_{p\to\infty}\|\theta_p\|_2 = \infty$ and $(ii)$ $\|u_{\theta_p}\|_{C^K(\mathbb{R}^{d_1})} \geqslant \bar{C}_{K,H}\|\theta_p\|_2^{HK+1}.$
\end{prop}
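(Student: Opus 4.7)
The plan is to prove the two halves of the proposition separately: the upper bound by a layer-by-layer induction based on Faà di Bruno's formula, and the lower bound by an explicit one-unit construction embedded in the width-$D$ architecture.

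\emph{Upper bound.} I would set $f_0(x)=x$ and $f_k=\tanh\circ\mathcal{A}_k\circ f_{k-1}$ for $1\le k\le H$, so that $u_\theta=W_{H+1}f_H+b_{H+1}$. The key structural fact is that every derivative $\tanh^{(j)}$ is bounded on $\mathbb{R}$ by a universal constant. I would then establish by induction on $k$ the uniform invariant
\[
\max_{1\le|\alpha|\le K}\|\partial^\alpha f_k\|_{\infty,\mathbb{R}^{d_1}}
\ \le\ \widetilde{C}_{K,H}(D+1)^{kK}(1+\|\theta\|_2)^{kK},
\]
handling the case $|\alpha|=0$ separately via $\|f_k\|_\infty\le 1$ for $k\ge 1$. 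The base case $k=1$ is a direct computation using that each row of $W_1$ has $\ell^2$ norm at most $\|\theta\|_2$. For the inductive step, componentwise Faà di Bruno applied to $\tanh\circ(W_kf_{k-1}+b_k)$ produces $C_K$ terms (a number depending only on $K$), each a product of at most $K$ partial derivatives of the affine map $W_kf_{k-1}+b_k$; each such derivative is controlled by $\sqrt{D}\,\|\theta\|_2\,\max_{|\beta|\le K}\|\partial^\beta f_{k-1}\|_\infty$ via Cauchy--Schwarz. The invariant therefore propagates with a multiplicative factor of order $[(D+1)(1+\|\theta\|_2)]^K$ per layer. Iterating $H$ times and applying the final affine step (which contributes an extra $\sqrt{D+1}\,\|\theta\|_2$) gives the advertised bound.

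\emph{Lower bound.} I would work with the one-dimensional scalar subnetwork obtained by padding with zeros, setting only the $(1,1)$ entry of each $W_k$ to be nonzero. Taking $W_k=t_p$ and $b_k=0$ for every $1\le k\le H+1$ with $t_p=p\to\infty$ gives $u_{\theta_p}(x)=t_p\,\phi_H(x_1)\,e_1$, where $\phi_0(y)=y$ and $\phi_k(y)=\tanh(t_p\phi_{k-1}(y))$. A direct induction via Faà di Bruno shows that $\phi_H^{(K)}(y)$ is a polynomial in $t_p$ of degree exactly $HK$ whose leading coefficient equals, up to lower $t_p$-powers,
\[
\tanh^{(K)}(t_p\phi_{H-1}(y))\prod_{j=0}^{H-2}\tanh'(t_p\phi_j(y))^K.
\]
Choosing $y_p=c_0/t_p^H$ with $c_0$ in the complement of the discrete zero set of $\tanh^{(K)}$, one readily checks that $t_p\phi_j(y_p)\to 0$ for $j<H-1$ while $t_p\phi_{H-1}(y_p)\to c_0$, so the leading coefficient is bounded away from zero and $|\phi_H^{(K)}(y_p)|\ge c\,t_p^{HK}$ for all $p$ large. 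Since $\|\theta_p\|_2=t_p\sqrt{H+1}$, this yields $\|u_{\theta_p}\|_{C^K(\mathbb{R}^{d_1})}\ge \bar{C}_{K,H}\|\theta_p\|_2^{HK+1}$.

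\emph{Main obstacle.} The delicate step is the bookkeeping in the upper-bound induction. A naive induction treating each derivative order in isolation multiplies exponents rather than adding them, producing a tower $K^H$ instead of the linear-in-$H$ exponent $HK$. Bounding all orders $|\alpha|\le K$ \emph{simultaneously} at every layer, and exploiting the uniform boundedness of \emph{every} derivative of $\tanh$, is what makes the Faà di Bruno expansion collapse to a clean per-layer multiplicative factor and delivers the correct exponent.
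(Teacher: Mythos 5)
Your lower-bound construction is sound and is essentially the paper's: all weights equal to $t_p$, biases zero, a one-dimensional subnetwork obtained by zero-padding, and the observation that the coefficient of the top power $t_p^{HK}$ in $\phi_H^{(K)}$ is $\tanh^{(K)}(t_p\phi_{H-1})\prod_{j\le H-2}\tanh'(t_p\phi_j)^K$, which your evaluation at $y_p=c_0/t_p^H$ keeps away from zero while the remaining terms are $O(t_p^{HK-1})$ with coefficients bounded by constants depending only on $K,H$. That part closes (modulo writing out the routine induction for the remainder bound, and the trivial $K=0$ case).

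The upper bound, however, has a genuine gap at exactly the step you flag as the main obstacle. Your inductive invariant is \emph{uniform} over derivative orders: $\max_{1\le|\alpha|\le K}\|\partial^\alpha f_k\|_\infty\le \widetilde{C}(D+1)^{kK}(1+\|\theta\|_2)^{kK}$, and in the inductive step you bound \emph{each} factor of a Fa\`a di Bruno monomial by $\sqrt{D}\,\|\theta\|_2\,\max_{|\beta|\le K}\|\partial^\beta f_{k-1}\|_\infty$. But a monomial of $\partial^\alpha(\tanh\circ(W_kf_{k-1}+b_k))$ with $|\alpha|=K$ can contain up to $K$ factors (the all-singleton partition, a product of $K$ first-order derivatives), so with only the uniform invariant the best you get for that monomial is
\[
\bigl(\sqrt{D}\,\|\theta\|_2\bigr)^{K}\Bigl(\widetilde{C}(D+1)^{(k-1)K}(1+\|\theta\|_2)^{(k-1)K}\Bigr)^{K},
\]
i.e.\ exponent $(k-1)K^2+K$, not $kK$: the per-layer factor is not $[(D+1)(1+\|\theta\|_2)]^K$ but that factor times $M_{k-1}^{K-1}$, and the invariant does not propagate. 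Bounding all orders "simultaneously" by the \emph{same} quantity is precisely what loses the game; what is needed is a \emph{graded} invariant, with exponent proportional to the order of the derivative being bounded (as in the paper: $\|u\|_{C^\ell(\mathbb{R}^{d_1})}\le C_{\ell,H}(D+1)^{1+\ell H}(1+\|\theta\|_2)^{\ell H}\|\theta\|_2$ for every $\ell\le K$), combined with the partition bookkeeping that the \emph{total} derivative order across the factors of each Fa\`a di Bruno monomial is exactly $K$ (the constraint $i_1+2i_2+\cdots+Ki_K=K$ in the paper's Lemma \ref{lem:boundPartialDer}). With order-dependent exponents, the exponents add to $(k-1)K$ along the partition and the extra $|P|\le K$ powers of $(D+1)\|\theta\|_2$ give the clean $+K$ per layer; with your uniform bound they multiply, reproducing the very blow-up you set out to avoid. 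To repair the proof, restate the invariant for every order $1\le\ell\le K$ with exponent linear in $\ell$, bound each factor $W_k\partial^{\alpha(S)}f_{k-1}$ using the invariant at order $|\alpha(S)|$, and sum over partitions; the rest of your outline (base case, handling $|\alpha|=0$ by $\|f_k\|_\infty\le1$, final affine layer) then goes through.
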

In order to study the generalization capabilities of regularized PINNs, we need to restrict the PDEs to a class of smooth differential operators, which we call polynomial operators (Definition \ref{defi:polyop} below). This class includes the most common PDE systems, as shown in the following example with the Navier-Stokes equations. 

\begin{ex}[Navier-Stokes equations]
\label{ex:navierS} Let $\Omega=\Omega_1 \times ]0,T[$, 
where $\Omega_1 \subseteq \mathbb{R}^{3}$ is a bounded Lipschitz domain and $T\geqslant 0$ is a finite time horizon.
The incompressible Navier-Stokes system of equations is defined for all $u = (u_x,u_y,u_z,p)\in C^2(\bar{\Omega}, \mathbb{R}^4)$ and for all $ \bx = (x,y,z,t) \in \Omega,$ by
\[ \left\{\begin{array}{rcl}
\mathscr{F}_1(u, \bx) &= &\partial_t u_x - (u_x \partial_x + u_y \partial_y + u_z \partial_z) u_x - \eta (\partial^2_{x,x}+\partial^2_{y,y}+\partial^2_{z,z}) u_x+ \rho^{-1} \partial_x p   \\
\mathscr{F}_2(u, \bx) &=& \partial_t u_y - (u_x \partial_x + u_y \partial_y + u_z \partial_z) u_y - \eta (\partial^2_{x,x}+\partial^2_{y,y}+\partial^2_{z,z}) u_y+ \rho^{-1} \partial_y p \\
         \mathscr{F}_3(u, \bx) &= &\partial_t u_z - (u_x \partial_x + u_y \partial_y + u_z \partial_z) u_z - \eta (\partial^2_{x,x}+\partial^2_{y,y}+\partial^2_{z,z}) u_z+ \rho^{-1} \partial_z p+g(\bx)\\
         \mathscr{F}_4(u, \bx) &= &\partial_x u_x + \partial_y u_y + \partial_z u_z,
    \end{array}\right. \]
where $\eta, \rho >0$ and $g\in C^{\infty}(\bar{\Omega}, \mathbb{R})$. Observe that $\mathscr{F}_1, \mathscr{F}_2, \mathscr{F}_3$, and $\mathscr{F}_4$ are polynomials in $u$ and its derivatives, with coefficients in $C^\infty(\bar{\Omega}, \mathbb{R})$. For example, $\mathscr{F}_3(u,\bx) = P_3(u_x, u_y, u_z, \partial_x u_z, \partial_y u_z, 
\partial_z u_z,$ $ \partial_t u_z, \partial^2_{x,x} u_z, \partial^2_{y,y} u_z, \partial^2_{z,z} u_z, \partial_z p)(\bx)$, where the polynomial $P_3\in C^\infty(\bar \Omega, \mathbb{R})[Z_1, \hdots, Z_{11}]$ is defined by $P_3(Z_1, \hdots, Z_{11}) = Z_7- Z_1Z_4 - Z_2Z_5 - Z_3Z_6- \eta (Z_8+Z_9+Z_{10}) + \rho^{-1}Z_{11} + g$.
\end{ex}

The above example can be generalized with the following definition. 
\begin{defi}[Polynomial operator]
\label{defi:polyop}
    An operator $\mathscr{F} : C^K(\bar{\Omega}, \mathbb{R}^{d_2})\times \Omega \to \mathbb{R}$  is  a polynomial operator
    of order $K \in \mathbb{N}$ if there exists an integer $s \in\mathbb{N}$ and multi-indexes $(\alpha_{i,j})_{1\leqslant i\leqslant d_2, 1\leqslant j\leqslant s} \in (\mathbb{N}^{d_1})^{sd_2}$ such that 
    \[\forall u=(u_1,\hdots,u_{d_2}) \in C^K(\bar{\Omega}, \mathbb{R}^{d_2}), \quad \mathscr{F}(u, \cdot) = P((\partial^{\alpha_{i,j}}u_i)_{1\leqslant i\leqslant d_2, 1\leqslant j\leqslant s}),\]
    where $P \in C^\infty(\bar{\Omega}, \mathbb{R})[Z_{1,1}, \hdots, Z_{d_2,s}]$ is a polynomial with smooth coefficients. 
\end{defi}
In other words, $\mathscr{F}$ is a polynomial operator if it is of the form
\[\mathscr{F}(u, \bx) = \sum_{k=1}^{N(P)} \phi_k \times \prod_{i=1}^{d_2}\prod_{j=1}^s (\partial^{\alpha_{i,j}}u_i(\bx))^{I(i,j,k)}, \]
where $N(P) \in \mathbb{N}^\star$, $\phi_k \in C^\infty(\bar{\Omega}, \mathbb{R})$, and $I(i,j,k) \in \mathbb{N}$. 
And the associated polynomial is $P(Z_{1,1}, \hdots, Z_{d_2, s})$ $= \sum_{k=1}^{N(P)} \phi_k \times \prod_{i=1}^{d_2}\prod_{j=1}^s Z_{i,j}^{I(i,j,k)}$ (recall that $\partial^\alpha u_i = u_i$ when $\alpha = 0$). 
\begin{defi}[Degree] \label{defi:deg}
    The degree of the polynomial operator $\mathscr{F}$ is \[\mathrm{deg}(\mathscr{F}) = \max_{1\leqslant k \leqslant N(P)}\sum_{i=1}^{d_2}\sum_{j=1}^{s} (1+|\alpha_{i,j}|)I(i,j,k).\] 
\end{defi}
As an illustration, in Example \ref{ex:navierS}, one has $\deg(\mathscr{F}_3) = 3$, and this degree is reached in both the terms  $u_z\partial_z u_z$ and $\partial^2_{z,z}u_z$. 
 Note that $\deg(P_3) = 2$ but $\deg(\mathscr{F}_3) = 3$.
 To compute $\deg(\mathscr{F}_3)$, we first count the number of terms in each monomial ($u_z\partial_zu_z$ has two terms while $\partial^2_{z,z}u_z$ has one term), which is $\sum_{i=1}^{d_2}\sum_{j=1}^{s} I(i,j,k)$ for the $k$th monomial, and add the number of derivatives involved in the product ($u_z\partial_zu_z$ contains a single $\partial_z$ operator while $\partial^2_{z,z}u_z$ contains two derivatives in $\partial_z$), which corresponds to $\sum_{i=1}^{d_2}\sum_{j=1}^{s} |\alpha_{i,j}|I(i,j,k)$ for the $k$th monomial. Thus, for each monomial $k$, the total sum is $\sum_{i=1}^{d_2}\sum_{j=1}^{s} (1+|\alpha_{i,j}|)I(i,j,k)$.

We emphasize that this class includes a large number of PDEs, such as linear PDEs (e.g.,  advection, heat, and Maxwell equations), as well as some nonlinear PDEs (e.g., Blasius, Burger's, and Navier-Stokes equations). Proposition \ref{prop:bounding} is a key ingredient to uniformly bound the risk of PINNs involving polynomial PDE operators 
\supplementSecFive.
This in turn can be used to establish the risk-consistency of these PINNs when $n_e$ and $n_r$ tend to $\infty$, 
as follows.

\begin{thm}[Risk-consistency of ridge PINNs]
\label{thm:generalization_error}
Consider the ridge PINN problem \eqref{eq:regPINN}, over the  class $\mathrm{NN}_H(D)=\{u_\theta, \theta\in\Theta_{H,D}\}$,  where $H \geqslant 2$. 
Assume that the condition function $h$ is Lipschitz and that $\mathscr{F}_1, \hdots, \mathscr{F}_M$ are polynomial operators. 
Assume, in addition, that the ridge parameter is of the form
\[
\lambda_{(\mathrm{ridge})} = \min(n_e, n_r)^{-\kappa}, \quad \text{where} \quad  \kappa=\frac{1}{12+4H(1+(2+H)\max_k\deg(\mathscr{F}_k))}.
\] 

Then, almost surely, 
\[
\lim_{n_e, n_r \to \infty} \lim_{p\to \infty}\mathscr{R}_n(u_{\hat{\theta}^{(\mathrm{ridge})}(p, n_e, n_r, D)}) =  \inf_{u \in \mathrm{NN}_H(D)} \mathscr{R}_n(u).
\] 
\end{thm}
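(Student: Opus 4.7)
The plan is to implement a standard sandwich between the empirical and theoretical risks. The core difficulty is that $\mathrm{NN}_H(D)$ is not uniformly bounded, so naive concentration arguments fail; the role of the ridge penalty is to confine any minimizing sequence to a parameter ball whose radius grows with $\min(n_e,n_r)$ but slowly enough that uniform concentration still wins. The argument naturally decomposes into three stages.

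\emph{Step 1: bounding the minimizer's parameters.} Comparing the ridge risk against the trivial choice $\theta=0$ (for which $u_0\equiv 0$) yields, almost surely,
\[
\lim_{p\to\infty}\lambda_{(\mathrm{ridge})}\|\hat\theta^{(\mathrm{ridge})}(p,n_e,n_r,D)\|_2^2 \;\leq\; R_{n,n_e,n_r}(u_0)\;\leq\;C,
\]
where $C$ is a (random) constant depending only on the $\|Y_i\|_2$, on $\|h\|_\infty$, and on $\max_k\|\mathscr{F}_k(u_0,\cdot)\|_{\infty,\Omega}$, all a.s.\ finite. Hence $\|\hat\theta^{(\mathrm{ridge})}(p,n_e,n_r,D)\|_2\leq C\,\min(n_e,n_r)^{\kappa/2}$ for $p$ large enough. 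Plugged into Proposition \ref{prop:bounding} (applied with $K$ one more than the maximal order of the $\mathscr{F}_k$'s), this controls both $\mathscr{F}_k(u_{\hat\theta^{(\mathrm{ridge})}(p)},\cdot)$ and its spatial Lipschitz constant by polynomials in $\min(n_e,n_r)^{\kappa/2}$, with exponents proportional to $\deg(\mathscr{F}_k)\cdot(H(K+1)+1)$.

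\emph{Step 2: uniform concentration over a growing ball.} With $R=C\min(n_e,n_r)^{\kappa/2}$, I would establish uniform convergence of the boundary and PDE empirical averages to their integral/expectation counterparts over $\{\theta:\|\theta\|_2\leq R\}\subset\Theta_{H,D}$. The map $\theta\mapsto u_\theta(\bx)$ and its derivatives are polynomially Lipschitz in $\theta$ on this ball; covering it by an $\epsilon$-net of cardinality $(CR/\epsilon)^{\dim\Theta_{H,D}}$ and combining Hoeffding's inequality with a union bound yields
\[
\sup_{\|\theta\|_2\leq R}\bigl|R_{n,n_e,n_r}(u_\theta)-\mathscr{R}_n(u_\theta)\bigr|\;\leq\;C\bigl(R^{\alpha}n_e^{-1/2}+R^{\beta}n_r^{-1/2}\bigr)\sqrt{\log\min(n_e,n_r)},
\]
where $\alpha$ depends only on $H$ and $\beta$ involves $\deg(\mathscr{F}_k)\cdot H(K+1)$; the PDE term is heavier because $\mathscr{F}_k(u_\theta,\cdot)^2$ is of total degree up to $2\deg(\mathscr{F}_k)$ in $u_\theta$ and its derivatives. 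The exact denominator $12+4H(1+(2+H)\max_k\deg(\mathscr{F}_k))$ defining $\kappa$ is then calibrated so that $R^{\beta}\min(n_e,n_r)^{-1/2}\to 0$; this PDE bound is the binding constraint.

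\emph{Step 3: sandwich and conclusion.} For any fixed $\theta^\star\in\Theta_{H,D}$,
\begin{align*}
\mathscr{R}_n(u_{\hat\theta^{(\mathrm{ridge})}(p)}) &\leq R_{n,n_e,n_r}(u_{\hat\theta^{(\mathrm{ridge})}(p)})+o(1)\\
&\leq R_{n,n_e,n_r}^{(\mathrm{ridge})}(u_{\hat\theta^{(\mathrm{ridge})}(p)})+o(1)\\
&\leq R_{n,n_e,n_r}^{(\mathrm{ridge})}(u_{\theta^\star})+o(1)\;\longrightarrow\;\mathscr{R}_n(u_{\theta^\star}),
\end{align*}
as $p\to\infty$ and then $n_e,n_r\to\infty$; the first inequality uses Step 2 applied at the (random but controlled) minimizer, and the last limit uses Step 2 applied at the fixed $\theta^\star$ together with $\lambda_{(\mathrm{ridge})}\|\theta^\star\|_2^2\to 0$. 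Taking the infimum over $\theta^\star$ gives the claim. The main obstacle is the polynomial bookkeeping in Step 2: tracking how the norm bound on $\theta$ propagates through the chain parameters $\to$ network $\to$ high-order derivatives $\to$ polynomial operator $\to$ square, in order to identify the worst-case exponent $\beta$ and verify that the rather intricate form of $\kappa$ exactly balances it against the $n_r^{-1/2}$ concentration rate. A secondary technicality is upgrading the in-probability bounds to the almost-sure, uniform-in-$(n_e,n_r)$ statement, which requires a Borel--Cantelli argument on top of the logarithmic tail rates above.
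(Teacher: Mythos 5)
Your plan is correct and its skeleton coincides with the paper's proof: compare the ridge risk with the zero network $u_0$ to confine any minimizing sequence to a parameter ball of slowly growing radius, prove an almost-sure uniform law of large numbers for the boundary and PDE terms over that ball, and conclude by a sandwich (the trivial lower bound $\mathscr{R}_n(u_{\hat\theta})\geqslant\inf_{\mathrm{NN}_H(D)}\mathscr{R}_n$ closes the argument, and is worth stating). Two points differ in execution. First, your localization is sharper: from $\lambda_{(\mathrm{ridge})}\|\hat\theta\|_2^2\leqslant R_{n,n_e,n_r}(u_0)+o(1)$ you get radius $\Oequivalent(n_{r,e}^{\kappa/2})$, whereas the paper only excludes $\|\theta\|_2\geqslant n_{r,e}^{\kappa}$ and therefore needs a three-zone decomposition ($\mathcal{E}_1,\mathcal{E}_2,\mathcal{E}_3$) with a one-sided inequality $R^{(\mathrm{ridge})}_{n,n_e,n_r}\geqslant\mathscr{R}_n$ on the intermediate shell, where the ridge penalty is not negligible but absorbs the deviation; your tighter radius lets you drop the nonnegative ridge term directly and dispense with the middle zone, at no cost since the stated $\kappa$ is calibrated for the larger radius anyway. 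Second, for the uniform deviation you propose an $\epsilon$-net plus Hoeffding plus union bound, while the paper (Theorem \ref{thm:approx_integral}) runs a subgaussian-process argument with Azuma and Dudley's entropy integral, then McDiarmid and Borel--Cantelli for the almost-sure statement; since $\Theta_{H,D}$ is finite-dimensional both routes give the required $n_{r,e}^{\beta_2-1/2}$-type rates up to logarithms, so your coarser bound suffices for consistency. The only substantive piece you leave undone is the exponent bookkeeping showing that the specific $\kappa=[12+4H(1+(2+H)\max_k\deg\mathscr{F}_k)]^{-1}$ beats the worst-case growth of the squared polynomial operator; this is exactly what the paper's Propositions \ref{prop:GKopH} and \ref{prop:GKopF} compute, feeding Propositions \ref{prop:bounding} and \ref{prop:lipschitzParam} into the conditions of Theorem \ref{thm:approx_integral}. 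One small correction: what your net argument needs is Lipschitz dependence of $\theta\mapsto\mathscr{F}_k(u_\theta,\cdot)^2$ uniformly over $\Omega$ (Proposition \ref{prop:lipschitzParam}, which you invoke correctly in Step 2), not the "spatial Lipschitz constant" mentioned in Step 1.
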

Thus, minimizing the ridge empirical risk \eqref{eq:regPINN} over $\Theta_{H,D}$ amounts to minimizing the theoretical risk \eqref{lossTheorical} over $\Theta_{H,D}$ in the asymptotic regime $n_e, n_r \to \infty$.
This fundamental result is complemented by the following one, which resorts to another asymptotics in the width $D$. This ensures that the choice of the neural architecture $\mathrm{NN}_H \subseteq C^\infty(\bar{\Omega}, \mathbb{R}^{d_2})$ does not introduce any asymptotic bias. 

\begin{thm}[The ridge PINN is asymptotically unbiased] 
    \label{thm:approximation}
    Under the same assumptions as in Theorem \ref{thm:generalization_error}, one has, almost surely, 
    \[
    \lim_{D \to \infty} \lim_{n_e, n_r \to \infty} \lim_{p\to \infty}\mathscr{R}_n(u_{\hat{\theta}^{(\mathrm{ridge})}(p, n_e, n_r, D)}) =  \inf_{u \in C^{\infty}(\bar{\Omega}, \mathbb{R}^{d_2})} \mathscr{R}_n(u).
    \]
\end{thm}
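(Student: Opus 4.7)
The plan is to layer Theorem \ref{thm:generalization_error} on top of an approximation step that handles the outer limit in $D$. Fix any countable family of widths (take $D \in \mathbb{N}^\star$). Theorem \ref{thm:generalization_error} provides, for each such $D$, a full-measure event on which the inner double limit $\lim_{n_e,n_r}\lim_p \mathscr{R}_n(u_{\hat{\theta}^{(\mathrm{ridge})}(p, n_e, n_r, D)})$ equals $\inf_{u \in \mathrm{NN}_H(D)} \mathscr{R}_n(u)$. Intersecting these events over $D$ involves only countably many null sets, so there is a single full-measure event on which the identity holds simultaneously for every $D$.

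It is then enough to establish the deterministic equality
\[
\lim_{D \to \infty} \inf_{u \in \mathrm{NN}_H(D)} \mathscr{R}_n(u) = \inf_{u \in C^\infty(\bar{\Omega}, \mathbb{R}^{d_2})} \mathscr{R}_n(u).
\]
Two observations reduce this to a continuity question. First, by padding the weight matrices and biases of a width-$D$ network with zeros (so the extra neurons output $\tanh(0)=0$ and contribute nothing), one has $\mathrm{NN}_H(D) \subseteq \mathrm{NN}_H(D+1)$; hence the infimum on the left-hand side is non-increasing in $D$ and converges to $\inf_{u \in \mathrm{NN}_H}\mathscr{R}_n(u)$. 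Second, since $\mathrm{NN}_H \subseteq C^\infty(\bar{\Omega}, \mathbb{R}^{d_2})$, the inequality $\geqslant$ is trivial, so only $\leqslant$ remains.

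For the nontrivial inequality, I would show that $\mathscr{R}_n$ is continuous on $(C^K(\bar{\Omega}, \mathbb{R}^{d_2}), \|\cdot\|_{C^K(\Omega)})$ for $K$ equal to the maximum order of the operators $\mathscr{F}_k$. The data-fitting term is a finite sum of pointwise evaluations, hence continuous even in $C^0$ norm; the boundary term $\lambda_e\,\mathbb{E}\|u(\bX^{(e)})-h(\bX^{(e)})\|_2^2$ is likewise continuous in $C^0$ norm by dominated convergence, using the uniform bound on $u$ over $\bar{\Omega}$. The PDE term requires the extra regularity: since each $\mathscr{F}_k$ is a polynomial operator (Definition \ref{defi:polyop}), $u \mapsto \mathscr{F}_k(u,\cdot)$ is a polynomial expression in the partial derivatives $\partial^{\alpha}u_i$ with $|\alpha|\leqslant K$, whose coefficients $\phi_k$ are smooth and therefore bounded on $\bar{\Omega}$. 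On any $C^K$-ball this map is Lipschitz from $C^K(\Omega,\mathbb{R}^{d_2})$ to $C^0(\Omega,\mathbb{R})$, so after squaring and integrating over the bounded domain $\Omega$ the resulting map is continuous. Invoking Proposition \ref{prop:densite}, every $u \in C^\infty(\bar{\Omega},\mathbb{R}^{d_2})$ is the $C^K$-limit of a sequence $(u_p) \in \mathrm{NN}_H^{\mathbb{N}}$, so $\mathscr{R}_n(u_p) \to \mathscr{R}_n(u)$, and taking infima yields $\inf_{\mathrm{NN}_H}\mathscr{R}_n \leqslant \inf_{C^\infty}\mathscr{R}_n$.

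The only step that is not purely cosmetic is the Hölder-norm continuity of $u \mapsto \int_\Omega \mathscr{F}_k(u,\bx)^2\,d\bx$, but thanks to the polynomial structure of $\mathscr{F}_k$ and the boundedness of $\Omega$ and of the coefficients $\phi_k$, this reduces to an elementary Lipschitz bound on bounded $C^K$-sets and so is not a genuine obstacle.
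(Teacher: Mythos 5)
Your proposal is correct and follows essentially the same route as the paper, which proves this theorem as a direct consequence of Theorem \ref{thm:generalization_error}, the density result of Proposition \ref{prop:densite}, and the continuity of $\mathscr{R}_n$ with respect to the $C^K(\Omega)$ norm. Your write-up merely makes explicit the details the paper leaves implicit (the countable intersection of full-measure events over $D$, the nesting $\mathrm{NN}_H(D)\subseteq \mathrm{NN}_H(D+1)$, and the Lipschitz bound for the polynomial PDE term on bounded $C^K$-sets), all of which are sound.
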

In other words, minimizing the ridge empirical risk over $\Theta_{H,D}$ and letting $D, n_e, n_r \to \infty$ amounts to minimizing the theoretical risk \eqref{lossTheorical} over the entire class $C^\infty(\bar{\Omega}, \mathbb{R}^{d_2})$.
We emphasize that these two theorems hold independently of the values of the hyperparameters $\lambda_d, \lambda_e \geqslant 0$. 
 Therefore, our results cover the general hybrid modeling framework \eqref{lossgenerale}, which includes the PDE solver. 
To the best of our knowledge, these are the first results that provide theoretical guarantees for PINNs regularized with a standard penalty. 
They complement the state-of-the-art approaches of  \citet{shin2020convergence}, \citet{shin2020errorEstimates}, \citet{mishra2022generalization}, and  \citet{wu2022convergence}, which consider re\-gu\-la\-ri\-za\-tion strategies that are unfortunately not feasible in practice.

It is worth noting that Theorem \ref{thm:approximation} still holds by choosing $D$ as a function of $n_e$ and $n_r$. 
In fact, an easy modification of the proofs reveals that one can take $D(n_e,n_r) = \min(n_e, n_r)^{\xi}$, where $\xi$ is a constant depending only on $H$ and $\max_k\deg(\mathscr{F}_k)$. Thus, in this setting, 
\[
\lim_{n_e, n_r \to \infty} \lim_{p\to \infty}\mathscr{R}_n(u_{\hat{\theta}^{(\mathrm{ridge})}(p, n_e, n_r, D(n_e,n_r)}) =  \inf_{u \in C^{\infty}(\bar{\Omega}, \mathbb{R}^{d_2})} \mathscr{R}_n(u).
\]

\begin{remark}[Dirichlet boundary conditions] Theorems \ref{thm:generalization_error} and \ref{thm:approximation} can be easily adapted to PINNs with Von Neumann conditions instead of Dirichlet boundary conditions. This is achieved by substituting the term $n_e^{-1}\sum_{j=1}^{n_e} \|u_\theta(\bX^{(e)}_j)-h(\bX^{(e)}_j)\|_2^2$ in the PINN definition \eqref{lossgenerale} by $n_e^{-1}\sum_{j=1}^{n_e}\|\partial_{\overrightarrow{n}}u_\theta(\bX^{(e)}_j)\|_2^2$, where $\overrightarrow{n}$ is the normal to $\partial \Omega$.
\end{remark}

\smallskip
\paragraph{Practical considerations}
The decay rate of $\lambda_{(\mathrm{ridge})} = \min(n_e, n_r)^{-\kappa}$ does not depend on the dimension $d_1$ of $\Omega$. This is consistent with the results of \citet{karniadakis2021piml} and \citet{ryck2022kolmogorov}, which suggest that PINNs can overcome the curse of dimensio\-nality, opening up interesting perspectives for efficient solvers of high-dimensional PDEs.
We also emphasize that $\lambda_{(\mathrm{ridge})}$ depends only on the degree of the polynomial PDE operator,  the depth $H$, and the sample sizes $n_e$ and $n_r$.
All these quantities are known, which makes this hyperparameter immediately useful for practical applications.    
For example, in Navier-Stokes equations of Example \ref{ex:navierS}, one has $\max_k \deg(\mathscr{F}_k) = 3$. 
Thus, for a neural network of depth, say $H=2$, the ridge hyperparameter $\lambda_{(\mathrm{ridge})} = \min(n_e, n_r)^{-1/116}$ is sufficient to ensure consistency. 
It is also interesting to note that the bound on $\lambda_{(\mathrm{ridge})}$ in the theorems deteriorates with increasing  depth $H$. 
This confirms the preferential use of shallow neural networks in the experimental works of \citet{arzani2021uncovering}, \citet{karniadakis2021piml}, and \citet{xu2021extrapolation}. 
The bound also deteriorates as $\max_k \deg \mathscr{F}_k$ increases. 
This is in line with the empirical results of \citet{davini2021using}, which was able to improve the performance of PINNs by reformulating their polynomial differential equation of degree $3$ as a system of two polynomial differential equations of degree $2$.

It is also interesting to note that Theorems \ref{thm:generalization_error} and \ref{thm:approximation} hold for any ridge hyperparameter $\lambda_{(\mathrm{ridge})} \geqslant \min(n_e, n_r)^{-\kappa}$ such that $\lim_{n_e, n_r \to \infty} \lambda_{(\mathrm{ridge})} = 0$. 
However, if $n_e$ and $n_r$ are fixed, choosing too large a $\lambda_{(\mathrm{ridge})}$ will lead to a bias toward parameters of $\Theta_{H,D}$ with a low $L^2$ norm. 
Therefore, there is a trade-off between taking $\lambda_{(\mathrm{ridge})}$ as small as possible to reduce this bias, but large enough to avoid overfitting, as illustrated in Section \ref{POF}. 
Moreover, our choice of $\lambda_{(\mathrm{ridge})}$ may be suboptimal, since these results rely on inequalities involving a general class of polynomial operators. 
When studying a particular PDE, the consistency results of Theorems \ref{thm:generalization_error} and \ref{thm:approximation} should eventually hold with a smaller $\lambda_{(\mathrm{ridge})}$. 
To tune $\lambda_{(\mathrm{ridge})}$ in practice, one could, for example, monitor the overfitting gap  $\mathrm{OG}_{n, n_e, n_r} = |R_{n, n_e, n_r} - \mathscr{R}_n|$ for a ridge estimator $\hat{\theta}^{(\mathrm{ridge})}(p, n_e, n_r, D)$, by standard validation strategy (e.g., by sampling $\tilde{n}_r$ and $\tilde{n}_e$ new points to estimate $\mathscr{R}_n(u_{\hat{\theta}^{(\mathrm{ridge})}(p, n_e, n_r, D)})$ at a $\min(\tilde{n}_r, \tilde{n}_e)^{-1/2}$-rate given by the central limit theorem), and then choose the smallest parameter $\lambda_{(\mathrm{ridge})}$ to introduce as little bias as possible. More information about the relevance of $\mathrm{OG}_{n, n_e, n_r}$ is given in \supplementSecTwo.

\section{Strong convergence of PINNs for linear PDE systems}
\label{sec:functional}
Beyond risk-consistency concerns, the ultimate goal of PINNs is to learn a physics-informed regression function $u^\star$, or, in the PDE solver setting, to strongly approximate the unique solution $u^\star$ of a PDE system. 
Thus, what we want is to have guarantees regarding the convergence of $u_{\hat{\theta}^{(\mathrm{ridge})}(p, n_e, n_r, D)}$ to $u ^\star$ for an adapted norm. This requirement is called strong convergence in the functional analysis literature.
This is however not guaranteed under the sole convergence of the theo\-re\-ti\-cal risk $(\mathscr{R}_n(u_{\hat{\theta}^{(\mathrm{ridge})}(p, n_e, n_r, D)}))_{p,n_e, n_r,D \in \mathbb{N}}$, as shown in the following two examples.

\begin{ex}[Lack of data incorporation in the hybrid modeling setting]
\label{ex:dataIncorpo}
Suppose $M =1$, $d_1 = 2$, $d_2 = 1$, $\Omega = ]0,1[\times ]0,T[$, $h(x,0) = 1$ and $h(0, t) = 1$,  and let $\mathscr{F}(u, \bx) = \partial_x u(\bx) + \partial_t u(\bx)$. 
This corresponds to the assumption that the solution should approximately follow the advection equation and that it should be close to $1$. 
For any $\delta >0$, let the sequence $(u_{\delta, p})_{p \in \mathbb{N}}\in \mathrm{NN}_H(2n)^{\mathbb{N}}$ be defined by \[u_{\delta, p}(x,t) = 1+\sum_{i=1}^n \frac{Y_i}{2} \big(\tanh_p^{\circ H}(x-t - x_i + t_i + \delta)- \tanh_p^{\circ H}(x-t - x_i+t_i-\delta)\big), \] 
where $\tanh_p:=\tanh(p\,\cdot)$, and $\bX_i = (x_i, t_i)$. 
Then, as soon as $\delta \leqslant \frac{1}{2}\min_{i\neq j} |x_i-x_j + t_j-t_i|$, we have that $\lim_{p \to \infty}\mathscr{R}_n (u_{\delta, p}) = 0$.  
Thus, as long as $D \geq 2n$, $\inf_{u\in \mathrm{NN}_H(D)}\mathscr R_n(u) = 0$. Therefore, Theorem \ref{thm:approximation} shows that $\lim_{D \to \infty} \lim_{n_e, n_r \to \infty} \lim_{p\to \infty}\mathscr{R}_n(u_{\hat{\theta}^{(\mathrm{ridge})}(p, n_e, n_r, D)}) =  0.$ It is then easy to check that this implies that $u_{\hat{\theta}^{(\mathrm{ridge})}(p, n_e, n_r, D)}$ converges in $L^2(\Omega)$ to $1$, independently of $n$ and the function $u^{\star}$. This shows that the ridge PINNs fails to learn $u^\star$ whenever the model is inexact.
\end{ex}

In the PDE solver setting, one can consider the a priori favorable case where the PDE system admits a unique (strong) solution $u^\star$ in $C^K(\bar{\Omega}, \mathbb{R}^{d_2})$ (where $K$ is the maximum order of the differential operators $\mathscr{F}_1$, $\hdots$, $\mathscr{F}_{M}$). 
Note that $u^\star$ is the unique minimizer of $\mathscr{R}$ over $C^K(\bar{\Omega}, \mathbb{R}^{d_2})$, with $\mathscr{R}(u^\star) = 0$ (and $\mathscr{R}(u) = 0$ if and only if $u$ satisfies the initial conditions, the boundary conditions, and the system of differential equations). However, we describe below a situation where a minimizing sequence of $\mathscr{R}$ does not converge to the unique strong solution $u^\star$ of the PDE in question. 
\begin{ex}[Divergence in the PDE solver setting]
\label{ex:degeneratePINN}
Suppose $M=1$, $d_1 = d_2 = 1$, $\Omega = ]-1,1[$, $h(1) = 1$, $\lambda_e > 0$, and let the polynomial operator be $\mathscr{F}(u, \bx) = \bx u'(\bx)$. 
Clearly, $u^\star(\bx) = 1$ is the only strong solution of the PDE $\bx u'(\bx) = 0$ with $u(1) = 1$. 
Let the sequence $(u_p)_{p\in \mathbb{N}}\in \mathrm{NN}_H(D)^{\mathbb{N}}$ be defined by $u_p = \tanh_p\circ \tanh^{\circ (H-1)}$. According to  \supplementSecTwo, $\lim_{p \to \infty} \mathscr{R}(u_p) = \mathscr{R}(u^{\star}) = 0$.
However, the minimizing sequence $(u_p)_{p\in\mathbb{N}}$ does not converge to $u^\star$, since $ u_{\infty}(\bx) := \lim_{p \to \infty} u_p(\bx) = \mathbf{1}_{\bx > 0} -  \mathbf{1}_{\bx < 0}$. 
\end{ex}
We have therefore exhibited a sequence $(u_p)_{p \in \mathbb{N}}$ of neural networks that minimizes $\mathscr{R}$ and such that $(u_p)_{p\in\mathbb{N}}$ converges pointwise. However, its limit $u_\infty$ is not the unique strong solution of the PDE. In fact, $u_\infty$ is not differentiable at $0$, which is incompatible with the differential operators $\mathscr{F}$ used in $\mathscr{R}(u_\infty)$. Interestingly, the Cauchy-Schwarz inequality states that the pathological sequence $(u_p)_{p\in\mathbb{N}}$ satisfies $\lim_{p\to\infty}\|u_p'\|_{L^2(\Omega)}^2=\infty$, as in Example \ref{ex:dataIncorpo}.

\subsection{Sobolev regularization}
The two examples above illustrate how the convergence of the theoretical risk $\mathscr{R}_n(u_{\hat{\theta}^{(\mathrm{ridge})}(p, n_e, n_r, D)})$ to $\inf_{u\in C^\infty(\bar{\Omega}, \mathbb{R}^{d_2})}\mathscr{R}_n(u)$ (for any $n$) is not sufficient to guarantee the strong convergence to a PDE or hybrid modeling solution.
To ensure such a convergence, a different analysis is needed, mobilizing tools from functional analysis.
In the sequel, we build upon the regression estimation penalized by PDEs of \citet{azzimonti2015blood}, \citet{sangalli2021spatial}, \citet{ arnone2022spatialRegression}, and \citet{ferraccioli2022some}, and make use of the calculus of variations \citep[e.g.,][Theorems 1-4, Chapter 8]{evans2010partial}.
In the former references, the minimizer of $\mathscr{R}_n$ does not satisfy the PDE system injected in the PINN penalty, but another PDE system, known as the Euler-Lagrange equations.
Although interesting, the mathematical framework is different from ours. First, the authors do not study the convergence of neural networks, but rather methods in which the boundary conditions are hard-coded, such as the finite element method. Second, these frameworks are limited to special cases of theoretical risks. Indeed, only second-order PDEs with $\lambda_e=\infty$ are considered in \citet{azzimonti2015blood}, 
while \citet{evans2010partial} deal with first-order PDEs, echoing the case of $\lambda_d = 0$ and $ \lambda_e = \infty$.

It is worthwhile mentioning that the results of \citet{azzimonti2015blood}  rely on an important property of the theoretical risk function $\mathscr{R}_n$, called coercivity.  This is a common assumption of the calculus of variations \citep{evans2010partial}. The operator $\mathscr{R}_n$ is said to be coercive if there exist $K \in \mathbb{N}$ and $\lambda_t>0$ such that, for all $u\in H^K(\Omega, \mathbb{R}^{d_2})$, $\mathscr{R}_n(u) \geqslant \lambda_t \|u\|_{H^K(\Omega)}^2$ (the notation $H^K(\Omega, \mathbb{R}^{d_2}$) stands for the usual Sobolev space of order $K$---see \appendixlink. 
It turns out that the failures of Examples  \ref{ex:dataIncorpo} and \ref{ex:degeneratePINN} are due to a lack of coercivity, since, in both cases,  $\lim_{p \to\infty} \|u_p\|_{H^1(\Omega)}= \infty$ but $\lim_{p \to\infty} \mathscr{R}_n(u_p) \leqslant \mathscr{R}_n(u^\star)$. There are two ways to correct this problem: either one can restrict the study to coercive operators only, or one can resort to an explicit regularization of the risk to enforce its coercivity. We choose the latter, since most PDEs used in the practice of PINNs are not coercive. Note however that our results could be easily adapted to the coercive case. 

In the following, we restrict ourselves to affine operators, which exactly correspond to linear PDE systems, including  the advection, heat, wave, and Maxwell equations. 
\begin{defi}[Affine operator]
\label{defi:affine}
    The operator $\mathscr{F}$ is affine of order $K$ if there exists $A_{\alpha} \in C^\infty(\bar{\Omega}, \mathbb{R}^{d_2})$ and $B \in C^\infty(\bar{\Omega}, \mathbb{R})$  such that, for all $\bx \in \Omega$ and all $u \in H^K(\Omega, \mathbb{R}^{d_2})$, 
    \[\mathscr{F}(u, \bx) = \mathscr{F}^{(\mathrm{lin})}(u, \bx) + B(\bx),\]
    where  $\mathscr{F}^{(\mathrm{lin})}(u, \bx) = \sum_{|\alpha|\leqslant K} \langle A_{\alpha}(\bx), \partial^\alpha u(\bx)\rangle$ is linear.
\end{defi}

The source term $B$ is important, as it makes it possible to model a large variety of applied physical problems, as illustrated in \citet{song2021solving}. Note also that affine operators of order $K$ are in fact polynomial operators of degree $K+1$ (Definitions \ref{defi:polyop} and \ref{defi:deg}) that are extended from smooth functions to the whole Sobolev space $H^K(\Omega, \mathbb{R}^{d_2})$.

\begin{defi}[Regularized PINNs]
    The regularized theoretical risk function is
\begin{equation}
    \label{eq:regThRisk} \mathscr{R}^{(\mathrm{reg})}_n(u) = \mathscr{R}_n(u) + \lambda_t \|u\|_{H^{m+1}(\Omega)}^2,
\end{equation} 
where $\mathscr{R}_n$ is the original theoretical risk as defined in \eqref{lossTheorical}, and $m \in \mathbb{N}$. 
The corresponding regularized empirical risk function is
\begin{equation*}
    R_{n, n_e,n_r}^{(\mathrm{reg)}}(u_\theta) = R_{n, n_e,n_r}(u_\theta) + \lambda_{(\mathrm{ridge})} \|\theta\|_2^2 + \frac{\lambda_t}{n_\ell} \sum_{\ell=1}^{n_\ell} \sum_{|\alpha|\leqslant m+1}\|\partial^\alpha u_\theta(\bX_{\ell}^{(r)})\|_2^2.
\end{equation*} 
\end{defi}
It is noteworthy that $R_{n, n_e,n_r}^{(\mathrm{reg)}}$ can be straightforwardly implemented in the usual PINN framework and benefit from the computational scalability of the backpropagation algorithm, by encoding the regularization as supplementary PDE-type constraints $\mathscr{F}_\alpha(u, \bx) = \partial^\alpha u(\bx) = 0$. Since this discretized Sobolev penalty can be seen as additional physical priors $\mathscr{F}_\alpha$, the overfitting behavior observed for the unregularized PINNs can be transferred to Sobolev-regularized PINNs trained without ridge regularization. This is why the ridge penalty is still included in the risk.
Note also that the Sobolev regularization has been shown to avoid overfitting in machine learning, yet in different contexts \citep[e.g.,][]{fischer2020sobolev}.

The following proposition shows that the unique minimizer of \eqref{eq:regThRisk} can be interpreted as the unique minimizer of an optimization problem involving a weak formulation of the differential terms included in the risk. Its proof is based on the Lax-Milgram theorem \citep[e.g.,][Corollary 5.8]{brezis2010functional}.

\begin{prop}[Characterization of the unique minimizer of  $\mathscr{R}^{(\mathrm{reg})}_n$]
\label{prop:laxMLin}
    Assume that $\mathscr{F}_1, \hdots, \mathscr{F}_M$ are affine operators of order $K$. 
    Assume, in addition, that $\lambda_t >0$ and  $m \geqslant \max(\lfloor d_1/2\rfloor, K)$. Then the regularized theoretical risk $\mathscr{R}^{(\mathrm{reg})}_n$ has a unique minimizer $\hat u_n$ over $H^{m+1}(\Omega, \mathbb{R}^{d_2})$. This minimizer
    $\hat u_n$ is the unique element of $H^{m +1}(\Omega, \mathbb{R}^{d_2})$ that satisfies 
    \[\forall v \in H^{m+1}(\Omega, \mathbb{R}^{d_2}),\quad  \mathcal{A}_n(\hat u_n,v) = \mathcal{B}_n(v),\] 
     where
     \begin{align*}
        \mathcal{A}_n(\hat u_n,v) &= \frac{\lambda_d}{n} \sum_{i=1}^n \langle \tilde \Pi(\hat u_n)(\bX_i), \tilde \Pi(v)(\bX_i)\rangle +\lambda_e \mathbb{E}\langle\tilde \Pi(\hat u_n)(\bX^{(e)}),\tilde \Pi(v)(\bX^{(e)})\rangle\\
        & \quad +\frac{1}{|\Omega|}\sum_{k=1}^M\int_{\Omega} \mathscr{F}_k^{(\mathrm{lin})}(\hat u_n,\bx)\mathscr{F}_k^{(\mathrm{lin})}(v,\bx)d\bx\\
        & \quad + \frac{\lambda_t }{|\Omega|} \sum_{|\alpha|\leqslant m+1}\int_{\Omega}\langle \partial^\alpha \hat u_n(\bx) , \partial^\alpha v(\bx)\rangle d\bx,\\
        \mathcal{B}_n(v) &= \frac{\lambda_d}{n} \sum_{i=1}^n \langle Y_i, \tilde \Pi(v)(\bX_i)\rangle+ \lambda_e \mathbb{E}\langle\tilde \Pi(v)(\bX^{(e)}),h(\bX^{(e)})\rangle\\
        &\quad -\frac{1}{|\Omega|}\sum_{k=1}^M\int_{\Omega} B_k(\bx) \mathscr{F}_k^{(\mathrm{lin})}(v,\bx)d\bx,
    \end{align*}
    and where $\tilde{\Pi} : H^{m+1}(\Omega, \mathbb{R}^{d_2}) \to C^0(\Omega, \mathbb{R}^{d_2})$ is the so-called Sobolev embedding, such that $\tilde \Pi(u)$ is the unique continuous function that coincides with $u$ almost everywhere.
\end{prop}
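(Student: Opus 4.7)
The plan is to cast the minimization of $\mathscr{R}^{(\mathrm{reg})}_n$ as a standard quadratic optimization problem on the Hilbert space $H^{m+1}(\Omega, \mathbb{R}^{d_2})$ and apply Lax–Milgram. First, I would expand the risk using the affine decomposition $\mathscr{F}_k(u,\bx) = \mathscr{F}_k^{(\mathrm{lin})}(u,\bx) + B_k(\bx)$ (since $\mathscr{F}_k^{(\mathrm{lin})}$ is linear in $u$) and likewise rewrite the data and boundary square terms as $\|u(\bX_i)-Y_i\|_2^2 = \|u(\bX_i)\|_2^2 - 2\langle u(\bX_i),Y_i\rangle + \|Y_i\|_2^2$, and similarly for the boundary term. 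Collecting all purely quadratic contributions in $u$ yields exactly $\mathcal{A}_n(u,u)$, all purely linear contributions yield $-2\mathcal{B}_n(u)$, and the remaining terms form a constant $C_n$ independent of $u$. Hence $\mathscr{R}^{(\mathrm{reg})}_n(u) = \mathcal{A}_n(u,u) - 2\mathcal{B}_n(u) + C_n$, with $\mathcal{A}_n$ obviously symmetric and bilinear, and $\mathcal{B}_n$ linear.

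Next, I would verify the Lax–Milgram hypotheses. For \emph{coercivity}, the Sobolev regularization term in $\mathcal{A}_n(u,u)$ equals $(\lambda_t/|\Omega|)\|u\|_{H^{m+1}(\Omega)}^2$ by definition of the Sobolev norm; every other term in $\mathcal{A}_n(u,u)$ is nonnegative, giving $\mathcal{A}_n(u,u) \geq (\lambda_t/|\Omega|)\|u\|_{H^{m+1}(\Omega)}^2$. For \emph{continuity}, I would handle each term separately. Because $m \geq K$, all derivatives $\partial^\alpha u$ appearing in $\mathscr{F}_k^{(\mathrm{lin})}$ are in $L^2(\Omega)$ with $\|\partial^\alpha u\|_{L^2}\leq \|u\|_{H^{m+1}}$, and boundedness of the smooth coefficients $A_\alpha$ on the bounded Lipschitz domain $\bar\Omega$ yields continuity of the PDE bilinear piece. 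The crucial point—and the one I expect to demand the most care—is the data term $n^{-1}\sum_i \langle\tilde\Pi(u)(\bX_i),\tilde\Pi(v)(\bX_i)\rangle$, along with the analogous boundary expectation: these involve \emph{pointwise} evaluations, which are continuous on $H^{m+1}$ only thanks to the Sobolev embedding $H^{m+1}(\Omega,\mathbb{R}^{d_2}) \hookrightarrow C^0(\bar\Omega, \mathbb{R}^{d_2})$. The assumption $m \geq \lfloor d_1/2\rfloor$ is precisely what guarantees $m+1 > d_1/2$, so the embedding holds (this uses the Lipschitz regularity of $\Omega$, already stated in the paper). The same embedding gives continuity of the linear form $\mathcal{B}_n$; Lipschitz continuity of $h$ combined with boundedness of $\bar\Omega$ ensures $h(\bX^{(e)})$ is integrable, so the $\lambda_e$-term of $\mathcal{B}_n$ is a well-defined continuous linear functional, and boundedness of $B_k$ handles the source term.

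With continuity and coercivity established, the Lax–Milgram theorem (Corollary 5.8 in \citet{brezis2010functional}) yields the existence and uniqueness of $\hat u_n \in H^{m+1}(\Omega,\mathbb{R}^{d_2})$ satisfying $\mathcal{A}_n(\hat u_n,v)=\mathcal{B}_n(v)$ for every $v \in H^{m+1}(\Omega,\mathbb{R}^{d_2})$. To conclude that $\hat u_n$ is the unique minimizer of $\mathscr{R}^{(\mathrm{reg})}_n$, I would use the classical identity valid for symmetric coercive bilinear forms: for every $u \in H^{m+1}(\Omega,\mathbb{R}^{d_2})$,
\begin{equation*}
\mathscr{R}^{(\mathrm{reg})}_n(u) - \mathscr{R}^{(\mathrm{reg})}_n(\hat u_n) = \mathcal{A}_n(u - \hat u_n, u - \hat u_n) \geq \frac{\lambda_t}{|\Omega|}\,\|u - \hat u_n\|_{H^{m+1}(\Omega)}^2,
\end{equation*}
which is nonnegative and vanishes only at $u = \hat u_n$. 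This identity follows by expanding $\mathscr{R}^{(\mathrm{reg})}_n(u) = \mathcal{A}_n(u,u) - 2\mathcal{B}_n(u) + C_n$ and substituting $\mathcal{B}_n(v) = \mathcal{A}_n(\hat u_n, v)$ with $v = u$ and $v = \hat u_n$. Hence $\hat u_n$ is the unique minimizer of $\mathscr{R}^{(\mathrm{reg})}_n$ over $H^{m+1}(\Omega,\mathbb{R}^{d_2})$, and it is characterized precisely by the variational identity in the statement. The main technical obstacle is the careful control of point evaluations via Sobolev embedding, which is exactly what forces the lower bound $m \geq \lfloor d_1/2\rfloor$; once this is handled, the remaining steps are routine Hilbert-space variational calculus.
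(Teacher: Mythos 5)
Your proposal is correct and follows essentially the same route as the paper: rewrite $\mathscr{R}^{(\mathrm{reg})}_n(u)=\mathcal{A}_n(u,u)-2\mathcal{B}_n(u)+C_n$, check coercivity via the $\lambda_t$-regularization and continuity via the Sobolev embedding (which is exactly where $m\geqslant\max(\lfloor d_1/2\rfloor,K)$ enters), and apply Lax--Milgram for the symmetric form. The only cosmetic differences are the normalization constant in the coercivity bound and the fact that you re-derive the minimizer characterization from the weak formulation by expanding the quadratic, whereas the paper reads both conclusions off the symmetric version of Lax--Milgram directly; neither affects correctness.
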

The Sobolev embedding $\tilde{\Pi}$ is essential in order to give a precise meaning to the pointwise evaluation at the points $\bX_i$ of a function $u\in H^{m+1}(\Omega, \mathbb{R}^{d_2}) \subseteq L^2(\Omega, \mathbb{R}^{d_2})$, which is defined only almost everywhere.
The rationale behind Proposition \ref{prop:laxMLin} is that 
\[
\mathscr{R}_n^{(\mathrm{reg})}(u) = \mathcal{A}_n(u,u) -2\mathcal{B}_n(u) + \frac{\lambda_d}{n} \sum_{i=1}^n \|Y_i\|^2 +\lambda_e \mathbb{E}\|h(\bX^{(e)})\|_2^2 + \frac{1}{|\Omega|}\sum_{k=1}^M\int_{\Omega} B_k(\bx)^2d\bx.
\]
Therefore, minimizing $\mathscr{R}_n^{(\mathrm{reg})}$ amounts to minimizing $\mathcal{A}_n - 2 \mathcal{B}_n$.
It is also interesting to note that the  weak formulation $\mathcal{A}_n(\hat u,v) = \mathcal{B}_n(v)$ can be interpreted as a weak PDE on $H^{m+1}(\Omega, \mathbb{R}^{d_2})$.
In particular, if $\hat u_n \in H^{2(m+1)}(\Omega, \mathbb{R}^{d_2})$, then one has, almost everywhere,
\[
\sum_{k=1}^M (\mathscr{F}_k^{(\mathrm{lin})})^* \mathscr{F}_k(\hat u_n,\bx)+ \lambda_t  \sum_{|\alpha|\leqslant m+1} (-1)^{|\alpha|} (\partial^\alpha)^2 \hat u_n(\bx)  = 0.
\] 
$(\mathscr{F}_k^{(\mathrm{lin})})^*$ is the adjoint operator of $\mathscr{F}_k^{(\mathrm{lin})}$ such that, for all $ u, v \in C^\infty(\Omega, \mathbb{R})$ with $v|_{\partial \Omega} = 0$, 
\[
\int_\Omega u \mathscr{F}^{(\mathrm{lin})}(v, \bx) d\bx = \int_\Omega (\mathscr{F}_k^{(\mathrm{lin})})^*(u, \bx)vd\bx.
\]
Thus, even in the regime $\lambda_t \to 0$  (i.e., when the regularization becomes negligible), the solution of the PINN problem does not satisfy the constraints $\mathscr{F}_k(u,\bx) = 0$, but the following constraint $\sum_{k=1}^M (\mathscr{F}_k^{(\mathrm{lin})})^* \mathscr{F}_k(u,\bx) =0$. 
(Notice that, in the PDE solver setting, since $u^\star$ satisfies all the constraints, it satisfies in particular the constraint $\sum_{k=1}^M (\mathscr{F}_k^{(\mathrm{lin})})^* \mathscr{F}_k(u^\star,\bx) =0$.)
For instance, the advection equation constraint $\mathscr{F}(u, \bx) = (\partial_x + \partial_t) u(\bx)$ of Example \ref{ex:dataIncorpo} becomes $\mathscr{F}^* \mathscr{F}(u, \bx) = - (\partial_x + \partial_t)^2 u(\bx)$, and the constraint $\mathscr{F}(u, \bx) = \bx u'(\bx)$ of Example \ref{ex:degeneratePINN} becomes $\mathscr{F}^* \mathscr{F}(u, \bx) = -2\bx  u'(\bx)-\bx^2  u''(\bx)$.

Proposition \ref{prop:laxMLin} shows that the regularization in $\lambda_t$ is sufficient to make the PINN problem well-posed, i.e., to ensure that the theoretical risk function \eqref{eq:regThRisk} admits a unique minimizer.
The next natural requirement is that the regularized PINN estimator obtained by minimizing the regularized empirical risk function converges to this unique minimizer $\hat u_n$. 
Proposition \ref{prop:sequenceCvLin} and Theorem \ref{thm:functionalCv} show that this is true for linear PDE systems.
\begin{prop}[From risk-consistency to strong convergence]
    \label{prop:sequenceCvLin}
    Assume that $\lambda_t >0$ and  $m \geqslant \max(\lfloor d_1/2\rfloor, K)$. 
    Let $(u_p)_{p\in\mathbb{N}} \in C^\infty(\bar{\Omega}, \mathbb{R}^{d_2})$ be a sequence of smooth functions satisfying that $\lim_{p \to \infty}\mathscr{R}^{\mathrm{(reg)}}_n(u_p) = \inf_{u\in C^\infty(\bar \Omega, \mathbb{R}^{d_2})}\mathscr{R}^{\mathrm{(reg)}}_n$. Then $\lim_{p\to \infty} \|u_p - \hat u_n\|_{H^{m}(\Omega)}=0$, where $\hat u_n$ is the unique minimizer of $\mathscr{R}^{(\mathrm{reg})}_n$ over $H^{m+1}(\Omega, \mathbb{R}^{d_2})$. 
\end{prop}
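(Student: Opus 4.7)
The plan is to combine coercivity, weak compactness, and the Rellich--Kondrachov compact embedding theorem in order to upgrade risk-consistency into strong $H^m$ convergence, exploiting that $\hat u_n$ has already been identified as the \emph{unique} minimizer in Proposition \ref{prop:laxMLin}.

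\textbf{Step 1 (matching the infima).} The functional $\mathscr{R}_n^{(\mathrm{reg})}$ is continuous on $H^{m+1}(\Omega, \mathbb{R}^{d_2})$: indeed, for $m \geqslant \lfloor d_1/2\rfloor$ the Sobolev embedding $\tilde \Pi: H^{m+1}(\Omega,\mathbb{R}^{d_2}) \hookrightarrow C^0(\bar\Omega, \mathbb{R}^{d_2})$ is continuous, so the pointwise evaluation terms at the $\bX_i$ and the $L^2$ boundary term make sense and depend continuously on $u$; the affine PDE terms (of order $K \leqslant m+1$) and the Sobolev norm are obviously continuous in the $H^{m+1}$ topology. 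Since $C^\infty(\bar\Omega, \mathbb{R}^{d_2})$ is dense in $H^{m+1}(\Omega, \mathbb{R}^{d_2})$, the infimum over smooth functions coincides with the infimum over $H^{m+1}$, which by Proposition \ref{prop:laxMLin} is attained uniquely at $\hat u_n$. Hence $\lim_p \mathscr{R}_n^{(\mathrm{reg})}(u_p) = \mathscr{R}_n^{(\mathrm{reg})}(\hat u_n)$.

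\textbf{Step 2 (boundedness via coercivity).} The Sobolev penalty furnishes the coercivity bound $\mathscr{R}_n^{(\mathrm{reg})}(u) \geqslant \lambda_t \|u\|_{H^{m+1}(\Omega)}^2$. Combined with Step 1, this forces $(u_p)_{p \in \mathbb{N}}$ to be bounded in $H^{m+1}(\Omega, \mathbb{R}^{d_2})$.

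\textbf{Step 3 (weak limit is the minimizer).} Since $H^{m+1}(\Omega, \mathbb{R}^{d_2})$ is a reflexive Hilbert space, bounded sequences are relatively weakly compact, so there exists a subsequence $(u_{p_k})$ and some $u_\infty \in H^{m+1}(\Omega,\mathbb{R}^{d_2})$ with $u_{p_k} \rightharpoonup u_\infty$ weakly. Now $\mathscr{R}_n^{(\mathrm{reg})}$ is continuous, convex, and coercive on $H^{m+1}$ (continuity in the $H^{m+1}$ topology was checked in Step 1; convexity follows from the fact that each squared term in \eqref{lossTheorical} and \eqref{eq:regThRisk} is of the form $v \mapsto \|Lv - c\|^2$ with $L$ affine, hence convex), so it is weakly lower semicontinuous. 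Therefore
\begin{equation*}
\mathscr{R}_n^{(\mathrm{reg})}(u_\infty) \;\leqslant\; \liminf_{k \to \infty}\, \mathscr{R}_n^{(\mathrm{reg})}(u_{p_k}) \;=\; \mathscr{R}_n^{(\mathrm{reg})}(\hat u_n),
\end{equation*}
and uniqueness of the minimizer (Proposition \ref{prop:laxMLin}) yields $u_\infty = \hat u_n$. The same argument applies to every subsequence, so the full sequence satisfies $u_p \rightharpoonup \hat u_n$ weakly in $H^{m+1}(\Omega,\mathbb{R}^{d_2})$.

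\textbf{Step 4 (upgrading to strong $H^m$ convergence).} Because $\Omega$ is a bounded Lipschitz domain, the Rellich--Kondrachov theorem provides a compact embedding $H^{m+1}(\Omega, \mathbb{R}^{d_2}) \hookrightarrow H^m(\Omega,\mathbb{R}^{d_2})$. Compact operators map weakly convergent sequences to strongly convergent ones, hence $u_p \to \hat u_n$ strongly in $H^m(\Omega,\mathbb{R}^{d_2})$, which is the desired conclusion.

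The main obstacle I anticipate is Step 3: carefully checking that $\mathscr{R}_n^{(\mathrm{reg})}$ is actually weakly lower semicontinuous on $H^{m+1}$, especially the pointwise evaluation terms $\|u(\bX_i)-Y_i\|_2^2$. This is where the assumption $m+1 > d_1/2$ is crucial, since only then does the Sobolev embedding $H^{m+1}(\Omega,\mathbb{R}^{d_2}) \hookrightarrow C^0(\bar\Omega,\mathbb{R}^{d_2})$ hold, making these terms continuous functionals of $u$ and, together with convexity, weakly lower semicontinuous. Everything else is a routine application of the direct method of the calculus of variations.
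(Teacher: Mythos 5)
Your proof is correct, but at the decisive lower-semicontinuity step it takes a genuinely different route from the paper's. The paper argues by contradiction: it extracts, via its Rellich--Kondrachov statement (Theorem \ref{thm:rellichK}), a subsequence converging weakly in $H^{m+1}$ and strongly in $H^m$, then splits $\mathscr{R}_n^{(\mathrm{reg})}$ into a data/boundary part $F$ (handled by continuity under strong $H^m$ convergence, using the Sobolev embedding) and an integral part $I$ whose Lagrangian is convex in the highest-order derivatives, the weak lower semicontinuity of $I$ being the content of the dedicated Lemma \ref{lem:lowerSemiC0Lin} (Egorov's theorem, dominated convergence, etc.). You instead observe that, because the $\mathscr{F}_k$ are affine, the \emph{entire} functional is convex and norm-continuous on $H^{m+1}(\Omega,\mathbb{R}^{d_2})$, hence weakly lower semicontinuous by Mazur's theorem; uniqueness of the minimizer from Proposition \ref{prop:laxMLin} then gives weak convergence of the whole sequence, and the compact embedding $H^{m+1}(\Omega)\hookrightarrow H^m(\Omega)$ upgrades this to strong $H^m$ convergence. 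Your argument is shorter and bypasses the Lagrangian lemma entirely, at the price of exploiting convexity of the full functional, which holds only in the affine (linear-PDE) setting of Section \ref{sec:functional}; the paper's lemma, requiring convexity only in the top-order variables, is the component that would survive a relaxation of that assumption. Two small points to make explicit: your Step 3 silently uses the standing affineness assumption (the proposition as stated does not repeat it, but it is needed both for convexity and for the very definition of $\hat u_n$ via Proposition \ref{prop:laxMLin}), and the density of $C^\infty(\bar\Omega,\mathbb{R}^{d_2})$ in $H^{m+1}(\Omega,\mathbb{R}^{d_2})$ invoked in Step 1 rests on the Lipschitz extension theorem, as the paper notes, not merely on interior density of smooth functions.
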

The next theorem follows from Theorem \ref{thm:approximation} and Proposition \ref{prop:sequenceCvLin}, by simply observing that the Sobolev regularization is just an ordinary PINN regularization, taking the form of a polynomial operator of degree $(m+2)$.
\begin{thm}[Strong convergence of regularized PINNs]
\label{thm:functionalCv}
Assume that $\mathscr{F}_1, \hdots, \mathscr{F}_M$ are affine operators of order $K$. 
Assume, in addition, that $\lambda_t >0$, $m \geqslant \max(\lfloor d_1/2\rfloor, K)$, and the condition function $h$ is Lipschitz.
Let $(\hat{\theta}^{(\mathrm{reg})}(p, n_e, n_r, D))_{p \in \mathbb{N}}$ be a minimizing sequence of the regularized empirical risk function
    \begin{equation*}
        R_{n, n_e,n_r}^{(\mathrm{reg)}}(u_\theta) = R_{n, n_e,n_r}(u_\theta) + \lambda_{(\mathrm{ridge})} \|\theta\|_2^2 + \frac{\lambda_t}{n_\ell} \sum_{\ell=1}^{n_\ell} \sum_{|\alpha|\leqslant m+1}\|\partial^\alpha u_\theta(\bX_{\ell}^{(r)})\|_2^2
    \end{equation*}
    over the class $\mathrm{NN}_H(D)=\{u_\theta, \theta\in\Theta_{H,D}\}$, where $H \geqslant 2$.
    Then, with the choice
\[\lambda_{(\mathrm{ridge})} = \min(n_e, n_r)^{-\kappa}, \quad \text{where} \quad  \kappa=\frac{1}{12+4H(1+(2+H)(m+2))},\]
one has, almost surely, 
\[\lim_{D \to \infty} \lim_{n_e, n_r \to \infty} \lim_{p\to \infty} \|u_{\hat{\theta}^{(\mathrm{reg})}(p, n_e, n_r, D)} - \hat u_n\|_{H^m(\Omega)} = 0,\]
where $\hat u_n$ is the unique minimizer of $\mathscr{R}^{\mathrm{(reg)}}_n$ over $H^{m+1}(\Omega, \mathbb{R}^{d_2})$.
\end{thm}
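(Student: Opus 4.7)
Following the hint right after the theorem statement, the strategy is to reduce the Sobolev regularization to the ridge framework already covered by Theorem \ref{thm:approximation}, and then pass from risk-consistency to strong $H^m$ convergence through Proposition \ref{prop:sequenceCvLin}.

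\textbf{Step 1: Recast the Sobolev penalty as PDE-type constraints.} For every multi-index $\alpha$ with $|\alpha|\leqslant m+1$ and every coordinate $i\in\{1,\dots,d_2\}$, I introduce the additional operator $\mathscr{F}_{\alpha,i}(u,\bx)=\partial^{\alpha}u_i(\bx)$. Each such operator is affine of order $m+1$ and, viewed as a polynomial operator in the sense of Definitions \ref{defi:polyop}--\ref{defi:deg}, has degree exactly $|\alpha|+1\leqslant m+2$. Absorbing the constant $\lambda_t/|\Omega|$ into the polynomial coefficients, the regularized empirical risk $R_{n,n_e,n_r}^{(\mathrm{reg})}$ coincides with the ridge empirical risk of Theorem \ref{thm:generalization_error} associated with the enlarged family $\mathscr{F}_1,\dots,\mathscr{F}_M,\{\mathscr{F}_{\alpha,i}\}$; an identical rewriting turns $\mathscr{R}_n^{(\mathrm{reg})}$ into the corresponding theoretical risk. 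Since $m\geqslant K$, the affine constraints already present have degree at most $K+1\leqslant m+1$, so the maximum degree of the enlarged family is exactly $m+2$.

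\textbf{Step 2: Apply Theorem \ref{thm:approximation}.} With this new polynomial-operator family, the assumption on the ridge hyperparameter $\lambda_{(\mathrm{ridge})}=\min(n_e,n_r)^{-\kappa}$ with $\kappa=1/(12+4H(1+(2+H)(m+2)))$ is exactly the value prescribed by Theorem \ref{thm:approximation} for the augmented problem. Theorem \ref{thm:approximation} then yields, almost surely,
\begin{equation*}
\lim_{D\to\infty}\lim_{n_e,n_r\to\infty}\lim_{p\to\infty}\mathscr{R}_n^{(\mathrm{reg})}\bigl(u_{\hat\theta^{(\mathrm{reg})}(p,n_e,n_r,D)}\bigr) \;=\; \inf_{u\in C^\infty(\bar\Omega,\mathbb{R}^{d_2})}\mathscr{R}_n^{(\mathrm{reg})}(u).
\end{equation*}
Density of $C^\infty(\bar\Omega,\mathbb{R}^{d_2})$ in $H^{m+1}(\Omega,\mathbb{R}^{d_2})$ on a bounded Lipschitz domain, combined with the continuity of $\mathscr{R}_n^{(\mathrm{reg})}$ on $H^{m+1}$ (the pointwise evaluations make sense via the Sobolev embedding $H^{m+1}\hookrightarrow C^0$, which holds since $m+1>d_1/2$; the affine differential terms are continuous because each $\mathscr{F}_k^{(\mathrm{lin})}$ maps $H^{m+1}\to L^2$ continuously for $m+1\geqslant K$), shows that this infimum equals $\inf_{H^{m+1}(\Omega,\mathbb{R}^{d_2})}\mathscr{R}_n^{(\mathrm{reg})}=\mathscr{R}_n^{(\mathrm{reg})}(\hat u_n)$ by Proposition \ref{prop:laxMLin}.

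\textbf{Step 3: Invoke Proposition \ref{prop:sequenceCvLin}.} Neural networks belong to $C^\infty(\bar\Omega,\mathbb{R}^{d_2})$, so any sequence of PINN estimators whose regularized theoretical risk tends to the infimum falls under the scope of Proposition \ref{prop:sequenceCvLin}, which delivers convergence in $\|\cdot\|_{H^m(\Omega)}$ to $\hat u_n$. To upgrade this from a single sequence to the nested-limit statement, I argue by contradiction: if the triple limit in $H^m$ were not zero, one could extract indices $(p_q,n_{e,q},n_{r,q},D_q)$ with $\|u_q-\hat u_n\|_{H^m(\Omega)}\geqslant\varepsilon_0$; but by the nested-limit risk-consistency of Step 2 one may also choose these indices so that $\mathscr{R}_n^{(\mathrm{reg})}(u_q)\to\mathscr{R}_n^{(\mathrm{reg})}(\hat u_n)$, contradicting Proposition \ref{prop:sequenceCvLin}.

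\textbf{Expected main obstacle.} The conceptual steps are transparent; the delicate point is the diagonal-extraction bookkeeping used to cast the three nested limits of Theorem \ref{thm:approximation} into a single risk-minimizing sequence suitable for Proposition \ref{prop:sequenceCvLin}. One must verify that the almost-sure exceptional set produced by Theorem \ref{thm:approximation} does not depend on the choice of extraction, and that the sharing (or independence) of the sample $\bX^{(r)}_\ell$ between the original PDE constraints and the added Sobolev constraints does not affect the uniform-convergence arguments on which Theorem \ref{thm:approximation} relies, both of which reduce to the fact that in each case the samples are i.i.d.\ uniform on $\Omega$.
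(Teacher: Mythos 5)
Your proposal is correct and follows essentially the same route as the paper: the paper's proof simply observes that the Sobolev penalty is an ordinary PINN regularization given by polynomial operators of degree $m+2$, applies Theorem \ref{thm:approximation} with the prescribed $\kappa$, and then combines Proposition \ref{prop:laxMLin} and Proposition \ref{prop:sequenceCvLin} to pass from risk-consistency to $H^m$ convergence. Your extra care about the nested-limit extraction and the degree bookkeeping is a faithful elaboration of what the paper leaves implicit.
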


Theorem \ref{thm:functionalCv} ensures 
that the sequence $u_{\hat{\theta}^{(\mathrm{reg})}(p, n_e, n_r, D)}$ of PINNs converges to the unique minimizer $\hat u_n$ of the regularized theoretical risk function \eqref{eq:regThRisk}, provided that the ridge hyperparameter $\lambda_{(\mathrm{ridge})}$ vanishes slowly enough.
However, it does not provide any information about the proximity between $u_{\hat{\theta}^{(\mathrm{reg})}(p, n_e, n_r, D)}$ and $u^\star$. 
On the other hand, since the regularized theoretical risk function is a small perturbation of the theoretical risk function \eqref{lossTheorical}, it is reasonable to think that its minimizer $\hat u_n$ should in some way converge to $u^\star$ as $\lambda_t \to 0$. This is encapsulated in Theorem \ref{prop:pdeSolverFunctional} for the PDE  solver setting and in Theorem \ref{cor:sPINNsConsistency} for the more general hybrid modeling setting.

\subsection{The PDE solver case}
\begin{thm}[Strong convergence of linear PDE solvers]
    \label{prop:pdeSolverFunctional}
    Assume that $\mathscr{F}_1, \hdots, \mathscr{F}_M$ are affine operators of order $K$.
    Consider the PDE solver setting (i.e., $\lambda_e >0$ and $\lambda_d = 0$) and assume that the condition function $h$ is Lipschitz. Assume, in addition, that the PDE system admits a unique solution $u^\star$ in $H^{m+1}(\Omega, \mathbb{R}^{d_2})$ for some $m \geqslant \max(\lfloor d_1/2\rfloor, K)$. Let $(\hat{\theta}^{(\mathrm{reg})}(p, n_e, n_r, D, \lambda_t))_{p \in \mathbb{N}}$ be a minimizing sequence of the regularized empirical risk function
    \begin{equation*}
        R_{n_e,n_r}^{(\mathrm{reg)}}(u_\theta) = R_{n_e,n_r}(u_\theta) + \lambda_{(\mathrm{ridge})} \|\theta\|_2^2 + \frac{\lambda_t}{n_\ell} \sum_{\ell=1}^{n_\ell} \sum_{|\alpha|\leqslant m+1}\|\partial^\alpha u_\theta(\bX_{\ell}^{(r)})\|_2^2
    \end{equation*}
    over the class $\mathrm{NN}_H(D)=\{u_\theta, \theta\in\Theta_{H,D}\}$, where $H \geqslant 2$. 
      Then, with the choice
    \[\lambda_{(\mathrm{ridge})} = \min(n_e, n_r)^{-\kappa}, \quad \text{where} \quad  \kappa=\frac{1}{12+4H(1+(2+H)(m+2))},\]
    one has, almost surely, 
    \[\lim_{\lambda_t \to 0}\lim_{D \to \infty} \lim_{n_e, n_r \to \infty} \lim_{p\to \infty} \|u_{\hat{\theta}^{(\mathrm{reg})}(p, n_e, n_r, D, \lambda_t)} -  u^\star\|_{H^m(\Omega)} = 0.\]
\end{thm}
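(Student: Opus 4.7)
The plan is to combine Theorem \ref{thm:functionalCv} (which already takes care of the three inner limits in $p$, $n_e, n_r$, and $D$) with a variational argument showing that the regularized minimizer $\hat u_{\lambda_t}$ converges to $u^\star$ in $H^m$ as $\lambda_t \to 0$. More precisely, denoting by $\hat u_{\lambda_t}$ the unique minimizer of $\mathscr{R}^{(\mathrm{reg})}$ over $H^{m+1}(\Omega, \mathbb{R}^{d_2})$ given by Proposition \ref{prop:laxMLin}, the triangle inequality gives
\[
\|u_{\hat\theta^{(\mathrm{reg})}(p,n_e,n_r,D,\lambda_t)} - u^\star\|_{H^m(\Omega)} \leqslant \|u_{\hat\theta^{(\mathrm{reg})}(p,n_e,n_r,D,\lambda_t)} - \hat u_{\lambda_t}\|_{H^m(\Omega)} + \|\hat u_{\lambda_t} - u^\star\|_{H^m(\Omega)},
\]
and Theorem \ref{thm:functionalCv} ensures that for each fixed $\lambda_t>0$ the first term vanishes almost surely as $p, n_e, n_r, D\to\infty$ in the prescribed order. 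It therefore only remains to prove that $\|\hat u_{\lambda_t} - u^\star\|_{H^m(\Omega)}\to 0$ when $\lambda_t \to 0$.

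Since $u^\star \in H^{m+1}(\Omega, \mathbb{R}^{d_2})$ is a strong solution, $\mathscr{R}(u^\star)=0$, and the minimality of $\hat u_{\lambda_t}$ yields the two a priori bounds
\[
\mathscr{R}(\hat u_{\lambda_t}) \leqslant \mathscr{R}^{(\mathrm{reg})}(\hat u_{\lambda_t}) \leqslant \mathscr{R}^{(\mathrm{reg})}(u^\star) = \lambda_t\|u^\star\|_{H^{m+1}(\Omega)}^2, \qquad \|\hat u_{\lambda_t}\|_{H^{m+1}(\Omega)} \leqslant \|u^\star\|_{H^{m+1}(\Omega)}.
\]
The second bound shows that the family $(\hat u_{\lambda_t})_{\lambda_t>0}$ is bounded in the reflexive Hilbert space $H^{m+1}(\Omega, \mathbb{R}^{d_2})$. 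Hence any subsequence admits a further subsequence $(\hat u_{\lambda_{t_j}})_j$ converging weakly in $H^{m+1}$ to some $u^\infty$. By the Rellich--Kondrachov theorem, the embedding $H^{m+1}(\Omega, \mathbb{R}^{d_2}) \hookrightarrow H^m(\Omega, \mathbb{R}^{d_2})$ is compact, so the convergence is strong in $H^m$.

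The next step is to identify $u^\infty$. Since each $\mathscr{F}_k$ is affine of order $K\leqslant m$, the map $u\mapsto \mathscr{F}_k(u,\cdot)$ is continuous from $H^m(\Omega, \mathbb{R}^{d_2})$ into $L^2(\Omega)$; similarly, the trace map $H^{m+1}(\Omega, \mathbb{R}^{d_2}) \to L^2(\partial\Omega, \mathbb{R}^{d_2})$ is continuous, and Proposition \ref{prop:laxMLin} guarantees that the Sobolev embedding $\tilde\Pi$ gives a well-defined pointwise/boundary evaluation. Consequently $\mathscr{R}$ is (strongly) continuous along the subsequence, and passing to the limit in the first bound,
\[
\mathscr{R}(u^\infty) = \lim_{j\to\infty}\mathscr{R}(\hat u_{\lambda_{t_j}}) \leqslant \lim_{j\to\infty}\lambda_{t_j}\|u^\star\|_{H^{m+1}(\Omega)}^2 = 0.
\]
Thus $u^\infty$ satisfies $\mathscr{F}_k(u^\infty,\cdot) = 0$ almost everywhere in $\Omega$ for every $k$, and matches the boundary condition $h$ on $E$. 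By the uniqueness of the solution in $H^{m+1}$, $u^\infty = u^\star$. Since every subsequence of $(\hat u_{\lambda_t})$ admits a further subsequence converging to the same limit $u^\star$ in $H^m$, the whole family converges, $\|\hat u_{\lambda_t}-u^\star\|_{H^m(\Omega)}\to 0$, which concludes the proof by the triangle inequality displayed above.

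The main obstacle is verifying the continuity (or at least weak lower semicontinuity) of $\mathscr{R}$ with respect to the appropriate topology so that the argument ``$\mathscr{R}(\hat u_{\lambda_t})\to 0$ implies $\mathscr{R}(u^\infty)=0$'' is rigorous; this relies on the assumption $m\geqslant \max(\lfloor d_1/2\rfloor, K)$, which ensures both that the differential operators act continuously on $H^m$ and that the trace on $\partial\Omega$ is controlled. Once this functional-analytic setup is in place, the uniqueness of $u^\star$ does the rest.
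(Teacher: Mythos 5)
Your proposal is correct and follows essentially the same route as the paper: the inner three limits are delegated to Theorem \ref{thm:functionalCv}, and the $\lambda_t \to 0$ limit is handled via the a priori bound $\|\hat u_{\lambda_t}\|_{H^{m+1}(\Omega)} \leqslant \|u^\star\|_{H^{m+1}(\Omega)}$, Rellich--Kondrachov compactness, continuity of $\mathscr{R}$ in the $H^m$ norm (using $m \geqslant \max(\lfloor d_1/2\rfloor, K)$), and uniqueness of $u^\star$. The only cosmetic difference is that you use the subsequence-of-subsequence principle where the paper argues by contradiction; the two are logically equivalent.
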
 
Back to Example \ref{ex:degeneratePINN}, one has $m = 1$.  
Recall that, in this setting, the unique minimizer of $\mathscr{R}$ over $C^0([-1,1], \mathbb{R})$ is $u^\star(\bx) = 1$, satisfying $u^\star \in H^2(]-1,1[, \mathbb{R})$. Therefore, by letting $\lambda_t \to 0$, this theorem shows that any sequence minimizing the regularized empirical risk function converges, with respect to the $H^2(\Omega)$ norm, to the unique strong solution $u^\star$ of the PDE $\bx u'(\bx) = 0$ and $u(1)=1$.  
\begin{remark}[Dimensionless hyperparameters and lower regularity assumptions on $u^\star$]
    The condition  $m \geqslant \lfloor d_1 / 2 \rfloor$ in Theorem \ref{thm:functionalCv} is necessary to make the pointwise evaluations $\tilde \Pi(u)(\bX_i)$ continuous.
    This condition does have an impact on $\lambda_{(\mathrm{ridge})}$, which grows exponentially fast with the dimension $d_1$. However, in the PDE solver setting, it is possible to get rid of this dimension problem, taking $m = \max_k\deg(\mathscr{F}_k)$. To see this, just note that there is no $\bX_i$, and so there is no need to resort to the $\tilde \Pi(u)(\bX_i)$.
    Indeed, the proof of 
    Theorem \ref{prop:pdeSolverFunctional} can be adapted by replacing the Sobolev inequalities in the proofs of Theorem \ref{thm:functionalCv} by the trace theorem for Lipschitz domains \citep[e.g.,][Theorem 1.5.1.10]{grisvard1985elliptic}. 
    In this case, it is enough to assume that $u^\star \in H^{K+1}(\Omega, \mathbb{R}^{d_2})$, which is less restrictive than $u^\star \in H^{\max(\lfloor d_1/2\rfloor, K)+1}(\Omega, \mathbb{R}^{d_2})$. However, this comes at the price of assuming that $\mu_E$ admits a density with respect to the hypersurface measure on $\partial \Omega$ (as it is often the case in practice).
\end{remark}

\subsection{The hybrid modeling case}
To apply Theorem \ref{thm:functionalCv} to the general  hybrid modeling setting, it is necessary to measure the gap between $u^\star$ and the model specified by the constraints $\mathscr{F}_1, \hdots, \mathscr{F}_M$ and the condition function $h$. This is encapsulated in the next definition.
\begin{defi}[Physics inconsistency]
    For any $u \in H^{m+1}(\Omega, \mathbb{R}^{d_2})$, the physics inconsistency of $u$ is defined by
    \[\mathrm{PI}(u) = \lambda_e\mathbb{E}\|\tilde \Pi(u)(\bX^{(e)})-h(\bX^{(e)})\|_2^2 + \frac{1}{|\Omega|}\sum_{k=1}^{M}\int_\Omega \mathscr{F}_k(u,\bx)^2 d\bx.\] 
\end{defi}
Observe that $\mathscr{R}_n(u) = \frac{\lambda_d}{n} \sum_{i=1}^n \|\tilde \Pi(u)(\bX_i) - Y_i\|_2^2 + \mathrm{PI}(u)$.
In short, the quantity $\mathrm{PI}(u)$ measures how well the boundary/initial conditions, encoded by $h$, and the PDE system, encoded by the $\mathscr{F}_k$, describe the function $u$ \citep[see also][]{willard2023integrating}. In particular, $\mathrm{PI}(u^\star)$ measures the modeling error---the better the model, the lower $\mathrm{PI}(u^\star)$.

\begin{prop}[Strong convergence of hybrid modeling]
    \label{prop:consistencey} 
    Assume that the conditions of Theorem \ref{thm:functionalCv} are satisfied. 
    Then $\hat u_n\equiv \hat u_n(\bX_1, \hdots, \bX_n, \varepsilon_1, \hdots, \varepsilon_n)$ is a random variable such that $\mathbb{E}\|\hat u_n\|^2_{H^{m+1}(\Omega)} < \infty$.
    
   Suppose, in addition, that $u^\star \in H^{m+1}(\Omega, \mathbb{R}^{d_2})$,  that the noise $\varepsilon$ is independent from $\bX$, and that $\varepsilon$ has the same distribution as $-\varepsilon$. 
    Then there exists a constant $C_\Omega > 0$, depending only on $\Omega$, such that \begin{align*}
        \mathbb{E}\int_\Omega \|\tilde{\Pi} (\hat u_n - u^\star)\|_2^2d\mu_\bX &\leqslant \frac{1}{\lambda_d}\big(\mathrm{PI}(u^\star) + \lambda_t\|u^\star\|_{H^{m+1}(\Omega)}^2\big)\\
        &\quad + \frac{C_\Omega d_2^{1/2}}{n^{1/2}} \Big(2\|u^\star\|_{H^{m+1}(\Omega)}^2 + \frac{\mathrm{PI}(u^\star)}{\lambda_t}\Big)\\
        &\quad +\frac{8\mathbb{E}\|\varepsilon\|_2^2}{n}\Big(1+C_\Omega d_2^{3/2}\Big(\frac{\lambda_d}{\lambda_t} +\frac{\lambda_d^2}{\lambda_t^2n^{1/2}}\Big)\Big).
    \end{align*}
    In particular, with the choice $\lambda_e = 1$, $\lambda_t = (\log n)^{-1}$, and $\lambda_d = n^{1/2}/(\log n)$, one has
    \[\mathbb{E}\int_\Omega \|\tilde{\Pi} (\hat u_n - u^\star)\|_2^2d\mu_\bX \leqslant 
     \frac{\Lambda \log^2(n)}{n^{1/2} },\]
    where $\Lambda = 24d_2^{3/2}C_\Omega(\mathrm{PI}(u^\star) + \|u^\star\|_{H^{m+1}(\Omega)} + \mathbb{E}\|\varepsilon\|_2^2)$.
\end{prop}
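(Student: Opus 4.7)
The argument proceeds in three steps: a basic oracle inequality derived from the minimization property, a symmetrization argument to control the noise cross term, and a passage from the empirical to the population norm. First, Proposition \ref{prop:laxMLin} applied to the regularized risk $\mathscr{R}_n^{(\mathrm{reg})}$ provides existence and uniqueness of $\hat u_n$ as the solution of the variational problem $\mathcal{A}_n(\hat u_n,\cdot) = \mathcal{B}_n(\cdot)$, and the continuity of the map $(\mathcal{A}_n,\mathcal{B}_n)\mapsto\hat u_n$ yields measurability of $\hat u_n$ in $((\bX_i,\varepsilon_i))_i$. The minimizer property $\mathscr{R}_n^{(\mathrm{reg})}(\hat u_n) \leqslant \mathscr{R}_n^{(\mathrm{reg})}(u^\star)$ combined with $u^\star \in H^{m+1}(\Omega, \mathbb{R}^{d_2})$ yields $\lambda_t \mathbb{E}\|\hat u_n\|^2_{H^{m+1}(\Omega)} < \infty$. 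Substituting $Y_i = u^\star(\bX_i) + \varepsilon_i$, expanding squared norms and cancelling the common noise-variance term $\frac{\lambda_d}{n}\sum_{i=1}^n\|\varepsilon_i\|_2^2$ produces the starting oracle inequality
\[
\frac{\lambda_d}{n}\sum_{i=1}^n\|\tilde \Pi(\hat u_n - u^\star)(\bX_i)\|_2^2 + \mathrm{PI}(\hat u_n) + \lambda_t\|\hat u_n\|_{H^{m+1}(\Omega)}^2 \leqslant \mathrm{PI}(u^\star) + \lambda_t\|u^\star\|_{H^{m+1}(\Omega)}^2 + \frac{2\lambda_d}{n}\sum_{i=1}^n\langle\tilde\Pi(\hat u_n - u^\star)(\bX_i), \varepsilon_i\rangle.
\]

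To bound the expectation of the cross term on the right, I use the symmetry $\varepsilon \stackrel{d}{=} -\varepsilon$: define $\hat u_n^-$ to be the minimizer of $\mathscr{R}_n^{(\mathrm{reg})}$ computed with $(-\varepsilon_i)_i$ in place of $(\varepsilon_i)_i$. Since $(\hat u_n, \varepsilon)$ and $(\hat u_n^-, -\varepsilon)$ share the same joint distribution, a short computation gives $\mathbb{E}\sum_{i=1}^n \langle\tilde\Pi(\hat u_n - u^\star)(\bX_i), \varepsilon_i\rangle = \tfrac{1}{2}\mathbb{E}\sum_{i=1}^n \langle\tilde\Pi(v)(\bX_i),\varepsilon_i\rangle$ with $v := \hat u_n - \hat u_n^-$. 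Subtracting the weak formulations of Proposition \ref{prop:laxMLin} for $\hat u_n$ and $\hat u_n^-$, $v$ satisfies $\mathcal{A}_n(v,w) = \frac{2\lambda_d}{n}\sum_{i=1}^n\langle \varepsilon_i, \tilde\Pi(w)(\bX_i)\rangle$ for all $w\in H^{m+1}(\Omega, \mathbb{R}^{d_2})$. Taking $w = v$ and exploiting simultaneously the coercivity $\mathcal{A}_n(v,v) \geqslant \lambda_t\|v\|_{H^{m+1}(\Omega)}^2$, the empirical coercivity $\mathcal{A}_n(v,v) \geqslant \frac{\lambda_d}{n}\sum_{i=1}^n\|\tilde\Pi(v)(\bX_i)\|_2^2$, and the Sobolev embedding $H^{m+1}(\Omega, \mathbb{R}^{d_2}) \hookrightarrow C^0(\bar\Omega, \mathbb{R}^{d_2})$ (valid by $m+1 > d_1/2$), I derive an estimate of the form $\|v\|_{H^{m+1}(\Omega)} \leqslant C\lambda_d(\lambda_t \sqrt n)^{-1} (\sum_{i=1}^n\|\varepsilon_i\|_2^2)^{1/2}$. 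Re-injecting this bound via Cauchy-Schwarz in the symmetrized cross term, after taking expectation, yields the contribution proportional to $\mathbb{E}\|\varepsilon\|_2^2\cdot \lambda_d/\lambda_t$ appearing in the final bound.

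To finish, I pass from the empirical norm $\frac{1}{n}\sum_{i=1}^n\|\tilde\Pi(\hat u_n - u^\star)(\bX_i)\|_2^2$ to the population norm $\int\|\tilde\Pi(\hat u_n - u^\star)\|_2^2 d\mu_\bX$ by writing their difference as a centred empirical process and bounding its expectation by $C n^{-1/2}\|\tilde\Pi(\hat u_n - u^\star)\|_\infty^2$; the sup norm is in turn controlled by the Sobolev embedding in terms of $\|\hat u_n\|_{H^{m+1}(\Omega)} + \|u^\star\|_{H^{m+1}(\Omega)}$, and $\mathbb{E}\|\hat u_n\|_{H^{m+1}(\Omega)}^2$ is bounded using the first two steps, producing the correction term $\lambda_d^2/(\lambda_t^2 n^{1/2})$. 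Assembling the three pieces gives the stated oracle inequality; the \emph{in-particular} statement follows by plugging $\lambda_e = 1$, $\lambda_t = (\log n)^{-1}$, $\lambda_d = n^{1/2}/\log n$ and checking that each contribution is dominated by a constant multiple of $\log^2(n)/\sqrt n$. I expect the main obstacle to be the cross-term analysis of step two: a crude Cauchy-Schwarz bound using only the empirical coercivity fails to decay as $n\to\infty$, and one must carefully combine the Sobolev regularization $\lambda_t\|v\|_{H^{m+1}(\Omega)}^2$ with the embedding $H^{m+1}\hookrightarrow C^0$ (which is the place where the assumption $m+1 > d_1/2$ is used in an essential way) in order to recover the correct $n$-dependence.
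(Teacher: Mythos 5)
Your overall architecture is sound and in fact close in spirit to the paper's: your $v := \hat u_n - \hat u_n^-$ is exactly twice the paper's noise component $\hat u_n^{(\mathrm{noise})}$, your oracle inequality and symmetrization identity are correct, and measurability plus the $H^{m+1}$ moment bound are fine. The genuine gap is in your Step 2 bound on $v$. From $\mathcal{A}_n(v,v)=\frac{2\lambda_d}{n}\sum_i\langle\varepsilon_i,\tilde\Pi(v)(\bX_i)\rangle$, coercivity and the Sobolev embedding only give $\lambda_t\|v\|_{H^{m+1}(\Omega)}^2\lesssim \frac{\lambda_d}{n}\|v\|_{H^{m+1}(\Omega)}\sum_i\|\varepsilon_i\|_2$, i.e.\ $\|v\|_{H^{m+1}(\Omega)}\lesssim\frac{\lambda_d}{\lambda_t n}\sum_i\|\varepsilon_i\|_2$, which is the same as your stated bound $\frac{\lambda_d}{\lambda_t\sqrt n}\bigl(\sum_i\|\varepsilon_i\|_2^2\bigr)^{1/2}$. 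But $\sum_i\|\varepsilon_i\|_2$ is of order $n$ (the summands are nonnegative, not centred), so this bound does \emph{not} decay in $n$: in expectation it is of order $\lambda_d/\lambda_t$, whereas the paper obtains $\mathbb{E}\|\hat u_n^{(\mathrm{noise})}\|_{H^{m+1}(\Omega)}^2\lesssim\frac{\lambda_d^2}{n\lambda_t^2}\mathbb{E}\|\varepsilon\|_2^2$, with the crucial extra factor $1/n$. Re-injecting your bound into the cross term yields a contribution of order $\frac{\lambda_d}{\lambda_t}\mathbb{E}\|\varepsilon\|_2^2$ rather than $\frac{\lambda_d}{n\lambda_t}\mathbb{E}\|\varepsilon\|_2^2$; with $\lambda_d=n^{1/2}/\log n$ and $\lambda_t=(\log n)^{-1}$ this is $n^{1/2}\,\mathbb{E}\|\varepsilon\|_2^2$, which diverges, so the ``in particular'' rate $\log^2(n)/n^{1/2}$ cannot be recovered along your route as written.

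The missing idea is that the $1/\sqrt n$ gain in the noise term cannot come from a pathwise Cauchy--Schwarz argument (which treats the $\varepsilon_i$ as adversarially aligned with $v$, while $v$ depends on all of them, so you also cannot use $\mathbb{E}\langle\tilde\Pi(v)(\bX_i),\varepsilon_i\rangle=0$ directly). One must decouple $v$ from the noise through a \emph{uniform} bound over the unit ball of $H^{m+1}(\Omega,\mathbb{R}^{d_2})$: split $\frac1n\sum_i\langle\tilde\Pi(u)(\bX_i),\varepsilon_i\rangle$ into the mean part $\langle\mathbb{E}\tilde\Pi(u)(\bX),\frac1n\sum_i\varepsilon_i\rangle$, controlled by $\mathbb{E}\|\frac1n\sum_i\varepsilon_i\|_2^2=\mathbb{E}\|\varepsilon\|_2^2/n$, and the centred empirical process, whose supremum over the Sobolev ball is of order $n^{-1/2}$ by a chaining (Dudley) argument based on the entropy of the $H^{m+1}$ ball (this is precisely the content of Lemmas \ref{lem:entropy} and \ref{lem:empiricalProcess}, and it is where the extension theorem for Lipschitz domains and $m\geqslant1$ are used). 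The same issue recurs in your Step 3: since $\hat u_n$ depends on $\bX_1,\hdots,\bX_n$, the passage from the empirical to the population $L^2(\mu_\bX)$ norm at rate $n^{-1/2}$ also requires the uniform bound of Lemma \ref{lem:empiricalL2} over the $H^{m+1}$ ball, not a fixed-function concentration estimate. Once these two uniform empirical-process ingredients are supplied, your symmetrization route does reassemble into the stated inequality, essentially reproducing the paper's bias--variance decomposition in a slightly different packaging.
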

This (nonasymptotic) proposition provides an insight into the scaling of the PINN hyperparameters. Indeed, the term $\frac{1}{\lambda_d}(\mathrm{PI}(u^\star) + \lambda_t\|u^\star\|_{H^{m+1}(\Omega)}) $ encapsulates the modeling error, damped by the weight $\lambda_d$. However, $\lambda_d$ cannot be arbitrarily large because of the term $\frac{8\mathbb{E}\|\varepsilon\|_2^2}{n}\big(1+C_\Omega d_2^{3/2}\big(\frac{\lambda_d}{\lambda_t} +\frac{\lambda_d^2}{\lambda_t^2n^{1/2}}\big)\big)$.
So, there is a trade-off between the modeling error and the random variation in the data.
Note also the other trade-off in the regularization hyperparameter $\lambda_t$, which should not converge to $0$ too quickly because of the term $\frac{C_\Omega d_2^{1/2}}{n^{1/2}} \big(2\|u^\star\|_{H^{m+1}(\Omega)}^2 + \frac{\mathrm{PI}(u^\star)}{\lambda_t}\big)$.
\begin{prop}[Physics consistency of hybrid modeling]
    \label{thm:phyCst}
     Under the conditions of Proposition \ref{prop:consistencey}, if $\lim_{n \to \infty} \frac{\lambda_d^2}{n \lambda_t} = 0$ and $\lim_{n \to \infty}\lambda_t = 0$, one has
    \[\mathbb{E}(\mathrm{PI}(\hat u_n)) \leqslant \mathrm{PI}(u^\star) + \oequivalent_{n\to \infty}(1).\]
    (Note that the conditions are satisfied with $\lambda_e = 1$, $\lambda_t = (\log n)^{-1}$, and $\lambda_d = n^{1/2}/(\log n)$.)
\end{prop}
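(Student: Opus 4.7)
The plan has three main steps: use the optimality of $\hat u_n$, apply a symmetry trick based on $\varepsilon \stackrel{d}{=}-\varepsilon$, and derive a stability bound from the strong convexity induced by the Sobolev penalty. Starting from $\mathscr{R}_n^{(\mathrm{reg})}(\hat u_n) \leq \mathscr{R}_n^{(\mathrm{reg})}(u^\star)$ (valid since $u^\star \in H^{m+1}(\Omega, \mathbb{R}^{d_2})$), I would write $Y_i = u^\star(\bX_i) + \varepsilon_i$ (pointwise evaluation of $u^\star$ being justified by the Sobolev embedding $H^{m+1}(\Omega, \mathbb{R}^{d_2}) \hookrightarrow C^0(\bar\Omega, \mathbb{R}^{d_2})$, since $m+1 > d_1/2$), expand
\begin{equation*}
\|\tilde\Pi(\hat u_n)(\bX_i) - Y_i\|_2^2 = \|\tilde\Pi(\hat u_n - u^\star)(\bX_i)\|_2^2 - 2\langle \tilde\Pi(\hat u_n - u^\star)(\bX_i), \varepsilon_i\rangle + \|\varepsilon_i\|_2^2,
\end{equation*}
cancel the $\|\varepsilon_i\|_2^2$ terms, drop the nonpositive contributions $-\lambda_t\|\hat u_n\|_{H^{m+1}(\Omega)}^2$ and $-\lambda_d n^{-1}\sum_i \|\tilde\Pi(\hat u_n - u^\star)(\bX_i)\|_2^2$, and take expectation to obtain
\begin{equation*}
\mathbb{E}[\mathrm{PI}(\hat u_n)] - \mathrm{PI}(u^\star) \leq \lambda_t\|u^\star\|_{H^{m+1}(\Omega)}^2 + \frac{2\lambda_d}{n}\sum_{i=1}^n \mathbb{E}\langle \tilde\Pi(\hat u_n)(\bX_i), \varepsilon_i\rangle,
\end{equation*}
where I also used that $\mathbb{E}\langle u^\star(\bX_i), \varepsilon_i\rangle = 0$ by independence of $\varepsilon$ and $\bX$ and $\mathbb{E}\varepsilon = 0$ (itself a consequence of the symmetry assumption). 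Since $\lambda_t \to 0$, the task reduces to bounding the cross terms, which are nonzero because $\hat u_n$ depends on $\varepsilon_i$.

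To control each cross term, let $\hat u_n^{i,-}$ denote the minimizer of $\mathscr{R}_n^{(\mathrm{reg}), i,-}$, the regularized empirical risk in which $\varepsilon_i$ is replaced by $-\varepsilon_i$. Since $\varepsilon_i \stackrel{d}{=}-\varepsilon_i$ independently from everything else, $\mathbb{E}\langle \tilde\Pi(\hat u_n)(\bX_i), \varepsilon_i\rangle = -\mathbb{E}\langle\tilde\Pi(\hat u_n^{i,-})(\bX_i), \varepsilon_i\rangle$, so that
\begin{equation*}
2\mathbb{E}\langle\tilde\Pi(\hat u_n)(\bX_i), \varepsilon_i\rangle = \mathbb{E}\langle\tilde\Pi(\hat u_n - \hat u_n^{i,-})(\bX_i), \varepsilon_i\rangle.
\end{equation*}
I would then derive a stability bound on $\hat u_n - \hat u_n^{i,-}$. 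Since the $\mathscr{F}_k$ are affine, $\mathscr{R}_n^{(\mathrm{reg})}$ is a sum of convex quadratic functionals plus the $\lambda_t\|\cdot\|_{H^{m+1}(\Omega)}^2$ penalty, hence $2\lambda_t$-strongly convex on $H^{m+1}(\Omega, \mathbb{R}^{d_2})$. Writing the strong convexity inequality at $\hat u_n$ (with test point $\hat u_n^{i,-}$) and at $\hat u_n^{i,-}$ (with test point $\hat u_n$), summing, and noting that $\mathscr{R}_n^{(\mathrm{reg})}(u) - \mathscr{R}_n^{(\mathrm{reg}), i,-}(u) = -\frac{4\lambda_d}{n}\langle \tilde\Pi(u)(\bX_i) - u^\star(\bX_i), \varepsilon_i\rangle$ is \emph{affine} in $u$, I would obtain
\begin{equation*}
\frac{4\lambda_d}{n}\langle\tilde\Pi(\hat u_n - \hat u_n^{i,-})(\bX_i), \varepsilon_i\rangle \geq 2\lambda_t \|\hat u_n - \hat u_n^{i,-}\|_{H^{m+1}(\Omega)}^2.
\end{equation*}
Bounding the left-hand side via Cauchy-Schwarz and the Sobolev embedding (with constant $C$) yields $\|\hat u_n - \hat u_n^{i,-}\|_{H^{m+1}(\Omega)} \leq \frac{2C\lambda_d}{n\lambda_t}\|\varepsilon_i\|_2$.

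Combining this stability bound with the symmetry identity, through Cauchy-Schwarz and the Sobolev embedding once more, gives $|\mathbb{E}\langle \tilde\Pi(\hat u_n)(\bX_i), \varepsilon_i\rangle| \leq \frac{C^2\lambda_d}{n\lambda_t}\mathbb{E}\|\varepsilon\|_2^2$; summing over $i$ and substituting into the first display yields
\begin{equation*}
\mathbb{E}[\mathrm{PI}(\hat u_n)] \leq \mathrm{PI}(u^\star) + \lambda_t\|u^\star\|_{H^{m+1}(\Omega)}^2 + \frac{2C^2\lambda_d^2}{n\lambda_t}\mathbb{E}\|\varepsilon\|_2^2,
\end{equation*}
which is $\mathrm{PI}(u^\star) + \oequivalent_{n\to\infty}(1)$ under the hypotheses $\lambda_t\to 0$ and $\lambda_d^2/(n\lambda_t)\to 0$. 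The most delicate part is the stability bound: it crucially relies on the interplay between the $H^{m+1}$ strong convexity coming from the Sobolev penalty (which requires the $\mathscr{F}_k$ to be affine so that $\mathscr{R}_n^{(\mathrm{reg})}$ is quadratic in $u$) and the Sobolev embedding, which permits the single-point perturbation through $\varepsilon_i$ to be Lipschitz-controlled in the $\|\cdot\|_{H^{m+1}(\Omega)}$ norm.
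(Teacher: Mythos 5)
Your proof is correct, but it takes a genuinely different route from the paper. The paper also starts from the optimality inequality $\mathscr{R}_n^{(\mathrm{reg})}(\hat u_n)\leqslant \mathscr{R}_n^{(\mathrm{reg})}(u^\star)$ and the same expansion of the data-fit term, but it then controls the cross terms $\frac{1}{n}\sum_i\langle \tilde\Pi(\hat u_n-u^\star)(\bX_i),\varepsilon_i\rangle$ by splitting off the mean and invoking the uniform empirical-process bound over the Sobolev ball (Lemma \ref{lem:empiricalProcess}, i.e., Dudley's entropy integral via Lemma \ref{lem:entropy}), combined with the a priori bounds on $\mathbb{E}\|\hat u_n\|^2_{H^{m+1}(\Omega)}$ and the $L^2(\mu_\bX)$ rate already established in Proposition \ref{prop:consistencey}; the comparison is then made at the level of expected risks. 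You instead use a leave-one-out sign-flip symmetrization ($\varepsilon_i\mapsto-\varepsilon_i$) together with a stability estimate $\|\hat u_n-\hat u_n^{i,-}\|_{H^{m+1}(\Omega)}\leqslant \frac{2C\lambda_d}{n\lambda_t}\|\varepsilon_i\|_2$, which follows from the exact quadratic structure of the regularized risk (valid because the $\mathscr{F}_k$ are affine, so $\mathscr{R}_n^{(\mathrm{reg})}(v)-\mathscr{R}_n^{(\mathrm{reg})}(\hat u_n)=\mathcal{A}_n(v-\hat u_n,v-\hat u_n)\geqslant\lambda_t\|v-\hat u_n\|^2_{H^{m+1}(\Omega)}$ by the weak formulation of Proposition \ref{prop:laxMLin}) and the affinity in $u$ of the perturbation $\mathscr{R}_n^{(\mathrm{reg})}-\mathscr{R}_n^{(\mathrm{reg}),i,-}$. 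Your approach is more elementary and self-contained: it avoids the chaining/entropy machinery and does not reuse the conclusion of Proposition \ref{prop:consistencey}, and it yields the explicit bound $\mathrm{PI}(u^\star)+\lambda_t\|u^\star\|^2_{H^{m+1}(\Omega)}+\mathcal{O}(\lambda_d^2/(n\lambda_t))\,\mathbb{E}\|\varepsilon\|_2^2$, which directly exhibits the two hypotheses $\lambda_t\to0$ and $\lambda_d^2/(n\lambda_t)\to0$; what it requires in exchange is precisely the symmetry and independence of the noise and the affine (hence quadratic-risk) structure, all of which are assumed here. The paper's route buys consistency with the rest of Section \ref{sec:functional} (it recycles Lemmas \ref{lem:empiricalL2}--\ref{lem:empiricalProcess} and the bias-variance decomposition). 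Two small points you should make explicit: the minimizer $\hat u_n^{i,-}$ exists, is unique and is measurable by the same Lax--Milgram and measurability arguments as for $\hat u_n$, and the integrability needed to take expectations and to apply the sign-flip change of variables is guaranteed by $\mathbb{E}\|\hat u_n\|^2_{H^{m+1}(\Omega)}<\infty$ (Step 2 of the proof of Proposition \ref{prop:consistencey}) together with $\mathbb{E}\|\varepsilon\|_2^2<\infty$ and the Sobolev embedding.
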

As usual, we let $(u^{(n)}_{\hat{\theta}^{(\mathrm{reg})}(p, n_e, n_r, D)})_{p\in \mathbb{N}} \in \mathrm{NN}_H(D)^\mathbb{N}$ be a minimizing sequence of $R_{n, n_e,n_r}^{(\mathrm{reg)}}$, where the exponent $n$ indicates that the sample size $n$ is kept fixed along the sequence. 
Since $u^{(n)}_{\hat{\theta}^{(\mathrm{reg})}(p, n_e, n_r, D)} \in C^\infty(\bar \Omega, \mathbb{R}^{d_2})$, one has $\tilde \Pi(u^{(n)}_{\hat{\theta}^{(\mathrm{reg})}(p, n_e, n_r, D)}) = u^{(n)}_{\hat{\theta}^{(\mathrm{reg})}(p, n_e, n_r, D)}$. Thus, by combining Theorem \ref{thm:functionalCv} with Propositions \ref{prop:consistencey} and \ref{thm:phyCst}, we obtain the following important theorem.
\begin{thm}[Strong convergence of regularized PINNs]
\label{cor:sPINNsConsistency}
    Under the same assumptions as in 
    Theorem \ref{thm:functionalCv} and Proposition \ref{prop:consistencey}, with the choice $\lambda_e = 1$, $\lambda_t = (\log n)^{-1}$, and $\lambda_d = n^{1/2}/(\log n)$, one has 
    \[\lim_{D \to \infty} \lim_{n_e, n_r \to \infty} \lim_{p\to \infty} \mathbb{E}\int_\Omega \| u^{(n)}_{\hat{\theta}^{(\mathrm{reg})}(p, n_e, n_r, D)} -  u^\star\|_2^2d\mu_\bX \leqslant 
     \frac{\Lambda \log^2(n)}{n^{1/2} }\]
     and 
     \[\lim_{D \to \infty} \lim_{n_e, n_r \to \infty} \lim_{p\to \infty} \mathbb{E}(\mathrm{PI}(u^{(n)}_{\hat{\theta}^{(\mathrm{reg})}(p, n_e, n_r, D)})) \leqslant \mathrm{PI}(u^\star) + \oequivalent_{n\to \infty}(1).\]
\end{thm}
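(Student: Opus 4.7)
The plan is to assemble Theorem \ref{thm:functionalCv} with Propositions \ref{prop:consistencey} and \ref{thm:phyCst}. Since the specified hyperparameters $\lambda_e = 1$, $\lambda_t = (\log n)^{-1}$, $\lambda_d = n^{1/2}/\log n$ satisfy the scaling conditions required by both Proposition \ref{prop:consistencey} (they yield the quantitative $\Lambda\log^2(n)/n^{1/2}$ bound) and Proposition \ref{thm:phyCst} (they ensure $\lambda_d^2/(n\lambda_t)\to 0$ and $\lambda_t\to 0$), the work lies in transferring the almost sure $H^m$-convergence of the PINN sequence given by Theorem \ref{thm:functionalCv} to convergence of expectations of the two functionals $\int_\Omega \|\cdot - u^\star\|_2^2\,d\mu_\bX$ and $\mathrm{PI}(\cdot)$.

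First I would invoke Theorem \ref{thm:functionalCv}: along the nested limits $D\to\infty$, $n_e,n_r\to\infty$, $p\to\infty$, the smooth sequence $u^{(n)}_{\hat\theta^{(\mathrm{reg})}(p,n_e,n_r,D)}$ converges almost surely in the $H^m(\Omega)$ norm to the unique minimizer $\hat u_n$ of $\mathscr{R}^{(\mathrm{reg})}_n$ over $H^{m+1}(\Omega,\mathbb{R}^{d_2})$. Since $m\geqslant \lfloor d_1/2\rfloor$, the Sobolev embedding $\tilde\Pi: H^{m+1}(\Omega,\mathbb{R}^{d_2})\to C^0(\bar\Omega,\mathbb{R}^{d_2})$ is continuous, and the uniform $H^{m+1}$-bound on the minimizing sequence provided by the Sobolev term $\frac{\lambda_t}{n_\ell}\sum_\ell\sum_{|\alpha|\leqslant m+1}\|\partial^\alpha u_\theta(\bX_\ell^{(r)})\|_2^2$ (together with Proposition \ref{prop:bounding}, which controls the Hölder norms of $u_\theta$ by $\|\theta\|_2$ and hence by the ridge penalty) upgrades the strong $H^m$-convergence to convergence of $\tilde\Pi(u^{(n)}_{\hat\theta})$ in $L^2(\mu_\bX)$ and to convergence of $\mathrm{PI}(u^{(n)}_{\hat\theta})\to\mathrm{PI}(\hat u_n)$, because $\mathrm{PI}$ is a continuous quadratic functional on $H^{m+1}$ (affine operators of order $K\leqslant m$ plus a Sobolev-embedded boundary term).

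For the first bound, by the triangle inequality
\[
\int_\Omega \|u^{(n)}_{\hat\theta} - u^\star\|_2^2\,d\mu_\bX
\leqslant 2\int_\Omega \|u^{(n)}_{\hat\theta} - \tilde\Pi(\hat u_n)\|_2^2\,d\mu_\bX
+ 2\int_\Omega \|\tilde\Pi(\hat u_n - u^\star)\|_2^2\,d\mu_\bX.
\]
The first integrand tends to $0$ almost surely along the nested limits by the previous step; taking expectation and interchanging with the limits, it vanishes. The second integral is deterministic in $n$ and its expectation is bounded by $\Lambda\log^2(n)/n^{1/2}$ by Proposition \ref{prop:consistencey} with the prescribed hyperparameters. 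Combining the two yields the first claim. For the second claim, the continuity of $\mathrm{PI}$ on $H^{m+1}$ gives $\mathrm{PI}(u^{(n)}_{\hat\theta})\to\mathrm{PI}(\hat u_n)$ almost surely in the nested limits, and Proposition \ref{thm:phyCst} provides $\mathbb{E}\,\mathrm{PI}(\hat u_n)\leqslant \mathrm{PI}(u^\star) + o_{n\to\infty}(1)$, which is exactly the advertised bound.

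The main obstacle is the justification for interchanging the three nested limits with the expectation. This requires a uniform integrability argument: the ridge term $\lambda_{(\mathrm{ridge})}\|\theta\|_2^2$ in $R^{(\mathrm{reg})}_{n,n_e,n_r}$, combined with Proposition \ref{prop:bounding}, yields a deterministic bound on $\|u^{(n)}_{\hat\theta(p,n_e,n_r,D)}\|_{H^{m+1}(\Omega)}$ that depends only on $n$ (through the random observations $(\bX_i, Y_i)$ and the value of the unpenalized empirical risk at, say, the zero network); this gives integrable dominants for both $\int_\Omega \|u^{(n)}_{\hat\theta} - u^\star\|_2^2 \,d\mu_\bX$ and $\mathrm{PI}(u^{(n)}_{\hat\theta})$ along the sequence. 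Dominated convergence then permits pushing the nested limits inside the expectation, reducing the problem to Proposition \ref{prop:consistencey} and Proposition \ref{thm:phyCst} as indicated.
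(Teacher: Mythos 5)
Your proposal takes essentially the same route as the paper: the paper proves this theorem simply by combining Theorem \ref{thm:functionalCv} (almost-sure convergence of the regularized PINN sequence to $\hat u_n$ in $H^m(\Omega)$, with $\tilde\Pi(u_\theta)=u_\theta$ for smooth networks) with Propositions \ref{prop:consistencey} and \ref{thm:phyCst} evaluated at the prescribed hyperparameters, which is exactly your assembly. Your extra discussion of passing the expectation through the nested limits is detail the paper does not spell out at all; just note that your claimed dominating bound obtained from the ridge penalty is not independent of $n_e,n_r,D$ (it scales with $\lambda_{(\mathrm{ridge})}^{-1}$ and with $D$), so as written it only covers the innermost limit, but this does not change the fact that your overall argument matches the paper's.
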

The minimax regression rate over any bounded class of functions in $C^{(m+1)}(\Omega, \mathbb{R}^{d_2})$ is known to be $n^{-2(m+1)/(2(m+1) +d_1)}$ \citep[][Theorem 1]{stone1982optimal}. Theorem \ref{cor:sPINNsConsistency} shows that the regularized PINN estimator achieves the rate $\log(n)/n^{1/2}$ over any \textit{larger} class bounded in $H^{(m+1)}(\Omega, \mathbb{R}^{d_2})$. Thus, the regularized PINN estimator has the nearly optimal rate, up to a log term, in the regime $d_1 \to \infty$ and $m = \lfloor d_1/2\rfloor$.

Theorem \ref{cor:sPINNsConsistency} shows that a properly regularized PINN estimator 
is both statistically \textit{and} physics consistent, in the sense that the error $\mathbb{E}\int_\Omega \| u^{(n)}_{\hat{\theta}^{(\mathrm{reg})}(p, n_e, n_r, D)} -  u^\star\|_2^2d\mu_\bX $ converges to zero with a physics inconsistency $ \mathbb{E}(\mathrm{PI}(u^{(n)}_{\hat{\theta}^{(\mathrm{reg})}(p, n_e, n_r, D)}))$ that is asymptotically no larger than $ \mathrm{PI}( u^\star)$.  It is also worth mentioning that in some applications, the physical measures $\bX_1, \hdots, \bX_n$ are forced to be sampled in certain subset of $\Omega$. 
An important application is when $\Omega$ is spatio-temporal and one wishes to extrapolate/transfer a model from a training dataset collected on $\mathrm{supp}(\mu_\bX) = \Omega_1 \times ]0,T_{\mathrm{train}}[$ to a test $\Omega_1 \times ]T_{\mathrm{train}},T_{\mathrm{test}}[$, using a temporal evolution PDE system to extrapolate \citep[e.g., ][]{cai2021physics}. On the other hand, the physical restriction on the data measurement can be also strictly spatial. 
This  is for example the case in some blood modeling problems, where the blood flow measures can only be taken in a specific region of a blood vessel, as illustrated in \citet{arzani2021uncovering}. Thus, in both these contexts, the support $\mathrm{supp}(\mu_\bX)$ of the distribution $\mu_\bX$ is strictly contained in $\Omega$.
Of course, this is compatible with Theorem \ref{cor:sPINNsConsistency}, which shows that the regularized PINN estimator consistently interpolates the function $u^\star$ on $\mathrm{supp}(\mu_\bX)$. Furthermore, Theorem \ref{cor:sPINNsConsistency} shows that the estimator uses the physical model to extrapolate on $\Omega \backslash \mathrm{supp}(\mu_\bX)$. 
In summary, the better the model, the lower the modeling error $\mathrm{PI}(u^\star)$, and the better the domain adaptation capabilities. 
This provides an interesting mathematical insight into the relevance of combining data-driven statistical models with the interpretability and extrapolation capabilities of physical modeling.

\smallskip
\paragraph{Numerical illustration of imperfect modeling}  In the following experiments, we  
illustrate with a toy example the results of Theorem \ref{cor:sPINNsConsistency} and show  how the Sobolev regularization can be implemented directly in the PINN framework, taking advantage of the automatic differentiation and backpropagation. 
Let $\Omega = ]0,1[^2$ and assume that $Y =  u^\star(\bX) + \mathcal{N}(0,10^{-2})$, where 
$u^\star(x, t) = \exp(t-x) + 0.1 \cos(2\pi x)$. 
In this hybrid modeling setting, the goal is to reconstruct $u^\star$.
We consider  an advection model of the form $\mathscr{F}(u, \bx) = \partial_x u(\bx) + \partial_t u(\bx)$, with $h(x, 0) = \exp(-x)$ and $h(0, t) = \exp(t)$. The unique solution of this PDE is $u_{\mathrm{model}}(x, t) = \exp(t-x)$ (Figure \ref{fig:hybrid_modelling_nn}, left). Note that the function $u_{\mathrm{model}}$ is different from $u^\star$ (Figure \ref{fig:hybrid_modelling_nn}, middle), which casts our problem in the imperfect modeling setting. This PDE prior is relevant because
$\|u_{\mathrm{model}}-u^\star\|_{L^2(\Omega)}^2 \simeq \exp(-5.3)$ and $\mathrm{PI}(u^\star) \simeq \exp(-1.6)$, two quantities that are negligible with respect to $\|u^\star\|_{L^2(\Omega)}^2\simeq \exp(0.3)$. 
We randomly sample $n$ observations $\bX_1, \hdots, \bX_n$ uniformly on the rectangle $\mathrm{supp}(\mu_\bX) = ]0,0.5[\times ]0,1[ \subsetneq \Omega$ (note that this is a strict inclusion), and let $n$ vary from $n_{\min} = 10$ to $n_{\max} = 10^3$ (linearly in a log scale).  

The architecture of the neural networks is set to $H= 2$ hidden layers with width $D = 100$, so that the total number of parameters is $10\,600 \gg n_{\max}$.
We fix $n_e, n_r = 10^4 \gg n_{\max}$ and $\lambda_{(\mathrm{ridge})} = \min(n_e, n_r)^{-1/2}$.  
Figure \ref{fig:monitoring} shows  the evolution of the regularized risk $R_{n, n_e,n_r}^{(\mathrm{reg)}}(u^{(n)}_{\hat \theta^{(\mathrm{reg})}(p, n_r, n_e, D)})$ in blue, with respect to the number $p$ of epochs in the gradient descent (for $n=10$). 
For a fixed number $n$ of observations,  the number $p_{\max}$ of epochs to stop training is determined by monitoring the evolution of the risk $R_{n, n_e,n_r}^{(\mathrm{reg})}(u^{(n)}_{\hat \theta^{(\mathrm{reg})}(p_{\max}, n_r, n_e, D)})$ (blue curve) and the overfitting gap $\mathrm{OG}_{n, n_e, n_r} = |R_{n, n_e, n_r}^{(\mathrm{reg})}- \mathscr{R}_n^{(\mathrm{reg})}|$ (orange curve). Both are required to be stable around a minimal value, so that the minimum of the risk is approximately reached, i.e., $R_{n, n_e,n_r}^{(\mathrm{reg})}(u^{(n)}_{\hat \theta^{(\mathrm{reg})}(p_{\max}, n_r, n_e, D)}) \simeq \inf_{u \in \mathrm{NN}_H(D)} R_{n, n_e,n_r}^{(\mathrm{reg})}(u)$ and $\mathscr{R}_n^{(\mathrm{reg})}(u^{(n)}_{\hat \theta^{(\mathrm{reg})}(p_{\max}, n_r, n_e, D)}) \simeq \inf_{u \in \mathrm{NN}_H(D)} \mathscr{R}_n^{(\mathrm{reg})}(u)$.
\begin{figure}
    \centering
    \vspace{-0.4cm}
    \includegraphics[width = 0.45\textwidth]{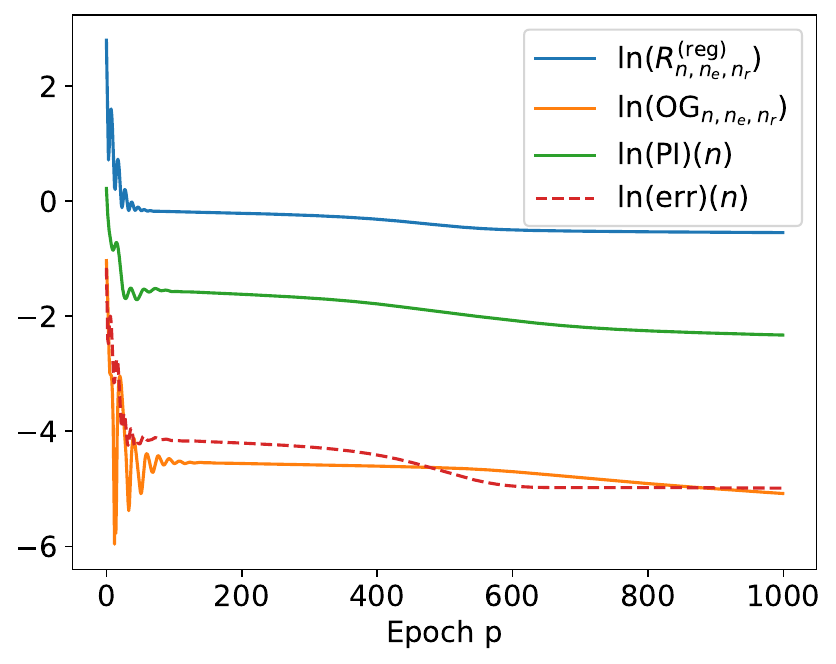}    \caption{Regularized empirical risk (blue) and overfitting gap $\mathrm{OG}$ (orange) with respect to the number $p$ of epochs for $n=10$. The physics inconsistency $\mathrm{PI}(n)$ (green) and the $L^2$ error $\mathrm{err}(n)$ (red) are also depicted.}
    \label{fig:monitoring}
\end{figure}
In this overparameterized regime ($D$ is large), one can consider that $\mathscr{R}_n^{(\mathrm{reg})}(u^{(n)}_{\hat \theta^{(\mathrm{reg})}(p_{\max}, n_r, n_e, D)}) \simeq \inf_{u \in C^\infty(\bar{\Omega}, \mathbb{R}^{d_2})} \mathscr{R}_n^{(\mathrm{reg})}(u)$ (Theorem \ref{thm:approximation}).
Keeping $n_e$, $n_r$, and $\lambda_{\text{ridge}}$ fixed, the proximity between the PINN  and $u^\star$ is measured by
\[\mathrm{err}(n)  = 2\int_0^{0.5}\int_0^1 \|u^{(n)}_{\hat \theta^{(\mathrm{reg})}(p_{\max}, n_r, n_e, D)}(x, t)-u^\star(x, t)\|_2^2 dxdt.\]
According to Theorem \ref{cor:sPINNsConsistency}, there exists some constant $\Lambda > 0$ such that, approximately, 
\[\ln\big(\mathbb{E}(\mathrm{err}(n))\big) \lesssim \ln(\Lambda) - \frac{\ln(n)}{2}.\]
This bound is validated numerically in Figure \ref{fig:linReg}, attesting a linear rate in log-log scale between $\mathrm{err}(n)$ and $n$ of $-0.69 \leqslant -0.5$. 
Furthermore, the second statement of Theorem \ref{cor:sPINNsConsistency} suggests that $\ln \mathrm{PI}(n) = \ln \mathrm{PI}(u^{(n)}_{\hat \theta^{(\mathrm{reg})}(p_{\max}, n_r, n_e, D)}) \leqslant \ln \mathrm{PI}(u^\star) = -1.6$, which is also verified in Figure \ref{fig:linReg}.
\begin{figure}
    \centering
    \begin{tabular}{cc}
    \includegraphics[width = 0.45\textwidth]{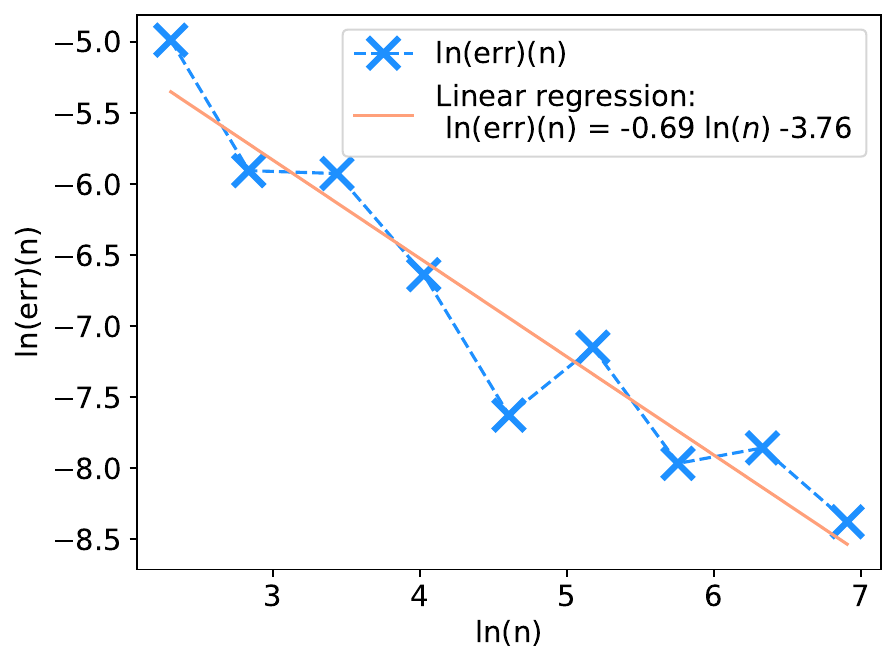} &
    \includegraphics[width = 0.45\textwidth]{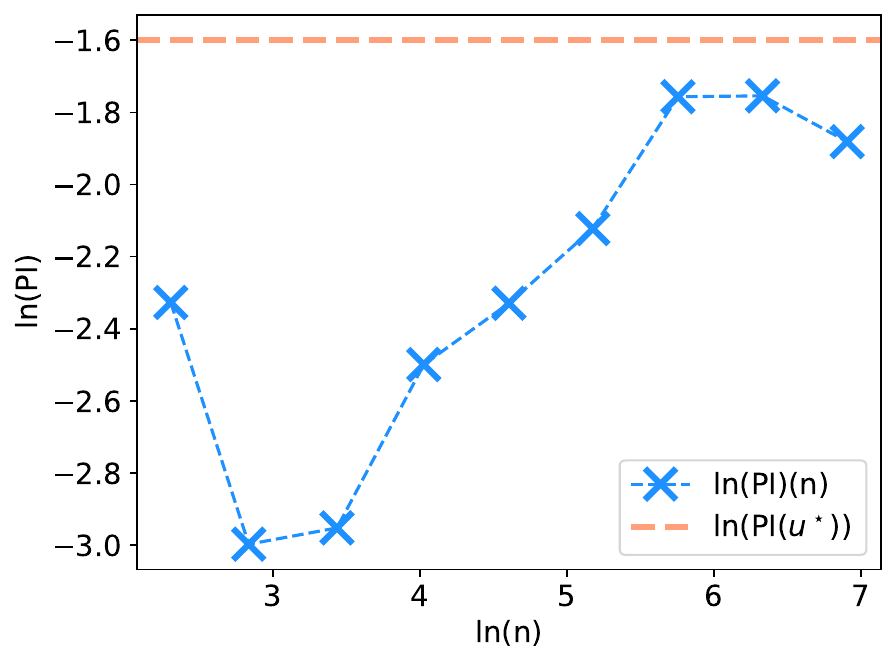} \\
    \end{tabular}
    \caption{Distance $\mathrm{err}(n)$ to  $u^\star$ (left) and physics inconsistency $\mathrm{PI}$ (right) of the regularized PINN estimator with respect to the number $n$ of observations in $\log$-$\log$ scale.}
    \label{fig:linReg}
\end{figure}
Interestingly, the regularized PINN estimator quickly becomes more accurate than the initial model, since $\mathrm{err}(n)$ is less than $ \int_\Omega\| u_{\mathrm{model}}-u^\star\|_2^2d\mu_\bX \simeq \exp(-5.3)$ as soon as $\ln(n) > 2.8$, i.e., $n\geqslant 17$.

\begin{figure}
    \centering
    \includegraphics[width = 0.32\textwidth]{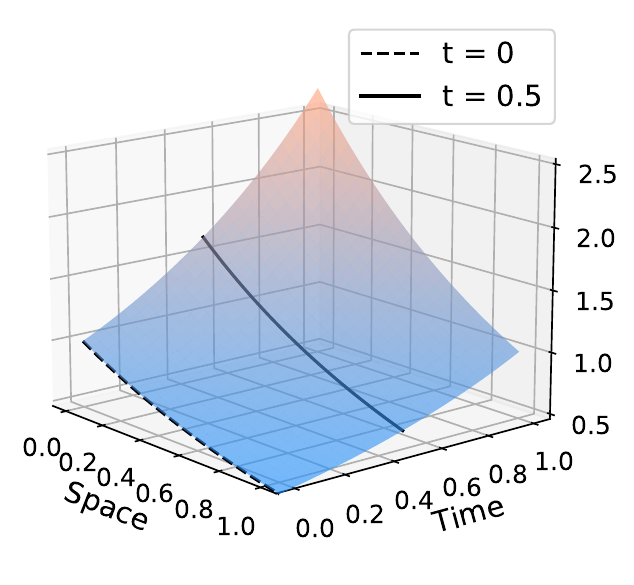}
    \includegraphics[width = 0.32\textwidth]{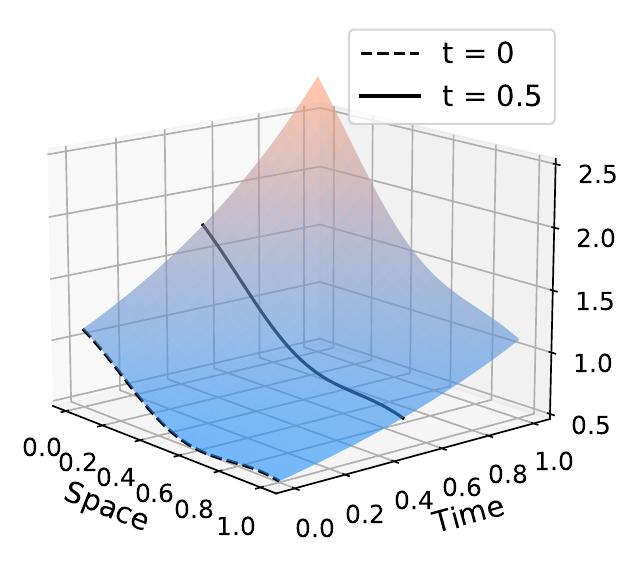}
    \includegraphics[width = 0.32\textwidth]{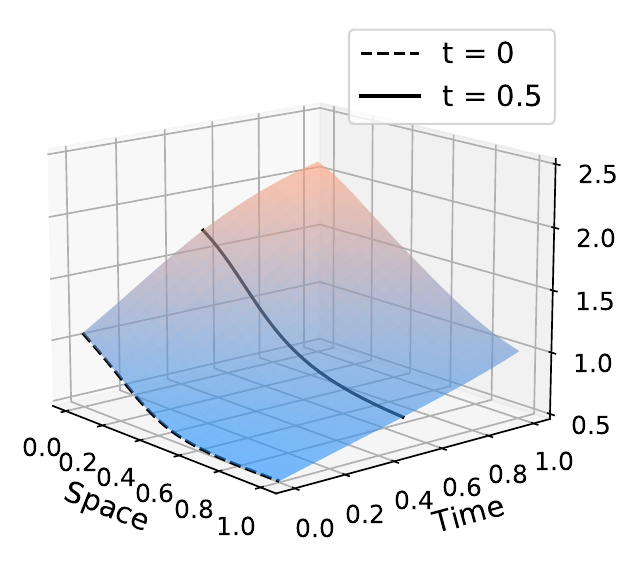}
    \caption{Functions $u_{\textrm{model}}$ (left), $u^\star$ (middle), and regularized PINN estimator with $n=10^3$ (right).}
    \label{fig:hybrid_modelling_nn}
\end{figure}
The obtained regularized PINN estimator for $n=10^3$ is shown in Figure \ref{fig:hybrid_modelling_nn} (right). This estimator looks globally similar to the model $u_{\mathrm{model}}$ (Figure \ref{fig:hybrid_modelling_nn}, left) while managing to reconstruct the variation typical of the cosine perturbation of $u^\star$ (Figure \ref{fig:hybrid_modelling_nn}, middle) at $t = 0$. 
Of course, for $t\geqslant 0.5$, the estimator cannot approximate $u^\star$  
with an infinite precision, since the measurements $\bX_i$ are only sampled for $t < 0.5$. 
However, the regularized PINN estimator succeeds to follow the advection equation dynamics, as it does not vary much along the lines $x-t=\mathrm{cst}$--- despite some flattening effect of the Sobolev regularization for $t \geqslant 0.5$. 

\section{Conclusion}
We have shown that unregularized PINNs can overfit. To remedy this problem, we have proposed to add a ridge penalty to the empirical risk. This regularization ensures the consistency of the PINNs for both linear and nonlinear PDE systems. However, to enforce strong convergence to the target function, another layer of regularization is needed. For linear PDEs, we have proved that the addition of a Sobolev-type penalty is sufficient to ensure the strong convergence of the PINNs. Regarding future research, the next step would be to derive tighter bounds to better quantify the impact of the physical penalty on the convergence speed.
\bibliographystyle{plainnat}
\bibliography{biblio}       

\begin{appendix}
\section{Notation and definitions}\label{appn}

\paragraph{Composition of functions} Given two functions $u,v : \mathbb{R} \rightarrow \mathbb{R}$, we denote by $u\circ v$ the function $u\circ v(x) = u(v(x))$. For all $k \in \mathbb{N}$, the function $u^{\circ k}$ is defined by induction as $u^{\circ 0}(x) = x$ and $u^{\circ (k+1)} = u^{\circ k}\circ u = u \circ u^{\circ k}$. The composition symbol is placed before the derivative, so that the $k$th derivative of $u^{\circ H}$ is denoted by $(u^{\circ H})^{(k)}$.

\smallskip
\paragraph{Norms} The $p$-norm $\|x\|_p$ of a vector $x = (x_1,\hdots, x_{d}) \in \mathbb R^d$ is defined by $ \|x\|_p = (\frac{1}{d}\sum_{i=1}^{d} |x_i|^p)^{1/p}$. In addition, $\|x\|_\infty = \max_{1\leqslant i \leqslant d} |x_i|$.
For a function $u : \Omega \rightarrow \mathbb{R}^{d}$, we let $ \|u\|_{L^p(\Omega)} = (\frac{1}{|\Omega|}\int_\Omega \|u\|_p^p)^{1/p}$. Similarly, $\|u\|_{\infty, \Omega} = \sup_{x \in \Omega} \|u(x)\|_\infty$. For simplicity, we sometimes write $\|u\|_{\infty}$ instead of $\|u\|_{\infty, \Omega}$.

\smallskip
\paragraph{Multi-indices and partial derivatives} For a multi-index $\alpha = (\alpha_1, \hdots, \alpha_{d_1}) \in \mathbb{N}^{d}$ and a differentiable function $u:\mathbb R^{d_1}\to \mathbb R^{d_2}$, the $\alpha$ partial derivative of $u$ is defined by $\partial^\alpha u = (\partial_{1})^{\alpha_1}\hdots (\partial_{d_1})^{\alpha_{d_1}} u$. The set of multi-indices of sum less than $k$ is defined by \[\{|\alpha|\leqslant k\} = \{(\alpha_1, \hdots, \alpha_{d_1}) \in \mathbb{N}^{d}, \alpha_1 + \cdots +\alpha_{d_1} \leqslant k\}.\] If $\alpha = 0$, $\partial^\alpha u = u$. Given two multi-indices $\alpha$ and $\beta$, we write $\alpha \leqslant \beta$ when $\alpha_i \leqslant \beta_i$ for all $1\leqslant i \leqslant d_1$. 
The set of multi-indices less than $\alpha$ is denoted by $\{\beta \leqslant \alpha\}$. For a multi-index $\alpha$ such that $|\alpha|\leqslant k$, both sets $\{|\beta|\leqslant k\}$ and $\{\beta \leqslant \alpha\}$ are contained in $\{0, \hdots, k\}^{d_1}$ and are therefore finite.

\smallskip
\paragraph{Hölder norm} For $K \in \mathbb{N}$, the Hölder norm of order $K$ of a function $u \in C^K(\Omega, \mathbb{R}^{d})$, is defined by $\|u\|_{C^K(\Omega)} = \max_{|\alpha|\leqslant K} \|\partial^\alpha u\|_{\infty, \Omega}$. 
This norm allows to bound a function as well as its derivatives. 
The space $C^K(\Omega, \mathbb{R}^{d})$ endowed with the Hölder norm $\|\cdot\|_{C^K(\Omega)}$ is a Banach space. 
$C^\infty(\bar{\Omega}, \mathbb{R}^{d_2})$ is the space of continuous functions $u:\bar{\Omega} \to \mathbb{R}^{d_2}$ satisfying $u|_\Omega \in C^\infty(\Omega, \mathbb{R}^{d_2})$ and, for all $K\in \mathbb{N}$, $\|u\|_{C^K(\Omega)} < \infty$.

\smallskip
\paragraph{Lipschitz function} Given a normed space $(V, \|\cdot\|)$, the Lipschitz norm of a function $u : V \rightarrow \mathbb{R}^{d_1}$ is defined by 
$\|u\|_{\text{Lip}} = \sup_{x,y \in V}\|u(x)-u(y)\|_2/\|x-y\|$. A function $u$ is Lipschitz if $\|u\|_{\mathrm{Lip}}<\infty$. For all $u \in C^1(V, \mathbb{R})$, $\|u\|_{\text{Lip}} \leqslant \|u\|_{C^1(V)}$.

\smallskip
\paragraph{Lipschitz surface  and domain} A surface $\Gamma \subseteq \mathbb{R}^{d_1}$ is said to be Lipschitz if locally, in a neighborhood $U(x)$ of any point $x \in \Gamma$, an appropriate rotation $r_x$ of the coordinate system transforms $\Gamma$ into the graph of a Lipschitz function $\phi_{x}$, i.e., 
\[r_x(\Gamma \cap U(x)) = \{(x_1, \hdots, x_{d - 1}, \phi_x(x_1, \hdots, x_{d - 1})), \forall (x_1, \hdots, x_d)\in r_x(\Gamma \cap U_x)\}.\]
A domain $\Omega \subseteq \mathbb{R}^{d_1}$ is said to be Lipschitz if its has Lipschitz boundary and lies on one side of it, i.e., $\phi_x < 0$ or $\phi_x > 0$ on all intersections $\Omega \cap U_x$. All manifolds with $C^1$ boundary and all convex domains are Lipschitz domains \citep[e.g.,][]{agranovich2015lispchitz}.

\smallskip
\paragraph{Sobolev spaces} Let $\Omega \subseteq \mathbb{R}^{d_1}$ be an open set. A function $v \in L^2(\Omega, \mathbb{R}^{d_2})$ is said to be the $\alpha$th weak derivative of  $u \in L^2(\Omega, \mathbb{R}^{d_2})$ if, for any $\phi \in C^\infty(\bar{\Omega}, \mathbb{R}^{d_2})$ with compact support in $\Omega$, one has
$\int_\Omega \langle v, \phi\rangle = (-1)^{|\alpha|} \int_\Omega \langle u, \partial^\alpha \phi\rangle$. This is denoted by $v = \partial^\alpha u$. For $m \in\mathbb{N}$, the Sobolev space $H^m(\Omega, \mathbb{R}^{d_2})$ is the space of all functions $u \in L^2(\Omega, \mathbb{R}^{d_2})$ such that $\partial^\alpha u$ exists for all $|\alpha|\leqslant m$. This space is naturally endowed with the norm $\|u\|_{H^m(\Omega)} = (\sum_{|\alpha|\leqslant m} |\Omega|^{-1}\|\partial^\alpha u\|_{L^2(\Omega)} ^2)^{1/2}$. For example, the function $u : \, ]-1, 1[ \to \mathbb{R}$ such that $u(x) = |x|$ is not derivable on $]-1, 1[$, but it admits $u'(x) = \mathbf{1}_{x > 0} -  \mathbf{1}_{x < 0}$ as weak derivative. Since $u' \in L^2([-1, 1], \mathbb{R})$, $u$ belongs to the Sobolev space $H^1(]-1, 1[, \mathbb{R})$. However, $u'$ has no weak derivative, and so $u \notin H^2(]-1, 1[, \mathbb{R})$. Of course, if a function $u$ belongs to the Hölder space $C^K(\bar \Omega, \mathbb{R}^{d_2})$, then it belongs to the Sobolev space $H^K(\Omega, \mathbb{R}^{d_2})$, and its weak derivatives are the usual derivatives. For more on Sobolev spaces, we refer the reader to \citet[Chapter 5]{evans2010partial}.

\section{Some reminders of functional analysis on Lipschitz domains}

\paragraph{Extension theorems} Let $\Omega \subseteq \mathbb{R}^{d_1}$ be an open set and let $K \in \mathbb{N}$ be an order of differentiation. It is not straightforward to extend a function $u \in H^K(\Omega, \mathbb{R}^{d_2})$ to a function $\tilde{u} \in H^K(\mathbb{R}^{d_1}, \mathbb{R}^{d_2})$ such that 
\[\tilde{u}|_\Omega = u|_\Omega \quad \text{and} \quad \|\tilde{u}\|_{H^K(\mathbb{R}^{d_1})} \leqslant C_\Omega \|u\|_{H^K(\Omega)},\]
for some constant $C_\Omega$ independent of $u$. This result is known as the extension theorem in \citet[][Chapter 5.4]{evans2010partial} when $\Omega$ is a manifold with $C^1$ boundary. 
However, the simplest domains in PDEs take the form $]0,L[^3\times ]0,T[$, the boundary of which is not $C^1$. Fortunately, \citet[][Theorem 5 Chapter VI.3.3]{stein1970lipschitz} provides an extension theorem for bounded Lipschitz domains.
We refer the reader to \citet{shvartzman2010sobolev} for a survey on extension theorems.

\paragraph{Example of a non-extendable domain} Let the domain $\Omega = ]-1, 1[^2 \backslash (\{0\}\times [0,1[)$ be the square $]-1, 1[^2$ from which the segment $\{0\}\times [0,1[$ has been removed. Then the function 
\[u(x, y) = \left\{\begin{array}{cl}
    0 & \quad  \text{if } x < 0  \text{ or if } y \leqslant 0\\
     \exp(-\frac{1}{y})& \quad  \text {if } x,y > 0,
\end{array}\right.\] belongs to $C^\infty(\Omega, \mathbb{R})$ but cannot be extended to $\mathbb{R}^2$, since it cannot be continuously extended to the segment $\{0\}\times [0,1[$. Notice that $\Omega$ is not a Lipschitz domain because it lies on both sides of the segment  $\{0\}\times [0,1[$, which belongs to its boundary $\partial \Omega$. 

\begin{thm}[Sobolev inequalities]
    \label{thm:sobIneq}
    Let $\Omega \subseteq \mathbb{R}^{d_1}$ be a bounded Lipschitz domain and let $m \in \mathbb{N}$. If $m  \geqslant d_1/2$, then there exists an operator $\tilde \Pi : H^{m}(\Omega, \mathbb{R}^{d_2}) \to C^0(\Omega, \mathbb{R}^{d_2})$ such that, for any $u \in H^{m}(\Omega, \mathbb{R}^{d_2})$, $\tilde \Pi(u) = u$ almost everywhere. Moreover, there exists a constant $C_\Omega >0$, depending only on $\Omega$, such that, $\|\tilde \Pi(u)\|_{\infty, \Omega} \leqslant C_\Omega \|u\|_{H^{m}(\Omega)}.$
\end{thm}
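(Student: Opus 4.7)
The plan is to reduce the statement to the classical Sobolev embedding on the whole space $\mathbb{R}^{d_1}$, via an extension step followed by a Fourier-analytic embedding step.

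First, I would invoke Stein's extension theorem for bounded Lipschitz domains \citep[Theorem 5, Chapter VI.3.3]{stein1970lipschitz}, explicitly mentioned in the preceding paragraphs of the appendix. This gives a bounded linear operator $E : H^m(\Omega, \mathbb{R}^{d_2}) \to H^m(\mathbb{R}^{d_1}, \mathbb{R}^{d_2})$ such that $(Eu)|_\Omega = u$ almost everywhere on $\Omega$ and $\|Eu\|_{H^m(\mathbb{R}^{d_1})} \leq C_\Omega^{(1)}\|u\|_{H^m(\Omega)}$, where $C_\Omega^{(1)}$ depends only on $\Omega$. This step exploits the Lipschitz regularity of $\partial\Omega$ to construct a suitable reflection-type extension; the example of a non-extendable domain discussed earlier shows that this geometric hypothesis on $\Omega$ is essential and not purely technical.

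Second, I would apply the classical Sobolev embedding on the full space: for any integer $m$ with $m > d_1/2$, one has $H^m(\mathbb{R}^{d_1}) \hookrightarrow C^0_b(\mathbb{R}^{d_1})$, with a constant $C^{(2)}$ such that every $v \in H^m(\mathbb{R}^{d_1}, \mathbb{R}^{d_2})$ admits a bounded continuous representative satisfying $\|v\|_{\infty, \mathbb{R}^{d_1}} \leq C^{(2)}\|v\|_{H^m(\mathbb{R}^{d_1})}$. The standard Fourier-analytic argument writes componentwise
\[ \hat v(\xi) = (1+|\xi|^2)^{-m/2}\,(1+|\xi|^2)^{m/2}\hat v(\xi), \]
and Cauchy--Schwarz combined with the fact that $(1+|\xi|^2)^{-m/2}\in L^2(\mathbb{R}^{d_1})$ for $m>d_1/2$ yields that $\hat v \in L^1(\mathbb{R}^{d_1})$ with norm bounded by $\|v\|_{H^m(\mathbb{R}^{d_1})}$ up to a constant. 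Fourier inversion then identifies $v$ almost everywhere with a bounded continuous function.

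Combining both steps, I would define $\tilde\Pi(u)$ as the restriction to $\Omega$ of the continuous representative of $Eu$ furnished by the embedding. The equality $\tilde\Pi(u) = u$ almost everywhere on $\Omega$ follows since $(Eu)|_\Omega = u$ almost everywhere, and
\[ \|\tilde\Pi(u)\|_{\infty, \Omega} \leq \|Eu\|_{\infty, \mathbb{R}^{d_1}} \leq C^{(2)}\|Eu\|_{H^m(\mathbb{R}^{d_1})} \leq C^{(2)} C_\Omega^{(1)}\|u\|_{H^m(\Omega)}, \]
so the claim holds with $C_\Omega = C^{(2)} C_\Omega^{(1)}$.

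The main obstacle is the extension step: constructing a bounded linear extension operator from Sobolev spaces on a Lipschitz domain to Sobolev spaces on $\mathbb{R}^{d_1}$ is substantially more delicate than in the $C^1$-boundary case treated in \citet[Chapter 5]{evans2010partial}, because naive reflection across $\partial\Omega$ fails when the boundary only has Lipschitz regularity. Rather than reproducing Stein's construction, I would cite it as a black box. A minor additional point is the borderline case $m=d_1/2$ when $d_1$ is even, where the embedding into $C^0$ fails in general; in all invocations of the theorem inside the paper the index used is $m+1$ with $m\geqslant\lfloor d_1/2\rfloor$, so $m+1>d_1/2$ strictly, and the Fourier argument applies without reservation.
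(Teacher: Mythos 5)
Your proposal is correct and follows essentially the same route as the paper's proof: Stein's extension theorem for Lipschitz domains followed by the classical Sobolev embedding, the only cosmetic difference being that the paper extends to a ball $B(0,r)\supseteq\Omega$ and cites \citet[Chapter 5.6, Theorem 6]{evans2010partial}, whereas you extend to all of $\mathbb{R}^{d_1}$ and run the Fourier-analytic argument. Your remark on the borderline case $m=d_1/2$ with $d_1$ even is apt --- the paper's own citation of Evans likewise requires the strict inequality $m>d_1/2$ --- and, as you note, every application in the paper uses the index $m+1>d_1/2$, so nothing downstream is affected.
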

\begin{proof}
    Since $\Omega$ is a bounded Lipschitz domain, there exists a radius $r > 0$ such that $\Omega \subseteq B(0, r)$. According to the extension theorem \citep[][Theorem 5, Chapter VI.3.3]{stein1970lipschitz}, there exists a constant $C_{\Omega}>0$, depending only on $\Omega$, such that any $u\in H^{m}(\Omega, \mathbb{R}^{d_2})$ can be extended to $\tilde u\in H^{m}(B(0,r), \mathbb{R}^{d_2})$, with
    $\|\tilde u \|_{H^{m}(B(0,r))} \leqslant C_{\Omega} \| u \|_{H^{m}(\Omega)}$.
    Since $m  \geqslant d_1/2$, the Sobolev inequalities \citep[e.g.,][Chapter 5.6, Theorem 6]{evans2010partial} state that there exists a constant $\tilde C_{\Omega} >0$, depending only on $\Omega$, and a linear embedding $\Pi : H^{m}(B(0,r), \mathbb{R}^{d_2}) \to C^0(B(0,r), \mathbb{R}^{d_2})$ such that  $\|\Pi (\tilde u)\|_{\infty} \leqslant \tilde C_{\Omega} \|\tilde u \|_{H^{m}(B(0,r))}$ and  $\Pi (\tilde u) = \tilde u$ in $H^{m}(B(0,r), \mathbb{R}^{d_2})$.
    Therefore, $\tilde \Pi (u) = \Pi (\tilde u)|_{\Omega}$ and $\|\tilde \Pi (u)\|_{\infty, \Omega} \leqslant  C_{\Omega} \tilde C_{\Omega} \| u \|_{H^{m}(\Omega)}$. 
\end{proof}

\begin{defi}[Weak convergence in $L^2(\Omega)$]
    A sequence $(u_p)_{p\in \mathbb{N}} \in L^2(\Omega)^{\mathbb{N}}$ weakly converges to $u_\infty \in L^2(\Omega)$ if, for any $\phi \in L^2(\Omega)$, $ \lim_{p\to\infty }\int_\Omega \phi u_p = \int_\Omega \phi u_\infty$. This convergence is denoted by $u_p \rightharpoonup u_\infty$.
\end{defi}

The Cauchy-Schwarz inequality shows that the convergence with respect to the $L^2(\Omega)$ norm implies the weak convergence. However, the converse is not true. For example, the sequence of functions $u_p(x) = \cos(px)$ weakly converges to $0$ in $L^2([-\pi, \pi])$, whereas $\|u_p\|_{L^2([-\pi,\pi])} = 1/2$.

\begin{defi}[Weak convergence in $H^m(\Omega)$]
    A sequence $(u_p)_{p\in \mathbb{N}} \in H^m(\Omega)^{\mathbb{N}}$ weakly converges to $u_\infty \in H^m(\Omega)$  in $H^m(\Omega)$ if, for all $|\alpha|\leqslant m$, $\partial^\alpha u_p \rightharpoonup \partial^\alpha u_\infty$.
\end{defi}

\begin{thm}[Rellich-Kondrachov]
    \label{thm:rellichK}
    Let $\Omega \subseteq \mathbb{R}^{d_1}$ be a bounded Lipschitz domain and let $m \in \mathbb{N}$. Let $(u_p)_{p\in \mathbb{N}}\in H^{m+1}(\Omega, \mathbb{R}^{d_2})$ be a sequence such that $(\|u_p\|_{H^{m+1}(\Omega)})_{p\in \mathbb{N}}$ is bounded.
    There exists a function $u_\infty \in H^{m+1}(\Omega, \mathbb{R}^{d_2})$ and a subsequence of  $(u_p)_{p\in \mathbb{N}}$ that converges to $u_\infty$ both weakly in $H^{m+1}(\Omega, \mathbb{R}^{d_2})$ and with respect to the $H^m(\Omega)$ norm.
\end{thm}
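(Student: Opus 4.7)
The plan is to combine two standard facts from functional analysis: weak sequential compactness of bounded sets in reflexive Hilbert spaces, and the compact embedding $H^{m+1}(\Omega,\mathbb{R}^{d_2}) \hookrightarrow H^m(\Omega,\mathbb{R}^{d_2})$ for bounded Lipschitz domains. Both will be extracted by a single diagonal subsequence argument.

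First I would invoke weak compactness. Since $H^{m+1}(\Omega,\mathbb{R}^{d_2})$ is a separable Hilbert space, it is reflexive, so by the Banach--Alaoglu / Eberlein--\v{S}mulian theorem the bounded sequence $(u_p)_{p\in\mathbb{N}}$ admits a subsequence $(u_{p_k})_{k\in\mathbb{N}}$ which converges weakly in $H^{m+1}(\Omega,\mathbb{R}^{d_2})$ to some limit $u_\infty \in H^{m+1}(\Omega,\mathbb{R}^{d_2})$, with $\|u_\infty\|_{H^{m+1}(\Omega)} \leqslant \liminf_k \|u_{p_k}\|_{H^{m+1}(\Omega)}$ by weak lower semicontinuity of the norm. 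By the definition of weak convergence in $H^{m+1}$, this means $\partial^\alpha u_{p_k} \rightharpoonup \partial^\alpha u_\infty$ in $L^2(\Omega,\mathbb{R}^{d_2})$ for every $|\alpha|\leqslant m+1$.

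Next I would upgrade to strong convergence in $H^m(\Omega,\mathbb{R}^{d_2})$ via the compact Sobolev embedding. Using the Stein extension theorem for Lipschitz domains (cited in the excerpt just before Theorem \ref{thm:sobIneq}), every function $v \in H^{m+1}(\Omega,\mathbb{R}^{d_2})$ extends to $\tilde v \in H^{m+1}(B(0,r),\mathbb{R}^{d_2})$ with $\|\tilde v\|_{H^{m+1}(B(0,r))} \leqslant C_\Omega\|v\|_{H^{m+1}(\Omega)}$. Applying this to each $u_{p_k}$ produces a bounded sequence in $H^{m+1}(B(0,r),\mathbb{R}^{d_2})$, to which the classical Rellich--Kondrachov compactness theorem on a ball (or smooth domain) applies, yielding a further subsequence which converges strongly in $H^m(B(0,r),\mathbb{R}^{d_2})$. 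Restricting back to $\Omega$ gives strong convergence of this subsequence in $H^m(\Omega,\mathbb{R}^{d_2})$ to some limit $v_\infty$.

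Finally, to identify $v_\infty = u_\infty$, I note that strong convergence in $H^m(\Omega)$ implies weak convergence in $H^m(\Omega)$, and the $H^{m+1}$-weak convergence $u_{p_k} \rightharpoonup u_\infty$ also implies $H^m$-weak convergence (test against any $L^2$ function). By uniqueness of weak limits in $L^2$, one has $v_\infty = u_\infty$ almost everywhere, and a diagonal extraction produces a single subsequence converging to $u_\infty$ both weakly in $H^{m+1}(\Omega,\mathbb{R}^{d_2})$ and strongly in $H^m(\Omega,\mathbb{R}^{d_2})$. The only real obstacle is the compact embedding for Lipschitz (not $C^1$) domains, which is circumvented precisely by the Stein extension theorem that reduces the problem to a ball, where the standard Rellich--Kondrachov theorem applies directly.
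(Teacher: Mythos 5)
Your proposal is correct and follows essentially the same route as the paper: extend to a ball via the Stein extension theorem for Lipschitz domains, apply the classical Rellich--Kondrachov theorem there for strong $H^m$ convergence, and use weak compactness of bounded sequences in $H^{m+1}$ for the weak limit, then restrict back to $\Omega$. The only difference is the order of the subsequence extractions (you extract the weak limit first, the paper extracts it after the strong one), which is immaterial since both arguments identify the two limits in the end.
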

\begin{proof}
    Let $r>0$ be such that $\Omega \subseteq B(0,r)$.
    According to the extension theorem of \citet[][Theorem 5, Chapter VI.3.3]{stein1970lipschitz}, there exists a constant $C_r >0$ such that each $u_p$ can be extended to $\tilde u_p \in H^{m+1}(B(0,r),\mathbb{R}^{d_2})$, with $\|\tilde u_p\|_{H^{m+1}(B(0,r))} \leqslant C_r \|u_p\|_{H^{m+1}(\Omega)}$.
    Observing that, for all $|\alpha|\leqslant m$, $\partial^\alpha \tilde u_p$ belongs to $H^1(B(0,r),\mathbb{R}^{d_2})$, the Rellich-Kondrachov compactness theorem \citep[Theorem 1, Chapter 5.7]{evans2010partial} ensures that there exists a subsequence of $(\tilde u_p)_{p \in \mathbb{N}}$ that converges to an extension of $u_\infty$ with respect to the $H^m(B(0,r))$ norm. 
    Since the subsequence is also bounded, upon passing to another subsequence, it also weakly converges in $H^{m+1}(B(0,r), \mathbb{R}^{d_2})$ to $u_\infty \in H^{m+1}(B(0,r), \mathbb{R}^{d_2})$ \citep[e.g.,][Chapter D.4]{evans2010partial}.
    Therefore, by considering the restrictions of all the previous functions to $\Omega$, we deduce that there exists a subsequence of $(u_p)_{p\in\mathbb{N}}$ that converges to $u_\infty$ both weakly in $H^{m+1}(\Omega)$ and with respect to the $H^m(\Omega)$ norm.
\end{proof}

\section{Some useful lemmas}
\label{app:lemmas}
The $n$th Bell number $B_n$ \citep{hardy2006combinatorics} corresponds to the number of partitions of the set $\{1,\hdots, n\}$. Bell numbers satisfy the relationship $B_0 = 1$ and 
\begin{equation}
    B_{n+1} = \sum_{k=0}^n \begin{pmatrix}
        n\\k
    \end{pmatrix} B_k \label{eq:bell}.
\end{equation}
For $K \geqslant 1$ and $u\in C^K(\mathbb{R}^{d_1}, \mathbb{R}^{d_2})$, the $K$th derivative of $u$ is denoted by $u^{(K)}$.

\begin{lem}[Bounding the partial derivatives of a composition of functions]
    \label{lem:boundPartialDer}
    Let $d_1, d_2 \geqslant 1$, $K \geqslant 0$,  $f \in C^{K}(\mathbb{R}^{d_1}, \mathbb{R})$, and $g \in C^{K}(\mathbb{R}, \mathbb{R}^{d_2})$. Then
    \[\|g\circ f\|_{C^K(\mathbb{R}^{d_1})} \leqslant B_{K} \|g\|_{C^K(\mathbb{R})} (1+\|f\|_{C^K(\mathbb{R}^{d_1})})^{K}.\]
\end{lem}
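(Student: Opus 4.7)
The plan is to apply the multivariate Faà di Bruno formula. For any multi-index $\alpha$ with $|\alpha| = n \leqslant K$, I would write $\partial^\alpha = \partial_{i_1}\cdots \partial_{i_n}$ (repeating indices according to the multiplicities prescribed by $\alpha$) and iterate the chain rule; a straightforward induction on $n$ yields
\[\partial^\alpha(g \circ f) = \sum_{\pi \in \mathcal{P}_n} g^{(|\pi|)}(f) \prod_{B \in \pi} \partial_{i_B} f,\]
where $\mathcal{P}_n$ denotes the set of partitions of $\{1,\ldots,n\}$, $|\pi|$ is the number of blocks of $\pi$, and $\partial_{i_B}$ stands for $\prod_{j \in B} \partial_{i_j}$ applied to $f$.

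The counting is exactly where Bell numbers enter: appending the differentiation $\partial_{i_{n+1}}$ to a fixed partition $\pi \in \mathcal{P}_n$ produces $|\pi|+1$ partitions of $\{1,\ldots,n+1\}$ --- either append $n+1$ to an existing block $B \in \pi$, which corresponds to the Leibniz rule on the factor $\partial_{i_B} f$, or create a new singleton $\{n+1\}$, which corresponds to the Leibniz rule on the $g^{(|\pi|)}(f)$ factor, producing the extra term $g^{(|\pi|+1)}(f)\,\partial_{i_{n+1}} f$. Summing over all $\pi \in \mathcal{P}_n$ reproduces exactly the Bell recursion \eqref{eq:bell}, so $|\mathcal{P}_n| = B_n$.

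The bound is then mechanical. For each summand, $\|g^{(|\pi|)}(f)\|_\infty \leqslant \|g\|_{C^{|\pi|}(\mathbb{R})} \leqslant \|g\|_{C^K(\mathbb{R})}$ since $|\pi| \leqslant n \leqslant K$, and $|\partial_{i_B} f| \leqslant \|f\|_{C^{|B|}(\mathbb{R}^{d_1})} \leqslant \|f\|_{C^K(\mathbb{R}^{d_1})}$ since $|B| \leqslant n \leqslant K$. Hence each term is bounded in sup-norm by $\|g\|_{C^K(\mathbb{R})} \|f\|_{C^K(\mathbb{R}^{d_1})}^{|\pi|} \leqslant \|g\|_{C^K(\mathbb{R})}(1 + \|f\|_{C^K(\mathbb{R}^{d_1})})^K$, since $|\pi| \leqslant K$ and $1+\|f\|_{C^K(\mathbb{R}^{d_1})} \geqslant 1$. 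Summing the $B_n \leqslant B_K$ terms and taking the maximum over $|\alpha| \leqslant K$ concludes the proof. The only genuinely non-routine ingredient is the combinatorial bookkeeping behind Faà di Bruno's identity and the Bell-number count; once the structural expansion is in hand, the analytic bounding step is entirely elementary.
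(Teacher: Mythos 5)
Your proof is correct and follows essentially the same route as the paper: both hinge on the set-partition (Faà di Bruno) expansion of $\partial^\alpha(g\circ f)$ and then apply the identical term-by-term bound $\|g\|_{C^K(\mathbb{R})}(1+\|f\|_{C^K(\mathbb{R}^{d_1})})^{K}$ to each of the at most $B_K$ summands. The only difference is cosmetic: you derive the partition formula by direct induction on the order of differentiation (your extension count actually yields $B_{n+1}=\sum_{\pi\in\mathcal{P}_n}(|\pi|+1)$ rather than literally \eqref{eq:bell}, but $|\mathcal{P}_n|=B_n$ holds anyway since the paper defines $B_n$ as the number of partitions), whereas the paper cites the distinct-variable version and recovers repeated derivatives through a coordinate-duplication change of variables.
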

\begin{proof}
    Let $K_1 \leqslant K$ and let $\Pi(K_1)$ be the set of all partitions of $\{1, \hdots, K_1\}$. According to \citet[Proposition 1]{hardy2006combinatorics}, one has, for all $h\in C^{K_1}(\mathbb{R}^{K_1+d_1}, \mathbb{R})$,
    \[\partial^{K_1}_{1,2,3,\hdots, K_1} (g\circ h) = \sum_{P \in \Pi(K_1)} g^{(|P|)}\circ h \times \prod_{S \in P} \Big[\big(\prod_{j \in S}\partial_j\big) h\Big].\]
    Let  $ \alpha = (\alpha_1, \hdots, \alpha_{d_1})$ be a multi-index such that $|\alpha|=K_1$. Setting $\alpha_0 = 0$, $y_j = x_{K_1+j} + (x_{\alpha_1 + \cdots + \alpha_{j-1}}+\cdots+x_{\alpha_1 + \cdots + \alpha_{j}-1})$, and letting $h(x_1, \hdots, x_{K_1+d_1}) = f(y_1, \hdots, y_{d_1})$, 
    we are led to
    \begin{equation}
        \partial^\alpha (g\circ f) = \sum_{P \in \Pi(K_1)} g^{(|P|)}\circ f \times \prod_{S \in P} \partial^{\alpha(S)}f, \label{eq:fdbFormula}
    \end{equation} 
    where $\alpha(S) = ( |\{b \in S,\quad \alpha_1+\cdots+\alpha_{\ell-1}\leqslant b  \leqslant \alpha_1+\cdots+\alpha_\ell \}|)_{1\leqslant \ell \leqslant d_1}$. Moreover, by definition of the Bell number, $|\Pi(K_1)| = B_{K_1}$, and, by definition of a partition, $|P|\leqslant K_1$. So,
    \begin{align*}
        \|\partial^\alpha (g\circ f)\|_\infty & \leqslant B_{K_1}  \|g\|_{C^{K_1}(\mathbb{R}^{d_1})} \max_{i_1+2i_2+\cdots+ K_1 i_{K_1}=K_1}\prod_{j=1}^{K_1} \|f\|_{C^j(\mathbb{R}^{d_1})}^{i_j} \\
        &\leqslant B_{K_1} \|g\|_{C^{K_1}(\mathbb{R}^{d_1})} (1+\|f\|_{C^{K_1}(\mathbb{R}^{d_1})})^{K_1}.\nonumber
    \end{align*} 
    Since this inequality is true for all $K_1\leqslant K$ and for all $|\alpha| = K_1$, the lemma is proved.
\end{proof}

\begin{lem}[Bounding the partial derivatives of a changing of coordinates $f$]
\label{lem:boundPartialDer2}
    Let $d_1, d_2 \geqslant 1$, $K \geqslant 0$,  $f \in C^{K}(\mathbb{R}, \mathbb{R})$, and $g \in C^{K}(\mathbb{R}^{d_1}, \mathbb{R}^{d_2})$. Let $v\in C^K(\mathbb{R}^{d_1}, \mathbb{R}^{d_1})$ be defined by $v(\bx) = (f(x_1),\hdots, f(x_{d_1}))$.
    Then
    \[\|g\circ v\|_{C^K(\mathbb{R}^{d_1})} \leqslant B_{K}\times \|g\|_{C^K(\mathbb{R}^{d_1})}\times (1+\|f\|_{C^K(\mathbb{R})})^{K}.\]
\end{lem}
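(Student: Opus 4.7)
The plan is to exploit the diagonal structure of $v$: since $v_j(\bx)=f(x_j)$ depends on $x_j$ alone, the multivariate Faà di Bruno formula collapses into $d_1$ independent applications of the univariate Faà di Bruno formula already used in the proof of Lemma \ref{lem:boundPartialDer}.

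Fix a multi-index $\alpha=(\alpha_1,\dots,\alpha_{d_1})$ with $|\alpha|=K_1\leqslant K$. Because partial derivatives in distinct coordinates commute and each $v_j$ is a function of $x_j$ only, I would apply $\partial_i^{\alpha_i}$ successively for $i=1,\dots,d_1$. At each step, since the factors produced by the previous steps depend on $x_i$ only through $v(\bx)$, the univariate Faà di Bruno formula (equation \eqref{eq:fdbFormula} in the proof of Lemma \ref{lem:boundPartialDer} in the single-variable case) yields
\[
\partial^\alpha(g\circ v)(\bx) \;=\; \sum_{P_1\in\Pi(\alpha_1)}\cdots\sum_{P_{d_1}\in\Pi(\alpha_{d_1})} \bigl(\partial_1^{|P_1|}\cdots \partial_{d_1}^{|P_{d_1}|}g\bigr)(v(\bx))\,\prod_{i=1}^{d_1}\prod_{S\in P_i} f^{(|S|)}(x_i),
\]
where $\Pi(\alpha_i)$ is the set of partitions of $\{1,\dots,\alpha_i\}$.

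The bookkeeping is then routine. The multi-index of $g$ appearing in each summand has modulus $\sum_i|P_i|\leqslant\sum_i\alpha_i=K_1\leqslant K$, so the $g$-factor is bounded by $\|g\|_{C^K(\mathbb{R}^{d_1})}$. The product of $f$-derivatives contains $\sum_i|P_i|\leqslant K_1\leqslant K$ terms, each of order at most $K$, so it is bounded by $\|f\|_{C^K(\mathbb{R})}^{K_1}\leqslant (1+\|f\|_{C^K(\mathbb{R})})^K$. Finally, the number of summands is $\prod_i B_{\alpha_i}$; using that concatenating partitions of disjoint blocks injects $\Pi(\alpha_1)\times\cdots\times\Pi(\alpha_{d_1})$ into $\Pi(\alpha_1+\cdots+\alpha_{d_1})$, one obtains $\prod_i B_{\alpha_i}\leqslant B_{K_1}\leqslant B_K$. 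Combining these three bounds and taking the supremum over $\bx$ and over $|\alpha|\leqslant K$ yields the claim.

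The only even mildly delicate point is the last inequality $\prod_i B_{\alpha_i}\leqslant B_{\sum_i\alpha_i}$, which I would justify by the injection that sends a tuple of partitions $(P_1,\dots,P_{d_1})$ to the partition of $\{1,\dots,K_1\}$ obtained by relabelling each $P_i$ into the block $\{\alpha_1+\cdots+\alpha_{i-1}+1,\dots,\alpha_1+\cdots+\alpha_i\}$ and taking the union. Everything else is a direct transcription of the argument of Lemma \ref{lem:boundPartialDer}, so I do not expect any real obstacle.
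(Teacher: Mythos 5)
Your proposal is correct and takes essentially the same route as the paper: reduce to the univariate Faà di Bruno formula coordinate by coordinate (the paper does this via the auxiliary function $h(t)=g(f(x_1),\dots,f(x_{i-1}),t,f(x_{i+1}),\dots,f(x_{d_1}))$), bound the $g$-derivative of order $(|P_1|,\dots,|P_{d_1}|)$ by $\|g\|_{C^K(\mathbb{R}^{d_1})}$, bound the at most $K$ factors $f^{(|S|)}$, and control the number of summands by the same concatenation injection $\Pi(\alpha_1)\times\cdots\times\Pi(\alpha_{d_1})\to\Pi(K)$ giving $\prod_i B_{\alpha_i}\leqslant B_K$. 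One cosmetic remark: your intermediate bound $\|f\|_{C^K(\mathbb{R})}^{K_1}$ can fail when $\|f\|_{C^K(\mathbb{R})}<1$ and $\sum_i|P_i|<K_1$, but bounding each of the at most $K$ factors directly by $1+\|f\|_{C^K(\mathbb{R})}$ yields the stated estimate, as in the paper.
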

\begin{proof} Let $\alpha = (\alpha_1, \hdots, \alpha_{d_1})$ be a multi-index such that $|\alpha|=K$. For $\bx = (x_1, \hdots, x_{d_1})$ and a fixed $i \in \{1, \hdots, d_1\}$, we let $h(t)= g(f(x_1), \hdots, f(x_{i-1}), t, f(x_{i+1}), \hdots, f(x_{d_1}))$. Clearly, $(h\circ f)^{(\alpha_i)}(x_i) = (\partial_i)^{\alpha_i} (g\circ v)(\bx)$. Thus, according to Lemma \ref{lem:boundPartialDer},
\[(h\circ f)^{(\alpha_i)} = \sum_{P_i \in \Pi(\alpha_i)} h^{(|P_i|)}\circ f \times \prod_{S_i \in P_i} f^{(|S_i|)}.\]
Therefore,
\[(\partial_i)^{\alpha_i}(g\circ v)(\bx) = \sum_{P_i \in \Pi(\alpha_i)} (\partial_i)^{|P_i|}g\circ v(\bx) \prod_{S_i \in P_i} f^{(|S_i|)}(x_i).\]
Letting $i=1$ and observing that $\partial_j f^{(|S_1|)}(x_1) =0$ for $j \neq 1$, we see that
    \[\partial^{\alpha}(g\circ v)(\bx) = \sum_{P_1 \in \Pi(\alpha_1)} \Big[\prod_{S_1 \in P_1} f^{(|S_1|)}(x_1)\Big] \times (\partial_2)^{\alpha_2}\hdots (\partial_{d_1})^{\alpha_{d_1}}[(\partial_1)^{|P_1|}g\circ v](\bx) .\]
Repeating the same procedure for $(\partial_1)^{|P_1|}g\circ v, \hdots, (\partial_1)^{|P_1|}\hdots (\partial_{d_1})^{|P_{d_1}|} g\circ v$, we obtain
\begin{align*}
    \partial^{\alpha}(g\circ v)(\bx) &= \sum_{P_1 \in \Pi(\alpha_1)} \Big[\prod_{S_1 \in P_1} f^{(|S_1|)}(x_1)]\Big] \times \cdots \\
     & \quad\cdots \times \sum_{P_{d_1} \in \Pi(\alpha_{d_1})} \Big[\prod_{S_{d_1} \in P_{d_1}} f^{(|S_{d_1}|)}(x_{d_1})]\Big]\times  (\partial_1)^{|P_1|}\hdots (\partial_{d_1})^{|P_{d_1}|}g\circ v(\bx) .
\end{align*}
Since $\sum_{S_i \in P_i} |S_i| = \alpha_i$ and $ \sum_{i=1}^{d_1} \alpha_i = K$, we conclude that 
\[\|\partial^{\alpha}(g\circ v)\|_\infty \leqslant B_{\alpha_1}\times\cdots\times B_{\alpha_{d_1}}\times \|\partial^{\alpha}g\|_\infty  (1+\|f\|_{C^K(\mathbb{R})})^K. \] 
Using the injective map $\mathcal{M} : \Pi(\alpha_1)\times\cdots\times \Pi(\alpha_{d_1}) \to \Pi(K)$ such that $\mathcal{M}(P_1,\hdots,P_{d_1}) = \cup_{i=1}^{d_1}P_i$, we have $B_{\alpha_1}\times\cdots\times B_{\alpha_{d_1}} \leqslant B_K$. This concludes the proof.
\end{proof}

\begin{lem}[Bounding hyperbolic tangent and its derivatives]
\label{lem:derTanh}
    For all $K\in \mathbb{N}$, one has 
 \[\|\tanh^{(K)}\|_{\infty} \leqslant 2^{K-1} (K+2)!\]
\end{lem}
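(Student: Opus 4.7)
The plan is to express $\tanh^{(K)}$ as a polynomial in $\tanh$ and to control the size of this polynomial by the $\ell^1$ norm of its coefficients.

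\textbf{Step 1 (polynomial recursion).} First I would prove by induction on $K$ that there exists a polynomial $P_K \in \mathbb{R}[y]$ of degree at most $K+1$ such that $\tanh^{(K)}(x) = P_K(\tanh x)$, with $P_0(y) = y$ and
\[
P_{K+1}(y) = (1-y^2)\,P_K'(y).
\]
This follows from $(\tanh)' = 1 - \tanh^2$ together with the chain rule: differentiating $P_K(\tanh x)$ with respect to $x$ gives $P_K'(\tanh x)\cdot(1-\tanh^2 x)$, which forces the recursion. The degree bound $\deg P_K \leqslant K+1$ is immediate from the recursion.

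\textbf{Step 2 (bounding the $\ell^1$ norm of the coefficients).} Writing $P_K(y) = \sum_j a_{K,j} y^j$, I would set $S_K := \sum_j |a_{K,j}|$. Two elementary observations suffice. Differentiation turns $a_{K,j}$ into $j\,a_{K,j}$ at exponent $j-1$, and since $j \leqslant K+1$ we get $S(P_K') \leqslant (K+1)S_K$. Multiplication by $1-y^2$ is convolution with a polynomial whose $\ell^1$ norm of coefficients equals $2$, so $S\bigl((1-y^2)P_K'\bigr) \leqslant 2\, S(P_K')$. Combining,
\[
S_{K+1} \leqslant 2(K+1) S_K,
\]
and since $S_0 = 1$, a straightforward induction yields $S_K \leqslant 2^K K!$ for every $K \in \mathbb{N}$.

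\textbf{Step 3 (conclusion).} Because $|\tanh x| \leqslant 1$ on $\mathbb{R}$, for every $x$ we have
\[
|\tanh^{(K)}(x)| = |P_K(\tanh x)| \leqslant \sum_j |a_{K,j}|\,|\tanh x|^j \leqslant S_K \leqslant 2^K K!.
\]
It only remains to compare this to the stated bound: since $(K+1)(K+2) \geqslant 2$ for all $K \in \mathbb{N}$,
\[
2^K K! = 2^{K-1}\cdot 2 \cdot K! \leqslant 2^{K-1}(K+1)(K+2)\,K! = 2^{K-1}(K+2)!,
\]
which is the desired inequality.

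\textbf{Expected difficulty.} I do not anticipate a serious obstacle: the proof is essentially bookkeeping, and the recursion for $P_K$ is exactly what makes the argument work. The only mildly delicate point is making sure the two nested inductions (existence of $P_K$ with the correct degree, and the $\ell^1$ bound) are set up consistently at $K=0$; in particular, $S_0 = 1 = 2^{-1}\cdot 2!$ so the base case is tight, which suggests the constants cannot be sharpened by this method without additional cancellations among the $a_{K,j}$.
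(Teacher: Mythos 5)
Your proof is correct and follows essentially the same route as the paper: the recursion $P_{K+1}(y)=(1-y^2)P_K'(y)$ with $\tanh^{(K)}=P_K(\tanh)$, then a coefficient bound growing like $2(K+1)$ per step. The only cosmetic difference is that you track the $\ell^1$ norm of the coefficients while the paper tracks the largest coefficient and multiplies by the number of terms, which leads to the same $2^{K-1}(K+2)!$ bound.
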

\begin{proof}
The $\tanh$ function is a solution of the equation $y' = 1 - y^2$. An elementary induction shows that there exists a sequence of polynomials $(P_K)_{K \in \mathbb{N}}$ such that $\tanh^{(K)} = P_K(\tanh)$, with $P_0(X) = X$ and $P_{K+1}(X) = (1-X^2)\times P_K'(X)$. Clearly, $P_K$ is a real polynomial of degree $K+1$, of the form
$P_K(X) = a^{(K)}_0 + a^{(K)}_1 X + \cdots + a^{(K)}_{K+1} X^{K+1}$. 
One verifies that $a^{(K+1)}_i = (i+1)a^{(K)}_{i+1} - (i-1)a_{i-1}^{(K)}$, with $a^{(K)}_{-1} = a^{(K)}_{K+2}=0$. The largest coefficient $M(P_K) = \max_{0 \leqslant i \leqslant K+1} |a^{(K)}_i|$ of $P_K$ satisfies $M(P_{K+1}) \leqslant 2(K+1) \times M(P_K)$.  Thus, since $M(P_1) = 1$, we see that $M(P_K) \leqslant 2^{K-1} K!$\ . Recalling that $0 \leqslant \tanh \leqslant 1$, we conclude that
\[\|\tanh^{(K)}\|_{\infty} = \|P_K(\tanh)\|_\infty \leqslant (K+2)  M(P_K)  \leqslant 2^{K-1} (K+2)!\]
\end{proof}

In the sequel, for all $\theta \in \mathbb{R}$, we write $\tanh_\theta(x) = \tanh(\theta x)$. We define the sign function such that $\mathrm{sgn}(x) = \mathbf{1}_{x > 0} - \mathbf{1}_{x < 0}$.
\begin{lem}[Characterizing the limit of hyperbolic tangent in Hölder norm]
    \label{lem:compTanh}
    Let $K \in \mathbb{N}$ and $H \in \mathbb{N}^\star$. Then, for all $\varepsilon > 0$,  
    $\lim_{\theta\to\infty}\|\tanh^{\circ H}_\theta- \mathrm{sgn}\|_{C^K(\mathbb{R}\backslash ]-\varepsilon, \varepsilon[)}=0$.
\end{lem}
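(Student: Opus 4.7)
The plan is to proceed by induction on $H \geqslant 1$. The main ingredient is that $\tanh(y) \to \pm 1$ exponentially fast as $y \to \pm\infty$, together with the stronger fact that all higher derivatives $\tanh^{(k)}$ ($k\geqslant 1$) decay exponentially. Indeed, from the proof of Lemma~\ref{lem:derTanh}, one has $\tanh^{(k)} = P_k(\tanh)$ with $P_0(X) = X$ and $P_{k+1}(X) = (1-X^2)P_k'(X)$. An easy induction shows that, for every $k\geqslant 1$, $P_k(\pm 1)=0$, so that $(1-X^2)$ divides $P_k(X)$. Combined with $1-\tanh^2(y)\leqslant 4e^{-2|y|}$, this yields constants $\tilde{C}_k>0$ such that $|\tanh^{(k)}(y)|\leqslant \tilde C_k e^{-2|y|}$ for all $k\geqslant 1$. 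In particular, $\theta^k \tanh^{(k)}(\theta x) \to 0$ uniformly on $\{|x|\geqslant \varepsilon\}$ as $\theta\to\infty$.

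For the base case $H=1$, note that $(\tanh_\theta)^{(k)}(x) = \theta^k\tanh^{(k)}(\theta x)$. For $|x|\geqslant \varepsilon$, the bound above gives $|\tanh_\theta(x) - \mathrm{sgn}(x)| = O(e^{-2\theta\varepsilon})$ (direct estimation on $\tanh$) and $|(\tanh_\theta)^{(k)}(x)| \leqslant \tilde C_k \theta^k e^{-2\theta\varepsilon}\to 0$ for $k\geqslant 1$. This settles $H=1$.

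For the inductive step, write $\tanh_\theta^{\circ(H+1)} = \tanh_\theta \circ g_\theta$ with $g_\theta := \tanh_\theta^{\circ H}$. By the induction hypothesis applied with $\varepsilon$, one has $\|g_\theta - \mathrm{sgn}\|_{C^K(\mathbb{R}\setminus]-\varepsilon,\varepsilon[)}\to 0$. In particular, for $\theta$ large enough, $|g_\theta(x)|\geqslant 1/2$ for all $|x|\geqslant \varepsilon$, and $\max_{1\leqslant k\leqslant K}\|g_\theta^{(k)}\|_{\infty, \mathbb{R}\setminus]-\varepsilon,\varepsilon[}$ is bounded by some constant $M$ independent of $\theta$. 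Applying the one-dimensional Faà di Bruno formula of \eqref{eq:fdbFormula} with $u(x) = \theta g_\theta(x)$ gives, for $k\geqslant 1$,
\[
(\tanh_\theta^{\circ(H+1)})^{(k)}(x) = \sum_{P\in\Pi(k)} \theta^{|P|}\tanh^{(|P|)}\!\big(\theta g_\theta(x)\big) \prod_{S\in P} g_\theta^{(|S|)}(x).
\]
On $\{|x|\geqslant\varepsilon\}$, $|\theta g_\theta(x)|\geqslant \theta/2$, so $|\tanh^{(|P|)}(\theta g_\theta(x))|\leqslant \tilde C_{|P|} e^{-\theta}$ for every partition $P$. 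The product $\prod_{S\in P} g_\theta^{(|S|)}(x)$ is bounded by $M^{k}$, and since $|P|\leqslant k$, each summand is at most $\tilde C_k M^k \theta^k e^{-\theta}$, which tends to $0$ uniformly. For $k=0$, $|\tanh(\theta g_\theta(x)) - \mathrm{sgn}(x)| \leqslant |\tanh(\theta g_\theta(x)) - \mathrm{sgn}(g_\theta(x))| + |\mathrm{sgn}(g_\theta(x))-\mathrm{sgn}(x)|$; the first term is $O(e^{-\theta})$ by the same argument, and the second vanishes for $\theta$ large since $g_\theta(x)$ and $x$ have the same sign once $|g_\theta(x)-\mathrm{sgn}(x)|<1$. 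This concludes the induction.

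The only real difficulty is the competition between the polynomial factor $\theta^{|P|}$ produced by the chain rule and the decay of $\tanh^{(|P|)}$, but exponential decay beats polynomial growth, so nothing subtle happens. The only place where the induction hypothesis is really used is to guarantee both that $|g_\theta|$ stays bounded away from $0$ on $\{|x|\geqslant\varepsilon\}$ and that its derivatives up to order $K$ are uniformly bounded, so that the Faà di Bruno expansion can be controlled term by term.
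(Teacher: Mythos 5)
Your argument is correct, but it takes a genuinely different route from the paper's proof. The paper proves by induction on $H$ a \emph{strengthened} statement, namely $\lim_{\theta\to\infty}\theta^m\|\tanh_\theta^{\circ H}-\mathrm{sgn}\|_{C^K(\mathbb{R}\backslash ]-\varepsilon,\varepsilon[)}=0$ for every $m\in\mathbb{N}$, and in the induction step it writes $\tanh_\theta^{\circ (H+1)}$ with the single layer $\tanh_\theta$ \emph{inside} and the $H$-fold composition outside; the inner derivatives then contribute factors $\|\tanh_\theta^{(j)}\|_\infty\leqslant \theta^j\|\tanh^{(j)}\|_\infty$ growing polynomially in $\theta$, which is exactly why the superpolynomial decay must be carried along in the induction hypothesis (together with the observation that $\tanh_\theta$ maps $\{|x|\geqslant\varepsilon\}$ into a region bounded away from $0$). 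You instead put the single layer \emph{outside}, $\tanh_\theta\circ g_\theta$ with $g_\theta=\tanh_\theta^{\circ H}$, and exploit the quantitative bound $|\tanh^{(k)}(y)|\leqslant \tilde C_k e^{-2|y|}$ for $k\geqslant 1$ (from $P_k(\pm1)=0$, hence $(1-X^2)\mid P_k$, combined with $1-\tanh^2(y)\leqslant 4e^{-2|y|}$), so that $|\theta g_\theta(x)|\geqslant \theta/2$ produces exponential decay that absorbs the $\theta^{|P|}$ factors coming out of Faà di Bruno; as a consequence the lemma's plain statement suffices as induction hypothesis, and the only inputs you need from the previous step are the uniform lower bound $|g_\theta|\geqslant 1/2$ and the uniform boundedness of $g_\theta^{(k)}$, $1\leqslant k\leqslant K$, on $\{|x|\geqslant\varepsilon\}$. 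Both proofs are valid; yours is somewhat more self-contained in that it localizes all the decay in the outermost layer and avoids strengthening the induction hypothesis, while the paper's version additionally yields the super-polynomial rate $\theta^m\to 0$, which is however internal to its proof and not exported by the lemma as stated.
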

\begin{proof}  Fix $\varepsilon > 0$. We prove the stronger statement that, for all $m \in \mathbb{N}$, one has
$$\lim_{\theta \to \infty}\theta^m \|\tanh^{\circ H}_\theta- \mathrm{sgn}\|_{C^K(\mathbb{R}\backslash ]-\varepsilon, \varepsilon[)}= 0.$$ 
We start with the case $H=1$ and then prove the result by induction on $H$. 
Observe first, since $\tanh_\theta^{\circ H} - \mathrm{sgn}$ is an odd function, that
\[\|\tanh^{\circ H}_\theta- \mathrm{sgn}\|_{C^K(\mathbb{R}\backslash ]-\varepsilon, \varepsilon[)} = \|\tanh^{\circ H}_\theta- \mathrm{sgn}\|_{C^K([\varepsilon, \infty[)}.\]
\paragraph{The case $H=1$} Assume, to start with, that $K=0$. For all $x \geqslant \varepsilon$, one has
\begin{align*}
    \theta^m|\tanh_\theta(x)- 1| &= \frac{2\theta^m}{1+\exp(-2\theta x)}\leqslant \frac{2\theta^m}{1+\exp(-2\theta \varepsilon)}.
\end{align*}
Therefore, for all $ m \in \mathbb{N}$,  
\begin{align*}
    \theta^m\|\tanh_\theta-\mathrm{sgn}\|_{\infty, \mathbb{R}\backslash ]-\varepsilon, \varepsilon[} &= \theta^m\|\tanh_\theta-\mathrm{sgn}\|_{\infty, [\varepsilon, \infty[}\leqslant \frac{2\theta^m}{1+\exp(-2\theta \varepsilon)}\xrightarrow{\theta \rightarrow \infty} 0.
\end{align*}  
Next, to prove that the result if true for all $K \geqslant 1$, it is enough to show that, for all $m$, $$\theta^m\|\tanh_\theta^{(K)}\|_{\infty,\mathbb{R}\backslash ]-\varepsilon, \varepsilon[}\xrightarrow{\theta \rightarrow \infty} 0.$$ 
According to the proof of Lemma \autoref{lem:derTanh}, there exists a sequence of polynomials $(P_K)_{K\in \mathbb{N}}$ such that $\tanh^{(K)} = P_K(\tanh)$ and $P_{K+1}(X) = (1-X^2)\times P_K'(X)$. Since $\tanh_\theta(x) = \tanh(\theta x)$, one has
\begin{align*}
    \tanh_\theta^{(K)}(x) & =  \theta^K \tanh^{(K)}(\theta x)\\
    & = \theta^K (1-\tanh^2(\theta x))\times P'_{K-1}(\tanh(\theta x))\\
    & = \theta^K (1-\tanh(\theta x)) (1+\tanh(\theta x))\times P'_{K-1}(\tanh(\theta x)).
\end{align*}
Fix $x \geqslant \varepsilon$. Then, letting $M_K = \|P'_{K-1}\|_{\infty, [-1,1]}$, we are led to
\begin{align*}
    |\tanh_\theta^{(K)}(x)|\leqslant 2 M_K  \theta^K (1-\tanh(\theta x)) &\leqslant 4 M_K \times \frac{\theta^K}{1+\exp(2\theta x)}\\
    &\leqslant 4 M_K \times \frac{\theta^K}{1+\exp(2\theta \varepsilon)}.
\end{align*}
This shows that $\theta^m\|\tanh_\theta^{(K)}\|_{\infty, [\varepsilon, \infty[}\leqslant 4 M_K \times \frac{\theta^{K+m}}{1+\exp(2\theta \varepsilon)}$. One proves with similar arguments that the same result holds for all $x \leqslant - \varepsilon$. Thus, 
\begin{equation*}
    \theta^m\|\tanh_\theta^{(K)}\|_{\infty, \mathbb{R}\backslash  ]-\varepsilon, \varepsilon[}\leqslant 4 M_K \times \frac{\theta^{K+m}}{1+\exp(2\theta \varepsilon)} \xrightarrow{\theta \rightarrow \infty}0,
\end{equation*}
and the lemma is proved for $H=1$.
\paragraph{Induction} 
Assume that that, for all $K$ and all $m$, 
\begin{equation}
\label{Hfixe}
\theta^m \|\tanh^{\circ H}_\theta- \mathrm{sgn}\|_{C^K(\mathbb{R}\backslash ]-\varepsilon, \varepsilon[)}\xrightarrow{\theta \rightarrow \infty} 0.
\end{equation}
Our objective is to prove that, for all $K_2$ and all $m_2$,
\[\theta^{m_2}\|\tanh^{\circ (H+1)}_\theta- \mathrm{sgn}\|_{C^{K_2}(\mathbb{R}\backslash ]-\varepsilon, \varepsilon[)}\xrightarrow{\theta \rightarrow \infty} 0.\]
If $K_2 = 0$, since, for all $(x,y) \in \mathbb{R}^2$,
  $|\tanh_\theta(x) - \tanh_\theta(y)| \leqslant \theta |x-y|\times \|\tanh'\|_\infty 
    \leqslant \theta |x-y|$.
We deduce that \[\theta^{m_2}\|\tanh_\theta^{\circ (H+1)}- \tanh_\theta(\mathrm{sgn})\|_{\infty, \mathbb{R}\backslash ]-\varepsilon, \varepsilon[} \leqslant \theta^{m_2+1} \|\tanh^{\circ H}_\theta- \mathrm{sgn}\|_{\infty, \mathbb{R}\backslash ]-\varepsilon, \varepsilon[}.\]
Therefore, according to \eqref{Hfixe}, we have that 
$\lim_{\theta\to\infty}\theta^{m_2}\|\tanh_\theta^{\circ (H+1)}- \tanh_\theta(\mathrm{sgn})\|_{\infty, \mathbb{R}\backslash ]-\varepsilon, \varepsilon[} = 0$.
Since $\tanh_\theta (\mathrm{sgn}) - \mathrm{sgn}= (\tanh(\theta)-1) \mathbf{1}_{x >0 } - (\tanh(\theta)-1) \mathbf{1}_{x <0 }$, we see that, for all $m_2$, 
$$\lim_{\theta\to\infty}\theta^{m_2}\|\tanh_\theta (\mathrm{sgn}) - \mathrm{sgn}\|_{\infty, \mathbb{R}\backslash ]-\varepsilon, \varepsilon[} =0.$$ 
Using the triangle inequality, we conclude as desired that, for all $m_2$, 
\begin{equation}
\label{eq:characterizing_K2_1}
    \theta^{m_2}\|\tanh_\theta^{\circ (H+1)}- \mathrm{sgn}\|_{\infty, \mathbb{R}\backslash ]-\varepsilon, \varepsilon[} \xrightarrow{\theta \rightarrow \infty} 0.
\end{equation}
Assume now that $K_2\geqslant 1$. Since $\tanh_\theta^{\circ (H+1)} = \tanh^{\circ H}(\tanh)$, the Faà di Bruno formula \citep[e.g.,][Chapter 3.4]{comtet1974advanced} states that 
\begin{align}
    (\tanh_\theta^{\circ (H+1)})^{(K_2)} &= \sum_{m_1 + 2m_2 + \cdots + K_2 m_{K_2} = K_2} \frac{K_2!}{\prod_{i=1}^{K_2} m_i! \times i!^{m_i}}  \nonumber \\
    &\times (\tanh_\theta^{\circ H})^{(m_1+\cdots+m_{K_2})}(\tanh_\theta)\times \prod_{j=1}^{K_2}(\tanh_\theta^{(j)})^{m_j}. \nonumber
\end{align}
Notice that if $|x| \leq \mathrm{arctanh}(1/\sqrt{2})$, $|\tanh(x)|\geqslant \frac{|x|}{2}$ because by calling $f(x) = \tanh(x) - \frac{x}{2}$, $f(0) = 0$ and $f'(x) = (1-\tanh(x)^2) - \frac{1}{2} \geqslant 0$. Therefore, if $|x|\geq \varepsilon$, $|\tanh(\theta x)| \geqslant \min(\frac{1}{\sqrt{2}},\frac{\theta}{2}\varepsilon)\geqslant \varepsilon$ if $\theta \geqslant 2$ and $\varepsilon \geqslant \frac{1}{\sqrt{2}}$. This is why for $\theta \geqslant 2$ and $\varepsilon \leqslant 1$, 
\[ \| (\tanh_\theta^{\circ H})^{(m_1+\cdots+m_{K_2})}(\tanh_\theta)\|_{\infty, \mathbb{R}\backslash ]-\varepsilon, \varepsilon[} \leqslant \|(\tanh_\theta^{\circ H})^{(m_1+\cdots+m_{K_2})}\|_{\infty, \mathbb{R}\backslash ]-\varepsilon, \varepsilon[} .\]  
Therefore, from the triangular inequality on $\|\cdot\|_{\infty, \mathbb{R}\backslash ]-\varepsilon, \varepsilon[}$,
\begin{align*}
    \|(\tanh_\theta^{\circ (H+1)})^{(K_2)}\|_{\infty, \mathbb{R}\backslash ]-\varepsilon, \varepsilon[} &\leqslant \sum_{m_1 + 2m_2 + \cdots + K_2 m_{K_2} = K_2} \frac{K_2!}{\prod_{i=1}^{K_2} m_i! \times i!^{m_i}} \\
    &\! \times  \| (\tanh_\theta^{\circ H})^{(m_1+\cdots+m_{K_2})}\|_{\infty, \mathbb{R}\backslash ]-\varepsilon, \varepsilon[}  \prod_{j=1}^{K_2}\|\tanh_\theta^{(j)}\|_{\infty, \mathbb{R}\backslash ]-\varepsilon, \varepsilon[}^{m_j}.
\end{align*}
 According to the induction hypothesis \eqref{Hfixe}, one has, for all $K \geqslant 1$ and all $m \in \mathbb{N}$,  
 $$\lim_{\theta\to\infty}\theta^m \|(\tanh^{\circ H}_\theta)^{(K)}\|_{\infty, \mathbb{R}\backslash ]-\varepsilon, \varepsilon[}=0.$$ 
 We deduce from the above that for all $K_2 \geqslant 1$ and all $m_2$, 
\begin{equation}
\label{eq:characterizin_K2_geq1}
    \theta^{m_2} \|(\tanh^{\circ (H+1)}_\theta)^{(K_2)}\|_{\infty, \mathbb{R}\backslash ]-\varepsilon, \varepsilon[}\xrightarrow{\theta \rightarrow \infty} 0.
\end{equation}
Combining \eqref{eq:characterizing_K2_1} and \eqref{eq:characterizin_K2_geq1}, it comes that
$\lim_{\theta\to\infty}\theta^{m_2}\|\tanh^{\circ (H+1)}_\theta- \mathrm{sgn}\|_{C^{K_2}(\mathbb{R}\backslash ]-\varepsilon, \varepsilon[)}= 0$.
\end{proof}
\begin{cor}[Bounding hyperbolic tangent compositions and their derivatives]
\label{cor:bounding_tanh}
    Let $K\in \mathbb{N}$ and $H\in \mathbb{N}^\star$. Then, for or all $\theta \in \mathbb R$, 
   $\|(\tanh_\theta^{\circ H})^{(K)}\|_{\infty} < \infty$.
\end{cor}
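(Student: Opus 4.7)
The plan is to proceed by induction on $H$, using Lemma \ref{lem:derTanh} (which controls the sup-norm of each derivative of $\tanh$) as a base ingredient and Lemma \ref{lem:boundPartialDer} (the Faà-di-Bruno/Bell-number bound for $\|g \circ f\|_{C^K}$) to handle the composition step. Since $\tanh$ and all its derivatives are already bounded on $\mathbb{R}$, there is nothing delicate to control at infinity; the only issue is keeping track of how the constants behave when the number of compositions grows.

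For the base case $H = 1$, I would simply note that for any $K \in \mathbb{N}$, the chain rule gives $(\tanh_\theta)^{(K)}(x) = \theta^K \tanh^{(K)}(\theta x)$, so by Lemma \ref{lem:derTanh},
\[\|(\tanh_\theta)^{(K)}\|_\infty \leqslant |\theta|^K \|\tanh^{(K)}\|_\infty \leqslant |\theta|^K \, 2^{K-1}(K+2)!\,,\]
which is finite. In particular $\|\tanh_\theta\|_{C^K(\mathbb{R})} < \infty$ for every $K$.

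For the inductive step, suppose $\|\tanh_\theta^{\circ H}\|_{C^K(\mathbb{R})} < \infty$ for all $K$. Write $\tanh_\theta^{\circ (H+1)} = \tanh_\theta^{\circ H} \circ \tanh_\theta$ and apply Lemma \ref{lem:boundPartialDer} in dimension $d_1 = d_2 = 1$ with $g = \tanh_\theta^{\circ H}$ and $f = \tanh_\theta$ to get
\[\|\tanh_\theta^{\circ (H+1)}\|_{C^K(\mathbb{R})} \leqslant B_K \, \|\tanh_\theta^{\circ H}\|_{C^K(\mathbb{R})} \bigl(1 + \|\tanh_\theta\|_{C^K(\mathbb{R})}\bigr)^K.\]
Both factors on the right are finite by the induction hypothesis and the base case, so the left-hand side is finite, which closes the induction. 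Since $\|(\tanh_\theta^{\circ H})^{(K)}\|_\infty \leqslant \|\tanh_\theta^{\circ H}\|_{C^K(\mathbb{R})}$, the desired bound follows.

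There is essentially no obstacle here; the statement is really just a consequence of iterating the composition estimate. The only thing to watch is to apply Lemma \ref{lem:boundPartialDer} in the correct direction (outer function $g = \tanh_\theta^{\circ H}$, inner function $f = \tanh_\theta$), so that the induction hypothesis supplies the bound on $\|g\|_{C^K}$ while the base case supplies the bound on $\|f\|_{C^K}$.
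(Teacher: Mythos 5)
Your proof is correct. The decomposition $\tanh_\theta^{\circ (H+1)} = \tanh_\theta^{\circ H}\circ \tanh_\theta$ is consistent with the paper's convention $u^{\circ(k+1)} = u^{\circ k}\circ u$, the base case follows from the chain rule together with Lemma \ref{lem:derTanh}, and the inductive step is a legitimate application of Lemma \ref{lem:boundPartialDer} with $d_1=d_2=1$, outer function $g=\tanh_\theta^{\circ H}$ and inner function $f=\tanh_\theta$, so every step goes through. Your route is, however, genuinely different from the paper's: the paper handles the corollary by splitting $\mathbb{R}$ into the compact interval $[-\varepsilon,\varepsilon]$, where smoothness of $\tanh_\theta^{\circ H}$ gives boundedness of the derivatives, and the complement $\mathbb{R}\setminus{]-\varepsilon,\varepsilon[}$, where it invokes the same induction as in Lemma \ref{lem:compTanh} (the argument tracking how $\tanh_\theta^{\circ H}$ and its derivatives behave away from the origin). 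That proof is only sketched and piggybacks on machinery built for the convergence-to-$\mathrm{sgn}$ statement, whereas your argument is a self-contained global induction on $H$ via the Faà di Bruno/Bell-number composition estimate, and it even produces an explicit quantitative bound on $\|\tanh_\theta^{\circ H}\|_{C^K(\mathbb{R})}$ in terms of $|\theta|$, $B_K$, and the factorial bounds of Lemma \ref{lem:derTanh} — more than the corollary asks for, and arguably cleaner since no $\varepsilon$-splitting is needed.
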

\begin{proof} 
An induction as the one of Lemma \ref{lem:compTanh} shows that
$\|(\tanh_\theta^{\circ H})^{(K)}\|_{\infty, \mathbb{R}\backslash ]-\varepsilon, \varepsilon[} < \infty$. In addition, since $\tanh_\theta^{\circ H} \in C^\infty(\mathbb{R}, \mathbb{R})$, $\|(\tanh_\theta^{\circ H})^{(K)}\|_{\infty, [-\varepsilon, \varepsilon]} < \infty$.  
\end{proof}

When $d_1=d_2=1$, the observations $(\bX_1,Y_1), \hdots,$ 
$(\bX_n, Y_n) \in\mathbb{R}^{2}$ can be reordered as $(\bX_{(1)}, Y_{(1)}), \hdots,$ $(\bX_{(n)}, Y_{(n)})$ according to increasing values of the $\bX_i$, that is,
$\bX_{(1)} \leqslant \cdots \leqslant \bX_{(n)}$. Moreover, we let 
 $\mathcal{G}(n, n_r) = \{(\bX_i, Y_i), 1\leqslant i \leqslant n\}\cup \{\bX^{(r)}_j, 1\leqslant j \leqslant n_r\}$,
 and denote by $\delta(n, n_r)$ the minimum distance between two distinct points in $\mathcal{G}(n, n_r) $, i.e., 
 \begin{equation}
 \label{def:delta}
     \delta(n,n_r) = \underset{z_1\neq z_2}{\min_{z_1, z_2 \in \mathcal{G}(n, n_r)}} |z_1-z_2|.
 \end{equation}
\begin{lem}[Exact estimation with hyperbolic tangent]
\label{lem:estimationTanh}
    Assume that $d_1=d_2=1$, and let $H \geqslant 1$. Let the neural network $u_\theta \in \mathrm{NN}_H(n-1)$ be defined by
    \[u_\theta(x) =  Y_{(1)} + \sum_{i=1}^{n-1} \frac{Y_{(i+1)}-Y_{(i)}}{2} \bigg[\tanh_\theta^{\circ H}\Big(x-\bX_{(i)}-\frac{\delta(n, n_r)}{2}\Big)+1\bigg].\]
    Then, for all $1 \leqslant i \leqslant n$, \[ \lim_{\theta \to \infty}u_\theta(\bX_i) = Y_i.\]
    Moreover, for all order $K \in \mathbb{N}^\star$ of differentiation and all $1 \leqslant j \leqslant n_r$, \[\lim_{\theta \to \infty} u^{(K)}_\theta(\bX^{(r)}_j) = 0.\]
\end{lem}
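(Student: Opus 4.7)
The plan is to exploit Lemma~\ref{lem:compTanh}, which states that $\tanh^{\circ H}_\theta$ converges to $\mathrm{sgn}$ in every Hölder norm on the complement of any open neighborhood of the origin. The shift by $\delta(n,n_r)/2$ inside each bump is engineered precisely so that the jump location $\bX_{(i)} + \delta(n,n_r)/2$ stays at distance at least $\delta(n,n_r)/2$ from every element of $\mathcal{G}(n,n_r)$, which is exactly the neighborhood size for which Lemma~\ref{lem:compTanh} applies.

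The key preliminary step is a separation estimate. For every $1\leqslant k\leqslant n$ and every $1\leqslant i\leqslant n-1$, I would check that $\mathrm{sgn}(\bX_{(k)} - \bX_{(i)} - \delta(n,n_r)/2) = +1$ if $k>i$ and $-1$ if $k\leqslant i$, with the absolute value being at least $\delta(n,n_r)/2$ in both cases. This is immediate from the definition \eqref{def:delta}: distinct elements of $\mathcal{G}(n,n_r)$ are separated by at least $\delta(n,n_r)$, and the equality case $k=i$ gives exactly $-\delta(n,n_r)/2$. The same bound holds with $\bX_j^{(r)}$ in place of $\bX_{(k)}$: either $\bX_j^{(r)} = \bX_{(i)}$ (yielding $-\delta(n,n_r)/2$), or the two are distinct elements of $\mathcal{G}(n,n_r)$ and thus separated by at least $\delta(n,n_r)$.

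For the first claim, applying Lemma~\ref{lem:compTanh} with $\varepsilon = \delta(n,n_r)/2$ and $K=0$ gives, for each fixed $i$ and $k$, $\tanh_\theta^{\circ H}(\bX_{(k)} - \bX_{(i)} - \delta(n,n_r)/2) \to \mathrm{sgn}(\bX_{(k)} - \bX_{(i)} - \delta(n,n_r)/2)$ as $\theta\to \infty$. Inserting the values $+1$ (for $i<k$) and $-1$ (for $i\geqslant k$) into the defining sum collapses it telescopically:
\[
u_\theta(\bX_{(k)}) \xrightarrow[\theta\to\infty]{} Y_{(1)} + \sum_{i=1}^{k-1} (Y_{(i+1)} - Y_{(i)}) = Y_{(k)},
\]
which proves $\lim_{\theta\to\infty} u_\theta(\bX_i) = Y_i$ for each $i$.

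For the second claim, the chain rule yields
\[
u_\theta^{(K)}(x) = \sum_{i=1}^{n-1} \frac{Y_{(i+1)} - Y_{(i)}}{2}\, (\tanh_\theta^{\circ H})^{(K)}(x - \bX_{(i)} - \delta(n,n_r)/2).
\]
Applying Lemma~\ref{lem:compTanh} with $\varepsilon = \delta(n,n_r)/2$ shows that $\|(\tanh_\theta^{\circ H})^{(K)}\|_{\infty,\, \mathbb{R}\setminus\,]-\delta(n,n_r)/2,\,\delta(n,n_r)/2[\,}\to 0$ as $\theta\to\infty$, since $\mathrm{sgn}$ is locally constant away from the origin and hence $\mathrm{sgn}^{(K)} \equiv 0$ there. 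Because $\bX_j^{(r)} - \bX_{(i)} - \delta(n,n_r)/2$ has absolute value at least $\delta(n,n_r)/2$ by the preliminary estimate, each summand vanishes in the limit, giving $\lim_{\theta\to\infty} u_\theta^{(K)}(\bX_j^{(r)}) = 0$. There is no real obstacle: the only subtlety is correctly identifying the $\delta(n,n_r)/2$ separation that makes Lemma~\ref{lem:compTanh} applicable at every point where we need to evaluate $u_\theta$ or its derivatives.
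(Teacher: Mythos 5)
Your proof is correct and follows essentially the same route as the paper: both rest on Lemma \ref{lem:compTanh} together with the $\delta(n,n_r)$ separation of the sample points; the paper packages the convergence as a uniform $C^K$ limit (with $\varepsilon=\delta(n,n_r)/4$) toward the explicit step function $u_\infty$ on a set $G$ avoiding the jump locations, whereas you apply the lemma term by term with $\varepsilon=\delta(n,n_r)/2$ and telescope the sum directly — a purely organizational difference.
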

\begin{proof} Applying Lemma \ref{lem:compTanh} with $\varepsilon = \nicefrac{\delta(n, n_r)}{4}$ and letting \[G = \mathbb{R}\backslash \cup_{i=1}^n ]\bX_{(i)} + \frac{1}{4}\delta(n, n_r), \bX_{(i)} + \frac{3}{4}\delta(n, n_r)[,\] one has, for all $K$,
$\lim_{\theta\to\infty }\|u_\theta - u_\infty\|_{C^K(G)} = 0$,
where \[u_\infty(x) = Y_{(1)}+\sum_{i=1}^{n-1} \big[Y_{(i+1)}-Y_{(i)}\big]\times \mathbf{1}_{x>\bX_{(i)}+\frac{\delta(n, n_r)}{2}} . \]
Clearly, for all $1\leqslant i \leqslant n$, $u_\infty(\bX_{i}) = Y_i$. Since $u'_\infty(x) = 0$ for all $x \in G$, and since $\bX^{(r)}_j \in G$ for all $1 \leqslant j \leqslant n_r$, we deduce that $u_\infty^{(K)}(\bX^{(r)}_j) = 0$. This concludes the proof.
\end{proof} 
\begin{defi}[Overfitting gap]
    \label{defi:OG}
    For any $n, n_e, n_r \in \mathbb{N}^\star$ and $\lambda_{(\mathrm{ridge})} \geqslant 0$, the overfitting gap operator $\mathrm{OG}_{n, n_e, n_r}$ is defined, for all $u \in C^\infty(\bar{\Omega}, \mathbb{R}^{d_2})$, by \[\mathrm{OG}_{n, n_e, n_r}(u) = |R_{n, n_e, n_r}^{(\mathrm{ridge})}(u)- \mathscr{R}_n(u)|.\]
\end{defi}
\begin{lem}[Monitoring the overfitting gap]
    \label{lem:overfitting_gap}
    Let $\varepsilon > 0$, $\lambda_{(\mathrm{ridge})} \geqslant 0$, $H \geqslant 2$, and $D\in \mathbb{N}^\star$. Let $n , n_e, n_r \in \mathbb{N}^\star$. Let $\hat \theta \in \Theta_{H, D}$ be a parameter such that $(i)$ $R^{(\mathrm{ridge})}_{n, n_e,n_r}(u_{\hat \theta}) \leqslant  \inf_{u \in \mathrm{NN}_H(D)} R_{n, n_e,n_r}^{(\mathrm{ridge})}(u) + \varepsilon$ and $(ii)$ $\mathrm{OG}_{n, n_e, n_r}(u_{\hat \theta})\leqslant \varepsilon$. 
    Then
    \[\mathscr{R}_n(u_{\hat \theta}) \leqslant \inf_{u \in \mathrm{NN}_H(D)} \mathscr{R}_n(u) + 2 \varepsilon + o_{n_e,n_r \to \infty}(1).\]
\end{lem}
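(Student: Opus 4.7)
The plan is to chain three inequalities. Starting from the two hypotheses, condition $(ii)$ rewritten via Definition \ref{defi:OG} gives
\[
\mathscr{R}_n(u_{\hat\theta}) \leqslant R^{(\mathrm{ridge})}_{n,n_e,n_r}(u_{\hat\theta}) + \varepsilon,
\]
while condition $(i)$ gives $R^{(\mathrm{ridge})}_{n,n_e,n_r}(u_{\hat\theta}) \leqslant \inf_{\theta \in \Theta_{H,D}} R^{(\mathrm{ridge})}_{n,n_e,n_r}(u_\theta) + \varepsilon$. Combining these two estimates yields
\[
\mathscr{R}_n(u_{\hat\theta}) \leqslant \inf_{\theta \in \Theta_{H,D}} R^{(\mathrm{ridge})}_{n,n_e,n_r}(u_\theta) + 2\varepsilon.
\]
It remains to show that the right-hand infimum is, as $n_e,n_r\to\infty$, at most $\inf_{u \in \mathrm{NN}_H(D)}\mathscr{R}_n(u) + o(1)$.

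To achieve this, I would fix $\eta > 0$ and pick an $\eta$-minimizer $\theta_\eta \in \Theta_{H,D}$ of $\mathscr{R}_n$ over $\mathrm{NN}_H(D)$, i.e., $\mathscr{R}_n(u_{\theta_\eta}) \leqslant \inf_{u \in \mathrm{NN}_H(D)} \mathscr{R}_n(u) + \eta$. Crucially, $\theta_\eta$ depends on $\eta$ but not on $n_e$ or $n_r$. By definition of the ridge risk,
\[
\inf_{\theta \in \Theta_{H,D}} R^{(\mathrm{ridge})}_{n,n_e,n_r}(u_\theta) \leqslant R_{n,n_e,n_r}(u_{\theta_\eta}) + \lambda_{(\mathrm{ridge})}\|\theta_\eta\|_2^2.
\]

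The last ingredient is the strong law of large numbers applied term by term. The data-fit term $\tfrac{\lambda_d}{n}\sum_i \|u_{\theta_\eta}(\bX_i) - Y_i\|_2^2$ coincides in $R_{n,n_e,n_r}$ and $\mathscr{R}_n$, so it cancels. The remaining two terms are Monte Carlo averages based on i.i.d.~samples $(\bX^{(e)}_j)$ on $E$ and $(\bX^{(r)}_\ell)$ uniform on $\Omega$. The integrands $\bx \mapsto \|u_{\theta_\eta}(\bx) - h(\bx)\|_2^2$ and $\bx \mapsto \mathscr{F}_k(u_{\theta_\eta},\bx)^2$ are continuous on the compact $\bar\Omega$ (recall $u_{\theta_\eta} \in \mathrm{NN}_H(D) \subseteq C^\infty(\bar\Omega,\mathbb{R}^{d_2})$ and $h$ is Lipschitz on $E$), hence bounded, so SLLN yields $R_{n,n_e,n_r}(u_{\theta_\eta}) \to \mathscr{R}_n(u_{\theta_\eta})$ almost surely as $n_e, n_r \to \infty$. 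Putting everything together gives
\[
\mathscr{R}_n(u_{\hat\theta}) \leqslant \inf_{u \in \mathrm{NN}_H(D)} \mathscr{R}_n(u) + 2\varepsilon + \eta + \lambda_{(\mathrm{ridge})}\|\theta_\eta\|_2^2 + o_{n_e,n_r\to\infty}(1).
\]
Letting $\eta \to 0$ and absorbing the vanishing contribution $\lambda_{(\mathrm{ridge})}\|\theta_\eta\|_2^2$ into the $o(1)$ term (in the regime intended by Theorem \ref{thm:generalization_error}, where $\lambda_{(\mathrm{ridge})} = \min(n_e,n_r)^{-\kappa} \to 0$) produces the desired bound.

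The main obstacle is the coupling between the two parameters $\eta$ and $(n_e,n_r)$: the norm $\|\theta_\eta\|_2$ of an $\eta$-minimizer is not \emph{a priori} bounded as $\eta \to 0$, so one must ensure that its contribution through the ridge weight $\lambda_{(\mathrm{ridge})}\|\theta_\eta\|_2^2$ vanishes in the asymptotics of interest. This is precisely why the lemma is applied in contexts where $\lambda_{(\mathrm{ridge})}$ is chosen to decay as a negative power of $\min(n_e, n_r)$: for any fixed $\theta_\eta$, one then has $\lambda_{(\mathrm{ridge})}\|\theta_\eta\|_2^2 = o(1)$, and the two limits can be taken in the correct order without any uniform-boundedness argument.
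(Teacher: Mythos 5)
Your argument is correct and reaches the stated bound, but its second half takes a genuinely different route from the paper. After the same chaining of hypotheses $(i)$ and $(ii)$, the paper controls $\inf_{u\in\mathrm{NN}_H(D)} R^{(\mathrm{ridge})}_{n,n_e,n_r}(u)$ by re-using the sequence built inside the proof of Theorem \ref{thm:generalization_error}: a family $\theta(n_e,n_r)$ whose overfitting gap vanishes and whose theoretical risk tends to $\inf_{u\in\mathrm{NN}_H(D)}\mathscr{R}_n(u)$, a fact resting on the uniform concentration machinery of Theorem \ref{thm:approx_integral} and on the specific decaying choice of $\lambda_{(\mathrm{ridge})}$ made there. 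You instead fix an $\eta$-minimizer $\theta_\eta$ of $\mathscr{R}_n$, independent of $(n_e,n_r)$, and only need the plain strong law of large numbers for the two Monte Carlo terms of $R_{n,n_e,n_r}(u_{\theta_\eta})$ (the integrands are indeed bounded on $\bar\Omega$ for a fixed smooth network, Lipschitz $h$, and polynomial operators) together with $\lambda_{(\mathrm{ridge})}\|\theta_\eta\|_2^2\to 0$; this is more elementary and self-contained, at the price of a two-parameter limit that you correctly resolve by keeping $\theta_\eta$ fixed while $n_e,n_r\to\infty$. To make the ``letting $\eta\to 0$'' step airtight, take $\eta$ along a countable sequence and intersect the corresponding almost-sure events, so that $\limsup_{n_e,n_r\to\infty}\big[\inf_{u\in\mathrm{NN}_H(D)} R^{(\mathrm{ridge})}_{n,n_e,n_r}(u)-\inf_{u\in\mathrm{NN}_H(D)}\mathscr{R}_n(u)\big]\leqslant 0$ almost surely; this produces an $o_{n_e,n_r\to\infty}(1)$ term independent of $\hat\theta$ and $\eta$, as required. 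Finally, your reliance on $\lambda_{(\mathrm{ridge})}\to 0$ is not a defect relative to the paper: although the lemma is stated for an arbitrary $\lambda_{(\mathrm{ridge})}\geqslant 0$, the paper's own proof also borrows the regime of Theorem \ref{thm:generalization_error}, where the ridge weight decays with $\min(n_e,n_r)$, so both arguments need the same reading of the statement.
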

\begin{proof}
    On the one hand, since $\mathscr{R}_n \leqslant R_{n, n_e,n_r}^{(\mathrm{ridge})} + \mathrm{OG}_{n, n_e, n_r}$, assumptions $(i)$ and $(ii)$ imply that $\mathscr{R}_n(u_{\hat \theta}) \leqslant \inf_{u \in \mathrm{NN}_H(D)} R_{n, n_e,n_r}^{(\mathrm{ridge})}(u) + 2\varepsilon$.
    On the other hand, $R_{n, n_e,n_r}^{(\mathrm{ridge})} - \mathrm{OG}_{n, n_e, n_r} \leqslant \mathscr{R}_n$. 
   The proof of Theorem \ref{thm:generalization_error} reveals that there exists a sequence $(\theta(n_e, n_r))_{n_e, n_r \in \mathbb{N}} \in \Theta_{H,D}^{\mathbb{N}}$ such that $\lim_{n_e, n_r \to \infty}\mathrm{OG}_{n, n_e, n_r}(u_{\theta(n_e, n_r)}) = 0$ and $\lim_{n_e, n_r \to \infty }\mathscr{R}_n(u_{\theta(n_e, n_r)}) =  \inf_{u \in \mathrm{NN}_H(D)} \mathscr{R}_n(u)$. 
   Thus, $\inf_{u \in \mathrm{NN}_H(D)} R_{n, n_e,n_r}^{(\mathrm{ridge})}(u) \leqslant \inf_{\mathrm{NN}_H(D)} \mathscr{R}_n(u) + o_{n_e,n_r \to \infty}(1)$.
   We deduce that $$\mathscr{R}_n(u_{\hat \theta}) \leqslant \inf_{u\in \mathrm{NN}_H(D)} \mathscr{R}_n(u) + 2\varepsilon +  o_{n_e,n_r \to \infty}(1).$$ 
\end{proof}
\begin{lem}[Minimizing sequence of the theoretical risk.]
\label{lem:min}
    Let $H,D \in \mathbb{N}^\star$. Define the sequence $(v_p)_{p\in \mathbb{N}}\in \mathrm{NN}_H(D)^\mathbb{N}$ of neural networks by $v_p(\bx) = \tanh_p\circ \tanh^{\circ (H-1)}(\bx)$. Then, for any $\lambda_e > 0$,
    \[\lim_{p \to \infty} \lambda_e (1-v_p(1))^2 + \frac{1}{2}\int_{-1}^1 \bx^2 (v_p')^2(\bx)d\bx = 0.\]
\end{lem}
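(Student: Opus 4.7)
The plan is to treat the two terms separately. The first term is immediate, and the difficulty lies in controlling the derivative of $v_p$ near the origin, where $v_p$ develops a sharp transition and its slope diverges like $p$.

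For the boundary term, set $c_H = \tanh^{\circ(H-1)}(1)$, which is a strictly positive constant independent of $p$ (since $\tanh$ is an odd monotone increasing function with $\tanh(1)>0$). Then $v_p(1)=\tanh(p c_H)\to 1$ as $p\to\infty$, and therefore $\lambda_e(1-v_p(1))^2\to 0$.

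For the integral, let $g=\tanh^{\circ(H-1)}$. A direct computation gives
\[
v_p'(\bx) = p\,g'(\bx)\,\mathrm{sech}^2\!\big(p g(\bx)\big),
\]
where $g$ is odd, smooth, strictly increasing on $\mathbb{R}$, with $g(0)=0$ and $g'(0)=1$. Fix a small $\varepsilon\in (0,1)$, and split
\[
\int_{-1}^1 \bx^2 (v_p')^2(\bx)\,d\bx = \int_{|\bx|\geqslant \varepsilon} \bx^2 (v_p')^2(\bx)\,d\bx + \int_{|\bx|<\varepsilon} \bx^2 (v_p')^2(\bx)\,d\bx.
\]
For the outer piece, since $g$ is strictly increasing with $g(0)=0$, one has $|g(\bx)|\geqslant g(\varepsilon)>0$ whenever $|\bx|\geqslant \varepsilon$. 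Using $\mathrm{sech}^2(y)\leqslant 4 e^{-2|y|}$, we get a uniform bound $|v_p'(\bx)|\leqslant 4p\,\|g'\|_\infty e^{-2 p g(\varepsilon)}$ on that set, so the outer integral decays like $p^2 e^{-4 p g(\varepsilon)}$ and tends to $0$. This is essentially the content of Lemma \ref{lem:compTanh} applied to $v_p$ rather than $\tanh_p^{\circ H}$, and could be invoked directly after noting the analogous proof.

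For the inner piece, pick $\varepsilon$ so small that $g(\bx)\geqslant \tfrac{1}{2}\bx$ for $\bx\in[0,\varepsilon]$ (possible since $g'(0)=1$), so that $|g(\bx)|\geqslant \tfrac{1}{2}|\bx|$ by oddness on $[-\varepsilon,\varepsilon]$. Then
\[
\bx^2 (v_p')^2(\bx)\leqslant 16\,\|g'\|_\infty^2\, p^2 \bx^2 e^{-2p|\bx|}.
\]
The substitution $t=p\bx$ yields
\[
\int_{-\varepsilon}^{\varepsilon} p^2 \bx^2 e^{-2p|\bx|}\,d\bx = \frac{2}{p}\int_0^{p\varepsilon} t^2 e^{-2t}\,dt \leqslant \frac{2}{p}\int_0^\infty t^2 e^{-2t}\,dt = O(1/p),
\]
and hence the inner piece also tends to $0$. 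Combining the two pieces concludes the proof. The main (mild) obstacle is the near-origin behaviour: the pointwise limit $\mathrm{sgn}$ of $v_p$ is not $H^1$, so naive passage to the limit fails; the weight $\bx^2$ is precisely what compensates for the $p$-scaling of $v_p'$ concentrated in a $1/p$-neighbourhood of $0$.
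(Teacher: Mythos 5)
Your proof is correct and follows essentially the same route as the paper's: bound $v_p'$ via the chain rule, use the exponential decay of $\tanh'$ (i.e.\ $\mathrm{sech}^2(y)\leqslant 4e^{-2|y|}$), lower-bound $\tanh^{\circ(H-1)}$ linearly near the origin, and conclude with the change of variables $t=p\bx$ giving an $O(1/p)$ bound. The only cosmetic difference is your $\varepsilon$-splitting of $[-1,1]$, which the paper avoids by using the global bound $\tanh^{\circ(H-1)}(\bx)\geqslant \bx/2^{H}$ valid on all of $[0,1]$.
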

\begin{proof}
$\tanh^{\circ (H-1)}$ is an increasing $C^\infty$ function such that $\tanh^{\circ (H-1)}(0) = 0$. Therefore, Lemma \ref{lem:compTanh} shows that $\lim_{p \to \infty}v_p(1) = 1$, so that $\lim_{p\to\infty}\lambda_e(1-v_p(1))^2=0$. This shows the convergence of the left-hand term of the lemma.
    
To bound the right-hand term, we have, according to the chain rule, \[|v_p'(\bx)| \leqslant p \|\tanh^{\circ (H-1)}\|_{C^1(\mathbb{R})} |\tanh'(p\tanh^{\circ (H-1)}(\bx))|,\] with  $\|\tanh^{\circ (H-1)}\|_{C^1(\mathbb{R})} < \infty$ by Corollary \ref{cor:bounding_tanh}.
Thus, \[\int_{-1}^1 \bx^2 (v_p')^2(\bx) d\bx \leqslant  \|\tanh^{\circ (H-1)}\|_{C^1(\mathbb{R})}^2 \int_{-1}^1 p^2 \bx^2 (\tanh'(p\tanh^{\circ (H-1)}(\bx)))^2d\bx.\]
Notice that $\bx^2 (\tanh'(p\tanh^{\circ (H-1)}(\bx)))^2$ is an even function, so that 
\[\int_{-1}^1 \bx^2 (v_p')^2(\bx) d\bx \leqslant 2 \|\tanh^{\circ (H-1)}\|_{C^1(\mathbb{R})}^2 \int_{0}^1 p^2\bx^2 (\tanh'(p\tanh^{\circ (H-1)}(\bx)))^2d\bx.\]
Remark that $(\tanh')^2(\bx) = (1-\tanh(\bx))^2(1+\tanh(\bx))^2 \leqslant 16 \exp(-2\bx)$, 
so that 
\[\int_{-1}^1 \bx^2 (v_p')^2(\bx) d\bx \leqslant 32 \|\tanh^{\circ (H-1)}\|_{C^1(\mathbb{R})}^2 \int_{0}^1 p^2\bx^2 \exp(-2p\tanh^{\circ (H-1)}(\bx))d\bx.\]

If $H=1$, then the change of variable $\bar{\bx} = p\bx$ states that $$\int_{0}^1 p^2\bx^2 \exp{(-2p\bx)}d\bx \leqslant p^{-1}\int_{0}^\infty \bar{\bx}^2 \exp{(-2\bar{\bx})}d\bar{\bx} \xrightarrow{p\to \infty}0$$ and the lemma is proved.

If $H\geqslant 2$, notice that $\tanh(\bx) \geqslant \bx\mathbf{1}_{\bx\leqslant 1}/2 + \mathbf{1}_{\bx\geqslant 1}/2$ for all $\bx \geqslant 0$, and therefore we have that $\tanh^{\circ (H-1)}(\bx) \geqslant \bx\mathbf{1}_{\bx\leqslant 2^{H-1}}/2^H + \mathbf{1}_{\bx\geqslant 2^{H-1}}/2^H$. 
Thus, using the change of variable $\bar{\bx} = p\bx$,
\begin{align*}
    \int_{0}^1 p^2\bx^2 \exp(-2p\tanh^{\circ (H-1)}(\bx))d\bx &\leqslant \int_{0}^{1} p^2\bx^2 \exp(-2^{H-1}p\bx)d\bx\\  
    & \leqslant p^{-1} \int_0^\infty \bar{\bx}^2 \exp(-2^{H-1}\bar{\bx})d\bar{\bx}.
\end{align*}
Since this upper bound vanishes as $p\to \infty$, this concludes the proof when $H\geqslant 2$.

\end{proof}

\begin{defi}[Weak lower semi-continuity]
    A fonction $I: H^m(\Omega)\to \mathbb{R}$ is weakly lower semi-continuous on $H^m(\Omega)$ if, for any sequence $(u_p)_{p\in \mathbb{N}} \in H^m(\Omega)^{\mathbb{N}}$ that weakly converges to $u_\infty \in H^m(\Omega)$ in $H^m(\Omega)$, one has $I(u_\infty) \leqslant \liminf_{p \to \infty} I(u_p).$ 
\end{defi}

The following technical lemma will be useful for the proof of Proposition \ref{prop:sequenceCvLin}.
\begin{lem}[Weak lower semi-continuity with convex Lagrangians]
\label{lem:lowerSemiC0Lin}
    Let the Lagrangian $L\in C^\infty(\mathbb{R}^{\binom{d_1+m}{m}d_2}\times \cdots \times \mathbb{R}^{d_2}\times \mathbb{R}^{d_1}, \mathbb{R})$ be such that, for any $x^{(m)}, \hdots, x^{(0)}$, and $z$, the function $x^{(m+1)} \mapsto L(x^{(m+1)}, \hdots, x^{(0)}, z)$
     is convex and nonnegative.
    
    Then the function $I : u \mapsto \int_{\Omega}L((\partial^{m+1}_{i_1,\hdots, i_{m+1}}u(\bx))_{1\leqslant i_1,\hdots, i_{m+1} \leqslant d_1},\hdots,u(\bx), \bx)d\bx$ is lower-semi continuous for the weak topology on $H^{m+1}(\Omega,\mathbb{R}^{d_2})$.
\end{lem}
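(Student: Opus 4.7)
The plan is to follow the classical weak lower semi-continuity argument from the calculus of variations (see \citealp{evans2010partial}, Chapter 8.2.2), adapted to derivatives of order $m+1$. Given an arbitrary sequence $(u_p)_{p \in \mathbb{N}} \subseteq H^{m+1}(\Omega, \mathbb{R}^{d_2})$ with $u_p \rightharpoonup u_\infty$, the goal is to prove $I(u_\infty) \leqslant \liminf_{p \to \infty} I(u_p)$. By passing to a subsequence realizing the liminf (and assuming it finite, otherwise there is nothing to prove), and applying Rellich--Kondrachov (Theorem \ref{thm:rellichK}) to the $H^{m+1}$-bounded sequence, I would further assume that $\partial^\alpha u_p \to \partial^\alpha u_\infty$ strongly in $L^2(\Omega, \mathbb{R}^{d_2})$ for all $|\alpha| \leqslant m$, and, after a diagonal extraction, pointwise almost everywhere on $\Omega$.

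Let $\xi_p(\bx)$ gather the $(m+1)$-order partial derivatives of $u_p$ at $\bx$ and $\eta_p(\bx)$ the lower-order ones, so that $I(u_p) = \int_\Omega L(\xi_p, \eta_p, \bx)\, d\bx$. The convexity of $L$ in its first argument yields the subgradient inequality
\[
L(\xi_p, \eta_p, \bx) \geqslant L(\xi_\infty, \eta_p, \bx) + \nabla_\xi L(\xi_\infty, \eta_p, \bx) \cdot (\xi_p - \xi_\infty).
\]
The strategy is to integrate this inequality over a suitable subset of $\Omega$, use $L \geqslant 0$ to discard the outside contribution in $I(u_p)$, pass to the limit in $p$, and then exhaust $\Omega$.

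The hard part will be handling the linear term $\int_\Omega \nabla_\xi L(\xi_\infty, \eta_p, \bx) \cdot (\xi_p - \xi_\infty)\, d\bx$: although $\xi_p - \xi_\infty \rightharpoonup 0$ weakly in $L^2$, the factor $\nabla_\xi L(\xi_\infty, \eta_p, \bx)$ is not a priori strongly convergent in $L^2(\Omega)$, because $\xi_\infty$ and $\eta_\infty$ need not be pointwise bounded. I would resolve this via the classical truncation and Egorov argument: for $M > 0$ and $\delta > 0$, Egorov's theorem yields a measurable set $F_\delta \subseteq \Omega$ with $|\Omega \setminus F_\delta| < \delta$ on which $\eta_p \to \eta_\infty$ uniformly, and I set
\[
E_{M,\delta} = F_\delta \cap \{\bx \in \Omega : \|\xi_\infty(\bx)\|_2 + \|\eta_\infty(\bx)\|_2 \leqslant M\}.
\]
On $E_{M,\delta}$ the arguments of $L$ remain in a compact set, so smoothness of $L$ ensures that both $L(\xi_\infty, \eta_p, \bx)$ and $\nabla_\xi L(\xi_\infty, \eta_p, \bx)$ are uniformly bounded and converge uniformly (hence strongly in $L^2(E_{M,\delta})$) to their $\eta_\infty$-counterparts. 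Restricting the integrated convexity inequality to $E_{M,\delta}$ and sending $p \to \infty$, dominated convergence handles the $L$-term and weak-strong duality in $L^2$ kills the linear one, giving $\liminf_p I(u_p) \geqslant \int_{E_{M,\delta}} L(\xi_\infty, \eta_\infty, \bx)\, d\bx$. Finally, letting $M \to \infty$ and $\delta \to 0$ so that $E_{M,\delta}$ exhausts $\Omega$ up to a null set, monotone convergence (using $L \geqslant 0$) yields $\liminf_p I(u_p) \geqslant I(u_\infty)$, as required.
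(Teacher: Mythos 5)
Your proposal is correct and follows essentially the same route as the paper's proof of Lemma \ref{lem:lowerSemiC0Lin}: a subsequence realizing the liminf, Rellich--Kondrachov to get strong lower-order and pointwise a.e.\ convergence, the convexity (subgradient) inequality with the top-order slot frozen at $u_\infty$, an Egorov-plus-truncation set on which the lower-order arguments are uniformly controlled, dominated convergence for the frozen term and weak--strong $L^2$ pairing for the linear term, and finally monotone convergence using $L \geqslant 0$. The only differences are cosmetic (two truncation parameters $M,\delta$ instead of the single $\varepsilon$ used in the paper).
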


\begin{proof}
    This results generalizes \citet[Theorem 1, Chapter 8.2]{evans2010partial}, which treats the case $m=0$. Let $(u_p)_{p\in \mathbb{N}} \in H^{m+1}(\Omega, \mathbb{R}^{d_2})^{\mathbb{N}}$ be a sequence that weakly converges  to $u_\infty \in H^{m+1}(\Omega, \mathbb{R}^{d_2})$ in $H^{m+1}(\Omega, \mathbb{R}^{d_2})$. Our goal is to prove that $I(u_\infty) \leqslant \liminf_{p \to \infty} I(u_p)$.  Upon passing to a subsequence, we can suppose that $\lim_{p\to\infty} I(u_p) =\liminf_{p\to\infty} I(u_p)$.

    As a first step, we strengthen the convergence of $(u_p)_{p \in\mathbb{N}}$ by showing that for any $\varepsilon > 0$, there exists a subset $E_\varepsilon$ of $\Omega$ such that $|\Omega\backslash E_\varepsilon|\leqslant \varepsilon$ (the notation $|\cdot|$ stands for the Lebesgue measure), and such that there exists a subsequence that uniformly converges on $E_\varepsilon$, as well as its derivatives.
    Recalling that a weakly convergent sequence is bounded \citep[e.g.,][Chapter D.4]{evans2010partial}, one has $\sup_{p\in\mathbb{N}} \|u_p\|_{H^{m+1}(\Omega)} < \infty$. 
    Theorem \ref{thm:rellichK} ensures that a subsequence of $( u_p)_{p \in \mathbb{N}}$ converges to, say, $u_\infty \in H^{m+1}(\Omega, \mathbb{R}^{d_2})$ with respect to the $H^{m}(\Omega)$ norm. 
    Upon passing again to another subsequence, we conclude that for all $|\alpha|\leqslant m$ and for almost every $x$ in $\Omega$, $\lim_{p \to \infty} \partial^\alpha u_p(x) = \partial^\alpha u_\infty(x)$ \citep[see, e.g.][Theorem 4.9]{brezis2010functional}.
    Finally, by Egorov's theorem \citep[Chapter E.2]{evans2010partial}, for any $\varepsilon >0$, there exists a measurable set $E_\varepsilon$ such that $|\Omega\backslash E_\varepsilon| \leqslant \varepsilon$ and such that, for all $|\alpha| \leqslant m$, $\lim_{p\to\infty} \|\partial^\alpha u_p-\partial^\alpha u_\infty\|_{L^\infty(E_\varepsilon)} =0$. 
    
    Our next goal is to bound the function $L$.
    Let $F_\varepsilon = \{x \in \Omega, \sum_{|\alpha|\leqslant m+1}|\partial^\alpha u_\infty(x)| \leqslant \varepsilon^{-1}\}$ and $G_\varepsilon = E_\varepsilon \cap F_\varepsilon$. 
    Observe that $\lim_{\varepsilon \to 0}|\Omega \backslash G_\varepsilon| =0$.
    Since, for all $|\alpha|\leqslant m+1$,  $\|\partial^\alpha u_\infty\|_{\infty, G_\varepsilon}< \infty$,  
    and since $\lim_{p \to\infty} \|\partial^\alpha u_p-\partial^\alpha u_\infty\|_{L^\infty(G_\varepsilon)} =0$, 
    then, for all $p$ large enough, $(\|\partial^\alpha u_p\|_{L^\infty(G_\varepsilon)})_{p\in\mathbb{N}}$ is bounded. 
    For now, for the ease of notation, we denote $((\partial^{m+1}_{i_1,\hdots, i_{m+1}}u(z))_{1\leqslant i_1,\hdots, i_{m+1} \leqslant d_1},\hdots, u(z), z)$ by $(D^{m+1}u(z),\hdots, u(z), z)$.
    Therefore, since the Lagrangian $L$ is smooth and $\Omega$ is bounded, for all $p$ large enough, $(\| L(D^{m+1}u_p(\cdot), \hdots, Du_p(\cdot), u_p(\cdot), \cdot)\|_{L^\infty(G_\varepsilon)})_{p\in\mathbb{N}}$ is bounded as well.
    
    To conclude the proof, we take advantage of the convexity of the Lagrangian $L$.
    Let $J_{m+1}$ be the Jacobian matrix of $L$ along the vector $x^{(m+1)}$.
    The convexity of $L$ implies 
    \begin{align*}
        &L(D^{m+1}u_p(z), \hdots, u_p(z), z)\\
        &\quad \geqslant L(D^{m+1}u_\infty(z), D^{m}u_p(z)\hdots, u_p(z), z) \\
        & \qquad+ J_{m+1}(D^{m+1}u_\infty(z), D^{m}u_p(z)\hdots, u_p(z), z) \times (D^{m+1}u_p(z)-D^{m+1}u_\infty(z)). 
    \end{align*}
    Using the fact that $L\geqslant0$ and that $I(u_p) \geqslant \int_{G_\varepsilon}L(D^{m+1}u_p(z), \hdots, u_p(z), z)dz$, 
    we obtain
    \begin{align*}
        I(u_p) &\geqslant \int_{G_\varepsilon } L(D^{m+1}u_\infty(z), D^m u_p(z), \hdots, u_p(z), z) \\
        &\quad + J_{m+1}(D^{m+1}u_\infty(z), D^m u_p(z), \hdots, u_p(z), z) \times (D^{m+1}u_p(z)-D^{m+1}u_\infty(z))dz.
    \end{align*}
    Since $(\| L(D^{m+1}u_p(\cdot), \hdots, Du_p(\cdot), u_p(\cdot), \cdot)\|_{L^\infty(G_\varepsilon)})_{p\in\mathbb{N}}$ is bounded for $p$ large enough, and since, for all $|\alpha| \leqslant m$, $\lim_{p\to\infty} \|\partial^\alpha u_p-\partial^\alpha u_\infty\|_{L^\infty(G_\varepsilon)} =0$, the dominated convergence theorem ensures that  
    \begin{align*}
        &\lim_{p\to \infty}\int_{G_\varepsilon }  \!\!\! L(D^{m+1}u_\infty(z), D^m u_p(z), \hdots, u_p(z), z)dz = \int_{G_\varepsilon }  \!\!\! L(D^{m+1}u_\infty(z), \hdots, u_\infty(z), z)dz. 
    \end{align*}
    Since $(i)$ $L$ is smooth (and therefore Lipschitz on bounded domains), 
   $(ii)$ for all $p$ large enough,  $(\|\partial^\alpha u_p\|_{L^\infty(G_\varepsilon)})_{p\in\mathbb{N}}$ is bounded, and $(iii)$ for all $|\alpha| \leqslant m$, $\lim_p \|\partial^\alpha u_p-\partial^\alpha u_\infty\|_{L^\infty(G_\varepsilon)} =0$, we have that $\lim_{p \to \infty }\|J_{m+1}(D^{m+1}u_\infty(\cdot), D^m u_p(\cdot), \hdots, u_p(\cdot), \cdot) - J_{m+1}(D^{m+1}u_\infty(\cdot), \hdots, u_\infty(\cdot), \cdot) \|_{L^\infty(G_\varepsilon)} = 0$. Therefore, since $D^{m+1}u_p \rightharpoonup D^{m+1}u_\infty$, 
    \[\lim_{p\to \infty}\int_{G_\varepsilon}\!\!\!J_{m+1}(D^{m+1}u_\infty(z), D^m u_p(z), \hdots, u_p(z), z) \times (D^{m+1}u_p(z)-D^{m+1}u_\infty(z))dz = 0.\]
    Hence,
    $\lim_{p\to\infty} I(u_p) \geqslant \int_{G_\varepsilon } L(D^{m+1}u_\infty(z), \hdots, u_\infty(z), z)dz$.
    Finally, applying the monotone convergence theorem with $\varepsilon \to 0$ shows that $\lim_{p\to\infty} I(u_p) \geqslant I(u_\infty)$, which is the desired result.
\end{proof}

\begin{lem}[Measurability of $\hat u_n$]
    \label{lem:measurability}
    Let $\hat u_n = \argmin_{u \in H^{m+1}(\Omega, \mathbb{R}^{d_2})} \mathscr{R}_n^{(\mathrm{reg})}(u)$, where, for all $u \in H^{m+1}(\Omega, \mathbb{R}^{d_2})$, 
    \begin{align*} 
        \mathscr{R}^{(\mathrm{reg})}_n(u) &= \frac{\lambda_d}{n} \sum_{i=1}^n \|\tilde \Pi(u)(\bX_i) - Y_i\|_2^2 +\lambda_e\mathbb{E}\|\tilde \Pi(u)(\bX^{(e)})-h(\bX^{(e)})\|_2^2\\
        & \quad + \frac{1}{|\Omega|}\sum_{k=1}^M \|\mathscr{F}_k(u, \cdot) \|_{L^2(\Omega)} + \lambda_t \|u\|_{H^{m+1}(\Omega)}^2.
    \end{align*}
    Then $\hat u_n$ is a random variable.
\end{lem}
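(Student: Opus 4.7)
The plan is to combine the Lax-Milgram characterization from Proposition \ref{prop:laxMLin} with the Sobolev embedding of Theorem \ref{thm:sobIneq} to show that $\hat u_n$ depends continuously on the data $\omega = (\bX_1, Y_1, \ldots, \bX_n, Y_n)$, from which Borel measurability follows. Setting $H = H^{m+1}(\Omega, \mathbb{R}^{d_2})$, I decompose the bilinear form as
$$\mathcal{A}_n^\omega(u, v) = \mathcal{A}_\infty(u, v) + \frac{\lambda_d}{n} \sum_{i=1}^n \langle \tilde{\Pi}(u)(\bX_i), \tilde{\Pi}(v)(\bX_i)\rangle,$$
and the linear form $\mathcal{B}_n^\omega$ analogously, isolating the data-dependent contributions. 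By the Riesz representation theorem, there exist a bounded operator $T_\omega : H \to H$ and an element $b_\omega \in H$ such that $\mathcal{A}_n^\omega(u, v) = \langle T_\omega u, v\rangle_H$ and $\mathcal{B}_n^\omega(v) = \langle b_\omega, v\rangle_H$. The coercivity induced by the $\lambda_t\|u\|_H^2$ term in $\mathscr{R}_n^{(\mathrm{reg})}$ gives $\|T_\omega^{-1}\| \leq \lambda_t^{-1}$ uniformly in $\omega$, so that $\hat u_n(\omega) = T_\omega^{-1} b_\omega$.

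To establish continuity of $\omega \mapsto \hat u_n(\omega)$, I take an arbitrary sequence $\omega_k \to \omega$ in $(\Omega \times \mathbb{R}^{d_2})^n$. First, I show the strong operator convergence $T_{\omega_k} u \to T_\omega u$ for every fixed $u \in H$, together with $b_{\omega_k} \to b_\omega$ in $H$. Both reduce to pointwise convergences of the form $\tilde \Pi(u)(\bX_i^{\omega_k}) \to \tilde \Pi(u)(\bX_i^\omega)$, which hold because Theorem \ref{thm:sobIneq} (applicable since $m+1 > d_1/2$) ensures $\tilde \Pi(u) \in C^0(\Omega, \mathbb{R}^{d_2})$ for every $u \in H$. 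Using the algebraic identity $T_{\omega_k}^{-1} - T_\omega^{-1} = T_{\omega_k}^{-1}(T_\omega - T_{\omega_k}) T_\omega^{-1}$, I then decompose
$$\hat u_n(\omega_k) - \hat u_n(\omega) = T_{\omega_k}^{-1}(b_{\omega_k} - b_\omega) + T_{\omega_k}^{-1}(T_\omega - T_{\omega_k}) T_\omega^{-1} b_\omega.$$
The uniform bound $\|T_{\omega_k}^{-1}\| \leq \lambda_t^{-1}$ handles both terms: the first vanishes because $b_{\omega_k} \to b_\omega$ in $H$, and the second vanishes because $(T_\omega - T_{\omega_k}) T_\omega^{-1} b_\omega \to 0$ in $H$ by strong operator convergence applied to the fixed vector $T_\omega^{-1} b_\omega$.

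Continuity of $\omega \mapsto \hat u_n(\omega)$ from the Polish space $(\Omega \times \mathbb{R}^{d_2})^n$ into the separable Hilbert space $H$ immediately entails Borel measurability, so $\hat u_n$ is a random variable. The main subtlety is that $\omega \mapsto T_\omega$ is generally only strongly continuous and not norm continuous, so one cannot invoke continuity of operator inversion as a black box; the identity $T_{\omega_k}^{-1} - T_\omega^{-1} = T_{\omega_k}^{-1}(T_\omega - T_{\omega_k})T_\omega^{-1}$ is what converts strong operator convergence of $T_{\omega_k}$ into norm convergence of $T_{\omega_k}^{-1} b_\omega$ when evaluated at the fixed target vector, allowing the argument to go through without requiring any finer equicontinuity estimates on the unit ball of $H$.
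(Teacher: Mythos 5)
Your overall architecture---the Lax--Milgram representation $\hat u_n(\omega)=T_\omega^{-1}b_\omega$ from Proposition \ref{prop:laxMLin}, the uniform bound $\|T_\omega^{-1}\|\leqslant \lambda_t^{-1}$ from coercivity, and the resolvent identity---is sound, and it is genuinely different from the paper's proof, which never establishes continuity in the data but instead writes the minimum as a countable infimum over a dense sequence of $H^{m+1}(\Omega,\mathbb{R}^{d_2})$ and applies a measurable-projection argument to the zero set of a jointly measurable function. However, there is a real gap in the step where you claim that $T_{\omega_k}u\to T_\omega u$ and $b_{\omega_k}\to b_\omega$ in $H$ ``reduce to pointwise convergences of the form $\tilde\Pi(u)(\bX_i^{\omega_k})\to\tilde\Pi(u)(\bX_i^{\omega})$''. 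By Riesz duality, $\|T_{\omega_k}u-T_\omega u\|_{H}=\sup_{\|v\|_{H^{m+1}(\Omega)}\leqslant 1}|\mathcal{A}^{\omega_k}_n(u,v)-\mathcal{A}^{\omega}_n(u,v)|$, and after splitting the data term the problematic contribution is $\sup_{\|v\|_{H^{m+1}(\Omega)}\leqslant 1}|\langle \tilde\Pi(u)(\bX_i),\tilde\Pi(v)(\bX_i^{\omega_k})-\tilde\Pi(v)(\bX_i)\rangle|$: the moving evaluation points sit inside the \emph{test} function $v$, so continuity of each individual $\tilde\Pi(v)$ gives nothing uniform; you need the oscillation of $\tilde\Pi(v)$ between $\bX_i^{\omega_k}$ and $\bX_i$ to be small uniformly over the unit ball of $H^{m+1}(\Omega,\mathbb{R}^{d_2})$. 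The same issue appears in $\|b_{\omega_k}-b_\omega\|_H$ through the terms $\langle Y_i,\tilde\Pi(v)(\bX_i^{\omega_k})-\tilde\Pi(v)(\bX_i)\rangle$. Theorem \ref{thm:sobIneq} only provides a uniform sup-norm bound, not a uniform modulus of continuity, so your closing remark that no ``finer equicontinuity estimates on the unit ball of $H$'' are required is exactly backwards: without such an estimate, the strong convergence your resolvent argument relies on is unproved.

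The gap is fixable by a standard strengthening of the embedding. Since $m\geqslant\lfloor d_1/2\rfloor$ implies $m+1\geqslant d_1/2+1/2>d_1/2$, the Stein extension theorem for bounded Lipschitz domains combined with Morrey's inequality gives a continuous embedding of $H^{m+1}(\Omega,\mathbb{R}^{d_2})$ into a Hölder space $C^{0,\gamma}(\bar\Omega,\mathbb{R}^{d_2})$ for some $\gamma>0$; hence the unit ball of $H^{m+1}$ is uniformly Hölder, in particular equicontinuous, and the suprema above vanish as $\bX_i^{\omega_k}\to\bX_i$. With that inserted, $b_{\omega_k}\to b_\omega$ in $H$ and $T_{\omega_k}w\to T_\omega w$ for each fixed $w$, your identity $T_{\omega_k}^{-1}-T_\omega^{-1}=T_{\omega_k}^{-1}(T_\omega-T_{\omega_k})T_\omega^{-1}$ applies, and $\omega\mapsto\hat u_n(\omega)$ is continuous from $(\Omega\times\mathbb{R}^{d_2})^n$ into the separable Hilbert space $H^{m+1}(\Omega,\mathbb{R}^{d_2})$, which indeed yields Borel measurability (composing, if one parameterizes by $(\bX_i,\varepsilon_i)$ as the paper does, with the measurable map $(\bX_i,\varepsilon_i)\mapsto(\bX_i,u^\star(\bX_i)+\varepsilon_i)$). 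Once repaired, your route even gives a stronger conclusion (continuity of the estimator in the data) than the paper's purely measure-theoretic argument.
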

\begin{proof}
Recall that \[
\mathscr{R}_n^{(\mathrm{reg})}(u) = \mathcal{A}_n(u,u) -2\mathcal{B}_n(u) + \frac{\lambda_d}{n} \sum_{i=1}^n \|Y_i\|^2 +\lambda_e \mathbb{E}\|h(\bX^{(e)})\|_2^2 + \frac{1}{|\Omega|}\sum_{k=1}^M\int_{\Omega} B_k(\bx)^2d\bx.
\]
Throughout we use the notation $\mathcal{A}_{(\bx, e)}(u,u)$ instead of $\mathcal{A}_n(u,u)$, to make the dependence of $\mathcal{A}_n$ in the random variables $\bx = (\bX_1, \hdots, \bX_n)$ and $e=(\varepsilon_1, \hdots, \varepsilon_n)$ more explicit. We do the same with $\mathcal{B}_n$. 
For a given a normed space $(F, \|\cdot\|)$,  we let $\mathscr{B}(F, \|\cdot\|)$ be the Borel $\sigma$-algebra on $F$ induced by the norm $\|\cdot\|$. 

Our goal is to prove that the function 
\begin{align*}
    \hat  u_n : (\Omega^n\! \times \!\mathbb{R}^{nd_2}, \mathscr{B}(\Omega^n\!\times\! \mathbb{R}^{nd_2}, \|\!\cdot\!\|_2)) &\to (H^{m+1}(\Omega, \mathbb{R}^{d_2}), \mathscr{B}(H^{m+1}(\Omega, \mathbb{R}^{d_2}), \|\!\cdot\!\|_{H^{m+1}(\Omega)}))\\
    (\bx, e) & \mapsto \argmin_{u \in H^{m+1}(\Omega, \mathbb{R}^{d_2})} \mathcal{A}_{(\bx, e)}(u,u)  - 2 \mathcal{B}_{(\bx, e)}(u)
\end{align*} is measurable. 
Recall that  $H^{m+1}(\Omega, \mathbb{R}^{d_2})$ is a Banach space separable with respect to its norm $\|\cdot\|_{H^{m+1}(\Omega)}$. 
Let $(v_q)_{q \in \mathbb{N}} \in H^{m+1}(\Omega, \mathbb{R}^{d_2})^\mathbb{N}$ be a sequence dense in $H^{m+1}(\Omega, \mathbb{R}^{d_2})$. 
Note that, for any $\bx\in \Omega^n$ and any $e \in \mathbb{R}^{nd_2}$, one has $\min_{u \in H^{m+1}(\Omega, \mathbb{R}^{d_2})} \mathcal{A}_{(\bx, e)}(u,u) - 2\mathcal{B}_{(\bx, e)}(u) = \inf_{q \in \mathbb{N}} \mathcal{A}_{(\bx, e)}(v_q,v_q) - 2\mathcal{B}_{(\bx, e)}(v_q)$. 
This identity is a consequence of the fact that the function $u \mapsto \mathcal{A}_{(\bx, e)}(u,u) - 2\mathcal{B}_{(\bx, e)}(u)$ is continuous for the $H^{m+1}(\Omega)$ norm, as shown in the proof of Proposition \ref{prop:laxMLin}).
Moreover, according to this proof, each function $F_q(\bx, e) := \mathcal{A}_{(\bx,  e)}(u_q, u_q) - 2 \mathcal{B}_{(\bx, e)}(u_q)$ is a composition of  continuous functions, and is therefore measurable. 
Thus, the function 
\begin{align*}
    G(\bx, e) :=  \min_{u \in H^{m+1}(\Omega, \mathbb{R}^{d_2})} \mathcal{A}_{(\bx, e)}(u, u) - 2 \mathcal{B}_{(\bx, e)}(u)= \inf_{q \in \mathbb{N}} \mathcal{A}_{(\bx, e)}(u_q, u_q) - 2 \mathcal{B}_{(\bx, e)}(u_q)
\end{align*} is measurable.

Next, since $\Omega$, $\mathbb{R}$, and $H^{m+1}(\Omega, \mathbb{R}^{d_2})$ are separable, we know that  the $\sigma$-algebras
$\mathscr{B}(\Omega^n\times\mathbb{R}^{nd_2}\times H^{m+1}(\Omega, \mathbb{R}^{d_2}), \|\cdot\|_{\otimes})$ and $ \mathscr{B}(\Omega^n  \times \mathbb{R}^{nd_2}, \|\cdot\|_2)\otimes \mathscr{B}(H^{m+1}(\Omega, \mathbb{R}^{d_2}), \|\cdot\|_{H^{m+1}(\Omega)})$ are identical, 
where $\|(\bx,e, u)\|_{\otimes} = \|(\bx,e)\|_2 +\|u\|_{H^{m+1}(\Omega)}$ \citep[see, e.g.][Chapter II.13, E13.11c]{rogers1979diffusions}. 
This implies that the coordinate projections $\Pi_{\bx,e}$ and $\Pi_u$---defined for $(\bx,e) \in \Omega^n\times\mathbb{R}^{nd_2}$ and $u \in H^{m+1}(\Omega, \mathbb{R}^{d_2})$ by $\Pi_{\bx,e}(\bx,e,u) = (\bx,e)$ and $\Pi_u(\bx,e,u) = u$---are $\|\cdot\|_\otimes$ measurable.
It is easy to check that, for any $(\bx,e) \in \Omega^n\times \mathbb{R}^{nd_2}$ and $u\in H^{m+1}(\Omega, \mathbb{R}^{d_2})$, if $\lim_{p \to \infty }\|(\bx_p, e_p, u_p)-(\bx, e, u)\|_\otimes =0$, then $\lim_{p\to \infty}\|\tilde \Pi(u_p)-\tilde \Pi(u)\|_{\infty, \Omega} = 0$ and, since $\tilde \Pi (u) \in C^0(\Omega, \mathbb{R}^{d_2})$, $\lim_{p\to \infty }\mathcal{A}_{\bx_p, e_p}(u_p,u_p) - 2\mathcal{B}_{\bx_p, e_p}(u_p) = \mathcal{A}_{\bx,e}(u,u) - 2\mathcal{B}_{\bx,e}(u)$.
This proves that $I : (\Omega^n\times \mathbb{R}^{nd_2} \times H^{m+1}(\Omega, \mathbb{R}^{d_2}), \mathscr{B}(\Omega^n\times \mathbb{R}^{nd_2}\times H^{m+1}(\Omega, \mathbb{R}^{d_2}), \|\cdot\|_{\otimes})) \to (\mathbb{R}, \mathscr{B}(\mathbb{R}))$ defined by 
\[I(\bx, e, u) = \mathcal{A}_{(\bx, e)}(u,u) - 2 \mathcal{B}_{(\bx, e)}(u)\] is continuous with respect to $\|\cdot\|_\otimes$ and therefore measurable. According to the above, the function \[\tilde I(\bx, e, u) = I(\bx, e, u) -  G\circ \Pi_{x,e}(\bx, e, u)\] 
is also measurable.
Observe that, by definition, $\hat u_n = J\circ (\bX_1, \hdots, \bX_n, \varepsilon_1, \hdots, \varepsilon_n)$, where $J(\bx, e) = \Pi_u(\tilde I^{-1}(\{0\})\cap (\{(\bx, e)\}\times H^{m+1}(\Omega, \mathbb{R}^{d_2})))$.
For any measurable set $S \in \mathscr{B}(H^{m+1}(\Omega, \mathbb{R}^{d_2}, \|\cdot\|_{H^{m+1}(\Omega)})$, $J^{-1}(S) = \Pi_{x,e}(\tilde{I}^{-1}(\{0\}) \cap (\Omega^n\times \mathbb{R}^{nd_2} \times S)) \in \mathscr{B}(\Omega^n\times \mathbb{R}^{nd_2})$. 
(Notice that $J^{-1}(S)$ is the collection of all pairs $(\bx, e)\in \Omega^n\times \mathbb{R}^{nd_2}$ satisfying $\argmin_{u \in H^{m+1}(\Omega, \mathbb{R}^{d_2})} \mathcal{A}_{(\bx, e)}(u,u)  - 2 \mathcal{B}_{(\bx, e)}(u) \in S$.)
To see this, jut note that for any set $\tilde S \in \mathscr{B}(\Omega^n \times \mathbb{R}^{nd_2}, \|\cdot\|_2)\otimes \mathscr{B}(H^{m+1}(\Omega, \mathbb{R}^{d_2}), \|\cdot\|_{H^{m+1}(\Omega, \mathbb{R}^{d_2})})$, one has $\Pi_{x,e}(\tilde S) \in \mathscr{B}(\Omega^n \times \mathbb{R}^{nd_2}, \|\cdot\|_2)$ \citep[see, e.g.][Lemma 11.4, Chapter II]{rogers1979diffusions}. We conclude that the function $J$ is measurable and so is $\hat u_n$.
\end{proof}

Let $B( 1, \|\cdot\|_{H^{m+1}(\Omega)}) = \{u \in H^{m+1}(\Omega, \mathbb{R}^{d_2}), \quad \|u\|_{H^{m+1}(\Omega)}\leqslant 1\}$ be the ball of radius $r$ centered at $0$. 
Let $N(B(1, \|\cdot\|_{H^{m+1}(\Omega)})), \|\cdot\|_{H^{m+1}(\Omega)}, r)$ be the minimum number of balls of radius $r$ according to the norm $\|\cdot\|_{H^{m+1}(\Omega)}$ needed to cover the space $B( 1, \|\cdot\|_{H^{m+1}(\Omega)})$.
\begin{lem}[Entropy of $H^{m+1}(\Omega, \mathbb{R}^{d_2})$]
Let $\Omega\subseteq \mathbb{R}^{d_1}$ be a Lipschitz domain. For $m \geqslant 1$, one has
    \label{lem:entropy}
    \[\log N(B( 1, \|\cdot\|_{H^{m+1}(\Omega)}), \|\cdot\|_{H^{m+1}(\Omega)}, r) = \Oequivalent_{r \to 0} (r^{-d_1/(m+1)}).\]
\end{lem}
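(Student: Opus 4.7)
My plan is to reduce to the case of a cube by means of Stein's extension theorem and then appeal to a classical Birman--Solomyak entropy estimate. Before starting, let me note an apparent typo in the statement: as written, the covering metric is $\|\cdot\|_{H^{m+1}(\Omega)}$, but the unit ball of the infinite-dimensional Banach space $H^{m+1}(\Omega, \mathbb{R}^{d_2})$ is never totally bounded in its own norm, so $N(\cdot, \cdot, r) = \infty$ for any $r < 1$. The exponent $-d_1/(m+1)$ matches exactly the Birman--Solomyak rate for covering in $\|\cdot\|_{L^2(\Omega)}$ (and, under the Sobolev embedding that is already used throughout the paper, also for $\|\cdot\|_{L^\infty(\Omega)}$ when $m+1 > d_1/2$), so I interpret the intended covering metric as $\|\cdot\|_{L^2(\Omega)}$ and proceed accordingly.

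The reduction step is the one already used elsewhere in this appendix. Since $\Omega$ is a bounded Lipschitz domain, Stein's extension theorem yields a bounded linear operator $E : H^{m+1}(\Omega, \mathbb{R}^{d_2}) \to H^{m+1}(\mathbb{R}^{d_1}, \mathbb{R}^{d_2})$ whose operator norm depends only on $\Omega$. Multiplying $Eu$ by a smooth cutoff $\chi \in C_c^\infty(\mathbb{R}^{d_1})$ equal to $1$ on a neighbourhood of $\bar\Omega$ and supported in a fixed cube $Q \supset \bar\Omega$ yields $\tilde u = \chi \cdot E u$ with $\|\tilde u\|_{H^{m+1}(Q)} \leqslant C_\Omega$ and $\mathrm{supp}(\tilde u) \subset Q$. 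Restricting back to $\Omega$ is a $1$-Lipschitz map for $L^2$, so the $L^2(\Omega)$ covering number of the unit ball of $H^{m+1}(\Omega, \mathbb{R}^{d_2})$ is controlled, up to constants depending only on $\Omega$ and $m$, by the $L^2(Q)$ covering number of a ball of fixed radius in $H^{m+1}_0(Q, \mathbb{R}^{d_2})$.

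For the entropy estimate on the cube I would use a wavelet multiresolution argument. Fix a compactly supported Daubechies wavelet basis $(\psi_{j,k})$ of $L^2(Q)$ adapted to $H^{m+1}$, so that $\|u\|_{H^{m+1}}^2 \asymp \sum_{j,k} 2^{2(m+1)j}\,|\langle u, \psi_{j,k}\rangle|^2$. For $u$ in the unit ball, truncating to scales $j \leqslant J$ produces an $L^2$ tail error $\lesssim 2^{-(m+1)J}$, while the remaining $O(2^{Jd_1})$ coefficients satisfy scale-wise bounds $|\langle u, \psi_{j,k}\rangle| \leqslant C\,2^{-(m+1)j}$. Quantising each coefficient with $O(1)$ levels relative to its own natural scale preserves the $L^2$ accuracy, and gives $\log N \lesssim 2^{Jd_1}$; balancing $2^{-(m+1)J} \asymp r$ then yields $\log N = O(r^{-d_1/(m+1)})$, as claimed.

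The main obstacle I anticipate is obtaining the rate without a spurious logarithmic factor. The more elementary Bramble--Hilbert route---partition $Q$ into $\epsilon^{-d_1}$ sub-cubes of side $\epsilon$, approximate on each by a polynomial of degree $\leqslant m$ with $L^2(Q_i)$ error $\lesssim \epsilon^{m+1}\|u\|_{H^{m+1}(Q_i)}$, then quantise the polynomial coefficients on a uniform grid---only yields $\log N = O(\epsilon^{-d_1} \log(1/\epsilon)) = O(r^{-d_1/(m+1)}\log(1/r))$, because a uniform grid wastes bits on small coefficients at fine scales. Removing the log factor genuinely requires the scale-adapted quantisation of the multiresolution step above (or, equivalently, invoking the sharp Birman--Solomyak entropy inequality as a black box after the extension step). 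All other ingredients---the extension theorem, the multiresolution decomposition, and the counting---are completely standard on bounded Lipschitz domains.
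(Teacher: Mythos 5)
Your fallback route---Stein extension, smooth cutoff supported in a fixed neighbourhood of $\bar\Omega$, then invoking the sharp Sobolev-ball entropy bound as a black box---is exactly the paper's proof: after the same extension-plus-cutoff reduction, the paper simply cites Corollary 4 of \citet{nickl2007bracketing}, which packages the Birman--Solomyak rate $r^{-d_1/(m+1)}$. Your remark about the covering metric is also well taken: as written the lemma asks to cover the unit ball of $H^{m+1}$ in its own norm, which is impossible in infinite dimension; the metric actually needed downstream (in the chaining arguments of Lemmas \ref{lem:empiricalL2} and \ref{lem:empiricalProcess}, whose increments are controlled through the sup norm via Theorem \ref{thm:sobIneq}) is the $L^\infty$ or $L^2$ one, for which the stated rate is correct since $m+1>d_1/2$.

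The one genuine gap is in your self-contained wavelet argument. Quantising each retained coefficient with $O(1)$ levels ``relative to its own natural scale'', i.e.\ with step $\sim 2^{-(m+1)j}$ at level $j$, produces a total $L^2$ quantisation error of order $\big(\sum_{j\le J} 2^{jd_1}2^{-2(m+1)j}\big)^{1/2}$, which is dominated by the coarsest scales and is of constant order, not of order $r\asymp 2^{-(m+1)J}$. To reach accuracy $r$ without a logarithmic loss you must allocate accuracy non-uniformly across scales: cover the $\ell_2$-ball of radius $2^{-(m+1)j}$ in dimension $n_j\asymp 2^{jd_1}$ to accuracy $\rho_j \asymp r\,2^{-\eta(J-j)/2}$ (so that $\sum_{j\le J}\rho_j^2\lesssim r^2$), using the volumetric bound $\log N\le n_j\log(1+2R_j/\rho_j)$ rather than a coordinate-wise grid (the latter wastes about $\tfrac12\log n_j$ bits per coefficient and reintroduces a log factor). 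The cost at scale $j$ is then $\lesssim 2^{jd_1}(J-j+1)$, and summing over $j\le J$ gives $\lesssim 2^{Jd_1}\asymp r^{-d_1/(m+1)}$ as desired. With this bookkeeping (or by simply keeping the black-box citation, as the paper does), your proof is complete.
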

\begin{proof}
According to the extension theorem \citep[][Theorem 5, Chapter VI.3.3]{stein1970lipschitz}, there exists a constant $C_{\Omega} >0$, depending only on $\Omega$, such that any $u\in H^{m+1}(\Omega, \mathbb{R}^{d_2})$ can be extended to $\tilde u\in H^{m+1}(\mathbb{R}^{d_1}, \mathbb{R}^{d_2})$, with
$\|\tilde u \|_{H^{m+1}(\mathbb{R}^{d_1})} \leqslant C_{\Omega} \| u \|_{H^{m+1}(\Omega)}$. 
Let $r > 0$ be such that $\Omega \subseteq B(r, \|\cdot\|_2)$ and let $\phi \in C^\infty(\mathbb{R}^{d_1}, \mathbb{R}^{d_2})$ be such that
\[\phi(\bx) = \left\{\begin{array}{ll}
    1  &\text{for } \bx \in \Omega\\
    0  &\text{for } \bx \in\mathbb{R}^{d_1}, |x|\geqslant r.
\end{array}\right. \]
Then, for any $ u \in H^{m+1}(\Omega, \mathbb{R}^{d_2})$, $(i)$ $\phi \tilde u \in H^{m+1}(\mathbb{R}^{d_1}, \mathbb{R}^{d_2})$, $(ii)$ $\phi \tilde u |_\Omega = u$, and $(iii)$ there exists a constant $\tilde C_\Omega > 0$ such that $\|\phi \tilde u\|_{H^{m+1}(\mathbb{R}^{d_1})} \leqslant \tilde C_\Omega \|u\|_{H^{m+1}(\Omega)}$.
The lemma follows from \citet[Corollary 4]{nickl2007bracketing}.     
\end{proof}

\begin{lem}[Empirical process $L^2$]
    \label{lem:empiricalL2}
    Let $\bX_1, \hdots, \bX_n$ be i.i.d.~random variables, with common distribution $\mu_\bX$ on $\Omega$. Then there exists a constant $C_\Omega >0$, depending only on $\Omega$, such that
    \[\mathbb{E}\Big(\sup_{\|u\|_{H^{m+1}(\Omega)}\leqslant 1} \mathbb{E}\|\tilde \Pi(u)(\bX)\|_2^2- \frac{1}{n}\sum_{i=1}^n \|\tilde \Pi(u)(\bX_i)\|_2^2\Big) \leqslant \frac{d_2^{1/2} C_\Omega}{n^{1/2}},\]
    and
    \[\mathbb{E}\Big(\Big(\sup_{\|u\|_{H^{m+1}(\Omega)}\leqslant 1} \mathbb{E}\|\tilde \Pi(u)(\bX)\|_2^2- \frac{1}{n}\sum_{i=1}^n \|\tilde \Pi(u)(\bX_i)\|_2^2\Big)^2\Big) \leqslant \frac{d_2 C_\Omega}{n},\]
    where $\tilde \Pi$ is the Sobolev embedding (see Theorem \ref{thm:sobIneq}).
\end{lem}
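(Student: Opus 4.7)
The plan is to cast the bound as a standard empirical process result for the function class
$\mathcal{F} = \{ f_u : \bx \mapsto \|\tilde \Pi(u)(\bx)\|_2^2,\ \|u\|_{H^{m+1}(\Omega)}\leqslant 1\}$,
and to exploit the entropy estimate from Lemma \ref{lem:entropy}. First I would invoke the Sobolev embedding (Theorem \ref{thm:sobIneq}) componentwise to deduce that $\|\tilde \Pi(u)\|_{\infty, \Omega} \leqslant C_\Omega d_2^{1/2}$ whenever $\|u\|_{H^{m+1}(\Omega)}\leqslant 1$, so that every $f_u$ is uniformly bounded and the map $u \mapsto f_u$ is Lipschitz from $H^{m+1}(\Omega, \mathbb{R}^{d_2})$ into $C^0(\Omega)$ with constant of order $C_\Omega^2 d_2$.

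Next, by the standard symmetrization inequality with i.i.d.~Rademacher variables $(\sigma_i)_{1\leqslant i\leqslant n}$,
\[
\mathbb{E}\sup_{u} \Big( \mathbb{E} f_u(\bX) - \frac{1}{n}\sum_{i=1}^n f_u(\bX_i) \Big) \leqslant 2\, \mathbb{E}\sup_u \Big|\frac{1}{n}\sum_{i=1}^n \sigma_i f_u(\bX_i)\Big|.
\]
Decomposing $f_u$ as a sum over its $d_2$ coordinates and applying the Ledoux--Talagrand contraction principle to the bounded Lipschitz map $t\mapsto t^2$ restricted to $[-C_\Omega d_2^{1/2}, C_\Omega d_2^{1/2}]$ reduces the problem to bounding, for each $k$, the linear Rademacher complexity $\mathbb{E}\sup_u |\frac{1}{n}\sum_i \sigma_i \tilde\Pi(u)_k(\bX_i)|$. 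This is controlled by Dudley's entropy integral; its integrand is bounded by combining the Lipschitz estimate $\|\tilde\Pi(u)-\tilde\Pi(v)\|_\infty \leqslant C_\Omega d_2^{1/2}\|u-v\|_{H^{m+1}(\Omega)}$ with the Sobolev-ball entropy from Lemma \ref{lem:entropy}, transferred to the empirical $L^2$ pseudo-metric. Since $m\geqslant \lfloor d_1/2\rfloor$ gives $d_1/(m+1) < 2$, the Dudley integral converges, yielding the desired $n^{-1/2}$ rate.

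Finally, for the second moment bound I would apply McDiarmid's bounded-differences inequality to the random variable $Z = \sup_u (\mathbb{E} f_u(\bX) - \frac{1}{n}\sum_i f_u(\bX_i))$: replacing a single $\bX_i$ perturbs $Z$ by at most $\frac{2}{n}\sup_{f\in\mathcal{F}}\|f\|_\infty$, hence $\mathrm{Var}(Z)\leqslant \frac{1}{4}\sum_i c_i^2 = \Oequivalent(1/n)$, and combining with the first bound via $\mathbb{E}(Z^2) = (\mathbb{E}Z)^2 + \mathrm{Var}(Z)$ yields the claimed $\Oequivalent(1/n)$ rate. The main technical obstacle is the chaining step---specifically, verifying that Dudley's integral converges under the mild regularity assumption $m\geqslant \lfloor d_1/2\rfloor$---together with a careful bookkeeping of constants in the contraction and symmetrization steps to recover the sharp $d_2^{1/2}$ dependence.
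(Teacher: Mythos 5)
Your proposal is correct in outline and reaches both bounds at the right rate, but it runs the chaining step differently from the paper. The paper never symmetrizes: it treats $Z_{n,u}=\mathbb{E}\|\tilde\Pi(u)(\bX)\|_2^2-\frac{1}{n}\sum_{i}\|\tilde\Pi(u)(\bX_i)\|_2^2$ directly as a subgaussian process indexed by the Sobolev ball. The Sobolev embedding (Theorem \ref{thm:sobIneq}) gives a deterministic increment bound of order $n^{-1}d_2^{1/2}C_\Omega\|u-v\|_{H^{m+1}(\Omega)}$ for each summand, Hoeffding plus Azuma then make the increments of the centered process subgaussian for the metric $d(u,v)\propto d_2^{1/2}n^{-1/2}\|u-v\|_{H^{m+1}(\Omega)}$, and Dudley's theorem is applied with the entropy of the Sobolev ball from Lemma \ref{lem:entropy} (convergent because $m+1>d_1/2$), exactly as in the proof of Theorem \ref{thm:approx_integral}; no empirical pseudo-metric is needed. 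Your route via Rademacher symmetrization, coordinatewise Ledoux--Talagrand contraction for $t\mapsto t^2$, and Dudley for the resulting linear processes is a legitimate alternative and yields the same $n^{-1/2}$ rate; what it costs is the dimensional bookkeeping you yourself flag: the contraction principle applies to scalar functions, so you must split $\|\tilde\Pi(u)(\bx)\|_2^2$ into its $d_2$ coordinates, each contraction carries a Lipschitz constant of order $C_\Omega$, and the subsequent sum over coordinates produces a factor $d_2$ rather than $d_2^{1/2}$ in the first bound (and $d_2^2$ rather than $d_2$ in the second). Since the lemma fixes the powers of $d_2$ explicitly and $C_\Omega$ may depend only on $\Omega$, your argument as sketched proves a slightly weaker constant than the displayed one (harmless for how the lemma is used downstream, but worth either tracking via a vector-valued contraction argument or simply chaining the centered process directly as the paper does). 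The second-moment step is identical in both proofs: bounded differences give $\mathrm{Var}(Z_n)=\Oequivalent(d_2/n)$ and $\mathbb{E}(Z_n^2)=\mathrm{Var}(Z_n)+(\mathbb{E}Z_n)^2$ concludes; one point you leave implicit, and which the paper handles via separability of $H^{m+1}(\Omega,\mathbb{R}^{d_2})$ and continuity of $u\mapsto Z_{n,u}$, is the measurability of the supremum.
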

\begin{proof}
    For any $u \in H^{m+1}(\Omega, \mathbb{R}^{d_2})$, let 
\[Z_{n,u} =  \mathbb{E}\|\tilde \Pi(u)(\bX_i)\|_2^2 - \frac{1}{n} \sum_{j=1}^n \|\tilde \Pi(u)(\bX_i)\|_2^2 \quad \text{and} \quad Z_n = \sup_{\|u\|_{H^{m+1}(\Omega)}\leqslant 1} Z_{n,u}.\] 
For any $u,v \in  H^{m+1}(\Omega, \mathbb{R}^{d_2})$ such that $\|u\|_{H^{m+1}(\Omega)} \leqslant 1$ and $\|v\|_{H^{m+1}(\Omega)} \leqslant 1$, we have
\begin{align*}
    &\Big|\frac{1}{n}( \|\tilde \Pi(u)(\bX_i)\|_2^2 - \mathbb{E}\|\tilde \Pi(u)(\bX_i)\|_2^2)  - \frac{1}{n} (\|\tilde \Pi(v)(\bX_i)\|_2^2 - \mathbb{E}\|\tilde \Pi(v)(\bX_i)\|_2^2)\Big| \\
    &\quad  \leqslant  \frac{2}{n}(\| \tilde \Pi (u-v)(\bX_i)\|_2 + \mathbb{E}\|\tilde \Pi (u-v)(\bX_i)\|_2)\\
    &\quad \leqslant \frac{4 C_\Omega}{n}\sqrt{d_2} \|u-v\|_{H^{m+1}(\Omega)} \qquad\qquad\qquad \text{(by applying Theorem \ref{thm:sobIneq}).}
\end{align*}
Therefore, applying Hoeffding's, Azuma's and Dudley's theorem similarly as in the proof of Theorem \ref{thm:approx_integral} shows that
\[\mathbb{E}(Z_n) \leqslant 24 C_\Omega d_2^{1/2} n^{-1}  \int_0^\infty [\log N(B( 1, \|\cdot\|_{H^{m+1}(\Omega)}), \|\cdot\|_{H^{m+1}(\Omega)}, r)]^{1/2}dr.\]
Lemma \ref{lem:entropy} shows that there exists a constant $C_\Omega'$, depending only on $\Omega$, such that 
$\mathbb{E}(Z_n) \leqslant C'_\Omega d_2^{1/2} n^{-1/2}$. 
Applying McDiarmid's inequality  as in the proof of Theorem \ref{thm:approx_integral} shows that
$\mathrm{Var}(Z_n) \leqslant 16 C_\Omega^2 d_2 n^{-1}$.
Finally, since $\mathbb{E}(Z_n^2) \leqslant \mathrm{Var}(Z_n) + \mathbb{E}(Z_n)^2$, we deduce that
\[\mathbb{E}(Z_n^2)\leqslant \frac{d_2}{n}\big((C'_\Omega)^2+16 C_\Omega^2\big).\]
\end{proof}

\begin{lem}[Empirical process]
    \label{lem:empiricalProcess}
    Let $\bX_1, \hdots, \bX_n, \varepsilon_1, \hdots, \varepsilon_n$ be independent random variables, such that $\bX_i$ is distributed along $\mu_\bX$ and $\varepsilon_i$ is distributed along $\mu_\varepsilon$, such that $\mathbb{E}(\varepsilon)=0$. Then there exists a constant $C_\Omega >0$, depending only on $\Omega$, such that
    \[\mathbb{E}\Big(\Big(\sup_{\|u\|_{H^{m+1}(\Omega)}\leqslant 1} \frac{1}{n} \sum_{j=1}^n \langle \tilde \Pi (u)(\bX_j)- \mathbb{E}(\tilde \Pi (u)(\bX)),\varepsilon_j\rangle \Big)^2\Big) \leqslant \frac{d_2 \mathbb{E}\|\varepsilon\|_2^2}{n} C_\Omega,\]
    where $\tilde \Pi$ is the Sobolev embedding.
\end{lem}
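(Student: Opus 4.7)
The central idea is to exploit the Hilbert structure of $H^{m+1}(\Omega, \mathbb{R}^{d_2})$ and reduce the supremum to the norm of a single Riesz representative. For each realization of the data, set
\[
Z_{n,u} := \frac{1}{n}\sum_{j=1}^n \langle \tilde{\Pi}(u)(\bX_j) - \mathbb{E}\tilde{\Pi}(u)(\bX),\, \varepsilon_j\rangle.
\]
By Theorem~\ref{thm:sobIneq} and Cauchy--Schwarz, one has $|Z_{n,u}| \leqslant \frac{2 C_\Omega \sqrt{d_2}\,\|u\|_{H^{m+1}(\Omega)}}{n}\sum_j \|\varepsilon_j\|_2$ almost surely, so $u \mapsto Z_{n,u}$ is an almost surely continuous linear form on $H^{m+1}(\Omega, \mathbb{R}^{d_2})$. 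The Riesz representation theorem therefore provides a random element $L_n \in H^{m+1}(\Omega, \mathbb{R}^{d_2})$ with $Z_{n,u} = \langle u, L_n\rangle_{H^{m+1}(\Omega)}$, and, since the process is linear in $u$,
\[
\Big(\sup_{\|u\|_{H^{m+1}(\Omega)}\leqslant 1} Z_{n,u}\Big)^{\!2} \;=\; \|L_n\|_{H^{m+1}(\Omega)}^{2}.
\]

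Next, I compute $\mathbb{E}\|L_n\|^2_{H^{m+1}(\Omega)}$ by choosing a tensor-product orthonormal basis. Let $(f_k)_{k\geqslant 1}$ be an ONB of $H^{m+1}(\Omega, \mathbb{R})$; then $(f_k \otimes \mathbf{e}_\ell)_{k\geqslant 1,\,1\leqslant \ell\leqslant d_2}$, defined by $(f_k\otimes \mathbf{e}_\ell)(x) = f_k(x)\mathbf{e}_\ell$, is an ONB of $H^{m+1}(\Omega, \mathbb{R}^{d_2})$. Parseval's identity gives $\mathbb{E}\|L_n\|^2_{H^{m+1}(\Omega)} = \sum_{k,\ell} \mathbb{E}[Z_{n, f_k \otimes \mathbf{e}_\ell}^{2}]$. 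Using the independence of the $\bX_j$ and the $\varepsilon_j$, the fact that $\mathbb{E}(\varepsilon)=0$, and Cauchy--Schwarz in $\mathbb{R}^{d_2}$, a direct computation on each term yields
\[
\mathbb{E}\big[Z_{n, f_k \otimes \mathbf{e}_\ell}^{2}\big] \;\leqslant\; \frac{\mathbb{E}\|\varepsilon\|_2^{2}}{n}\,\mathbb{E}|f_k(\bX)|^{2},
\]
where I use that $\|\tilde{\Pi}(f_k\otimes \mathbf{e}_\ell)(x)\|_2 = |f_k(x)|$ and that the centering in $g_j(u) := \tilde{\Pi}(u)(\bX_j) - \mathbb{E}\tilde{\Pi}(u)(\bX)$ only reduces the second moment.

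The final step rests on the reproducing-kernel character of the Sobolev embedding. For each $x \in \Omega$, Theorem~\ref{thm:sobIneq} states that the scalar evaluation functional $f \mapsto \tilde{\Pi}(f)(x)$ on $H^{m+1}(\Omega, \mathbb{R})$ is continuous with operator norm at most $C_\Omega$, so its Riesz representative $\rho_x$ satisfies $\|\rho_x\|_{H^{m+1}(\Omega)} \leqslant C_\Omega$. Parseval applied to $\rho_x$ in the basis $(f_k)$ yields the pointwise bound $\sum_k |f_k(x)|^{2} = \|\rho_x\|_{H^{m+1}(\Omega)}^{2} \leqslant C_\Omega^{2}$. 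Integrating against $\mu_\bX$ and summing over $\ell\in\{1,\hdots, d_2\}$ gives
\[
\sum_{k,\ell}\mathbb{E}|f_k(\bX)|^{2} \;\leqslant\; d_2 C_\Omega^{2},
\]
from which the announced bound with constant $C_\Omega^{2}$ (relabeled as $C_\Omega$) follows at once.

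The main obstacle is spotting that the chaining/Dudley strategy used in Lemma~\ref{lem:empiricalL2} is not well suited here, because it would tacitly require a sub-Gaussian tail on the noise $\varepsilon$, whereas only $\mathbb{E}\|\varepsilon\|_2^{2} < \infty$ is assumed. The Riesz viewpoint sidesteps this issue entirely by reducing the supremum over the unit ball to a squared norm whose expectation can be evaluated via Parseval, turning the whole argument into an elementary computation on the diagonal of the reproducing kernel of $H^{m+1}(\Omega, \mathbb{R})$.
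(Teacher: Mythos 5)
Your proof is correct, but it follows a genuinely different route from the paper. The paper treats this lemma with the same uniform empirical-process machinery as Lemma~\ref{lem:empiricalL2}: it conditions on $\varepsilon_1,\hdots,\varepsilon_n$, shows via Hoeffding's and Azuma's lemmas that the increments of the process are conditionally subgaussian with parameter proportional to $\sum_j\|\varepsilon_j\|_2^2$, applies Dudley's entropy bound together with the covering-number estimate of Lemma~\ref{lem:entropy} (as in Theorem~\ref{thm:approx_integral}), controls the conditional variance with McDiarmid's inequality, and concludes with the law of total variance. You instead exploit the fact that $u\mapsto Z_{n,u}$ is \emph{linear}: Riesz representation turns the supremum over the unit ball into $\|L_n\|_{H^{m+1}(\Omega)}$, Parseval in a tensor ONB reduces $\mathbb{E}\|L_n\|^2$ to an exact second-moment computation using independence and $\mathbb{E}(\varepsilon)=0$, and the diagonal of the reproducing kernel of $H^{m+1}(\Omega,\mathbb{R})$ (i.e., $\sum_k|\tilde\Pi(f_k)(x)|^2=\|\rho_x\|^2_{H^{m+1}(\Omega)}\leqslant C_\Omega^2$, a direct consequence of Theorem~\ref{thm:sobIneq}) gives the constant. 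Your argument is shorter, avoids covering numbers and chaining entirely, and produces an explicit constant (the square of the embedding constant); its limitation is that it hinges on linearity in $u$, so it does not carry over to the quadratic functional of Lemma~\ref{lem:empiricalL2}, which is presumably why the paper handles both lemmas uniformly by chaining. One small correction to your motivating remark: the chaining strategy does not require subgaussian noise here, because the paper applies it conditionally on $\varepsilon_1,\hdots,\varepsilon_n$ (each summand is then bounded in $\bX_j$ by $2C_\Omega\sqrt{d_2}\,\|\varepsilon_j\|_2\|u\|_{H^{m+1}(\Omega)}/n$), so only $\mathbb{E}\|\varepsilon\|_2^2$ enters, exactly as in your computation. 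Finally, be slightly careful with the paper's normalized conventions for $\|\cdot\|_2$ and $\|\cdot\|_{L^2(\Omega)}$ when asserting $\|\tilde\Pi(f_k\otimes\mathbf{e}_\ell)(x)\|_2=|f_k(x)|$ and when declaring $(f_k\otimes\mathbf{e}_\ell)$ orthonormal; these normalizations only change the bookkeeping by factors that make your bound smaller, so the stated inequality still holds.
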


\begin{proof}
First note, since $H^{m+1}(\Omega, \mathbb{R}^{d_2})$ is separable and since, for all $u \in H^{m+1}(\Omega, \mathbb{R}^{d_2})$, the function $(\bx_1, \hdots, \bx_n, e_1, \hdots, e_n) \mapsto \frac{1}{n} \sum_{j=1}^n \langle \tilde \Pi (u)(\bx_j)- \mathbb{E}(\tilde \Pi (u)(\bX)),e_j\rangle$ is continuous,  that the quantity $Z = \sup_{\|u\|_{H^{m+1}(\Omega)}\leqslant 1} \frac{1}{n} \sum_{j=1}^n \langle \tilde \Pi (u)(\bX_j) - \mathbb{E}(\tilde \Pi (u)(\bX)),\varepsilon_j\rangle$ is a random variable.
Moreover, $|Z|\leqslant 2 C_\Omega \sqrt{d_2} \sum_{j=1}^n \|\varepsilon_j\|_2/n$, where $C_\Omega$ is the constant of Theorem \ref{thm:sobIneq}. 
Thus, $\mathbb{E}(Z^2) < \infty$.

Define, for any $u \in H^{m+1}(\Omega, \mathbb{R}^{d_2})$, 
\[Z_{n,u} =  \frac{1}{n} \sum_{j=1}^n \langle \tilde \Pi (u)(\bX_j) - \mathbb{E}(\tilde \Pi (u)(\bX)),\varepsilon_j\rangle \quad \text{and} \quad Z_n = \sup_{\|u\|_{H^{m+1}(\Omega)}\leqslant 1} Z_{n,u}.\] 
For any $u,v \in  H^{m+1}(\Omega, \mathbb{R}^{d_2})$, we have
\begin{align*}
    &\Big|\frac{1}{n} \langle \tilde \Pi (u)(\bX_i) - \mathbb{E}(\tilde \Pi (u)(\bX)),\varepsilon_i\rangle  - \frac{1}{n} \langle \tilde \Pi (v)(\bX_i) - \mathbb{E}(\tilde \Pi (u)(\bX)),\varepsilon_i\rangle \Big| \\
    &\quad  =  \frac{1}{n}|\langle \tilde \Pi (u-v)(\bX_i) - \mathbb{E}(\tilde \Pi (u-v)(\bX)),\varepsilon_i\rangle|\\
    &\quad \leqslant \frac{2 C_\Omega}{n}\sqrt{d_2} \|u-v\|_{H^{m+1}(\Omega)}\|\varepsilon_i\|_2 \qquad\qquad\qquad \text{(by applying Theorem \ref{thm:sobIneq}).}
\end{align*}
Using that $\varepsilon$ is independent of $\bX$, so that the conditional expectation of $Z_n$ is indeed a real expectation with $\varepsilon_1, \hdots, \varepsilon_n$ fixed, we can apply Hoeffding's, Azuma's and Dudley's theorem similarly as in the proof of Theorem \ref{thm:approx_integral} to show that
\begin{align*}
    \mathbb{E}(Z_n\mid \varepsilon_1, \hdots, \varepsilon_n) &\leqslant \frac{24 C_\Omega}{n} \sqrt{d_2}\Big(\sum_{i=1}^n \|\varepsilon_i\|_2^2\Big)^{1/2} \\
    &\quad \times \int_0^\infty[\log N(B( 1, \|\cdot\|_{H^{m+1}(\Omega)}), \|\cdot\|_{H^{m+1}(\Omega)}, r)]^{1/2}dr.
\end{align*}
Hence, according to Lemma \ref{lem:entropy}, there exists a constant $C_\Omega'>0$, depending only on $\Omega$, such that $\mathbb{E}(Z_n \mid \varepsilon_1, \hdots, \varepsilon_n) \leqslant C'_\Omega n^{-1} \sqrt{d_2}\Big(\sum_{i=1}^n \|\varepsilon_i\|_2^2\Big)^{1/2}$.
We deduce that
\begin{align*}
    \mathbb{E}(Z_n) \leqslant C'_\Omega  \sqrt{d_2} \frac{(\mathbb E \|\varepsilon\|_2^2)^{1/2}}{n^{1/2}},
\end{align*}
and 
\begin{align*}
    \mathrm{Var}(\mathbb{E}(Z_n\mid \varepsilon_1, \hdots, \varepsilon_n)) \leqslant \mathbb E(\mathbb{E}(Z_n\mid \varepsilon_1, \hdots, \varepsilon_n)^2)  \leqslant (C'_\Omega)^2  d_2 \frac{\mathbb E \|\varepsilon\|_2^2}{n}.
\end{align*}
Applying McDiarmid's inequality as in the proof of Theorem \ref{thm:approx_integral} shows that
\[\mathrm{Var}(Z_n\mid \varepsilon_1, \hdots, \varepsilon_n) \leqslant 16 C_\Omega^2 d_2 \frac{1}{n^2}\sum_{i=1}^n \|\varepsilon_i\|_2^2. \]
The law of the total variance ensures that 
\begin{align*}
    \mathrm{Var}(Z_n) &= \mathrm{Var}(\mathbb{E}(Z_n\mid \varepsilon_1, \hdots, \varepsilon_n)) + \mathbb{E}(\mathrm{Var}(Z_n\mid \varepsilon_1, \hdots, \varepsilon_n))\\
    & \leqslant \frac{d_2 \mathbb E \|\varepsilon\|_2^2}{n}\big((C'_\Omega)^2+16 C_\Omega^2\big).
\end{align*}
Since $\mathbb{E}(Z_n^2) \leqslant \mathrm{Var}(Z_n) + \mathbb{E}(Z_n)^2$, we deduce that
\[\mathbb{E}(Z_n^2)\leqslant \frac{d_2 \mathbb E \|\varepsilon\|_2^2}{n}\big(2(C'_\Omega)^2+16 C_\Omega^2\big).\]
\end{proof}

\section{Proofs of Proposition \ref{prop:densite}} 
\label{proof:density}
\citet[][Theorem 5.1]{ryck2021approximation} ensures that $\text{NN}_2$ is dense in $(C^\infty([0,1]^{d_1}, \mathbb{R}), \|\cdot\|_{C^K([0,1]^{d_1})})$ for all $d_1 \geqslant 1$ and $K\in \mathbb{N}$. Note that the authors state the result for Hölder spaces $(W^{K+1,\infty}([0,1]^{d_1}), \|\cdot\|_{W^{K,\infty}(]0,1[^{d_1})})$ \citep[see][for a definition]{evans2010partial}. Clearly, $C^\infty([0,1]^{d_1}) \subseteq W^{K+1, \infty}([0,1]^{d_1})$ and the norms $\|\cdot\|_{C^{K}}$ and $\|\cdot\|_{W^{K,\infty}}$ coincide on $C^\infty([0,1]^{d_1})$.

Our proof generalizes this result to any bounded Lipschitz domain $\Omega$, to any number $H\geqslant 2$ of layers, and to any output dimension $d_2$. We stress that for any $U \subseteq \mathbb{R}^{d_1}$, the set $\mathrm{NN}_2 \subseteq C^\infty(\mathbb{R}^{d_1}, \mathbb{R}^{d_2})$ can of course be seen as a subset of $C^\infty(U, \mathbb{R}^{d_2})$.

\paragraph{Generalization to any bounded Lipschitz domain $\Omega$} 
In this and the next paragraph, $d_2=1$.
Our objective is to prove that $\text{NN}_2$ is dense in $(C^\infty(\bar{\Omega}, \mathbb{R}), \|\cdot\|_{C^K(\Omega)})$. 
Let $f \in C^\infty(\bar{\Omega}, \mathbb{R})$. Since $\Omega$ is bounded, there exists an affine transformation $\tau : x \mapsto A_\tau x +b_\tau$, with $A_\tau \in \mathbb{R}^\star$ and $b_\tau \in \mathbb{R}^{d_1}$, 
such that $\tau(\Omega) \subseteq [0,1]^d$. Set  $\hat{f} = f (\tau^{-1})$. 
According to the extension theorem for Lipschitz domains of \citet[][Theorem 5 Chapter VI.3.3]{stein1970lipschitz}, the function $\hat{f}$ can be extended to a function $\tilde{f} \in W^{K,\infty}([0,1]^{d_1})$ such that $\tilde{f}|_{\tau(\Omega)} = \hat{f}|_{\tau(\Omega)}$. 
Fix $\epsilon > 0$. According to \citet[][Theorem 5.1]{ryck2021approximation}, there exists $u_\theta \in \text{NN}_2$ such that 
$\|u_\theta - \hat{f}\|_{W^{K, \infty}([0,1]^d)} \leqslant \epsilon$. Since $\tilde{f}$ is an extension of 
$\hat{f}$,  $\tilde{f}|_{\tau(\Omega)} \in C^\infty(\bar{\Omega})$ and one also has $\|u_\theta - \hat{f}\|_{C^K(\tau(\Omega))} \leqslant \epsilon$. 

Now, let $m\in \mathbb{N}$ and let $\alpha$ be a multi-index such that $\sum_{i=1}^{d_1}\alpha_i = m$. Then, clearly,
$\partial^\alpha (\hat{f}(\tau))= A_\tau^m \times \partial^\alpha \hat{f}(\tau)$. 
Therefore,
$\|u_\theta(\tau) - \hat{f}(\tau)\|_{C^K(\Omega)} \leqslant \epsilon\times \max(1,A_\tau^K)$,
that is
\[\|u_\theta(\tau) - f\|_{C^K(\Omega)} \leqslant \epsilon\times \max(1,A_\tau^K).\] 
But, since $\tau$ is affine, $u_\theta(\tau)$ belongs to $\text{NN}_2$. This is the desires result.
\paragraph{Generalization to any number $H\geqslant 2$ of layers} We show in this paragraph that $\text{NN}_H$ is dense in $(C^\infty(\bar{\Omega}, \mathbb{R}), \|\cdot\|_{C^K(\Omega)})$ for all $H \geqslant 2$. The case $H=2$ has been treated above and it is therefore assumed that $H \geqslant 3$.

Let $f \in C^\infty(\bar{\Omega}, \mathbb{R})$. Introduce the function $v$ defined by
\[v(x_1, \hdots, x_{d_1}) = (\tanh^{\circ (H-2)}(x_1), \hdots, \tanh^{\circ (H-2)}(x_{d_1})),\] 
where $\tanh^{\circ (H-2)}$ stands for the $\tanh$ function composed $(H-2)$ times with itself.
For all $u_\theta \in \text{NN}_2$, $u_\theta(v) \in \text{NN}_H$ is a neural network such that the first weights matrices $(W_\ell)_{1 \leqslant \ell \leqslant H-2}$ are identity matrices and the first offsets $(b_\ell)_{1 \leqslant \ell \leqslant H-2}$ are equal to zero. 
Since $\tanh$ is an increasing $C^\infty$ function, $v$ is a $C^\infty$ diffeomorphism.
Therefore, $v(\Omega)$ is a bounded Lipschitz domain and $f(v^{-1}) \in C^\infty(v(\Omega), \mathbb{R})$. Lemma \ref{lem:boundPartialDer2} shows that $f(v^{-1}) \in C^\infty(\bar{v}(\Omega), \mathbb{R})$, where $\bar{v}(\Omega)$ is the closure of $v(\Omega)$. According to the previous paragraph, there exists a sequence $(\theta_m)_{m \in \mathbb{N}}$ of parameters such that $u_{\theta_m} \in \text{NN}_2$ and 
\begin{equation*}
    \lim_{m \to \infty} \|u_{\theta_m} - f(v^{-1})\|_{C^K(v(\Omega))} = 0.
\end{equation*}
Thus, $u_{\theta_m}$ approximates $f(v^{-1})$, and we would like $u_{\theta_m} (v)$ to approximate $f$.
From Lemma \ref{lem:boundPartialDer2}, \[ \|u_{\theta_m}(v)-f\|_{C^K(\Omega)} \leqslant B_K \times  \|u_{\theta_m}-f\circ v^{-1}\|_{C^K(\Omega)} \times (1+\|\tanh^{\circ H-2}\|_{C^K(\mathbb{R})})^K,\]
while Corollary \ref{cor:bounding_tanh} asserts that $\|\tanh^{\circ H-2}\|_{C^K(\mathbb{R})} < \infty$.

Therefore, we deduce that $\lim_{m \to \infty }\|u_{\theta_m}(v)-f\|_{C^K(\Omega)}  = 0$ with $u_{\theta_m}(v) \in \text{NN}_H$, which proves the lemma for  $H \geqslant 2$.

\paragraph{Generalization to all output dimension $d_2$} We have shown so far that for all $H \geqslant 2$, $\text{NN}_H$ is dense in $(C^\infty(\bar{\Omega}, \mathbb{R}), \|\cdot\|_{C^K(\Omega)})$. It remains to establish that $\text{NN}_H$ is dense in $(C^\infty(\bar{\Omega}, \mathbb{R}^{d_2}), \|\cdot\|_{C^K(\Omega)})$ for any output dimension $d_2$.

Let $f = (f_1, \hdots, f_{d_2}) \in C^\infty(\Omega, \mathbb{R}^{d_2})$. For all $1\leqslant i \leqslant d_2$, let $(\theta_m^{(i)})_{m \in \mathbb{N}} \in (\text{NN}_H)^{\mathbb{N}}$ be a sequence of neural networks such that $\lim_{m \to \infty}\|u_{\theta^{(i)}_m}-f_i\|_{C^K(\Omega)} = 0$. Denote by $u_{\theta_m} = (u_{\theta_m^{(1)}}, \hdots, u_{\theta_m^{(d_2)}})$ the stacking of these sequences. For all $m\in \mathbb{N}$, $u_{\theta_m} \in \text{NN}_H$ and $\lim_{m \to \infty }\|u_{\theta_m}-f\|_{C^K(\Omega)} = 0$. Therefore, $\text{NN}_H$ is dense in $(C^\infty(\bar{\Omega}, \mathbb{R}), \|\cdot\|_{C^K(\Omega)})$.

\section{Proofs of Section \ref{POF}}

\subsection{Proof of Proposition \ref{prop:friction}}
\label{proof:hybrid_modeling_failure}
Consider $u_{\hat{\theta}(p, n_r,D)} \in \text{NN}_H(D)$, the neural network defined by
    \[u_{\hat{\theta}(p, n_r,D)}(\bx) =  Y_{(1)} + \sum_{i=1}^{n-1} \frac{Y_{(i+1)}-Y_{(i)}}{2} \bigg[\tanh_p^{\circ H}\Big(\bx-\bX_{(i)}-\frac{\delta(n,n_r)}{2}\Big)+1\bigg],\]
where $\delta(n,n_r)$ is defined in \eqref{def:delta} and where the observations have been reordered according to increasing values of the $\bX_{(i)}$. 
According to Lemma \ref{lem:estimationTanh}, one has, for all $1 \leqslant i \leqslant n$, $\lim_{p \to \infty} u_{\hat{\theta}(p, n_r,D)}(\bX_i) = Y_i$. Moreover, for all order $K\geqslant 1$ of differentiation and all $1 \leqslant j \leqslant n_r$, $\lim_{p\to\infty}u^{(K)}_{\hat{\theta}(p, n_r,D)}(\bX^{(r)}_j) = 0$. Recalling that $\mathscr{F}(u, \bx) = mu''(\bx) + \gamma u'(\bx)$, we have
$\|\mathscr{F}(u, \bx)\|_2\leqslant m \|u''(\bx)\|_2 + \gamma \|u'(\bx)\|_2$. We therefore conclude that $\lim_{p\to\infty} R_{n, n_r}(u_{\hat{\theta}(p, n_r,D)}) = 0$, which is the first statement of the proposition.

Next, using the Cauchy-Schwarz inequality, we have that, for any function $f\in C^2(\mathbb{R})$ and any $\varepsilon >0$,
\[
2\varepsilon\int_{-\varepsilon}^\varepsilon (m f''+ \gamma f')^2  \geqslant \Big(\int_{-\varepsilon}^\varepsilon mf'' + \gamma f'\Big)^2= \big[m (f'(\varepsilon)-f'(-\varepsilon)) + \gamma (f(\varepsilon)-f(-\varepsilon))\big]^2. 
\]
Thus,
\begin{align*}
&\mathscr{R}_n(u_{\hat{\theta}(p, n_r,D)}) \\
&\quad \geqslant \frac{1}{T} \int_{[0,T]} \mathscr{F}(u_{\hat{\theta}(p, n_r,D)}, \bx)^2 d\bx\\
 &\quad \geqslant \frac{1}{T} \sum_{i=1}^{n}\int_{\bX_{(i)}+\delta(n, n_r)/2-\varepsilon}^{\bX_{(i)}+\delta(n, n_r)/2+\varepsilon } \mathscr{F}(u_{\hat{\theta}(p, n_r,D)}, \bx)^2 d\bx\\
 &\quad \geqslant  \frac{1}{T}\sum_{i=1}^{n} \frac{1}{2 \varepsilon}\big[m(u'_{\hat{\theta}(p, n_r,D)}(\bX_{(i)}+\delta(n, n_r)/2+\varepsilon)-u'_{\hat{\theta}(p, n_r,D)}(\bX_{(i)}+\delta(n, n_r)/2-\varepsilon))\\
 &\qquad \quad +\gamma(u_{\hat{\theta}(p, n_r,D)}(\bX_{(i)}+\delta(n, n_r)/2+\varepsilon)-u_{\hat{\theta}(p, n_r,D)}(\bX_{(i)}+\delta(n, n_r)/2-\varepsilon))\big]^2.
\end{align*}
Observe that, as soon as $\delta(n,n_r)/4 > \varepsilon$, one has, for all $1 \leqslant i\leqslant n-1$,
\[\lim_{p \to \infty}u_{\hat{\theta}(p, n_r,D)}(\bX_{(i)}+\delta(n, n_r)/2+\varepsilon ) - u_{\hat{\theta}(p, n_r,D)}(\bX_{(i)}+\delta(n, n_r)/2 -  \varepsilon ) = Y_{(i+1)}-Y_{(i)},\] and, for all $1 \leqslant i\leqslant n-1$,
\[\lim_{p\to\infty}u'_{\hat{\theta}(p, n_r,D)}(\bX_{(i)}+\delta(n, n_r)/2+\varepsilon ) - u'_{\hat{\theta}(p, n_r,D)}(\bX_{(i)}+\delta(n, n_r)/2 -  \varepsilon ) = 0.\] 
Hence, for any $ 0<\varepsilon<\delta(n,n_r)/4$,
\begin{align*}
    &\sum_{i=1}^{n} \frac{1}{2 \varepsilon}\big[m(u'_{\hat{\theta}(p, n_r,D)}(\bX_{(i)}+\delta(n, n_r)/2-\varepsilon)-u'_{\hat{\theta}(p, n_r,D)}(\bX_{(i)}+\delta(n, n_r)/2-\varepsilon))\\
    &\qquad +\gamma(u_{\hat{\theta}(p, n_r,D)}(\bX_{(i)}+\delta(n, n_r)/2-\varepsilon)-u_{\hat{\theta}(p, n_r,D)}(\bX_{(i)}+\delta(n, n_r)/2-\varepsilon))\big]
    ^2 \\
&\xrightarrow[p \rightarrow \infty]{} \gamma \times \frac{\sum_{i=1}^{n-1} (Y_{(i+1)}-Y_{(i)})^2}{2 \varepsilon}.
\end{align*}
We have just proved that, for any $ 0<\varepsilon<\delta(n,n_r)/4$, there exists $P \in \mathbb{N}$ such that, for all  $p \geqslant P$, 
\[\mathcal{R}_n(u_{\hat{\theta}(p, n_r,D)}) \geqslant  \gamma \times \frac{\sum_{i=1}^{n-1} (Y_{(i+1)}-Y_{(i)})^2}{2 \varepsilon T}.
\]
We conclude as desired that $\lim_{p\to\infty}\mathcal{R}_n(u_{\hat{\theta}(p, n_r,D)})  =\infty$, since we suppose that there exists two observations $Y_{(i)} \neq Y_{(j)}$.

\subsection{Proof of Proposition \ref{prop:wave}}
\label{proof:PDE_failure}
Let $u_{\hat{\theta}(p,n_e, n_r,D)} \in \text{NN}_H(4)$ be the neural network defined by
\begin{align*}
    u_{\hat{\theta}(p,n_e, n_r,D)}(x, t) =& \tanh^{\circ H}(x+0.5+p t)-\tanh^{\circ H}(x-0.5+pt) \\
    &+\tanh^{\circ H}(0.5+pt) - \tanh^{\circ H}(1.5+pt).
\end{align*}
Clearly, for any $p\in \mathbb{N}$, $u_{\hat{\theta}(p,n_e, n_r,D)}$ satisfies the initial condition
\[u_{\hat{\theta}(p,n_e, n_r,D)}(x, 0) = \tanh^{\circ H}(x+0.5)-\tanh^{\circ H}(x-0.5) + \tanh^{\circ H}(0.5) - \tanh^{\circ H}(1.5).\]
We are going to prove in the next paragraphs that the derivatives of $u_{\hat{\theta}(p,n_e, n_r,D)}$ vanish as $p\to \infty$, starting with the temporal derivative and continuing with the spatial ones.
According to Lemma \ref{lem:compTanh}, for all $\varepsilon > 0$ and all $x \in [-1, 1]$, 
$\lim_{p\to\infty}\|u_{\hat{\theta}(p,n_e, n_r,D)}(x, \cdot)\|_{C^2([\varepsilon, T])} = 0$.
Therefore, for any $\bX^{(e)}_i\in \{-1, 1\}\times [0,T]$,
$\lim_{p\to\infty}\|u_{\hat{\theta}(p,n_e, n_r,D)}(\bX^{(e)}_i)\|_2 = 0$ and, 
for any  $\bX^{(r)}_j\in \Omega$,
$\lim_{p\to\infty}\|\partial_{t}u_{\hat{\theta}(p,n_e, n_r,D)}(\bX^{(r)}_j)\|_2 = 0$ (since $\bX^{(r)}_j \notin \partial \Omega$).

Letting $v(x,t) = \tanh^{\circ H}(x+0.5+p t)-\tanh^{\circ H}(x-0.5+pt)$,
it comes that $\partial^2_{x,x} u_{\hat{\theta}(p,n_e, n_r,D)} = p^{-2}\partial^2_{t,t} v$.
Thus, invoking again Lemma \ref{lem:compTanh}, for all $\varepsilon > 0$, and all $x \in [-1, 1]$,
\[\lim_{p\to\infty}p^{-2}\|\partial^2_{t,t} v(x, \cdot)\|_{\infty, [\varepsilon, T]} =  \lim_{p\to\infty}\|\partial^2_{x,x} u_{\hat{\theta}(p,n_e, n_r,D)}(x, \cdot)\|_{\infty, [\varepsilon, T]}  = 0.\]
Therefore, for any $\bX^{(r)}_j\in \Omega$, one has $\lim_{p\to\infty}\|\partial^2_{x,x}u_{\hat{\theta}(p,n_e, n_r,D)}(\bX^{(r)}_j)\|_2 = 0$ and, in turn, one has
$\lim_{p\to\infty}\|\mathscr{F}(u_{\hat{\theta}(p,n_e, n_r,D)}, \bX^{(r)}_j)\|_2 = 0$. Thus, for all $n_e, n_r \geqslant 0$,
$\lim_{p\to\infty}R_{n_e,n_r}(u_{\hat{\theta}(p,n_e, n_r,D)}) = 0$.

Next, observe that $\mathscr{R}(u_{\hat{\theta}(p,n_e, n_r,D)}) \geqslant \int_{[-1,1]\times[0,T]} (\partial_{t}u_{\hat{\theta}(p,n_e, n_r,D)}-\partial^2_{x,x}u_{\hat{\theta}(p,n_e, n_r,D)})^2$. 
By the Cauchy-Schwarz inequality, for any $\delta >0$, 
\begin{align*}
   & \int_{[-1,1]\times[0,T]} (\partial_{t}u_{\hat{\theta}(p,n_e, n_r,D)}-\partial^2_{x,x}u_{\hat{\theta}(p,n_e, n_r,D)})^2\\
   & \geqslant \delta^{-1}\int_{x =-1}^{1}\Big(\int_{t=0}^\delta \partial_{t}u_{\hat{\theta}(p,n_e, n_r,D)}(x,t)-\partial^2_{x,x}u_{\hat{\theta}(p,n_e, n_r,D)}(x,t)\Big)^2dx\\
    & \geqslant \delta^{-1}\int_{x =-1}^{1}\Big(u_{\hat{\theta}(p,n_e, n_r,D)}(x,\delta) - u_{\hat{\theta}(p,n_e, n_r,D)}(x,0) -  \int_{t=0}^\delta \partial^2_{x, x}u_{\hat{\theta}(p,n_e, n_r,D)}(x,t)dt\Big)^2dx.
\end{align*}
Invoking again Lemma \ref{lem:compTanh}, we know that $\lim_{p\to \infty}\|u_{\hat{\theta}(p,n_e, n_r,D)}(\cdot,\delta)\|_{[-1, 1], \infty} = 0$. Moreover, for all $t >0$ and all $-1\leqslant x \leqslant 1$, $\lim_{p\to\infty}\partial^2_{x,x} u_{\hat{\theta}(p,n_e, n_r,D)}(x, t)  = 0$. Besides, by Corollary 
\ref{cor:bounding_tanh}, $\|\partial^2_{x,x} u_{\hat{\theta}(p,n_e, n_r,D)}\|_{\infty, [0,1]\times [-1,1]} \leqslant 2\|\tanh^{\circ H}\|_{C^2(\mathbb{R})} < \infty$.
Thus, by the dominated convergence theorem, for any $\delta > 0$ and all $p$ large enough, 
\[\mathscr{R}(u_{\hat{\theta}(p,n_e, n_r,D)}) \geqslant  \frac{1}{2\delta}\int_{x =-1}^{1}\big(u_{\hat{\theta}(p,n_e, n_r,D)}(x,0)\big)^2dx.\]
Noticing that $u_{\hat{\theta}(p,n_e, n_r,D)}(x,0)$ corresponds to the initial condition, that does not depends on $p$, we conclude that $\lim_{p\to\infty}\mathscr{R}(u_{\hat{\theta}(p,n_e, n_r,D)}) = \infty$.

\section{Proofs of Section \ref{sec:consistency}}
\label{app:sec5}
\subsection{Proof of Proposition \ref{prop:bounding}}
Recall that each neural network $u_\theta \in \mathrm{NN}_H(D)$ is written as $u_\theta = \mathcal{A}_{H+1} \circ (\tanh \circ \mathcal{A}_{H}) \circ \cdots \circ  (\tanh \circ \mathcal{A}_1)$,
where each $\mathcal{A}_k : \mathbb{R}^{L_{k-1}} \rightarrow \mathbb{R}^{L_{k}}$ is an affine function of the form $\mathcal{A}_k(x) = W_k x + b_k$, with $W_k$ a ($L_{k-1} \times L_k$)-matrix, $b_k \in \mathbb{R}^{L_k}$ a vector,  $L_0 = d_1$, $L_1=\cdots=L_H=D$, $L_{H+1} = d_2$, and $\theta = (W_1, b_1, \hdots, W_{H+1}, b_{H+1})\in \mathbb{R}^{\sum_{i=0}^H (L_i+1)\times L_i}$. For each $i\in\{1, \hdots, d_1\}$, we let $\pi_i$ be the projection operator on the $i$th coordinate, defined by $\pi_i(x_1, \hdots, x_{d_1}) = x_i$. Similarly, for a matrix $W = (W_{i,j})_{1\leqslant i \leqslant d_2, 1\leqslant j \leqslant d_1 }$, we let $\pi_{i,j}(W) = W_{i,j}$ and $\|W\|_\infty = \max_{1\leqslant i \leqslant d_2, 1\leqslant j \leqslant d_1 } |W_{i,j}|$. Note that $\|W_k\bx\|_\infty \leqslant L_{k-1} \|W_k\|_\infty \|\bx\|_\infty$.  Clearly,
$\max_{1\leqslant k \leqslant H+1}(\|W_k\|_\infty , \|b_k\|_\infty) \leqslant\|\theta\|_\infty \leqslant \|\theta\|_2$.
Finally, we recursively define  the constants $C_{K,H}$ for all $K\geqslant 0$ and all $H\geqslant 1$ by $C_{0,H} = 1$, $C_{K,1}= 2^{K-1}\times(K+2)!$, and 
\begin{equation}
    C_{K,H+1} = B_K 2^{K-1} (K+2)! \max_{\underset{i_1+2i_2+\cdots + Ki_K = K}{ i_1,\hdots, i_K \in \mathbb{N}}} \prod_{1\leqslant \ell \leqslant K}C_{\ell,H}, \label{eq:defC}
\end{equation}
where $B_K$ is the $K$th Bell number, defined in \eqref{eq:bell}.

We prove the proposition by induction  on $H$, starting with the case $H=1$. 
Clearly, for $H=1$, one has
\begin{equation}
    \label{eq:iniBounding}
    \|u_\theta\|_\infty \leqslant \|W_2 \times \tanh \circ \mathcal{A}_1\|_\infty + \|b_2\|_\infty \leqslant \|W_2\|_\infty D  + \|b_2\|_\infty \leqslant (D+1)  \|\theta\|_2.
\end{equation}
Next, for any multi-index $\alpha = (\alpha_1, \hdots, \alpha_{d_1})$ such that $|\alpha|\geqslant 1$,
\begin{equation}
    \label{eq:boundingNNIni}
    \partial^\alpha u_\theta(\bx) = W_2 \begin{pmatrix}
    \pi_{1,1}(W_1)^{\alpha_1} \times \cdots \times \pi_{1,d_1}(W_1)^{\alpha_{d_1}} \times \tanh^{(|\alpha|)}(\pi_1(\mathcal{A}_1(\bx)))\\
    \vdots\\
    \pi_{1,d_1}(W_1)^{\alpha_1} \times \cdots \times \pi_{d_1,d_1}(W_1)^{\alpha_{d_1}} \times \tanh^{(|\alpha|)}(\pi_{d_1}(\mathcal{A}_1(\bx)))
\end{pmatrix}.
\end{equation}
Upon noting that $|\pi_{1,d_1}(W_1)|\leqslant \|\theta\|_\infty$, we see that 
\begin{equation}
\label{eq:iniBoundingK}
    \|\partial^\alpha u_\theta\|_\infty \leqslant D  \|W_2\|_\infty  \|\theta\|_2^{|\alpha|}  \|\tanh^{(|\alpha|)}\|_\infty \leqslant D  \|\theta\|_2^{1+|\alpha|}  \|\tanh^{(|\alpha|)}\|_\infty.
\end{equation}
Therefore, combining \eqref{eq:iniBounding} and \eqref{eq:iniBoundingK},  for any $K \geqslant 1$, $\|u_\theta\|_{C^K(\mathbb{R}^{d_1})} \leqslant (D+1) \max_{k \leq K}\|\tanh^{(k)}\|_\infty  (1+\|\theta\|_2)^{K} \|\theta\|_2$.
Applying Lemma \ref{lem:derTanh}, we conclude that, for all $u \in \mathrm{NN}_1(D)$ and for all $K \geqslant 0$,
\[\|u_\theta\|_{C^K(\mathbb{R}^{d_1})} \leqslant  C_{K,1} (D+1)  (1+\|\theta\|_2)^{K} \|\theta\|_2.\]
\paragraph{Induction} Assume that for a given $H \geqslant 1$, one has, for any neural network $u_\theta \in \mathrm{NN}_H(D)$ and any $K \geqslant 0$,
\begin{equation}
\label{eq:recBoundingNN}
    \|u_\theta\|_{C^K(\mathbb{R}^{d_1})} \leqslant C_{K,H} (D+1)^{1+KH}  (1+\|\theta\|_2)^{KH}\|\theta\|_2.
\end{equation}
Our objective is to show that for any $u_\theta \in \mathrm{NN}_{H+1}(D)$ and any $K \geqslant 0$,
\[\|u_\theta\|_{C^K(\mathbb{R}^{d_1})} \leqslant C_{K,H+1} (D+1)^{1+K(H+1)}  (1+\|\theta\|_2)^{K(H+1)} \|\theta\|_2.\] 
For such a $u_\theta$, we have, by definition, 
$u_\theta = \mathcal{A}_{H+2}\circ \tanh \circ v_\theta$, where $v_\theta \in \mathrm{NN}_{H}(D)$ (by a slight abuse of notation, the parameter of $v_\theta$ is in fact $\theta' = (W_1, b_1, \hdots, W_{H+1}, b_{H+1})$ while $\theta = (W_1, b_1, \hdots,$ $W_{H+2}, b_{H+2})$, so $\|\theta'\|_2 \leqslant \|\theta\|_2$ and $\|\theta'\|_\infty \leqslant \|\theta\|_\infty$).
Consequently, 
\begin{equation}
    \|u_\theta\|_\infty \leqslant \|W_{H+2}\|_\infty  D + \|b_{H+2}\|_\infty \leqslant (D+1) \|\theta\|_2.\label{eq:boundNN}
\end{equation}
In addition, for any multi-index $\alpha = (\alpha_1, \hdots, \alpha_{d_1})$ such that $|\alpha|\geqslant 1$,
\[
\partial^\alpha u_\theta(\bx) = W_{H+2} \begin{pmatrix}
    \partial^\alpha ( \tanh\circ \pi_1 \circ v_\theta(\bx))\\
    \vdots\\
    \partial^\alpha ( \tanh\circ \pi_{D}\circ v_\theta(\bx))
\end{pmatrix}.
\]
Thus,
$\|\partial^\alpha u_\theta\|_\infty \leqslant D \|W_{H+2}\|_\infty  \max_{j \leqslant D}\|\tanh \circ \pi_j \circ v_\theta\|_{C^K(\mathbb{R}^{d_1})}$. Invoking identity \eqref{eq:fdbFormula}, one has
\[\|\tanh \circ \pi_j\circ v\|_{C^K(\mathbb{R}^{d_1})} \leqslant B_K \|\tanh \|_{C^K(\mathbb{R})} \max_{i_1+2i_2+\cdots + Ki_{K}=K}\prod_{1\leqslant \ell \leqslant K}\|\pi_j \circ v_\theta\|_{C^\ell(\mathbb{R}^{d_1})}^{i_\ell}.\] 
Observing that $\pi_{j}\circ v_\theta $ belongs to $\mathrm{NN}_H(D)$, Lemma \ref{lem:derTanh} and inequality \eqref{eq:recBoundingNN} show that
\[\|\tanh \circ \pi_j\circ v_\theta\|_{C^\ell(\mathbb{R}^{d_1})} \leqslant C_{\ell,H+1} (D+1)^{1+\ell H}  (1+\|\theta\|_2)^{1+\ell H} \|\theta\|_2.\] 
Therefore, $\|\partial^\alpha u_\theta\|_\infty \leqslant C_{K,H+1} (D+1)^{1+KH}  (1+\|\theta\|_2)^{K(H+1)}  \|\theta\|_2$, which concludes the induction.

To complete the proof, it remains to show that the exponent of $\|\theta\|_2$ is optimal. To this aim, we let $d_1 = d_2 = 1$, $D=1$. For each $H \geqslant 1$, we consider the sequence $(\theta^{(H)}_m)_{m \in \mathbb{N}}$ defined by  
$\theta^{(H)}_m = (W_1^{(m)}, b_1^{(m)}, \hdots, W^{(m)}_{H+1}, b^{(m)}_{H+1})$, with $W_i^{m} = m$ and $b_i^m = 0$. Then, for all $\theta = (W_1, b_1, \hdots,$ $ W_{H+1}, b_{H+1}) \in \Theta_{H, 1}$, the associated neural network's derivatives satisfy
\[\|u_\theta^{(k)}\|_\infty =
\|(\tanh^{\circ H})^{(K)}\|_{\infty}  |W_{H+1}| \prod_{i=1}^{H} |W_i|^K. \]
Next, since 
$\|\theta^{(H)}_m\|_2 = m\sqrt{H+1}$, we have
\begin{equation*}
\|u_{\theta^{(H)}_m}\|_{C^K(\mathbb{R}^{d_1})} 
\geqslant \big\|u_{\theta^{(H)}_m}^{(K)}\big\|_\infty \geqslant \big\|(\tanh^{\circ H})^{(K)}\big\|_\infty   m^{1+HK}
\geqslant \bar{C}(H, K)   \|\theta^{(H)}_m\|_2^{1+HK},
\end{equation*} 
where $\bar{C}(H, K) = (H+1)^{-(1+HK)/2} \|(\tanh^{\circ H})^{(K)}\|_\infty $. Since $\lim_{m\to\infty}\|\theta^{(H)}_m\|_2 = \infty $, we conclude that the bound of inequality \eqref{eq:recBoundingNN} is tight.
\subsection{Lipschitz dependence of the Hölder norm in the NN parameters}

\begin{prop}[Lipschitz dependence of the Hölder norm in the NN parameters]
\label{prop:lipschitzParam}
Consider the class $\mathrm{NN}_H(D)=\{u_\theta, \theta\in\Theta_{H,D}\}$.
Let $K \in \mathbb{N}$. Then there exists a constant $\tilde{C}_{K,H}>0$, depending only on $K$ and $H$, such that, for all $\theta, \theta' \in \Theta_{H, D}$, 
\[
\|u_\theta-u_{\theta'}\|_{C^K(\Omega)} \leqslant \tilde{C}_{K,H}  (1+d_1M(\Omega)) (D+1)^{H+KH^2}  (1+\|\theta\|_2)^{H+KH^2}  \|\theta-\theta'\|_2,
\]
where $M(\Omega) = \sup_{\bx\in\Omega} \|\bx\|_\infty$.
\end{prop}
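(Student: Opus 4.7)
The proof will proceed by induction on the number of hidden layers $H$, mirroring the structure of the proof of Proposition \ref{prop:bounding}, but now applied to the difference $u_\theta - u_{\theta'}$ rather than to $u_\theta$ alone. The guiding idea is that, since $u_\theta$ depends smoothly on $\theta$, the difference $u_\theta - u_{\theta'}$ can be decomposed into terms each of which contains at most one factor linear in $\theta - \theta'$, with the remaining factors bounded polynomially in $\|\theta\|_2$ via Proposition \ref{prop:bounding}.

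For the base case $H=1$, I would write $u_\theta = W_2 \tanh(W_1 \cdot + b_1) + b_2$ and use the telescoping identity
\[
u_\theta - u_{\theta'} = (W_2 - W_2')\tanh(W_1\bx+b_1) + W_2'\bigl[\tanh(W_1\bx+b_1)-\tanh(W_1'\bx+b_1')\bigr] + (b_2-b_2').
\]
For $K=0$ on $\Omega$, bound each term using $|\tanh|\leqslant 1$ and the $1$-Lipschitz property of $\tanh$, together with $\|W_1\bx + b_1 - W_1'\bx - b_1'\|_\infty \leqslant (1+d_1 M(\Omega))\|\theta - \theta'\|_2$, which is where the factor $(1+d_1 M(\Omega))$ enters. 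For $K \geqslant 1$, differentiate each term via the explicit formula \eqref{eq:boundingNNIni} and bound the differences $\tanh^{(j)}(W_1\bx+b_1)-\tanh^{(j)}(W_1'\bx+b_1')$ by applying the mean value theorem to $\tanh^{(j+1)}$, whose $L^\infty$ norm is controlled by Lemma \ref{lem:derTanh}.

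For the inductive step from $H$ to $H+1$, write $u_\theta = \mathcal{A}_{H+2}\circ \tanh \circ v_\theta$ with $v_\theta\in \mathrm{NN}_H(D)$ built from the first $H+1$ affine layers of $\theta$, and telescope:
\[
u_\theta - u_{\theta'} = (W_{H+2}-W_{H+2}')\tanh(v_\theta) + W_{H+2}'\bigl[\tanh(v_\theta)-\tanh(v_{\theta'})\bigr] + (b_{H+2}-b_{H+2}').
\]
The first and third terms are controlled by bounding $\|\tanh(v_\theta)\|_{C^K(\Omega)}$ through Lemma \ref{lem:boundPartialDer} combined with Proposition \ref{prop:bounding} applied to $v_\theta$. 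For the middle term, I would use the integral representation
\[
\tanh(v_\theta)-\tanh(v_{\theta'}) = \int_0^1 \tanh'\bigl(v_{\theta'}+s(v_\theta-v_{\theta'})\bigr)\,(v_\theta-v_{\theta'})\,ds,
\]
then apply the Leibniz rule to $\partial^\alpha$ of the integrand. Each factor of the form $\partial^{\gamma}\bigl[\tanh' \circ (v_{\theta'}+s(v_\theta-v_{\theta'}))\bigr]$ is bounded by Lemma \ref{lem:boundPartialDer} together with Proposition \ref{prop:bounding} applied to the intermediate network $v_{\theta'}+s(v_\theta-v_{\theta'})$, whose parameter norm is at most $\|\theta\|_2+\|\theta-\theta'\|_2$. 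The remaining factor $\partial^\beta(v_\theta-v_{\theta'})$ is handled by the induction hypothesis. Summing over $\alpha$ and integrating in $s$ yields a bound of the form $\|\tanh(v_\theta)-\tanh(v_{\theta'})\|_{C^K(\Omega)} \leqslant C\,(D+1)^{a}(1+\|\theta\|_2)^{b}\,\|v_\theta-v_{\theta'}\|_{C^K(\Omega)}$ for suitable $a,b$.

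The main obstacle will be the bookkeeping: one must verify that the exponents in $(D+1)$ and $(1+\|\theta\|_2)$ propagating through the induction remain bounded by $H+KH^2$. At each layer, Lemma \ref{lem:boundPartialDer} and Proposition \ref{prop:bounding} together contribute a factor of degree at most $K(HK+1)$ in $(1+\|\theta\|_2)$, while the inductive hypothesis contributes $H+KH^2$; the sum is controlled by $(H+1)+K(H+1)^2$, allowing the recursion to close. The constant $\tilde C_{K,H}$ is defined recursively, in the spirit of \eqref{eq:defC}, to absorb the multiplicative factors of $B_K$, $\|\tanh\|_{C^K(\mathbb{R})}$, and the previously defined $C_{K,H}$ of Proposition \ref{prop:bounding}. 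Everything else, including the extraction of the $(1+d_1M(\Omega))$ factor (which appears only through the base-case estimate $\|(W_1-W_1')\bx + (b_1-b_1')\|_\infty$ and is preserved multiplicatively through the induction), is a routine but tedious verification.
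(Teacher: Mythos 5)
Your overall strategy is the paper's: induction on $H$, telescoping through the last affine layer, a base case in which the factor $(1+d_1M(\Omega))$ enters through $\|(W_1-W_1')\bx+(b_1-b_1')\|_\infty$, and the induction hypothesis applied to $\|v_\theta-v_{\theta'}\|_{C^K(\Omega)}$. The only genuinely different ingredient is that you replace the paper's product-difference identity \eqref{eq:diffProd} and the resulting estimate \eqref{eq:fundRes} by the integral representation of $\tanh\circ v_\theta-\tanh\circ v_{\theta'}$. Two issues with how you execute it. First, a repairable imprecision: $w_s:=v_{\theta'}+s(v_\theta-v_{\theta'})$ is a convex combination of the two output functions, not a network of $\mathrm{NN}_H(D)$, so Proposition \ref{prop:bounding} does not apply to it "as an intermediate network with parameter norm $\|\theta\|_2+\|\theta-\theta'\|_2$"; you must instead bound $\|w_s\|_{C^j(\Omega)}\leqslant\max(\|v_\theta\|_{C^j(\Omega)},\|v_{\theta'}\|_{C^j(\Omega)})$ by convexity and then apply Proposition \ref{prop:bounding} to $v_\theta$ and $v_{\theta'}$ separately (note this naturally produces $\max(\|\theta\|_2,\|\theta'\|_2)$, as in the paper's own induction).

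Second, and more seriously, your exponent bookkeeping does not close as stated. Using Lemma \ref{lem:boundPartialDer} as a black box on $\tanh'\circ w_s$ bounds the product of derivative factors by $(1+\|w_s\|_{C^K(\Omega)})^K\lesssim(1+\|\theta\|_2)^{K(HK+1)}$, which is exactly the "degree at most $K(HK+1)$" you quote. But then the total degree at level $H+1$ is at least $K(HK+1)+(H+KH^2)=HK^2+K+H+KH^2$, while the target is $(H+1)+K(H+1)^2=H+1+KH^2+2KH+K$; the inequality $HK^2\leqslant 2KH+1$ fails for every $K\geqslant 3$, so the recursion does not deliver the claimed exponent $H+KH^2$. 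The missing idea is the one encoded in \eqref{eq:fundRes}: in the Faà di Bruno expansion of $\partial^{\alpha-\beta}(\tanh'\circ w_s)$ the differentiation orders of the factors $\partial^{\gamma(S)}w_s$ sum to at most $K$, so applying Proposition \ref{prop:bounding} factor by factor at the correct order gives a per-layer contribution of exponent at most $KH+K$ (not $K(HK+1)$), which does fit inside the available increment $1+K(2H+1)$. With that refinement—and the analogous care in the base case, where the differences of the monomials in the entries of $W_1$ appearing in \eqref{eq:boundingNNIni} also require the telescoping product identity, not just the mean value theorem on $\tanh^{(j)}$—your integral-representation route closes and is essentially equivalent to the paper's argument.
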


\begin{proof}
    We recursively define the constants $\tilde{C}_{K,H}$ for all $K\geqslant 0$ and all $H\geqslant 1$ by $\tilde{C}_{K,1} = (K+2) 2^{2K-1}  (K+2)! (K+3)!$, and \[\tilde{C}_{K,H+1} = C_{K,H+1} [1 + (K+1) B_K 2^{2K-1} (K+3)!(K+2)! \tilde{C}_{K,H}].\]
    Recall that $\pi_i$ is the projection operator on the $i$th coordinate, defined by $\pi_i(x_1, \hdots, x_{d_1}) = x_i$.
Before embarking on the proof, observe that by identity \eqref{eq:fdbFormula}, we have, for all $u_1, u_2 \in C^K(\Omega, \mathbb{R}^{D})$, 
for all $1 \leqslant i \leqslant D,$
\begin{align*}
    \partial^\alpha (\tanh\circ \pi_i \circ u_1-\tanh\circ \pi_i \circ u_2) &= \sum_{P\in \Pi(K)}[\tanh^{(|P|)}\circ \pi_i \circ u_1]  \prod_{S \in P} \partial^{\alpha(S)}(\pi_i \circ u_1) \\
    &\quad - [\tanh^{(|P|)}\circ \pi_i \circ u_2]  \prod_{S \in P} \partial^{\alpha(S)} (\pi_i \circ u_2).
\end{align*}
In addition, for two sequences $(a_i)_{1\leqslant i \leqslant n}$ and $(b_i)_{1\leqslant i \leqslant n}$,
\begin{equation}
    \prod_{i=1}^n a_i - \prod_{i=1}^n b_i = \sum_{i=1}^n (a_i-b_i) \Big(\prod_{j=i+1}^n a_j\Big)\Big(\prod_{j=1}^{i-1}b_j\Big) \leqslant n \max_{1\leqslant i\leqslant n}\{|a_i-b_i|\} \prod_{i=1}^n \max(|a_i|, |b_i|). \label{eq:diffProd}
\end{equation}  
 Observe that for any $1 \leqslant i\leqslant d_2$ and $P \in \Pi(K)$, the term  $[\tanh^{(|P|)}\circ \pi_i \circ u_1]  \prod_{S \in P} \partial^{\alpha(S)}(\pi_i \circ u_1) - [\tanh^{(|P|)}\circ \pi_i \circ u_2]  \prod_{S \in P} \partial^{\alpha(S)} (\pi_i \circ u_2)$ is the difference of two products of $|P| +1$ terms to which we can apply \eqref{eq:diffProd}. So,
\begin{align}
    &\Big\|[\tanh^{(|\pi|)}\circ \pi_i \circ u_1] \prod_{S \in P} \partial^{\alpha(S)}(\pi_i \circ u_1) - [\tanh^{(|\pi|)}\circ \pi_i \circ u_2]  \prod_{S \in \pi} \partial^{\alpha(S)}(\pi_i \circ u_2)\Big\|_{\infty, \Omega}\nonumber\\
    &\quad \leqslant (|P|+1)  \big(\|\tanh^{(|P|)}\|_{\mathrm{Lip}} \|u_1-u_2\|_{\infty, \Omega} + \|u_1-u_2\|_{C^K(\Omega)}\big) \nonumber\\
    &\qquad \times \|\tanh^{(|P|)}\|_{\infty} \prod_{S \in P} \max(\|\partial^{\alpha(S)}u_1\|_{\infty, \Omega},\|\partial^{\alpha(S)}u_2\|_{\infty, \Omega}).\label{eq:fundRes}
\end{align}
Notice finally that $\|\tanh^{(|P|)}\|_{\mathrm{Lip}} = \|\tanh^{(|P|+1)}\|_{\infty}$.

With the preliminary results out of the way, we are now equipped to prove the statement of the proposition, by induction on $H$. Assume first that $H=1$. We start by examining the case $K= 0$ and then generalize to all $K \geqslant 1$. Let $u_\theta = \mathcal{A}_2 \circ \tanh \circ \mathcal{A}_1$ and $u_{\theta'} = \mathcal{A}'_2 \circ \tanh \circ \mathcal{A}'_1$. Notice that \[\|\mathcal{A}_1-\mathcal{A}'_1\|_{\infty, \Omega} \leqslant \|b_1-b'_1\|_\infty + d_1 M(\Omega) \|W_1-W_1'\|_\infty \leqslant \|\theta-\theta'\|_2 (1+d_1M(\Omega)),\]
where $M(\Omega)= \max_{\bx \in \Omega}\|\bx\|_\infty$. 
Since $\|\tanh\|_{\mathrm{Lip}}=1$, we deduce that $\|\tanh \circ \mathcal{A}_1-\tanh \circ \mathcal{A}'_1\|_\infty \leqslant \|\theta-\theta'\|_2 (1+d_1M(\Omega))$. Similarly, $\|\mathcal{A}_2-\mathcal{A}'_2\|_{\infty,  B(1, \|\cdot\|_\infty)} \leqslant \|\theta-\theta'\|_2 (1+D)$. Next, 
\begin{align*}
    \|u_\theta-u_{\theta'}\|_{\infty, \Omega} &\leqslant \|(\mathcal{A}_2-\mathcal{A}_2')\circ \tanh \circ \mathcal{A}_1\|_{\infty, \Omega} + \|\mathcal{A}_2' \circ \tanh \circ \mathcal{A}_1 - \mathcal{A}_2'\circ \tanh\circ \mathcal{A}_1')\|_{\infty, \Omega}\\
    & \leqslant \|\mathcal{A}_2-\mathcal{A}_2'\|_{\infty, B(1, \|\cdot\|_\infty)} + D \|W_2'\|_\infty \|\tanh \circ \mathcal{A}_1 - \tanh\circ \mathcal{A}_1'\|_{\infty, \Omega}\\
    & \leqslant \|\theta-\theta'\|_2  (1+D+ D  \|\theta'\|_2  (1+d_1M(\Omega)))\\
    &\leqslant \tilde{C}_{0,1}  (1+d_1M(\Omega)) (D+1) (1+\max(\|\theta\|_2,\|\theta'\|_2)) \|\theta-\theta'\|_2.
\end{align*}
This shows the result for $H=1$ and $K=0$. Assume now that $K\geqslant 1$, and let $\alpha$ be a multi-index such that $|\alpha| = K$. Observe that
\begin{align}
    \|\partial^\alpha(u_\theta-u_{\theta'})\|_{\infty, \Omega} &\leqslant \|(W_2-W_2') \partial^\alpha( \tanh \circ \mathcal{A}_1)\|_{\infty, \Omega} \nonumber\\
    &\quad + \|W_2' \partial^\alpha (\tanh \circ \mathcal{A}_1 - \tanh\circ \mathcal{A}_1')\|_{\infty, \Omega}.\label{eq:trig}
\end{align}
By Lemma \ref{lem:derTanh} and an argument similar to the inequality \eqref{eq:boundingNNIni}, we have
\begin{align}
    \|(W_2-W_2') \partial^\alpha( \tanh \circ \mathcal{A}_1)\|_{\infty, \Omega} &\leqslant (D+1)  \|\theta-\theta'\|_2  \|\theta\|_2^K  \|\tanh\|_{C^K(\mathbb{R})}\nonumber\\
    &\leqslant  2^{K-1}(K+2)!  (D+1)  \|\theta-\theta'\|_2  \|\theta\|_2^K\label{eq:trigPart1}.
\end{align}
 In order to bound the second term on the right-hand side of \eqref{eq:trig}, we use inequality \eqref{eq:fundRes} with $u_1 =\mathcal{A}_1$ and $u_2 =\mathcal{A}_1'$. In this case,  the only non-zero term on the right-hand side of \eqref{eq:fundRes} corresponds to the partition $\pi  = \{\{1\}, \{2\}, \hdots, \{K\}\}$. Recall that $\|\mathcal{A}_1-\mathcal{A}_1'\|_{\infty, \Omega} \leqslant \|\theta-\theta'\|_2 (1+d_1M(\Omega))$, and note that whenever $|\alpha| = 1$, $\|\partial^\alpha(\mathcal{A}_1-\mathcal{A}_1')\|_{\infty, \Omega} \leqslant \|\theta-\theta'\|_2$.  
Therefore, $\|\mathcal{A}_1-\mathcal{A}_1'\|_{C^K(\Omega)} = \|\mathcal{A}_1-\mathcal{A}_1'\|_{C^1(\Omega)} \leqslant  \|\theta-\theta'\|_2  (1+d_1M(\Omega))$. Observe that $\prod_{B \in \{\{1\}, \{2\}, \hdots, \{K\}\}} \max(\|\partial^{\alpha(B)}\mathcal{A}_1\|_{\infty, \Omega},\|\partial^{\alpha(B)}\mathcal{A}_1'\|_{\infty, \Omega}) \leqslant \max(\|\theta\|_2, \|\theta'\|_2)^{K}.$ Thus, putting all the pieces together, we are led to \begin{align*}
    &\|\partial^\alpha (\tanh\circ \mathcal{A}_1-\tanh\circ \mathcal{A}_1')\|_{\infty, \Omega}\\
    &\quad \leqslant (K+1) \|\tanh^{(K+1)}\|_{\infty} \|\theta-\theta'\|_2 (1+d_1 M(\Omega))  \|\tanh^{(K)}\|_{\infty}    \max(\|\theta\|_2, \|\theta'\|_2)^{K}.
\end{align*}
Now, by Lemma \ref{lem:derTanh}, $\|\tanh^{(K)}\|_{\infty} \leqslant 2^{K-1}(K+2)!$ So,
\begin{align}
    &\|\partial^\alpha (\tanh\circ \mathcal{A}_1-\tanh\circ \mathcal{A}_1')\|_{\infty, \Omega}\nonumber\\
    &\quad \leqslant (K+1)2^{2K-1}(K+2)!(K+3)!  \|\theta-\theta'\|_2 (1+d_1 M(\Omega)) \max(\|\theta\|_2, \|\theta'\|_2)^{K} \label{eq:trigPart2}.
\end{align}
Combining inequalities \eqref{eq:trig}, \eqref{eq:trigPart1}, and \eqref{eq:trigPart2}, we conclude that
\[\|\partial^\alpha(u_\theta-u_{\theta'})\|_{\infty, \Omega} \leqslant \tilde{C}_{K,1}  (1+d_1M(\Omega)) (D+1)(1+\max(\|\theta\|_2, \|\theta'\|_2))^{K+1} \|\theta-\theta'\|_2, \]
so that 
$\|u_\theta-u_{\theta'}\|_{C^K(\Omega)} \leqslant \tilde{C}_{K,1}  (1+d_1M(\Omega)) (D+1)(1+\max(\|\theta\|_2, \|\theta'\|_2))^{K+1} \|\theta-\theta'\|_2$.

\paragraph{Induction} Fix $H\geqslant 1$, and assume that for all $u_\theta, u_{\theta'} \in \mathrm{NN}_H(D)$ and all $K \geqslant 0$,
\begin{align}
    &\|u_\theta-u_{\theta'}\|_{C^K(\Omega)} \nonumber \\
    &\quad \leqslant \tilde{C}_{K,H}  (1+d_1M(\Omega)) (D+1)^{H+KH^2}(1+\max(\|\theta\|_2, \|\theta'\|_2))^{H+KH^2} \|\theta-\theta'\|_2. \label{eq:recLip}
\end{align}
Let $u_\theta, u_{\theta'} \in \mathrm{NN}_{H+1}(D)$. Observe that $u_\theta = \mathcal{A}_{H+2}\circ\tanh \circ v_\theta$ and $u_{\theta'} = \mathcal{A}_{H+2}'\circ\tanh \circ v_{\theta'}$, where  $v_\theta, v_{\theta'} \in \mathrm{NN}_H(D)$.
Moreover, 
\begin{align}
    &\|\partial^\alpha(u_\theta-u_{\theta'})\|_{\infty, \Omega} \nonumber \\
    &\quad \leqslant \|(W_{H+2}-W_{H+2}') \partial^\alpha( \tanh \circ v_\theta)\|_{\infty, \Omega} + \|W_{H+2}' \partial^\alpha (\tanh \circ v_\theta - \tanh\circ v_{\theta'})\|_{\infty, \Omega} \nonumber \\
    &\quad \leqslant D(\|\theta-\theta'\|_2 \times \|\partial^\alpha( \tanh \circ v_\theta)\|_{\infty, \Omega} + \|\theta'\|_2\times\|\partial^\alpha (\tanh \circ v_\theta - \tanh\circ v_{\theta'})\|_{\infty, \Omega}). \label{eq:lipTrig}
\end{align}
Since $\tanh\circ v_\theta \in \mathrm{NN}_{H+1}(D)$, we have, by Proposition \ref{prop:bounding},
\begin{equation}
    \|\partial^\alpha( \tanh \circ v_\theta)\|_{\infty, \Omega} \leqslant C_{K,H+1}(D+1)^{1+K(H+1)}(1+\|\theta\|_2)^{K(H+1)}\|\theta\|_2. \label{eq:boundTrig1}
\end{equation} 
Moreover, using \eqref{eq:fundRes}, Lemma \ref{lem:derTanh}, and the definition of $C_{K,H+1}$ in \eqref{eq:defC}, we have 
\begin{align}
      &\|\partial^\alpha (\tanh \circ v_\theta - \tanh\circ v_{\theta'})\|_{\infty, \Omega}
      \nonumber\\
      & \quad \leqslant B_K(K+1) \|\tanh^{(K+1)}\|_{\infty} \|v_\theta-v_{\theta'}\|_{C^K(\Omega)} \|\tanh^{(K)}\|_{\infty} \nonumber \\
      &\qquad \times C_{K,H+1} (D+1)^{KH}  (1+\max(\|\theta\|_2, \|\theta'\|_2))^{KH} \nonumber \\
      &\quad \leqslant 2^{2K-1} (K+3)!(K+2)! B_K(K+1) \|v_\theta-v_{\theta'}\|_{C^K(\Omega)} \nonumber \\
      &\qquad \times C_{K,H+1} (D+1)^{KH}  (1+\max(\|\theta\|_2, \|\theta'\|_2))^{KH}. \label{eq:boundingTrig2}
\end{align}
 The term $\|v_\theta-v_{\theta'}\|_{C^K(\Omega)}$ in \eqref{eq:boundingTrig2} can be upper bounded using the induction assumption \eqref{eq:recLip}. Thus, combining 
\eqref{eq:lipTrig}, \eqref{eq:boundTrig1}, and \eqref{eq:boundingTrig2}, we conclude as desired that for all $u_\theta, u_{\theta'} \in \mathrm{NN}_{H+1}(D)$ and all $K \in \mathbb{N}$,
\begin{align*}
\|u_\theta-u_{\theta'}\|_{C^K(\Omega)}&\leqslant \tilde{C}_{K,H+1}  (1+d_1M(\Omega)) (D+1)^{(H+1)+K(H+1)^2}\\
& \quad \times (1+\max(\|\theta\|_2, \|\theta'\|_2))^{(H+1)+K(H+1)^2} \|\theta-\theta'\|_2.
\end{align*}
\end{proof}

\subsection{Uniform approximation of integrals}
\label{app:glivenko_cantelli}
Throughout this section, the parameters $H,D\in \mathbb{N}^\star$ are held fixed, as well as the neural architecture $\mathrm{NN}_H(D)$  parameterized by $\Theta_{H,D}$. We let $d$ be a metric in $\Theta_{H,D}$, and denote by $B(r, d)$ the closed ball in $\Theta_{H,D}$ centered at $0$ and of radius $r$ according to the metric $d$, that is, $B(r, d)= \{\theta \in \Theta_{H,D},\ d(0,\theta) \leqslant r\}$.
\begin{thm}[Uniform approximation of integrals]
\label{thm:approx_integral}
    Let $\Omega \subseteq \mathbb{R}^{d_1}$ be a bounded Lipschitz domain, let $\alpha_1 >0$, and let $\bX_1, \hdots, \bX_n$ be a sequence of i.i.d.~random variables in $\bar{\Omega}$, with distribution $\mu_{X}$. Let $f:C^\infty(\bar{\Omega}, \mathbb{R}^{d_2})\times\bar{\Omega}\to\mathbb{R}^{d_2}$ be an operator, and assume that the following two requirements are satisfied: 
    \begin{itemize}
        \item[$(i)$] there exist $C_1 >0$ and $\beta_1 \in [0, 1/2[$ such that, for all $n\geqslant 1$ and all $\theta, \theta'\in  B(n^{\alpha_1}, \|.\|_2)$,
    \begin{equation}
        \|f(u_\theta, \cdot)-f(u_{\theta'},\cdot)\|_{\infty, \bar{\Omega}} \leqslant C_1  n^{\beta_1}  \|\theta-\theta'\|_2;\label{eq:cdn1}
    \end{equation}
\item[$(ii)$] there exist $C_2 >0$ and $\beta_2 \in [0, 1/2[$ satisfying $\beta_2 > \alpha_1 + \beta_1$ such that, for all $n \geqslant 1$ and all $\theta \in B(n^{\alpha_1}, \|.\|_2)$,
    \begin{equation}
        \|f(u_\theta, \cdot)\|_{\infty, \bar{\Omega}} \leqslant C_2  n^{\beta_2}. \label{eq:cdn2}
    \end{equation}
    \end{itemize}
Then, almost surely, there exists $N \in \mathbb{N}^\star$ such that, for all $n\geqslant N$,
    \[\sup_{\theta\in B(n^{\alpha_1}, \|.\|_2)} \Big\|\frac{1}{n}\sum_{i=1}^n f(u_\theta, \bX_i) - \int_{\bar{\Omega}} f(u_\theta,\cdot) d\mu_{X}\Big\|_2 \leqslant \log^2(n) n^{\beta_2 - 1/2}.\]
    (Notice that the rank $N$ is random.)
\end{thm}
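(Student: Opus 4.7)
The plan is to establish uniform convergence on the parameter ball $B(n^{\alpha_1}, \|\cdot\|_2)$ through a standard chaining strategy: a covering argument combined with a pointwise concentration inequality, followed by a Borel-Cantelli step. Throughout, let $Z_n(\theta) = \|\frac{1}{n}\sum_{i=1}^n f(u_\theta, \bX_i) - \int_{\bar\Omega} f(u_\theta,\cdot)d\mu_X\|_2$, and denote by $d_\Theta$ the (finite) dimension of $\Theta_{H,D}$.

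First, I would use condition~\eqref{eq:cdn1}, together with the triangle inequality and the coordinate-wise bound $\|\cdot\|_2 \leqslant \sqrt{d_2}\|\cdot\|_\infty$, to show that $\theta \mapsto Z_n(\theta)$ is Lipschitz on $B(n^{\alpha_1},\|\cdot\|_2)$ with constant bounded by $2\sqrt{d_2}\,C_1 n^{\beta_1}$. Then, for each fixed $\theta\in B(n^{\alpha_1},\|\cdot\|_2)$, condition~\eqref{eq:cdn2} gives a uniform bound $\sqrt{d_2}\,C_2 n^{\beta_2}$ on $f(u_\theta, \cdot)$, so $Z_n(\theta)$ is a function of the $\bX_i$ with bounded differences $c_i \leqslant 2\sqrt{d_2}\,C_2 n^{\beta_2-1}$. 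The McDiarmid inequality therefore yields
\[
\mathbb P\bigl(Z_n(\theta) \geqslant \mathbb E Z_n(\theta) + t\bigr) \leqslant \exp\bigl(-c\, t^2\, n^{1-2\beta_2}\bigr),
\]
with a constant $c$ depending on $d_2, C_2$. Meanwhile, independence yields $\mathbb E Z_n(\theta) \leqslant (\mathbb E Z_n(\theta)^2)^{1/2} \leqslant \sqrt{d_2}\, C_2\, n^{\beta_2-1/2}$.

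Second, choose the scales $\epsilon_n = c_0 \log^2(n)\, n^{\beta_2 - 1/2 - \beta_1}$ and $t_n = \tfrac{1}{2}\log^2(n)\, n^{\beta_2-1/2}$ with $c_0$ small enough that $2\sqrt{d_2}\,C_1 n^{\beta_1} \epsilon_n \leqslant \tfrac12 t_n$. Cover $B(n^{\alpha_1},\|\cdot\|_2)$ by a net $\mathcal N_n \subset B(n^{\alpha_1},\|\cdot\|_2)$ at scale $\epsilon_n$; since $\Theta_{H,D} \cong \mathbb R^{d_\Theta}$, one has $|\mathcal N_n| \leqslant (3n^{\alpha_1}/\epsilon_n)^{d_\Theta}$, which grows only polynomially in $n$. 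Applied to each point of the net, the McDiarmid bound with $t = t_n/2 - \mathbb E Z_n(\theta)$ gives a failure probability at most $\exp(-c \log^4 n)$ per net point, because $t_n^2 n^{1-2\beta_2} \asymp \log^4(n)$. The union bound then gives
\[
\mathbb P\Bigl(\sup_{\theta\in\mathcal N_n} Z_n(\theta) \geqslant \tfrac{1}{2}t_n\Bigr) \leqslant |\mathcal N_n|\exp(-c\log^4 n),
\]
which is summable in $n$ since super-polynomial decay dominates polynomial growth.

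Third, Borel-Cantelli implies that almost surely, for all $n$ sufficiently large, $\sup_{\theta\in\mathcal N_n} Z_n(\theta) \leqslant \tfrac{1}{2}t_n$. Extending from the net to the full ball via the Lipschitz estimate from the first step contributes an additional error of at most $2\sqrt{d_2}\,C_1 n^{\beta_1}\epsilon_n \leqslant \tfrac12 t_n$, so that $\sup_{\theta\in B(n^{\alpha_1},\|\cdot\|_2)} Z_n(\theta) \leqslant t_n \leqslant \log^2(n)\, n^{\beta_2-1/2}$, which is the claim.

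The main delicate point is the calibration of the three scales $(\epsilon_n, t_n, n^{\alpha_1})$: the radius $\epsilon_n$ must be small enough that the Lipschitz extension error is absorbed by $t_n$, yet large enough that the covering number $|\mathcal N_n|$ remains polynomial, while $t_n$ must be large enough that the McDiarmid tail beats $|\mathcal N_n|$ in the union bound. The crucial feature making everything fit is the $\log^2(n)$ factor in $t_n$, which boosts the McDiarmid exponent to $\log^4(n)$ and thereby provides the super-polynomial decay that overwhelms any polynomial covering number, regardless of $d_\Theta$, $\alpha_1$, $\beta_1$ (as long as $\beta_1, \beta_2 < 1/2$).
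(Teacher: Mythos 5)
Your proposal is correct, but it follows a genuinely different route from the paper. The paper treats $(Z_{n,\theta})_{\theta}$ as a subgaussian process: condition \eqref{eq:cdn1} combined with Hoeffding's lemma and Azuma's inequality shows that the increments $Z_{n,\theta}-Z_{n,\theta'}$ are subgaussian with parameter proportional to $n^{2\beta_1-1}\|\theta-\theta'\|_2^2$, Dudley's entropy integral over the Euclidean ball $B(n^{\alpha_1},\|\cdot\|_2)$ then yields $\mathbb{E}\big(\sup_\theta Z_{n,\theta}\big)=\mathcal{O}(n^{\alpha_1+\beta_1-1/2})$, and finally McDiarmid's inequality is applied to the supremum itself (bounded differences of order $n^{\beta_2-1}$ via \eqref{eq:cdn2}) together with Borel--Cantelli at the scale $\log^2(n)\,n^{\beta_2-1/2}$; the hypothesis $\beta_2>\alpha_1+\beta_1$ is precisely what makes the chaining bound on the mean negligible against this scale. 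You instead discretize: an $\epsilon_n$-net of polynomial cardinality, pointwise McDiarmid at each net point with the elementary variance bound $\mathbb{E}Z_n(\theta)\lesssim n^{\beta_2-1/2}$ replacing the chaining estimate, a union bound in which the $\exp(-c\log^4 n)$ tail beats any polynomial covering number, and a Lipschitz transfer from the net to the ball via \eqref{eq:cdn1} with $c_0$ tuned so the discretization error is absorbed into $t_n$. This is more elementary (no Azuma, no Dudley) and in fact slightly more general, since your argument never uses $\beta_2>\alpha_1+\beta_1$: the covering number only needs to be polynomial in $n$, and the per-point expectation is already $o(t_n)$. What the paper's chaining buys is the sharper control $\mathbb{E}\big(\sup_\theta Z_{n,\theta}\big)=\mathcal{O}(n^{\alpha_1+\beta_1-1/2})$ on the expected supremum (useful if one wanted to avoid the $\log^2$ slack), whereas your covering count carries the parameter dimension explicitly—harmless here because it enters only polynomially against a super-polynomial tail; your formulation also sidesteps the paper's symmetrization remark used to drop absolute values, since you work directly with the nonnegative quantity $Z_n(\theta)$. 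If you wrote this up, you should state explicitly the measurability of the supremum over the uncountable ball (continuity of $\theta\mapsto Z_n(\theta)$ plus separability, as in the paper) and the positivity of $t_n/2-\mathbb{E}Z_n(\theta)$ for $n$ large enough before invoking McDiarmid, both of which you use implicitly.
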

\begin{proof} Let us start the proof by considering the case $d_2=1$.
For a given $\theta \in B(n^{\alpha_1}, \|\cdot\|_2)$, we let 
\[
Z_{n,\theta} = \frac{1}{n}\sum_{i=1}^n f(u_\theta, \bX_i) - \int_{\bar{\Omega}} f(u_\theta, \cdot) d\mu_X.
\] 
We are interested in bounding the random variable 
\[
Z_n = \sup_{\theta \in B(n^{\alpha_1}, \|\cdot\|_2)} |Z_{n,\theta}| = \sup_{\theta \in B(n^{\alpha_1}, \|\cdot\|_2)} Z_{n,\theta}.
\] 
Note that there is no need of absolute value in the rightmost term since, for any $ \theta=(W_1, b_1,\hdots,$ $ W_{H+1}, b_{H+1}) \in B(n^{\alpha_1}, \|\cdot\|_2)$, it is clear that $\theta' = (W_1, b_1,\hdots, W_H, b_H$, $ -W_{H+1}, -b_{H+1}) \in B(n^{\alpha_1}, \|\cdot\|_2)$ and $ u_{\theta'} = -u_\theta$.
Let $M(\Omega) = \max_{\bx\in\bar{\Omega}} \|x\|_2$.
Using inequality \eqref{eq:cdn1}, we have, for any $\theta, \theta' \in B(n^{\alpha_1}, \|\cdot\|_2)$, 
\[
\Big|\frac{1}{n}\Big(f(u_\theta, \bX_i)-\int_{\bar{\Omega}} f(u_\theta,\cdot)d\mu_X\Big) - \frac{1}{n}\Big(f(u_\theta', \bX_i)-\int_{\bar{\Omega}} f(u_\theta',\cdot)d\mu_X\Big) \Big| \leqslant 2C_1 n^{\beta_1 - 1}\|\theta-\theta'\|_2.
\] 
According to Hoeffding's theorem \citep[][Lemma 3.6]{vanhandel2016lectures},  the random variable $n^{-1}(f(u_\theta, \bX_i) $ $-\int_{\bar{\Omega}} f(u_\theta,\cdot)d\mu_X) - n^{-1}(f(u_\theta', \bX_i)-\int_{\bar{\Omega}} f(u_\theta',\cdot)d\mu_X)$ is subgaussian with parameter $4C_1^2 n^{2\beta_1-2} \|\theta-\theta'\|_2^2$. 
Invoking Azuma's theorem \citep[][Lemma 3.7]{vanhandel2016lectures}, we deduce that $Z_{n,\theta}-Z_{n,\theta'}$, is also subgaussian, with parameter $4C_1^2n^{2\beta_1-1}\|\theta-\theta'\|_2^2$. 
Since $\mathbb{E}(Z_{n,\theta}) = 0$, we conclude that for all $n\geqslant 1$, $(Z_{n,\theta})_{\theta\in B(n^{\alpha_1}, \|\cdot\|_2)}$ is a subgaussian process on $B(n^{\alpha_1}, \|\cdot\|_2)$ for the metric $d(\theta,\theta') = 2C_1n^{\beta_1-1/2}\|\theta-\theta'\|_2$.
Moreover, since $\theta \mapsto Z_{n,\theta}$ is continuous for the topology induced by the metric $d$, $(Z_{n,\theta})_{\theta \in B(n^{\alpha_1}, \|\cdot\|_2)}$ is separable \citep[][Remark 5.23]{vanhandel2016lectures}.
Thus, by Dudley's theorem \citep[][Corollary 5.25]{vanhandel2016lectures}
\[\mathbb{E}(Z_n) \leqslant 12 \int_0^\infty [\log N(B( n^{\alpha_1}, \|\cdot\|_2), d, r)]^{1/2}dr,\] where $N(B(n^{\alpha_1}, \|\cdot\|_2), d, r)$ is the minimum number of balls  of radius $r$ according to the metric $d$ needed to cover the space $B( n^{\alpha_1}, \|\cdot\|_2)$. 
Clearly,
$N(B( n^{\alpha_1}, \|\cdot\|_2),\ d,\ r) = N(B( n^{\alpha_1}, \|\cdot\|_2),\ \|\cdot\|_2,\ n^{1/2-\beta_1}r/(2C_1))$.
Thus,
\[\mathbb{E}(Z_n) \leqslant 24 C_1 n^{\beta_1-1/2} \int_0^\infty [\log N(B( n^{\alpha_1}, \|\cdot\|_2), \|\cdot\|_2, r)]^{1/2}dr\] 
and, in turn,
\[
\mathbb{E}(Z_n) \leqslant 24 C_1 n^{\alpha_1+\beta_1-1/2} \int_0^\infty [\log N(B(1, \|\cdot\|_2),\ \|\cdot\|_2, r)]^{1/2}dr.
\]
Upon noting that $N(B(1, \|\cdot\|_2),\ \|\cdot\|_2, r) = 1$ for $r\geqslant 1$, we are led to
\[
\mathbb{E}(Z_n) \leqslant 24 C_1 n^{\alpha_1+\beta_1-1/2} \int_0^1 [\log N(B(1, \|\cdot\|_2),\ \|\cdot\|_2, r)]^{1/2}dr.
\] 
 Since $\Theta_{H,D} = \mathbb{R}^{(d_1+1)D+(H-1)D(D+1)+(D+1)d_2}$, according to \citet[][Lemma 5.13]{vanhandel2016lectures}, one has
\[
\log N(B(1, \|\cdot\|_2),\ \|\cdot\|_2, r) \leqslant [(d_1+1)D+(H-1)D(D+1)+(D+1)d_2]\log(3/r).
\] 
Notice that $\int_0^1 \log(3/r)^{1/2}dr \leqslant 3/2$. Therefore,  
\begin{equation}
    \mathbb{E}(Z_n) \leqslant 36 C_1[(d_1+1)D+(H-1)D(D+1)+(D+1)d_2]^{1/2} n^{\alpha_1+\beta_1-1/2}. \label{eq:expectancyZn}
\end{equation}
Next, observe that, by definition of $Z_n=Z_n(\bX_1, \hdots, \bX_n)$, 
\begin{align*}
    &\sup_{\bx_i \in \mathbb{R}^{d_1}}Z_n(\bX_1, \hdots, \bX_{i-1}, \bx_i, \bX_{i+1}, \hdots, \bX_n) - \inf_{\bx_i \in \mathbb{R}^{d_1}}Z_n(\bX_1, \hdots, \bX_{i-1}, \bx_i, \bX_{i+1}, \hdots, \bX_n)\\ 
    & \quad \leqslant 2n^{-1}\sup_{\theta \in B(n^{\alpha_1}, \|\cdot\|_2)}\Big\|f(u_\theta, \bX_i) - \int_{\bar{\Omega}} f(u_\theta,\cdot) d\mu_X\Big\|_2\\
    & \quad \leqslant 4n^{-1}\sup_{\theta \in B(n^{\alpha_1}, \|\cdot\|_2)}\|f(u_\theta, \cdot)\|_\infty.
\end{align*} 
Using inequality \eqref{eq:cdn2}, McDiarmid's inequality  \citep[Theorem 3.11]{vanhandel2016lectures} ensures that $Z_n$ is subgaussian with parameter $4C_2^2n^{2\beta_2-1}$. In particular, for all $t_n \geqslant 0$, $\mathbb{P}(|Z_n - \mathbb{E}(Z_n)|\geqslant t_n) \leqslant 2 \exp(-n^{1-2\beta_2}t_n^2/(8C_2^2))$, which is summable with $t_n = C_3 n^{\beta_2-1/2}\log^2(n)$, where $C_3$ is any positive constant.
Thus,  recalling that $\beta_2 > \alpha_1+\beta_1$, the Borel-Cantelli lemma and \eqref{eq:expectancyZn} ensure that,
almost surely, for all $n$ large enough,
$0 \leqslant Z_n \leqslant 2 C_3 n^{\beta_2-1/2}\log^2(n)$. Taking $C_3 = 1/2$ yields the desired result.

The generalization to the case $d_2 \geqslant 2$ is easy. Just note, letting $f = (f_1, \hdots, f_{d_2})$, that 
\begin{align*}
    &\sup_{\theta \in B(n^{\alpha_1}, \|\cdot\|_2)} \Big\|\frac{1}{n}\sum_{i=1}^n f(u_\theta, \bX_i) - \int_{\bar{\Omega}} f(u_\theta, \cdot) d\mu_X\Big\|_2 \\
    &\qquad \leqslant \sqrt{d_2} \max_{1 \leqslant j\leqslant d_2}\sup_{\theta \in B(n^{\alpha_1}, \|\cdot\|_2)} \Big\|\frac{1}{n}\sum_{i=1}^n f_j(u_\theta, \bX_i) - \int_{\bar{\Omega}} f_j(u_\theta, \cdot) d\mu_X\Big\|_2 .
\end{align*}
Taking $C_3 = d_2^{-1/2}/2$ as above leads to the result.
\end{proof}
\begin{prop}[Condition function]
\label{prop:GKopH}
    Let $\Omega$ be a bounded Lipschitz domain, let $E$ be a closed subset of $\partial\Omega$, and let $h \in \mathrm{Lip}(E,\mathbb{R}^{d_2})$. Then the operator $\mathscr{H}(u, \bx) = \mathbf{1}_{\bx \in E} \|u(\bx) -h(\bx)\|^2$ satisfies inequalities \eqref{eq:cdn1} and \eqref{eq:cdn2} with $\alpha_1 < (3+H)^{-1}/2$, $\beta_1=(1+H)\alpha_1$, and  $1/2 > \beta_2 \geqslant (3+H)\alpha_1$.  
\end{prop}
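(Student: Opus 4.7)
The strategy is simply to verify the two assumptions \eqref{eq:cdn1} and \eqref{eq:cdn2} of Theorem \ref{thm:approx_integral} for the operator $\mathscr{H}(u,\bx) = \mathbf{1}_{\bx \in E}\|u(\bx) - h(\bx)\|_2^2$, by combining the $C^0$-bounds provided by Propositions \ref{prop:bounding} and \ref{prop:lipschitzParam}. A preliminary remark: since $E\subseteq \partial \Omega \subseteq \bar\Omega$ is closed and bounded, $E$ is compact, so the Lipschitz function $h$ is bounded on $E$; the constant $\|h\|_{\infty, E}$ will only contribute an additive $O(1)$ term in what follows, negligible with respect to the polynomial powers of $n$.

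For assumption \eqref{eq:cdn2}, apply Proposition \ref{prop:bounding} with $K=0$ to get $\|u_\theta\|_{\infty, \bar\Omega} \leqslant C_{0,H}(D+1)\|\theta\|_2 \leqslant C(D+1) n^{\alpha_1}$ on $B(n^{\alpha_1},\|\cdot\|_2)$. Consequently,
\[
\|\mathscr{H}(u_\theta, \cdot)\|_{\infty, \bar\Omega} \leqslant (\|u_\theta\|_{\infty} + \|h\|_{\infty, E})^2 = O(n^{2\alpha_1}),
\]
which is majorized by $C_2 n^{\beta_2}$ as soon as $\beta_2 \geqslant 2\alpha_1$; the stronger requirement $\beta_2\geqslant (3+H)\alpha_1$ stated in the claim is comfortably sufficient.

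For assumption \eqref{eq:cdn1}, the key identity is $|\|a\|_2^2 - \|b\|_2^2| = |\langle a-b,\,a+b\rangle| \leqslant \|a-b\|_2(\|a\|_2+\|b\|_2)$, applied pointwise at $\bx\in E$ with $a = u_\theta(\bx)-h(\bx)$ and $b = u_{\theta'}(\bx)-h(\bx)$. This yields
\[
\|\mathscr{H}(u_\theta,\cdot) - \mathscr{H}(u_{\theta'}, \cdot)\|_{\infty,\bar\Omega} \leqslant \|u_\theta - u_{\theta'}\|_{\infty,\bar\Omega}\bigl(\|u_\theta\|_{\infty} + \|u_{\theta'}\|_{\infty} + 2\|h\|_{\infty, E}\bigr).
\]
The first factor is controlled by Proposition \ref{prop:lipschitzParam} with $K=0$: on the ball $B(n^{\alpha_1}, \|\cdot\|_2)$,
$\|u_\theta - u_{\theta'}\|_{\infty,\bar\Omega} \leqslant \tilde C_{0,H}(1+d_1 M(\Omega))(D+1)^H(1+n^{\alpha_1})^H \|\theta-\theta'\|_2 = O(n^{H\alpha_1})\|\theta-\theta'\|_2$; the second factor is $O(n^{\alpha_1})$ by the estimate of the previous paragraph. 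Multiplying these bounds gives
\[
\|\mathscr{H}(u_\theta,\cdot) - \mathscr{H}(u_{\theta'}, \cdot)\|_{\infty,\bar\Omega} \leqslant C_1\, n^{(H+1)\alpha_1}\|\theta-\theta'\|_2,
\]
which is exactly \eqref{eq:cdn1} with $\beta_1 = (1+H)\alpha_1$. Finally, the constraints $\beta_1 < 1/2$ and $\beta_2 < 1/2$ of Theorem \ref{thm:approx_integral}, combined with the choice $\beta_2 \geqslant (3+H)\alpha_1$, force $(3+H)\alpha_1 < 1/2$, which is the stated upper bound on $\alpha_1$. No serious obstacle is expected; the argument is essentially a bookkeeping exercise in combining the existing $C^0$-estimates, and the only subtlety—the presence of the indicator $\mathbf{1}_{\bx\in E}$—is immaterial since all estimates are in supremum norm over $\bar\Omega$.
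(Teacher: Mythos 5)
Your proof is correct and follows essentially the same route as the paper's: the squared-norm difference identity combined with the $C^0$ bound $\|u_\theta\|_{\infty}\leqslant (D+1)\|\theta\|_2$ and the parameter-Lipschitz estimate of Proposition \ref{prop:lipschitzParam}, yielding $\beta_1=(1+H)\alpha_1$, $\beta_2$ of order $n^{2\alpha_1}$, and the constraint $\alpha_1<(3+H)^{-1}/2$ from $\alpha_1+\beta_1<\beta_2<1/2$. The only cosmetic difference is that the paper tracks the dimensional constant $d_2$ explicitly when passing between $\|\cdot\|_2$ and $\|\cdot\|_\infty$, which you absorb into the constants.
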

\begin{proof} First note, since $\mathrm{Lip}(E,\mathbb{R}^{d_2}) \subseteq C^0(E, \mathbb{R}^{d_2})$, that $\|h\|_\infty < \infty$. Observe also that for any $v,w \in \mathbb{R}^{d_2}$, $|\|v\|_2^2-\|w\|_2^2|= |\langle v+w, v-w\rangle|\leqslant \|v+w\|_2\|v-w\|_2 \leqslant d_2 \|v+w\|_\infty\|v-w\|_\infty$, where $\langle\cdot, \cdot\rangle$ denotes the canonical scalar product. Thus, 
we obtain, for all $\theta, \theta' \in B(n^{\alpha_1}, \|\cdot\|_2)$ and all $\bx\in E$,
\begin{align*}
    |\mathscr{H}(u_\theta,\bx)-\mathscr{H}(u_{\theta'},\bx)| &\leqslant (\|u_\theta(\bx)\|_2 + \|u_{\theta'}(\bx)\|_2 + 2\|h(\bx)\|_2) \|u_\theta(\bx) - u_{\theta'}(\bx)\|_2\\
    &\leqslant d_2 (\|u_\theta\|_{\infty, \bar{\Omega}} + \|u_{\theta'}\|_{\infty, \bar{\Omega}} + 2\|h\|_\infty) \|u_\theta - u_{\theta'}\|_{\infty, \bar{\Omega}} \\
    &\leqslant d_2 (2(D+1)n^{\alpha_1} + 2\|h\|_\infty) \|u_\theta - u_{\theta'}\|_{\infty, \bar{\Omega}}  \;\; \mbox{(by inequality \eqref{eq:boundNN})} \\
    &\leqslant 2 d_2 ((D+1)n^{\alpha_1} + \|h\|_\infty)\tilde{C}_{0,H}  (1+d_1M(\Omega)) \\
    &\qquad \times (D+1)^{H}(1+n^{\alpha_1})^{H} \|\theta-\theta'\|_2 \quad \mbox{(by Proposition \ref{prop:lipschitzParam})}\\
    &\leqslant C_1 n^{\beta_1} \|\theta- \theta'\|_2,
\end{align*}
where $\beta_1 = (1+H)\alpha_1$ and 
$C_1 = 2^{H+1} d_2 (D+1+ \|h\|_\infty)\tilde{C}_{0,H}  (1+d_1M(\Omega)) (D+1)^{H}$. 

Next,  using \eqref{eq:boundNN} once again, for all $\theta \in B(n^{\alpha_1}, \|.\|_2)$, 
$\|\mathscr{H}(u_\theta, \cdot)\|_{\infty, \bar{\Omega}} \leqslant d_2(\|u_\theta\|_{\infty, \bar{\Omega}}+\|h\|_\infty)^2\leqslant d_2 ((D+1)n^{\alpha_1} + \|h\|_\infty)^2 \leqslant C_2 n^{2 \alpha_1}$. Recall that for inequality \eqref{eq:cdn2}, $\beta_2$ must satisfy $\alpha_1 + \beta_1 < \beta_2 < 1/2$. This is true for $\beta_2 = (3+H)\alpha_1$, which completes the proof.
\end{proof}

\begin{prop}[Polynomial operator]
\label{prop:GKopF}
    Let $\Omega$ be a bounded Lipschitz domain, and let $\mathscr{F} \in {\mathscr P}_{\mathrm{op}}$. Then the operator $\mathbf{1}_{\bx \in \Omega} \mathscr{F}(u_\theta, \bx)^2$ satisfies inequalities \eqref{eq:cdn1} and \eqref{eq:cdn2} with $\alpha_1 < [2+H(1+ (2+H)\deg(\mathscr{F}))]^{-1}/2$, $\beta_1 = H(1+ (2+H)\deg(\mathscr{F}))\alpha_1$, and $1/2 > \beta_2 \geqslant [2+H(1+ (2+H)\deg(\mathscr{F}))]\alpha_1$. 
\end{prop}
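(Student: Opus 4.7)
The plan is to combine the two deterministic estimates of Propositions \ref{prop:bounding} and \ref{prop:lipschitzParam} with the telescoping identity \eqref{eq:diffProd} to handle the products inherent in the polynomial $\mathscr{F}$. First I would write
\[\mathscr{F}(u,\bx) = \sum_{k=1}^{N(P)} \phi_k(\bx) \prod_{i=1}^{d_2}\prod_{j=1}^s (\partial^{\alpha_{i,j}}u_i(\bx))^{I(i,j,k)},\]
set $d=\deg(\mathscr{F})$, $K_{i,j}=|\alpha_{i,j}|$, $L_k=\sum_{i,j}I(i,j,k)$, $d_k=\sum_{i,j}(1+K_{i,j})I(i,j,k)$, and observe that $L_k\leqslant d_k\leqslant d$ and, whenever $I(i,j,k)\geqslant 1$, $K_{i,j}\leqslant d_k-1$. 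The coefficients $\phi_k$ are bounded by a universal constant on the compact $\bar\Omega$.

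For the sup-norm bound \eqref{eq:cdn2}, I would apply Proposition \ref{prop:bounding} to each factor so that, for $\theta\in B(n^{\alpha_1},\|\cdot\|_2)$, $\|\partial^{\alpha_{i,j}}u_\theta\|_{\infty,\bar\Omega}\lesssim n^{\alpha_1(HK_{i,j}+1)}$. Multiplying the $L_k$ factors of the $k$th monomial, the exponents add up to $\alpha_1[H(d_k-L_k)+L_k]\leqslant \alpha_1 Hd$. Summing over $k$ yields $\|\mathscr{F}(u_\theta,\cdot)\|_{\infty,\bar\Omega}\lesssim n^{\alpha_1 Hd}$, and squaring gives the bound $\|\mathscr{F}(u_\theta,\cdot)^2\|_{\infty,\bar\Omega}\lesssim n^{2\alpha_1 Hd}$, which is dominated by $n^{\beta_2}$ for the announced $\beta_2=[2+H(1+(2+H)d)]\alpha_1$.

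For the Lipschitz bound \eqref{eq:cdn1}, I would start from $|a^2-b^2|\leqslant (|a|+|b|)|a-b|$ and bootstrap on the sup-norm estimate above to reduce matters to controlling $\|\mathscr{F}(u_\theta,\cdot)-\mathscr{F}(u_{\theta'},\cdot)\|_\infty$. Monomial-by-monomial, I would apply the telescoping identity \eqref{eq:diffProd}: one factor among the $L_k$ product entries becomes a difference $\partial^{\alpha_{i,j}}(u_\theta-u_{\theta'})$, bounded via Proposition \ref{prop:lipschitzParam} by $\lesssim n^{\alpha_1(H+K_{i,j}H^2)}\|\theta-\theta'\|_2$, while the remaining $L_k-1$ factors stay bounded via Proposition \ref{prop:bounding}. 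Adding the exponents yields the crude majorant $\alpha_1[(H-1)(1+(d-1)H)+Hd]=\alpha_1[H^2d-H^2+2H-1]$ for the monomial difference, and combining with the first factor produces the claimed $\beta_1=H(1+(2+H)d)\alpha_1$ after a short algebraic comparison. The conditions $\beta_1,\beta_2<1/2$ and $\beta_2>\alpha_1+\beta_1$ then follow directly from the assumed range of $\alpha_1$; note in particular that $\beta_2-\beta_1=2\alpha_1$, making the last condition automatic.

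The main obstacle is the exponent bookkeeping at the Lipschitz step: Proposition \ref{prop:lipschitzParam} contributes the worse exponent $H+K_{i,j}H^2$ instead of the plain $HK_{i,j}+1$ of Proposition \ref{prop:bounding}, and one must carefully exploit the structural constraints $K_{i,j}\leqslant d-1$ and $L_k\leqslant d$, uniformly over every $(i,j)$ and every monomial of $\mathscr{F}$, in order to fit the final exponent within the advertised target $H(1+(2+H)d)\alpha_1$.
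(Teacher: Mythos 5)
Your proposal is correct and follows essentially the same route as the paper's proof: bound $\|\mathscr{F}(u_\theta,\cdot)\|_{\infty,\bar\Omega}$ monomial-by-monomial via Proposition \ref{prop:bounding}, reduce the squared difference through $|a^2-b^2|\leqslant(|a|+|b|)|a-b|$, telescope the products as in \eqref{eq:diffProd}, and control the differenced factor with Proposition \ref{prop:lipschitzParam}, then check $\beta_2>\alpha_1+\beta_1$ and $\beta_2<1/2$. Your slightly finer per-factor bookkeeping (tracking each $K_{i,j}$ instead of bounding everything by the $C^{\deg(\mathscr{F})}(\Omega)$ norm) and your admittedly crude intermediate exponent still land below the advertised $\beta_1=H(1+(2+H)\deg(\mathscr{F}))\alpha_1$, so the argument goes through.
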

\begin{proof} 
Let $\mathscr{F} \in {\mathscr P}_{\mathrm{op}}$. 
By definition, there exist a degree $s\geqslant 1$, a polynomial $P \in C^\infty(\mathbb{R}^{d_1}, \mathbb{R})[Z_{1,1},$ $\hdots,$ $ Z_{d_2, s}]$, and a sequence $(\alpha_{i,j})_{1\leqslant i\leqslant d_2, 1\leqslant j\leqslant s}$ of multi-indexes such that, for any $u\in C^\infty(\bar{\Omega},\mathbb{R}^{d_2})$, $\mathscr{F}(u,\cdot) = P((\partial^{\alpha_{i,j}}u_i)_{1\leqslant i\leqslant d_2, 1\leqslant j\leqslant s})$. 
Namely, there exists $N(P) \in \mathbb{N}^\star$, exponents $ I(i,j,k) \in \mathbb{N}$, and functions $\phi_1, \hdots, \phi_{N(P)} \in C^\infty(\bar{\Omega}, \mathbb{R})$, such that $P(Z_{1,1}, \hdots, Z_{d_2,s}) = \sum_{k=1}^{N(P)} \phi_k \times \prod_{i=1}^{d_2}\prod_{j=1}^{s} Z_{i,j}^{I(i,j,k)}$. 
Recall, by Definition \ref{defi:deg}, that $\deg(\mathscr{F})= \max_k \sum_{i=1}^{d_2}\sum_{j=1}^{s} (1+|\alpha_{i,j}|)I(i,j,k)$.

Now, according to Proposition \ref{prop:bounding}, there exists a positive constant $C_{\mathrm{deg}(\mathscr{F}),H}$ such that 
\begin{align*}
    &\|\mathscr{F}(u_\theta,\cdot)^2\|_{\infty,\bar{\Omega}} \\
    &\quad \leqslant \bigg[\sum_{k=1}^{N(P)} \|\phi_k\|_{\infty, \bar{\Omega}}  \prod_{i=1}^{d_2}\prod_{j=1}^{s} \|\partial^{\alpha_{i,j}}u_\theta\|_{\infty, \bar{\Omega}}^{I(i,j,k)}\bigg]^2\\
    & \quad \leqslant N^2(P)  \big[\max_{1\leqslant k \leqslant N(P)}\|\phi_k\|_{\infty, \bar{\Omega}}\big]^2  C_{\mathrm{deg}(\mathscr{F}),H}^2(D+1)^{2 H \deg(\mathscr{F})} (1+\|\theta\|_2)^{2 H\deg(\mathscr{F})}.
\end{align*}
Thus, for any $\theta\in B(n^{\alpha_1}, \|\cdot\|_2)$, $\|\mathscr{F}(u_\theta,\cdot)^2\|_{\infty,\bar{\Omega}} \leqslant C_2 n^{\beta_2}$,  where 
\[C_2 = 2^{2H\deg(\mathscr{F})} N^2(P)  \big[\max_{1\leqslant k \leqslant N(P)}\|\phi_k\|_{\infty, \bar{\Omega}}\big]^2  C_{\mathrm{deg}(\mathscr{F}),H}^2(D+1)^{2H\deg(\mathscr{F})}, 
\]
and for any $\beta_2 \geqslant 2H\deg(\mathscr{F}) \alpha_1$.

Next, observe that, any $u$ and $v$, $||u|^2-|v|^2|= |(u+v)(u-v)|\leqslant |u+v||u-v|$. Therefore,
\begin{align*}
    |\mathscr{F}(u_\theta,\bx)^2 - \mathscr{F}(u_{\theta'},\bx)^2| &\leqslant \big(|\mathscr{F}(u_\theta,\bx)|+| \mathscr{F}(u_{\theta'},\bx)| \big) |\mathscr{F}(u_\theta,\bx) - \mathscr{F}(u_{\theta'},\bx)|\\
    &\leqslant 2C_2^{1/2} n^{H \deg(\mathscr{F})\alpha_1}|\mathscr{F}(u_\theta,\bx) - \mathscr{F}(u_{\theta'},\bx)|.
\end{align*}
Using inequality \eqref{eq:diffProd} (remark that the product $\prod_{i=1}^{d_2}\prod_{j=1}^{s} Z_{i,j}^{I(i,j,k)}$ has less than $\deg(\mathscr{F})$ terms different from $1$), it is easy to see that
\begin{align*}
    |\mathscr{F}(u_\theta,\bx) - \mathscr{F}(u_{\theta'},\bx)|&\leqslant N(P)  \big[\max_{1\leqslant k \leqslant N(P)}\|\phi_k\|_{\infty, \bar{\Omega}}\big] \deg(\mathscr{F}) \|u_\theta-u_{\theta'}\|_{C^{\deg(\mathscr{F})}(\Omega)}\\
    &\quad \times \max_{1\leqslant k \leqslant N(P)}\prod_{i,j} \max(\|u_\theta\|_{C^{|\alpha_{i,j}|}(\Omega)}, \|u_{\theta'}\|_{C^{|\alpha_{i,j}|}(\Omega)}) ^{I(i,j,k)}. 
\end{align*}
From Proposition \ref{prop:bounding}, we deduce that
\begin{align*}
    &\max_{1\leqslant k \leqslant N(P)}\prod_{i,j} \max(\|u_\theta\|_{C^{|\alpha_{i,j}|}(\Omega)}, \|u_{\theta'}\|_{C^{|\alpha_{i,j}|}(\Omega)}) ^{I(i,j,k)} \\
    &\quad\leqslant C_{\deg(\mathscr{F}),H} (D+1)^{H\deg(\mathscr{F})}(1+\max(\|\theta\|_2, \|\theta'\|_2))^{H\deg(\mathscr{F})}.
\end{align*}

Combining the last two inequalities with  Proposition \ref{prop:lipschitzParam} gives that
\begin{align*}
    &|\mathscr{F}(u_\theta,\bx) - \mathscr{F}(u_{\theta'},\bx)| \\
    &\quad \leqslant  N(P)  \big[\max_{1\leqslant k \leqslant N(P)}\|\phi_k\|_{\infty, \bar{\Omega}}\big] \deg(\mathscr{F})  \tilde{C}_{\deg(\mathscr{F}),H}(1+d_1M(\Omega))\|\theta-\theta'\|_2\\
    &\qquad \times  C_{\deg(\mathscr{F}),H} (D+1)^{H(1+ (1+H)\deg(\mathscr{F}))}(1+\max(\|\theta\|_2, \|\theta'\|_2))^{H(1+ (1+H)\deg(\mathscr{F}))}.
\end{align*}
Hence, for all $\theta, \theta'\in B(n^{\alpha_1}, \|\cdot\|_2)$, $ |\mathscr{F}(u_\theta,\bx)^2 - \mathscr{F}(u_{\theta'},\bx)^2| \leqslant C_1 n^{\beta_1} \|\theta-\theta'\|_2$, where 
\begin{align*}
C_1 &= 2C_2^{1/2}N(P)  \big[\max_{1\leqslant k \leqslant N(P)}\|\phi_k\|_{\infty, \bar{\Omega}}]\big] \deg(\mathscr{F}) \tilde{C}_{\deg(\mathscr{F}), H} (1+d_1M(\Omega)) \\
& \quad \times C_{\deg(\mathscr{F}), H}  (D+1)^{H(1+ (1+H)\deg(\mathscr{F}))} 2^{H(1+ (1+H)\deg(\mathscr{F}))}
\end{align*}
and $\beta_1 = H(1+ (2+H)\deg(\mathscr{F}))\alpha_1$. 

 Recall that for inequality \eqref{eq:cdn2}, $\beta_2$ must satisfy $\alpha_1 + \beta_1 < \beta_2 < 1/2$. This is true for $\beta_2 = [2+H(1+ (2+H)\deg(\mathscr{F}))]\alpha_1$ and $\alpha_1 < [2+H(1+ (2+H)\deg(\mathscr{F}))]^{-1}/2$.
\end{proof}

\subsection{Proof of Theorem \ref{thm:generalization_error}}
Let $u_0 = 0 \in \mathrm{NN}_H(D)$ be the neural network with parameter $\theta = (0, \hdots, 0)$. 
Obviously, $R_{n, n_e, n_r}^{(\mathrm{ridge})}(u_0) = R_{n, n_e, n_r}(u_0)$. Also, 
\begin{equation*}
R_{n, n_e, n_r}(u_0)  \leqslant \frac{\lambda_d}{n}\sum_{i=1}^{n} \|Y_i\|_2^2 + {\lambda_e}\|h\|_\infty  + \frac{1}{n_r}\sum_{k=1}^M \sum_{\ell=1}^{n_r} \|\mathscr{F}_k(0, \bX^{(r)}_\ell)\|_2^2.
\end{equation*}
Since each $\mathscr{F}_k$
 is a polynomial operator (see Definition \ref{defi:polyop}), it takes the form \[\mathscr{F}_k(u, \bx) = \sum_{\ell=1}^{N(P_k)} \phi_{\ell,k} \prod_{i=1}^{d_2}\prod_{j=1}^{s_k} (\partial^{\alpha_{i,j,k}}u_i(\bx))^{I_k(i,j,\ell)}.\] Therefore,
\begin{align}
R_{n, n_e, n_r}(u_0) 
& \leqslant \frac{\lambda_d}{n}\sum_{i=1}^{n} \|Y_i\|_2^2 + {\lambda_e}\|h\|_\infty  + \sum_{k=1}^M  \sum_{\ell = 1}^{N(P_k)}\|\phi_{\ell,k}\|_{\infty, \bar{\Omega}} \nonumber \\
&:= I, \label{eq:borne0}
\end{align}
where $I$ does not depend on $\lambda_{(\mathrm{ridge})}$, $n_e$, and $n_r$.

Let $(\hat{\theta}^{(\mathrm{ridge})}(p, n_e, n_r, D))_{p\in\mathbb{N}}$ be any minimizing sequence of the empirical risk of the ridge PINN, i.e.,  $\lim_{p \to \infty}R^{(\mathrm{ridge})}_{n, n_e, n_r}(u_{\hat{\theta}^{(\mathrm{ridge})}(p, n_e, n_r, D)}) = \inf_{\theta \in \Theta_{H,D}}\,R^{(\mathrm{ridge})}_{n, n_e, n_r}(u_\theta)$.
In the rest of the proof, we let $n_{r,e} = \min(n_r, n_e)$.
We will make use of the following three sets:
$\mathcal{E}_1(n_{r,e}) = \{\theta \in \Theta_{H,D},\ \|\theta\|_2 \geq n_{r,e}^{\kappa}\}$,
$\mathcal{E}_2(n_{r,e}) = \{\theta \in \Theta_{H,D}, \  n_{r,e}^{\kappa/4}\leq \|\theta\|_2 \leq n_{r,e}^{\kappa}\}$, and
    $\mathcal{E}_3(n_{r,e}) = \{\theta \in \Theta_{H,D}, \  \|\theta\|_2 \leq n_{r,e}^{\kappa/4}\}$.
Clearly, $\Theta_{H,D} = \mathcal{E}_1 \cup \mathcal{E}_2 \cup \mathcal{E}_3$. 
The proof relies on the argument that almost surely, given any $n_r$ and $n_e$, for all $p$ large enough, $\hat{\theta}^{(\mathrm{ridge})}(p, n_e, n_r, D) \in \mathcal{E}_2 \cup \mathcal{E}_3$. 
Moreover, on $\mathcal{E}_2 \cup \mathcal{E}_3$, the empirical risk function $R_{n, n_e, n_r}^{(\mathrm{ridge})}$ is close to the theoretical risk $\mathscr{R}_n$, when $n_{r,e}$ is large enough. 
For clarity, the proof is divided into four steps.
 
\paragraph{Step 1}
We start by observing that, for any $\theta \in \mathcal{E}_1(n_{r,e})$,
$R_{n, n_e, n_r}^{(\mathrm{ridge})}(\theta) \geqslant \lambda_{(\mathrm{ridge})}\|\theta\|_2^2\geqslant  n_{r,e}^{\kappa}$.
Therefore, according to \eqref{eq:borne0}, once $n_{r,e} \geq (I+1)^{1/\kappa}$,
\[ \inf_{\theta \in \mathcal{E}_3(n_{r,e})}R_{n, n_e,n_r}^{(\mathrm{ridge})}(u_\theta) + 1 \leqslant R_{n, n_e,n_r}^{(\mathrm{ridge})}(u_0) +1 \leqslant \inf_{\theta \in \mathcal{E}_1(n_{r,e})}R_{n, n_e,n_r}^{(\mathrm{ridge})}(u_\theta).\] This shows that, for all $n_{r,e}$ large enough and for all $p$ large enough, $\hat{\theta}^{(\mathrm{ridge})}(p, n_e, n_r, D) \notin \mathcal{E}_1(n_{r,e})$. 

\paragraph{Step 2} Applying Proposition \ref{prop:GKopH} and Proposition \ref{prop:GKopF} with $\alpha_1 = \kappa$ and $\beta_2 = (2+H(1+(2+H)\max_k\deg(\mathscr{F}_k)))\alpha_1$, and then Theorem \ref{thm:approx_integral}, we know that, almost surely, there exists $N \in \mathbb{N}^\star$ such that, for all $n_{r,e}\geqslant N$,
\begin{align}
    &\sup_{\theta\in \mathcal{E}_2(n_{r,e})\cup \mathcal{E}_3(n_{r,e})} \Big|\frac{1}{n_e}\sum_{j=1}^{n_e} \|u_\theta(\bX_j^{(e)})-h(\bX_j^{(e)})\|_2^2 - \mathbb{E}\|u_\theta(\bX^{(e)})-h(\bX^{(e)})\|_2^2 \Big| \nonumber\\
    &\quad \leqslant \log^2(n_{r,e}) n_{r,e}^{\beta_2-1/2}
    \label{eq:GKh2}
\end{align}
and, for each $1 \leqslant k \leqslant M$,
\begin{equation}
    \sup_{\theta\in \mathcal{E}_2(n_{r,e})\cup \mathcal{E}_3(n_{r,e})} \Big|\frac{1}{n_r}\sum_{\ell=1}^{n_r}  \mathscr{F}_k(u_\theta, \bX^{(r)}_\ell)^2 - \frac{1}{|\Omega|}\int_\Omega \mathscr{F}_k(u_\theta, \bx)^2 d\bx\Big| \leqslant \log^2(n_{r,e}) n_{r,e}^{\beta_2-1/2}.
    \label{eq:GKF2}
\end{equation} 
Thus, almost surely, for all $n_{r,e}$ large enough and for all $\theta \in \mathcal{E}_2(n_{r,e})$,
\begin{align*}
    R_{n, n_e, n_r}^{(\mathrm{ridge})}(u_\theta) &\geqslant  \mathscr{R}_n(u_\theta) + \lambda_{(\mathrm{ridge})}\|\theta\|_2^2  - (M+1) \log^2(n_{r,e}) n_{r,e}^{\beta_2-1/2}.
\end{align*}
But, for all $\theta \in \mathcal{E}_2(n_{r,e})$,
$\lambda_{(\mathrm{ridge})}\|\theta\|_2^2 \geqslant n_{e,r}^{-\kappa / 2}$. Upon noting that $-\nicefrac{\kappa}{2}  > \beta_2 -\nicefrac{1}{2}$, we conclude that, almost surely, for all $n_{r,e}$ large enough and for all $\theta \in \mathcal{E}_2(n_{r,e})$, $ R_{n, n_e,n_r}^{(\mathrm{ridge})}(u_\theta) \geqslant \mathscr{R}_n(u_\theta)$.

\paragraph{Step 3}
Clearly, for all $\theta \in \mathcal{E}_3(n_{r,e})$,
$\lambda_{(\mathrm{ridge})}\|\theta\|_2^2 \leqslant n_{e,r}^{-\kappa / 2}$. Using inequalities \eqref{eq:GKh2} and \eqref{eq:GKF2}, we deduce that, almost surely, for all $n_{r,e}$ large enough and for all $\theta \in \mathcal{E}_3(n_{r,e})$, $|R_{n, n_e,n_r}^{(\mathrm{ridge})}(u_\theta) - \mathscr{R}_n(u_\theta)| \leqslant (M+2)\log^2(n_{r,e}) n_{r,e}^{-\kappa/2}$.
 
\paragraph{Step 4} Fix $\varepsilon > 0$.
Let $(\theta_p)_{p\in\mathbb{N}}$ be any minimizing sequence of the theoretical risk function $\mathscr{R}_n$, that is, 
$\lim_{p\to\infty} \mathscr{R}_n(u_{\theta_p}) = \inf_{\theta \in \Theta_{H,D}}\mathscr{R}_n(u_\theta)$.
Thus, by definition, there exists some $P_\varepsilon \in \mathbb{N}$ such that $|\mathscr{R}_n(u_{\theta_{P_\varepsilon}}) - \inf_{\theta \in \Theta_{H,D}}\mathscr{R}_n(u_\theta)|\leqslant \varepsilon$. 

For fixed $n_{r,e}$, according to Step 1, we have, for all $p$ large enough, $\hat{\theta}^{(\mathrm{ridge})}(p, n_e, n_r, D) \in  \mathcal{E}_2(n_{r,e}) \cup \mathcal{E}_3(n_{r,e})$. So, according to Step 2 and Step 3,
\[\mathscr{R}_{n}(u_{\hat{\theta}^{(\mathrm{ridge})}(p, n_e, n_r, D)}) \leqslant R_{n, n_e,n_r}^{(\mathrm{ridge})}(u_{\hat{\theta}^{(\mathrm{ridge})}(p, n_e, n_r, D)})+ (M+2)\log^2(n_{r,e}) n_{r,e}^{-\kappa/2}.\] 
Now, by definition of the minimizing sequence $(\hat{\theta}^{(\mathrm{ridge})}(p, n_e, n_r, D))_{p\in\mathbb{N}}$, for all $p$ large enough, $R_{n, n_e,n_r}^{(\mathrm{ridge})}(u_{\hat{\theta}^{(\mathrm{ridge})}(p, n_e, n_r, D)}) \leqslant  \inf_{\theta \in \Theta_{H,D}} R_{n, n_e,n_r}^{(\mathrm{ridge})}(u_\theta) + \varepsilon$. Also, according to Step 3,
\begin{align*}
    \inf_{\theta \in \mathcal{E}_2(n_{r,e}) \cup \mathcal{E}_3(n_{r,e})} R_{n, n_e,n_r}^{(\mathrm{ridge})}(u_\theta) &\leqslant \inf_{\theta \in \mathcal{E}_3(n_{r,e})} R_{n, n_e,n_r}^{(\mathrm{ridge})}(u_\theta)\\
    & \leqslant \inf_{\theta \in \mathcal{E}_3(n_{r,e})} \mathscr{R}_{n}(u_\theta) + (M+2)\log^2(n_{r,e}) n_{r,e}^{-\kappa/2}.
\end{align*}
Observe that, for all $n_{r,e}$ large enough, $\theta_{P_\varepsilon} \in \mathcal{E}_3(n_{r,e})$. Therefore, $\inf_{\theta \in \mathcal{E}_3(n_{r,e})} \mathscr{R}_{n}(u_\theta) \leqslant \mathscr{R}_{n}(u_{\theta_{P_\varepsilon}})$. Combining the previous inequalities, we conclude that, almost surely, for all $n_{r,e}$ large enough and for all $p$ large enough, 
\[\mathscr{R}_{n}(u_{\hat{\theta}^{(\mathrm{ridge})}(p, n_e, n_r, D)}) \leqslant \inf_{\theta \in \Theta_{H,D}}\mathscr{R}_n(u_\theta) + 3\varepsilon.\]
Since $\varepsilon$ is arbitrary, almost surely,
$\lim_{n_e,n_r \to \infty} \lim_{p\to\infty} \mathscr{R}_{n}(u_{\hat{\theta}^{(\mathrm{ridge})}(p, n_e, n_r, D)}) = \inf_{\theta \in \Theta_{H,D}}\mathscr{R}_n(u_\theta)$.

\subsection{Proof of Theorem \ref{thm:approximation}}
The result is a direct consequence of Theorem \ref{thm:generalization_error}, Proposition \ref{prop:densite} and of the continuity of $\mathscr{R}_n$ with respect to the $C^K(\Omega)$ norm.

\section{Proofs of Section \ref{sec:functional}}
\subsection{Proof of Proposition \ref{prop:laxMLin}}
Since the functions in $H^{m+1}(\Omega, \mathbb{R}^{d_2})$ are only defined almost everywhere, we first have to give a meaning to the pointwise evaluations $u(\bX_i)$ when $u\in H^{m+1}(\Omega, \mathbb{R}^{d_2})$. 
Since $\Omega$ is a bounded Lipschitz domain and $(m +1)  > d_1/2$, we can use the Sobolev embedding of Theorem \ref{thm:sobIneq}.
Clearly, $\tilde{\Pi}$ is linear and $\|\tilde \Pi (u)\|_{\infty} \leqslant  C_{\Omega} \| u \|_{H^{m+1}(\Omega)}$. The natural choice to evaluate $u \in H^{m+1}(\Omega,\mathbb{R}^{d_2})$ at the point $\bX_i$ is therefore to evaluate its unique continuous modification $\tilde \Pi (u)$ at $\bX_i$.
    
By assumption, $\mathscr{F}_k(u, \cdot) = \mathscr{F}_k^{(\mathrm{lin})}(u, \cdot) + B_k$, where $\mathscr{F}_k^{(\mathrm{lin})}(u, \cdot) = \sum_{|\alpha|\leqslant K} \langle A_{k,\alpha}, \partial^\alpha u\rangle$ and $A_{k,\alpha} \in C^\infty(\bar{\Omega}, \mathbb{R}^{d_1})$.
Next, consider the symmetric bilinear form, defined for all $u,v \in H^{m+1}(\Omega, \mathbb{R}^{d_2})$ by
\begin{align*}
    \mathcal{A}_n(u,v) &= \frac{\lambda_d}{n} \sum_{i=1}^n \langle\tilde \Pi (u)(\bX_i) , \tilde \Pi(v)(\bX_i) \rangle+\lambda_e \mathbb{E}\langle \tilde \Pi(u)(\bX^{(e)}), \tilde \Pi(v)(\bX^{(e)})\rangle\\
    &\quad +\frac{1}{|\Omega|}\sum_{k=1}^{M}\int_{\Omega} \mathscr{F}_k^{(\mathrm{lin})}(u,\bx)\mathscr{F}_k^{(\mathrm{lin})}(v,\bx)d\bx + \frac{\lambda_t }{|\Omega|} \!\sum_{|\alpha|\leqslant m+1}\!\int_{\Omega}\langle \partial^\alpha u(\bx), \partial^\alpha v(\bx)\rangle d\bx,
    \end{align*}
along with the linear form defined for all $u \in H^{m+1}(\Omega, \mathbb{R}^{d_2})$ by
\begin{align*}
    \mathcal{B}_n(u) &= \frac{\lambda_d}{n} \sum_{i=1}^n \langle Y_i, \tilde \Pi(u)(\bX_i)\rangle + \lambda_e \mathbb{E}\langle \tilde \Pi(u)(\bX^{(e)}), h(\bX^{(e)})\rangle\\
    &\quad - \frac{1}{|\Omega|}\sum_{k=1}^{M}\int_{\Omega} B_k(\bx)\mathscr{F}_k^{(\mathrm{lin})}(v,\bx)d\bx .
\end{align*}
Observe that \[\mathcal{A}_n(u,u) -2\mathcal{B}_n(u) = \mathscr{R}_n^{(\mathrm{reg})}(u) - \frac{\lambda_d}{n} \sum_{i=1}^n \|Y_i\|_2^2 -\lambda_e \mathbb{E}\|h(\bX^{(e)})\|_2^2 - \frac{1}{|\Omega|}\sum_{k=1}^{M}\int_{\Omega} B_k(\bx)^2d\bx.\] 
In addition, $\mathcal{A}_n(u,u) \geqslant \lambda_t \|u\|_{H^{m+1}(\Omega)}^2$, where $\lambda_t >0$, so that $\mathcal{A}_n$ is coercive on the normed space $(H^{m+1}(\Omega), \|\cdot\|_{H^{m+1}(\Omega)})$.  
Since $(m+1) > \max(d_1/2, K)$, one has that 
    \[|\mathcal{A}_n(u,v)| \leqslant ((\lambda_d+\lambda_e) C_{\Omega}^2 + \sum_{1\leqslant k \leqslant M}(\sum_{|\alpha|\leqslant K} \|A_{k,\alpha}\|_{\infty,\Omega})^2 +\lambda_t)\| u \|_{H^{m+1}(\Omega)}\| v \|_{H^{m+1}(\Omega)},\]
    and 
    \[|\mathcal{B}_n(u)| \leqslant C_\Omega \Big(\frac{\lambda_d}{n} \sum_{i=1}^n \|Y_i\|_2  + \lambda_e \|h\|_{\infty} + \sum_{k=1}^M(\|B_{k}\|_{\infty,\Omega}\sum_{|\alpha|\leqslant K} \|A_{k,\alpha}\|_{\infty,\Omega})\Big) \| u \|_{H^{m+1}(\Omega)}.\]
    This shows that the operators $\mathcal{A}_n$ and $\mathcal{B}_n$ are continuous. Therefore, by the Lax-Milgram theorem \citep[e.g.,][Corollary 5.8]{brezis2010functional}, there exists a unique $\hat u \in H^{m+1}(\Omega, \mathbb{R}^{d_2})$ such that $\mathcal{A}_n(\hat u,\hat u) - 2\mathcal{B}_n(\hat u) = \min_{u \in H^{m+1}(\Omega, \mathbb{R}^{d_2})}\mathcal{A}_n(u,u) - 2\mathcal{B}_n(u)$. This directly implies that $\hat u$ is the unique minimizer of $\mathscr{R}_n^{(\mathrm{reg})}$ over $H^{m+1}(\Omega, \mathbb{R}^{d_2})$. Furthermore, the Lax-Milgram theorem also states that $\hat u$ is the unique element of $H^{m+1}(\Omega, \mathbb{R}^{d_2})$ such that, for all $v \in H^{m+1}(\Omega, \mathbb{R}^{d_2})$, $\mathcal{A}_n(\hat u,v) = \mathcal{B}_n(v)$. This concludes the proof of the proposition.

\subsection{Proof of Proposition \ref{prop:sequenceCvLin}}
Let $\hat u_n$ be the unique minimizer of the regularized theoretical risk $\mathscr{R}^{(\mathrm{reg})}_n$ over $H^{m+1}(\Omega, \mathbb{R}^{d_2})$ given by Proposition \ref{prop:laxMLin}. Notice that
\[\inf_{u \in C^{\infty}(\bar{\Omega}, \mathbb{R}^{d_2})} \mathscr{R}^{\mathrm{(reg)}}_n(u) = \inf_{u \in H^{m+1}(\Omega, \mathbb{R}^{d_2})} \mathscr{R}^{\mathrm{(reg)}}_n(u) = \mathscr{R}_n(\hat u_n).\]
The first equality is a consequence of the density of $ C^{\infty}(\bar{\Omega}, \mathbb{R}^{d_2})$ in $H^{m+1}(\Omega,\mathbb{R}^{d_2})$, together with the continuity of the function $\mathscr{R}^{\mathrm{(reg)}}_n : H^{m+1}(\Omega, \mathbb{R}^{d_2}) \to \mathbb{R}$ with respect to the $H^{m+1}(\Omega)$ norm (see the proof of Proposition \ref{prop:laxMLin}). The density argument follows from the extension theorem of \citet[][Chapter VI.3.3, Theorem 5]{stein1970lipschitz} and from \citet[Chapter 5.3, Theorem 3]{evans2010partial}.

Our goal is to show that the regularized theoretical risk satisfies some form of continuity, so that we can connect $\mathscr{R}^{(\mathrm{reg})}(u_p)$ and $\mathscr{R}^{(\mathrm{reg})}(\hat u_n)$. 
Recall that, by assumption, $\mathscr{F}_k(u, \cdot) = \mathscr{F}_k^{(\mathrm{lin})}(u, \cdot) + B_k$, where $\mathscr{F}_k^{(\mathrm{lin})}(u, \cdot) = \sum_{|\alpha|\leqslant K} \langle A_{k,\alpha}(\cdot), \partial^\alpha u(\cdot)\rangle$ and $A_{k,\alpha} \in C^\infty(\bar{\Omega}, \mathbb{R}^{d_1})$. Observe that 
    \begin{equation}
        \mathscr{R}_n^{(\mathrm{reg})}(u) = F(u)+ \frac{1}{|\Omega|}I(u),
        \label{eq:decomposition}
    \end{equation}
where \[F(u) =   \frac{\lambda_d}{n} \sum_{i=1}^n \|\tilde \Pi (u)(\bX_i) - Y_i\|_2^2 + \lambda_e \mathbb{E}\|\tilde \Pi (u)(\bX^{(e)})-h(\bX^{(e)})\|_2^2 ,\] 
\[I(u) = \int_{\Omega}L((\partial^{m+1}_{i_1,\hdots, i_{m+1}}u(\bx))_{1\leqslant i_1,\hdots, i_{m+1} \leqslant d_1},\hdots,u(\bx), \bx)d\bx,\] 
and where the function $L$ satisfies
\[L(x^{(m+1)}, \hdots, x^{(0)}, z) =  \sum_{k=1}^M\Big(B_k(z) + \sum_{|\alpha|\leqslant K} \langle A_{k,\alpha}(z), x^{(|\alpha|)}_{\alpha}\rangle\Big)^2 +  \lambda_t \sum_{j=0}^{m+1} \|x^{(j)}\|_2^2.\]
(The term $x^{(j)} \in \mathbb{R}^{\binom{d_1+j-1}{j-1}d_2}$ corresponds to the to the concatenation of all the partial derivatives of order $j$, i.e., to the term $(\partial^{j}_{i_1,\hdots, i_{j}}u(\bx))_{1\leqslant i_1,\hdots, i_{j} \leqslant d_1}$.)
Clearly, $L\geqslant 0$ and, since $(m+1) > K$, the Lagrangian $L$ is convex in $x^{(m+1)}$. 
Therefore, according to Lemma \ref{lem:lowerSemiC0Lin}, the function $I$ is weakly lower-semi continuous on $H^{m+1}(\Omega, \mathbb{R}^{d_2})$.

Now, let us proceed by contradiction and assume that there is a sequence $(u_p)_{p\in\mathbb{N}}$ of functions such that $(i)$ $u_p\in C^\infty(\bar{\Omega}, \mathbb{R}^{d_2})$, $(ii)$ $\lim_{p\to\infty}\mathscr{R}^{(\mathrm{reg})}_n(u_p)=\mathscr{R}^{(\mathrm{reg})}_n(\hat u_n)$, and $(iii)$ $(u_p)_{p\in\mathbb{N}}$ does not converge to $\hat u_n$ with respect to the $H^m(\Omega)$ norm. 
Therefore, upon passing to a subsequence, there exists $\varepsilon >0$ such that, for all $ p \geqslant 0$, $\|u_p-\hat u_n\|_{H^m(\Omega)} \geqslant \varepsilon$.

Since $\mathscr{R}^{(\mathrm{reg})}_n(u_p) \geqslant \lambda_t \|u_p\|_{H^{m+1}(\Omega)}$, $\lambda_t>0$, and $(u_p)_{p\in\mathbb{N}}$ is a minimizing sequence, $(u_p)_{p\in \mathbb{N}}$ is bounded in $H^{m+1}(\Omega, \mathbb{R}^{d_2})$.
Therefore, Theorem \ref{thm:rellichK} states that passing to a subsequence, $(u_p)_{p\in\mathbb{N}}$ converges to a limit, say $u_\infty$, both weakly in $H^{m+1}(\Omega, \mathbb{R}^{d_2})$ and with respect to the $H^m(\Omega)$ norm. 
Then, since $I$ is weakly lower-semi continuous on $H^{m+1}(\Omega, \mathbb{R}^{d_2})$, we deduce that 
\begin{equation}
    \label{eq:cvI}
    \lim_{p\to\infty}I(u_p) \geqslant  I(u_\infty).
\end{equation}
Recalling the definition of $\tilde \Pi$ in Theorem \ref{thm:sobIneq}, we know that there exists a constant $C_\Omega >0$ such that $\|u_p-\tilde \Pi(u_\infty)\|_{\infty, \Omega} = \|\tilde \Pi(u_p - u_\infty)\|_{\infty, \Omega} \leqslant C_\Omega \|u_p-u_\infty\|_{H^{m}(\Omega)}$.
We deduce that  $\lim_{p\to\infty}F(u_p) = F(u_\infty)$. Therefore, combining this result with \eqref{eq:decomposition} and \eqref{eq:cvI}, we deduce that $\lim_{p\to\infty}\mathscr{R}^{(\mathrm{reg})}_n(u_p) \geqslant  \mathscr{R}^{(\mathrm{reg})}_n(u_\infty)$.
However, recalling that $\lim_{p\to\infty}\mathscr{R}^{(\mathrm{reg})}_n(u_p)=\mathscr{R}^{(\mathrm{reg})}_n(\hat u_n)$ and that $\hat u_n$ is the unique minimizer of $\mathscr{R}^{(\mathrm{reg})}_n$ over $H^{m+1}(\Omega,\mathbb{R}^{d_2})$, we conclude that $u_\infty = \hat u_n$. 

We just proved that there exists a subsequence of $(u_p)_{p\in \mathbb{N}}$ which converges to $\hat u_n$ with respect to the $H^{m}(\Omega)$ norm.
This contradicts the assumption $\|u_p-\hat u_n\|_{H^m(\Omega)} \geqslant \varepsilon$ for all $ p \geqslant 0$.

\subsection{Proof of Theorem \ref{thm:functionalCv}}

The result is an immediate consequence of Theorem \ref{thm:approximation}, Propositions \ref{prop:laxMLin}, and Proposition \ref{prop:sequenceCvLin}.

\subsection{Proof of Theorem \ref{prop:pdeSolverFunctional}}
Throughout the proof, since no data are involved, we denote the regularized theoretical risk by $\mathscr{R}^{(\mathrm{reg})}$ instead of $\mathscr{R}_n^{(\mathrm{reg})}$. Also, to make the dependence in the hyperparameter $\lambda_t$ transparent, we denote by $u(\lambda_t)$ the unique minimizer of $\mathscr{R}^{(\mathrm{reg})}$ instead of $\hat u_n$. 

We proceed by contradiction and assume that 
$\lim_{\lambda_t \to 0}\| u(\lambda_t)-u^\star\|_{H^{m}(\Omega)} \neq 0$.
If this is true, then, upon passing to a subsequence $(\lambda_{t,p})_{p\in \mathbb{N}}$ such that $\lim_{p\to \infty} \lambda_{t,p} =0$, there exists $\varepsilon >0$ such that, for all $ p \geqslant 0$, $\| u(\lambda_{t,p})- u^\star\|_{H^m(\Omega)} \geqslant \varepsilon$.

Notice that $ \| u(\lambda_{t,p})\|_{H^{m+1}(\Omega)} \leqslant \mathscr{R}^{(\mathrm{reg})}(u^\star)/\lambda_{t,p} = \|u^\star\|_{H^{m+1}(\Omega)}$.
Theorem \ref{thm:rellichK} proves that upon passing to a subsequence, $(u(\lambda_{t,p}))_{p\in \mathbb{N}}$ converges with respect to the $H^m(\Omega)$ norm to a function $u_\infty \in H^{m+1}(\Omega, \mathbb{R}^{d_2})$.
Since $m \geqslant K$, the theoretical risk $\mathscr{R}$ is continuous with respect to the $H^m(\Omega)$ norm and we have that $\mathscr{R}(u_\infty) = \lim_{p\to \infty} \mathscr{R}(u(\lambda_{t,p}))$. 
Moreover, by definition of $u(\lambda_{t,p})$ and since $\mathscr{R}(u^\star) = 0$, we have that  $\mathscr{R}(u(\lambda_{t,p})) + \lambda_{t,p} \|u(\lambda_{t,p})\|_{H^{m+1}(\Omega)}\leqslant \lambda_{t,p} \|u^\star\|_{H^{m+1}(\Omega)}$.
Therefore, $\mathscr{R}(u_\infty) = 0$ and $u_\infty = u^\star$. 
This contradicts the assumption that for all $ p \geqslant 0$, $\| u(\lambda_{t,p})- u^\star\|_{H^m(\Omega)} \geqslant \varepsilon$.

\subsection{Proof of Proposition \ref{prop:consistencey}}
We prove the proposition in several steps. In the sequel, given a measure $\mu$ on $\Omega$ and a function $u \in H^{m+1}(\Omega, \mathbb{R}^{d_2})$, we let $\|u\|^2_{L^2(\mu)} = \int_\Omega \|\tilde \Pi(u)(\bx)\|_2^2 d\mu(\bx)$, where, as usual, $\tilde \Pi(u)$ is the unique continuous function such that $\tilde \Pi(u) = u$ almost everywhere.

\paragraph{Step 1: Decomposing the problem into two simpler ones} 
Following the framework of \citet{arnone2022spatialRegression}, the core idea is to decompose the problem into two simpler ones thanks to the linearity in $\hat u_n$ and in $Y_i$ of the identity 
\[\forall v \in H^{m+1}(\Omega, \mathbb{R}^{d_2}),\quad  \mathcal{A}_n(\hat u_n,v) = \mathcal{B}_n(v)\] 
of Proposition \ref{prop:laxMLin}.
Thus, recalling that $Y_i = u^\star(\bX_i)+\varepsilon_i$, we 
let
\begin{align*}
    \mathcal{B}^{\star}_n(v) &= \frac{\lambda_d}{n} \sum_{i=1}^n \langle u^\star(\bX_i), \tilde \Pi(v)(\bX_i)\rangle+ \lambda_e \mathbb{E}\langle\tilde \Pi(v)(\bX^{(e)}),h(\bX^{(e)})\rangle\\
    &\quad - \frac{1}{|\Omega|}\sum_{k=1}^{M}\int_{\Omega} B_k(\bx)\mathscr{F}_k^{(\mathrm{lin})}(v,\bx)d\bx 
\end{align*}
and
\[\mathcal{B}^{\mathrm{(noise)}}_n(v) = \frac{\lambda_d}{n} \sum_{i=1}^n \langle \varepsilon_i, \tilde \Pi(v)(\bX_i)\rangle.\]
Clearly, $\mathcal{B}_n = \mathcal{B}^{\star}_n + \mathcal{B}^{\mathrm{(noise)}}_n.$
Using Proposition \ref{prop:laxMLin} with $Y_i$ instead of $\varepsilon_i$, and setting $\lambda_e = 0$, we see that there exists a unique $\hat u ^{\mathrm{(noise)}}_n \in H^{m+1}(\Omega, \mathbb{R}^{d_2})$ such that, for all $v \in H^{m+1}(\Omega, \mathbb{R}^{d_2})$, $\mathcal{A}_n( \hat u ^{\mathrm{(noise)}}_n,v) = \mathcal{B}^{\mathrm{(noise)}}_n(v)$. Furthermore, $\hat u ^{\mathrm{(noise)}}_n$ is the unique minimizer over $H^{m+1}(\Omega, \mathbb{R}^{d_2})$ of
\begin{align*}
    \mathscr{R}_n^{\mathrm{(noise)}}(u) &= \frac{\lambda_d}{n} \sum_{i=1}^n \|\tilde \Pi (u)(\bX_i)-\varepsilon_i\|_2^2+\lambda_e \mathbb{E}\| u(\bX^{(e)})\|_2^2+\frac{1}{|\Omega|}\sum_{k=1}^{M}\int_{\Omega} \mathscr{F}_k^{(\mathrm{lin})}(u,\bx)^2d\bx  \\
    &\quad + \lambda_t\|u\|_{H^{m+1}(\Omega)}^2.
\end{align*}
Similarly, Proposition \ref{prop:laxMLin} shows that there exists a unique $\hat u ^{\star}_n \in H^{m+1}(\Omega, \mathbb{R}^{d_2})$ such that, for all $v \in H^{m+1}(\Omega, \mathbb{R}^{d_2})$, 
$\mathcal{A}_n( \hat u ^{\star}_n,v) = \mathcal{B}_n^{\star}(v)$, and
$\hat u ^{\star}_n$ is the unique minimizer over $H^{m+1}(\Omega, \mathbb{R}^{d_2})$ of 
\begin{align*}
    \mathscr{R}_n^\star(u) &= \frac{\lambda_d}{n} \sum_{i=1}^n \|\tilde \Pi (u-u^\star)(\bX_i)\|_2^2+\lambda_e \mathbb{E}\| \tilde \Pi(u)(\bX^{(e)}) - h(\bX^{(e)})\|_2^2\\
    &\quad +\frac{1}{|\Omega|}\sum_{k=1}^{M}\int_{\Omega} \mathscr{F}_k(u,\bx)^2d\bx + \lambda_t\|u\|_{H^{m+1}(\Omega)}^2.
\end{align*}
By the bilinearity of $\mathcal{A}_n$, one has, for all $ v \in H^{m+1}(\Omega, \mathbb{R}^{d_2})$, $\mathcal{A}_n( \hat u_n ^{\star}+ \hat u_n ^{\mathrm{(noise)}} ,v) = \mathcal{B}_n(v)$. However, according to Proposition \ref{prop:laxMLin}, $\hat u_n$ is the unique element of $H^{m+1}(\Omega, \mathbb{R}^{d_2})$ satisfying this property. Therefore, 
$\hat u_n = \hat u_n ^{\star}+ \hat u_n ^{\mathrm{(noise)}}$.
\paragraph{Step 2: Some properties of the minimizers} According to Lemma \ref{lem:measurability}, $\hat u_n$, $\hat u_n^\star$, and $\hat u_n^{\mathrm{(noise)}}$ are random variables. Our goal in this paragraph is to prove that $\mathbb{E}\|\hat u_n\|_{H^{m+1}(\Omega)}^2$, $\mathbb{E}\|\hat u_n^\star\|_{H^{m+1}(\Omega)}^2$, and $\mathbb{E}\|\hat u_n^{\mathrm{(noise)}}\|_{H^{m+1}(\Omega)}^2 $ are finite, so that we can safely use conditional expectations on $\hat u_n$,  $\hat u_n^\star$, and  $\hat u_n^\mathrm{(noise)}$. 
Recall that, since  $\lambda_t \|\hat u_n\|_{H^{m+1}(\Omega)}^2 
    \leqslant \mathscr{R}^{(\mathrm{reg})}_n(\hat u_n)  \leqslant \mathscr{R}^{(\mathrm{reg})}_n(0) $, and since $\mathscr{F}_k^{(\mathrm{lin})}(0, \cdot) = 0$, 
\[\lambda_t \|\hat u_n\|_{H^{m+1}(\Omega)}^2 \leqslant \frac{\lambda_d}{n} \sum_{i=1}^n \|Y_i\|_2^2 + \lambda_e \mathbb{E}\|h(\bX^{(e)})\|_2^2 + \frac{1}{|\Omega|} \sum_{k=1}^M \int_\Omega B_k(\bx)^2 d\bx.\]
Hence, 
\[\mathbb{E}\|\hat u_n\|_{H^{m+1}(\Omega)}^2 \leqslant \lambda_t^{-1}\Big(\lambda_d\mathbb{E}\|u^\star(\bX) + \varepsilon\|_2^2 + \lambda_e \mathbb{E}\|h(\bX^{(e)})\|_2^2 + \frac{1}{|\Omega|}\sum_{k=1}^M \int_\Omega B_k(\bx)^2 d\bx\Big).\]
Similarly,
\begin{equation*}
    \mathbb{E}\|\hat u_n^\star\|_{H^{m+1}(\Omega)}^2 \leqslant \lambda_t^{-1}\Big(\lambda_d\mathbb{E}\|u^\star(\bX)\|_2^2 + \lambda_e \mathbb{E}\|h(\bX^{(e)})\|_2^2 + \frac{1}{|\Omega|}\sum_{k=1}^M \int_\Omega B_k(\bx)^2 d\bx\Big),
    \label{eq:boundingOptimizerExpectancy}
\end{equation*} 
and $\mathbb{E}\|\hat u_n^{\mathrm{(noise)}}\|_{H^{m+1}(\Omega)}^2 \leqslant \lambda_t^{-1}\lambda_d\mathbb{E}\|\varepsilon\|_2^2$.

\paragraph{Step 3: Bias-variance decomposition} In this paragraph, we use the notation $\mathcal{A}_{(\bx, e)}(u,u)$ instead of $\mathcal{A}_n(u,u)$, to make the dependence of $\mathcal{A}_n$ in the random variables $\bx = (\bX_1, \hdots, \bX_n)$ and $e=(\varepsilon_1, \hdots, \varepsilon_n)$ more explicit. We do the same with $\mathcal{B}_n$ and $\hat u_n^{\mathrm{(noise)}}$. 
 Observe that, for any $(\bx,e) \in\Omega^n \times \mathbb{R}^{nd_2}$ and for any $u \in H^{m+1}(\Omega, \mathbb{R}^{d_2})$, one has \[\mathcal{A}_{(\bx,-e)}(u,u) - 2 \mathcal{B}_{(\bx,e)}^{\mathrm{(noise)}}(u) = \mathcal{A}_{(\bx,e)}(-u,-u) - 2 \mathcal{B}_{(\bx,-e)}^{\mathrm{(noise)}}(-u).\]
Therefore, $\hat u^{\mathrm{(noise)}}_{(\bx,e)} = -\hat u^{\mathrm{(noise)}}_{(\bx,-e)}$. 

Since, by assumption, $\varepsilon$ has the same law as $-\varepsilon$, this implies $\mathbb{E}(\hat u_n^{\mathrm{(noise)}}\mid \bX_1, \hdots, \bX_n) = 0$, and so $\mathbb{E}(\hat u_n^{\mathrm{(noise)}}) = 0$.
Moreover, since $\hat u_n^\star$ is a measurable function of $\bX_1, \hdots, \bX_n$, we have $\mathbb{E}(\hat u_n^\star\mid\bX_1, \hdots, \bX_n) = \hat u_n^\star$. Recalling (Step 1) that $\hat u_n = \hat u_n ^{\star}+ \hat u_n ^{\mathrm{(noise)}}$, we deduce the following bias-variance decomposition:  
\begin{equation}
    \label{eq:biasVariance}
    \mathbb{E}\|\hat u_n-u^\star\|^2_{L^2(\mu_\bX)} = \mathbb{E}\|\hat u^\star_n-u^\star\|^2_{L^2(\mu_\bX)}+ \mathbb{E}\|\hat u^{\mathrm{(noise)}}_n\|^2_{L^2(\mu_\bX)}.
\end{equation}

\paragraph{Step 4: Bounding the bias} 
Recall that $\hat u_n^\star$ minimizes $\mathscr{R}_n^\star$ over $H^{m+1}(\Omega, \mathbb{R}^{d_2})$, so that $\mathscr{R}_n^\star(u^\star) \geqslant \mathscr{R}_n^\star(\hat u^\star_n)$.
Therefore, $\mathrm{PI}(u^\star)+ \lambda_t\|u^\star\|_{H^{m+1}(\Omega)}^2 \geqslant \frac{\lambda_d}{n} \sum_{i=1}^n \|\tilde \Pi (\hat u_n^\star-u^\star)(\bX_i)\|_2^2$. 
We deduce that
\begin{align*}
    &\frac{1}{\lambda_d}\big(\mathrm{PI}(u^\star)+ \lambda_t\|u^\star\|_{H^{m+1}(\Omega)}^2\big) \\
    &\geqslant  \frac{\|\hat u_n^\star-u^\star\|_{H^{m+1}(\Omega)}^2}{n} \sum_{i=1}^n \Big\|\tilde \Pi \Big (\frac{\hat u_n^\star-u^\star}{\|\hat u_n^\star-u^\star\|_{H^{m+1}(\Omega)}}\Big)(\bX_i)\Big\|_2^2 \\
    & \geqslant  \|\hat u_n^\star-u^\star\|_{L^2(\mu_\bX)}^2 \\
    & \quad - \|\hat u_n^\star-u^\star\|_{H^{m+1}(\Omega)}^2  \sup_{\|u\|_{H^{m+1}(\Omega)}\leqslant 1} \Big( \mathbb{E}\|\tilde \Pi(u)(\bX)\|_2^2 - \frac{1}{n}\sum_{i=1}^n \|\tilde \Pi(u)(\bX_i)\|_2^2\Big) \\
  & \geqslant  \|\hat u_n^\star-u^\star\|_{L^2(\mu_\bX)}^2\\
  & \quad 
   - 2  \big(\|\hat u_n^\star\|_{H^{m+1}(\Omega)}^2+\|u^\star\|_{H^{m+1}(\Omega)}^2\big) \!\sup_{\|u\|_{H^{m+1}(\Omega)}\leqslant 1}\! \Big( \mathbb{E}\|\tilde \Pi(u)(\bX)\|_2^2- \frac{1}{n}\sum_{i=1}^n \|\tilde \Pi(u)(\bX_i)\|_2^2\Big).
\end{align*}
Moreover, $\mathrm{PI}(u^\star)  + \lambda_t\|u^\star\|_{H^{m+1}(\Omega)}^2 \geqslant \lambda_t \|\hat u_n^\star\|_{H^{m+1}(\Omega)}^2$.
Taking expectations, we conclude by Lemma \ref{lem:empiricalL2} that there exists a constant $C_\Omega '$, depending only on $\Omega$, such that
\[\mathbb{E}\|\hat u_n^\star-u^\star\|_{L^2(\mu_\bX)}^2\leqslant \frac{1}{\lambda_d}\big(\mathrm{PI}(u^\star) + \lambda_t\|u^\star\|_{H^{m+1}(\Omega)}^2\big) + \frac{C_\Omega 'd_2^{1/2}}{n^{1/2}} \Big(2\|u^\star\|_{H^{m+1}(\Omega)}^2 + \frac{\mathrm{PI}(u^\star)}{\lambda_t}\Big) .\]

\paragraph{Step 5: Bounding the variance} 
Since $\hat u_n^{\mathrm{(noise)}}$ minimizes $\mathscr{R}_n^{\mathrm{(noise)}}$ over $H^{m+1}(\Omega, \mathbb{R}^{d_2})$, we have $\mathscr{R}_n^{\mathrm{(noise)}}(0) \geqslant \mathscr{R}_n^{\mathrm{(noise)}}(\hat u^{\mathrm{(noise)}}_n)$.
So, 
\[\frac{\lambda_d}{n} \sum_{i=1}^n \|\varepsilon_i\|_2^2  \geqslant \frac{\lambda_d}{n} \sum_{i=1}^n \|\tilde \Pi (\hat u_n^\mathrm{(noise)})(\bX_i)- \varepsilon_i\|_2^2.\]
Observing  that  $\|\tilde \Pi (\hat u_n^\mathrm{(noise)})(\bX_i)- \varepsilon_i\|_2^2 = \|\tilde \Pi (\hat u_n^\mathrm{(noise)})(\bX_i)\|_2^2 -2\langle \tilde \Pi (\hat u_n^\mathrm{(noise)})(\bX_i),\varepsilon_i\rangle + \|\varepsilon_i\|_2^2$, we deduce that
\[\frac{2}{n} \sum_{i=1}^n \langle \tilde \Pi (\hat u_n^\mathrm{(noise)})(\bX_i),\varepsilon_i\rangle  \geqslant  \frac{1}{n} \sum_{i=1}^n \|\tilde \Pi (\hat u_n^\mathrm{(noise)})(\bX_i)\|_2^2,\]
and  
\begin{align*}
&\Big\langle \int_\Omega \tilde \Pi(\hat u_n^\mathrm{(noise)})d \mu_\bX, \frac{2}{n} \sum_{i=1}^n \varepsilon_i\Big\rangle + \frac{2}{n} \sum_{i=1}^n \Big\langle \tilde \Pi (\hat u_n^\mathrm{(noise)})(\bX_i) - \int_\Omega \tilde \Pi(\hat u_n^\mathrm{(noise)})d \mu_\bX,\varepsilon_i\Big\rangle  \\
& \quad \geqslant  \frac{1}{n} \sum_{i=1}^n \|\tilde \Pi (\hat u_n^\mathrm{(noise)})(\bX_i)\|_2^2.
\end{align*}
Therefore, 
\begin{align*}
     \|\hat u_n^\mathrm{(noise)}\|_{L^2(\mu_\bX)}^2  &\leqslant \Big\langle \int_\Omega \tilde \Pi(\hat u_n^\mathrm{(noise)})d \mu_\bX, \frac{2}{n} \sum_{i=1}^n \varepsilon_i\Big\rangle  \\
    &+ \|\hat u_n^\mathrm{(noise)}\|_{H^{m+1}(\Omega)}\sup_{\|u\|_{H^{m+1}(\Omega)}\leqslant 1} \frac{1}{n} \sum_{j=1}^n \langle \tilde \Pi (u)(\bX_j) - \mathbb{E}(\tilde \Pi(u)(\bX)),\varepsilon_j\rangle \\ 
    &+ \|\hat u_n^\mathrm{(noise)}\|_{H^{m+1}(\Omega)}^2\sup_{\|u\|_{H^{m+1}(\Omega)}\leqslant 1} \Big(\mathbb{E}\|\tilde \Pi(u)(\bX_i)\|_2^2 - \frac{1}{n}\sum_{i=1}^n \|\tilde \Pi(u)(\bX_i)\|_2^2\Big)\\
    &:= A+B+C.
\end{align*}
According to the Cauchy-Schwarz inequality,
\[\mathbb{E}(A)  \leqslant \Big(\mathbb{E}\Big\| \int_\Omega \tilde \Pi(\hat u_n^\mathrm{(noise)})d \mu_\bX \Big\|_2^2\Big)^{1/2}  \times \frac{2 (\mathbb{E}\|\varepsilon\|_2^2)^{1/2}}{n^{1/2}},\]
and so, by Jensen's inequality,
\[\mathbb{E}(A) \leqslant \big(\mathbb{E}\|\hat u_n^\mathrm{(noise)}\|_{L^2(\mu_\bX)}^2\big)^{1/2}  \times \frac{2 (\mathbb{E}\|\varepsilon\|_2^2)^{1/2}}{n^{1/2}}.\]
The inequality $\mathscr{R}_n^{\mathrm{(noise)}}(0) \geqslant \mathscr{R}_n^{\mathrm{(noise)}}(\hat u^{\mathrm{(noise)}}_n)$ also implies that
\[\frac{\lambda_d}{n} \sum_{i=1}^n \|\varepsilon_i\|_2^2  \geqslant \frac{\lambda_d}{n} \sum_{i=1}^n \|\tilde \Pi (\hat u_n^\mathrm{(noise)})(\bX_i)- \varepsilon_i\|_2^2 + \lambda_t \|\hat u_n^\mathrm{(noise)}\|_{H^{m+1}(\Omega)}^2.\]
Therefore,
\[\frac{\lambda_d}{n \lambda_t} \sum_{i=1}^n 2\langle \tilde \Pi (\hat u_n^\mathrm{(noise)})(\bX_i),\varepsilon_i\rangle \geqslant  \|\hat u_n^\mathrm{(noise)}\|_{H^{m+1}(\Omega)}^2,\]
and
\[\frac{\lambda_d}{\lambda_t} \sup_{\|u\|_{H^{m+1}(\Omega)}\leqslant 1} \frac{1}{n} \sum_{j=1}^n \langle \tilde \Pi (u)(\bX_j),\varepsilon_j\rangle  \geqslant   \|\hat u_n^\mathrm{(noise)}\|_{H^{m+1}(\Omega)}.\]
By Theorem \ref{thm:sobIneq}, if $\|u\|_{H^{m+1}(\Omega)}\leqslant 1$, then  $ \langle \mathbb{E}(\tilde \Pi(u)(\bX)),\frac{1}{n} \sum_{j=1}^n\varepsilon_j\rangle \leqslant \frac{C_\Omega d_2^{1/2}}{n} \|\sum_{i=1}^n \varepsilon_i\|_2$. Thus, 
\begin{align*}
    &\|\hat u_n^\mathrm{(noise)}\|_{H^{m+1}(\Omega)}\\
    &\leqslant\frac{\lambda_d}{\lambda_t} \Big( \frac{C_\Omega d_2^{1/2}}{n} \|\sum_{i=1}^n \varepsilon_i\|_2 + \sup_{\|u\|_{H^{m+1}(\Omega)}\leqslant 1} \frac{1}{n} \sum_{j=1}^n \langle \tilde \Pi (u)(\bX_j) - \mathbb{E}(\tilde \Pi(u)(\bX)),\varepsilon_j\rangle \Big) .
\end{align*} 
Using Lemma \ref{lem:empiricalProcess} together with the fact that, for all $\bx, \by \in \mathbb{R}$, $(\bx+\by)^2 \leqslant 2(\bx^2 + \by^2)$, 
\begin{equation*}
    \mathbb E \|\hat u_n^\mathrm{(noise)}\|_{H^{m+1}(\Omega)}^2 \leqslant \frac{4\lambda_d^2}{n\lambda_t^2} C_\Omega^2 d_2 \mathbb E\|\varepsilon\|_2^2.
\end{equation*} 
Similarly, observing that for all random variables $X, Y \in \mathbb{R}$, $\mathbb E(XY)^2 \leqslant \mathbb E(X^2) \mathbb E(Y^2) $, 
\[\mathbb E(B) \leqslant \frac{4\lambda_d}{n\lambda_t} C_\Omega^2 d_2 \mathbb E\|\varepsilon\|_2^2.\]
Moreover, by Lemma \ref{lem:empiricalL2} and the inequality $\mathbb E(XYZ)^2 \leqslant \mathbb E(X^2) \mathbb E(Y^2) \mathbb E(Z^2) $, 
\[\mathbb E(C) \leqslant \frac{\lambda_d^2}{n^{3/2} \lambda_t^2}  C_\Omega^2 d_2^{3/2} \mathbb E\|\varepsilon\|_2^2.\]
Therefore, we conclude that there exists a constant $C_\Omega > 0$, depending only on $\Omega$, such that 
\begin{align*}
    \mathbb{E}\|\hat u_n^\mathrm{(noise)}\|_{L^2(\mu_\bX)}^2 &\leqslant \big(\mathbb{E}\|\hat u_n^\mathrm{(noise)}\|_{L^2(\mu_\bX)}^2\big)^{1/2}  \frac{2 (\mathbb{E}\|\varepsilon\|_2^2)^{1/2}}{n^{1/2}} \\
    &\quad + \frac{4\lambda_d}{n\lambda_t} C_\Omega^2 d_2 \mathbb E\|\varepsilon\|_2^2 +  \frac{\lambda_d^2}{n^{3/2} \lambda_t^2}  C_\Omega^2 d_2^{3/2} \mathbb E\|\varepsilon\|_2^2 .
\end{align*}
Hence, using elementary algebra,
\[\big(\mathbb{E}\|\hat u_n^\mathrm{(noise)}\|_{L^2(\mu_\bX)}^2\big)^{1/2} \leqslant \frac{(\mathbb{E}\|\varepsilon\|_2^2)^{1/2}}{n^{1/2}}\Big(2+2C_\Omega d_2^{3/4}\Big(\frac{\lambda_d^{1/2}}{\lambda_t^{1/2}} +\frac{\lambda_d}{\lambda_tn^{1/4}}\Big)\Big)\]
and 
\[\mathbb{E}\|\hat u_n^\mathrm{(noise)}\|_{L^2(\mu_\bX)}^2 \leqslant \frac{8\mathbb{E}\|\varepsilon\|_2^2}{n}\Big(1+C_\Omega d_2^{3/2}\Big(\frac{\lambda_d}{\lambda_t} +\frac{\lambda_d^2}{\lambda_t^2n^{1/2}}\Big)\Big).\]

\paragraph{Step 6: Putting everything together} Combining Steps 3, 4, and 5, we conclude that
\begin{align*}
    \mathbb{E}\|\hat u_n-u^\star\|^2_{L^2(\mu_\bX)} &\leqslant \frac{1}{\lambda_d}\big(\mathrm{PI}(u^\star) + \lambda_t\|u^\star\|_{H^{m+1}(\Omega)}^2\big)  + \frac{C_\Omega 'd_2^{1/2}}{n^{1/2}} \Big(2\|u^\star\|_{H^{m+1}(\Omega)}^2 + \frac{\mathrm{PI}(u^\star)}{\lambda_t}\Big)\\
    &\quad +\frac{8\mathbb{E}\|\varepsilon\|_2^2}{n}\Big(1+C_\Omega d_2^{3/2}\Big(\frac{\lambda_d}{\lambda_t} +\frac{\lambda_d^2}{\lambda_t^2n^{1/2}}\Big)\Big).
\end{align*}
\subsection{Proof of Proposition \ref{thm:phyCst}}
By definition, $\hat u_n$ minimizes $\mathscr{R}_n^{(\mathrm{reg})}$ over $H^{m+1}(\Omega, \mathbb{R}^{d_2})$. So, $\mathscr{R}_n^{(\mathrm{reg})}(u^\star) \geqslant \mathscr{R}_n^{(\mathrm{reg})}(\hat u_n)$. 
Moreover, since 
\[
\|\tilde \Pi (\hat u_n)(\bX_i) - Y_i\|_2^2 = \|\tilde \Pi (\hat u_n-u^\star )(\bX_i)\|_2^2 -2 \langle \tilde \Pi (\hat u_n-u^\star )(\bX_i), \varepsilon_i\rangle + \|\varepsilon_i\|_2^2 ,\]
one has 
\begin{align*}
    &\frac{1}{n}\sum_{i=1}^n \|\tilde \Pi (\hat u_n)(\bX_i) - Y_i\|_2^2 \\
    &\quad \geqslant  -2 \|\hat u_n-u^\star\|_{H^{m+1}(\Omega)}\times \sup_{\|u\|_{H^{m+1}(\Omega)}\leqslant 1} \frac{1}{n} \sum_{j=1}^n \langle \tilde \Pi (u)(\bX_j) - \mathbb{E}(\tilde \Pi (u)(\bX)),\varepsilon_j\rangle\\
    &\qquad -2 \Big\langle \int_\Omega \tilde \Pi (\hat u_n-u^\star )d\mu_\bX, \frac{1}{n}\sum_{i=1}^n \varepsilon_i\Big\rangle + \frac{1}{n}\sum_{i=1}^n \|\varepsilon_i\|_2^2.
\end{align*}
Thus,
\begin{align}
    &\frac{1}{n}\sum_{i=1}^n \|\tilde \Pi (\hat u_n)(\bX_i) - Y_i\|_2^2 \nonumber\\
    &\quad \geqslant -2 (\|\hat u_n\|_{H^{m+1}(\Omega)}+\|u^\star\|_{H^{m+1}(\Omega)}) \sup_{\|u\|_{H^{m+1}(\Omega)}\leqslant 1} \frac{1}{n} \sum_{j=1}^n \langle \tilde \Pi (u)(\bX_j) - \mathbb{E}(\tilde \Pi (u)(\bX)),\varepsilon_j\rangle \nonumber\\
    &\qquad -2 \Big\langle \int_\Omega \tilde \Pi (\hat u_n-u^\star )d\mu_\bX, \frac{1}{n}\sum_{i=1}^n \varepsilon_i\Big\rangle + \frac{1}{n}\sum_{i=1}^n \|\varepsilon_i\|_2^2. \label{eq:phyConsistencyIneg}
\end{align}
Recall from Steps 4 and 5 of the proof of Theorem \ref{prop:consistencey} that
\begin{align*}
    \mathbb{E}\|\hat u_n\|_{H^{m+1}(\Omega)}^2 
    &\leqslant 2\mathbb{E}\|\hat u_n^\star\|_{H^{m+1}(\Omega)}^2 + 2\mathbb{E}\|\hat u_n^{\mathrm{(noise)}}\|_{H^{m+1}(\Omega)}^2\\
    & \leqslant 2\Big(\frac{\mathrm{PI}(u^\star)}{\lambda_t} + \|u^\star\|_{H^{m+1}(\Omega)}^2\Big) + \frac{8\lambda_d^2}{n\lambda_t^2} C_\Omega^2 d_2 \mathbb E\|\varepsilon\|_2^2 
\end{align*}
Therefore, Lemma \ref{lem:empiricalProcess} and the inequality $\mathbb E(XY)^2 \leqslant \mathbb E(X)^2 \mathbb E(Y)^2 $ show that 
\begin{align*}
    &\mathbb{E}\Big( \|\hat u_n\|_{H^{m+1}(\Omega)} \sup_{\|u\|_{H^{m+1}(\Omega)}\leqslant 1} \frac{1}{n} \sum_{j=1}^n \langle \tilde \Pi (u)(\bX_j) - \mathbb{E}(\tilde \Pi (u)(\bX)),\varepsilon_j\rangle \Big) = \Oequivalent_{n\to \infty}\Big(\frac{\lambda_d}{n\lambda_t}\Big). 
\end{align*}
By Theorem \ref{prop:consistencey},
\begin{align*}
    \mathbb{E}\Big|\Big\langle \int_\Omega \tilde \Pi (\hat u_n-u^\star )d\mu_\bX, \frac{1}{n}\sum_{i=1}^n \varepsilon_i\Big\rangle \Big| &\leqslant \big(\mathbb{E}\|u^\star-\hat u_n\|_{L^2(\mu_\bX)}^2\big)^{1/2} \frac{\mathbb{E}\|\varepsilon\|_2^2}{n^{1/2}}= \Oequivalent_{n\to \infty}\Big(\frac{\lambda_d}{n^{2}\lambda_t}\Big)^{1/2}.
\end{align*}
Combining these three results with \eqref{eq:phyConsistencyIneg}, we conclude that
\begin{align*}
    \mathbb{E}\Big(\frac{1}{n}\sum_{i=1}^n \|\tilde \Pi (\hat u_n)(\bX_i) - Y_i\|_2^2\Big) \geqslant \mathbb{E}\|\varepsilon\|_2^2 + \Oequivalent_{n\to \infty}\Big(\frac{\lambda_d}{n\lambda_t}\Big).
\end{align*}
Therefore, since $\lim_{n \to \infty }\frac{\lambda_d^2}{n\lambda_t} = 0$ and since $\mathscr{R}_n^{(\mathrm{reg})}(\hat u_n) = \frac{\lambda_d}{n}\sum_{i=1}^n \|\tilde \Pi (\hat u_n)(\bX_i) - Y_i\|_2^2+\mathrm{PI}(\hat u_n) + \lambda_t \|\hat u_n\|_{H^{m+1}(\Omega)}^2$,
\[\mathbb{E}\big(\mathscr{R}_n^{(\mathrm{reg})}(\hat u_n)\big) \geqslant  \lambda_d \mathbb{E}\|\varepsilon\|_2^2 + \mathbb{E}(\mathrm{PI}(\hat u_n)) + \oequivalent_{n\to \infty}(1).\]
Similarly, almost everywhere,
\[\frac{1}{n}\sum_{i=1}^n \|\tilde \Pi (\hat u^\star)(\bX_i) - Y_i\|_2^2 = \frac{1}{n}\sum_{i=1}^n \|\varepsilon_i\|_2^2.\]
Hence, 
\[\mathbb{E}\big(\mathscr{R}_n^{(\mathrm{reg})}(u^\star)\big) = \lambda_d \mathbb{E}\|\varepsilon\|_2^2 + \mathrm{PI}( u^\star) + \lambda_t \|u^\star\|_{H^{m+1}(\Omega)}^2.\]
Since $ \mathbb{E}(\mathscr{R}_n^{(\mathrm{reg})}(\hat u_n)) \leqslant \mathbb{E}(\mathscr{R}_n^{(\mathrm{reg})}(u^\star))$ and since $\lambda_t \to 0$, we are led to 
\[\mathbb{E}(\mathrm{PI}(\hat u_n)) \leqslant \mathrm{PI}(u^\star) + \oequivalent_{n\to \infty}(1),\]
which is the desired result.
\end{appendix}

\end{document}